\documentclass[10pt]{article} 
\usepackage[margin=1in]{geometry}
 
\usepackage{listings}
\setlength{\headheight}{15pt}
\usepackage{geometry}

\usepackage{tikz}
\usetikzlibrary{backgrounds}
\usepackage{amsmath,amsthm,amsfonts,amssymb,amscd}
\usepackage[colorlinks=true, pdfstartview=FitV, linkcolor=blue,citecolor=blue, urlcolor=blue]{hyperref}

\usepackage{mathtools}
\usepackage{mathabx}
\usepackage{float}
\usepackage{makeidx}
\usepackage{stmaryrd}
\usepackage{mathrsfs}
\usepackage{emptypage}
\usepackage{xfrac}
\usepackage{bbm}

\usepackage[doi=false,isbn=false,url=false,eprint=false,maxbibnames=99]{biblatex} 
\addbibresource{references.bib} 

\usepackage{graphicx}

\renewcommand{\epsilon}{\varepsilon}


\newcommand{\p}{\ensuremath{\partial}}

\newcommand{\mc}{\ensuremath{\mathcal}}


\definecolor{labelkey}{rgb}{0,0,1}

\def\les{\lesssim}

\def\eps{\varepsilon}
\renewcommand*{\div}{\ensuremath{\mathrm{div\,}}}

\newcommand{\ga}{\gamma}
\newcommand{\al}{\alpha}

\newcommand{\mt}{\widetilde}

\newcommand*{\supp}{\ensuremath{\mathrm{supp\,}}}

\renewcommand*{\tilde}{\widetilde}
\renewcommand*{\hat}{\widehat}
\renewcommand*{\bar}{\overline}

\newcommand{\T}{{\mathbb T}}

\newtheorem{theorem}{Theorem}[section]
\newtheorem{lemma}[theorem]{Lemma}
\newtheorem{prop}[theorem]{Proposition}
\newtheorem{proposition}[theorem]{Proposition}

\theoremstyle{definition}
\newtheorem{definition}[theorem]{Definition}

\newtheorem{remark}[theorem]{Remark}



\numberwithin{equation}{section}

\def\p{\partial}

\def\f1r{{\frac{1}{r}}  }
\def\p{\partial}

\def\f1r{{\frac{1}{r}}  }

\allowdisplaybreaks[4]


\title{Non-radial implosion for compressible Euler and Navier-Stokes in $\T^3$ and $\mathbb{R}^3$}
 \author{G. Cao-Labora, J. G\'omez-Serrano, J. Shi, G. Staffilani}

\date{} %
\begin{document}

\maketitle
 \begin{abstract}
 In this paper we construct smooth, non-radial solutions of the compressible Euler and Navier-Stokes equation that develop an imploding finite time singularity. Our construction is motivated by the works \cite{Merle-Raphael-Rodnianski-Szeftel:implosion-i,Merle-Raphael-Rodnianski-Szeftel:implosion-ii,Buckmaster-CaoLabora-GomezSerrano:implosion-compressible}, but is flexible enough to handle both periodic and non-radial initial data. 
 \end{abstract}
\tableofcontents

\section{Introduction}

In this paper we are concerned with the compressible Euler and Navier-Stokes equations, corresponding to solutions of

\begin{align} \begin{split} \label{eq:CNS}
\rho \p_t u + \rho u \cdot \nabla u &= - \frac{1}{\gamma} \nabla (\rho^{\gamma}) + \nu \Delta u   \\
\p_t \rho + \rm{ div } (\rho u) &= 0.
\end{split} \end{align}
where $\gamma > 1$ is the adiabatic constant and $\nu = 0$ for Euler, $\nu =1$ for Navier-Stokes. We will consider two domains: $y\in \mathbb{T}^{3}$ (torus) or $y\in \mathbb{R}^{3}$ (whole space).
 
Specifically, we will be concerned with the construction of \textit{imploding singularities}, that is solutions of \eqref{eq:CNS} that develop a finite time singularity where both $\| u \|_{L^\infty},\| \rho \|_{L^\infty}$ become infinite in finite time, as opposed to a \textit{shock}, where $u,\rho$ stay bounded but their gradients blow-up.

\subsection{Historical Background}

Generically, finite time singularities for the compressible Euler and Navier-Stokes equations are in the form of shocks. By using Riemann invariants, Lax \cite{Lax:singularities-nonlinear-hyperbolic-pde} showed that in 1D shocks may develop. His results were later generalized and improved by John \cite{John:singularities-1d-wave} and Liu \cite{Liu:singularities-nlw-quasilinear-hyperbolic-pde}.

Later, Sideris \cite{Sideris:singularities-3d-compressible} employed a virial type argument to prove singularity formation in 2D and 3D. Yin \cite{Yin:formation-shock-waves-3d-compressible-euler} proved shock formation and development in 3D under spherical symmetry. In a series of landmark monographs
\cite{Christodoulou:shock-development-problem,Christodoulou:formation-shocks-3d-fluids-book,Christodoulou-Lisibach:shock-development-spherical-symmetry,Christodoulou-Miao:compressible-flow-euler} Christodoulou, as well as Christodoulou--Miao and Christodoulou--Lisibach investigated the shock formation and the shock development problem without radial symmetry, and developed a new framework to study such problems. 
In recent years, there has been an emergence of new and exciting results. Buckmaster, Drivas, Neal, Rickard, Shkoller, and Vicol \cite{Buckmaster-Shkoller-Vicol:formation-shocks-2d-isentropic-compressible-euler,Buckmaster-Shkoller-Vicol:point-shocks-3d-compressible-euler,Buckmaster-Shkoller-Vicol:shock-formation-vorticity-creation-3d-euler,Buckmaster-Drivas-Shkoller-Vicol:simultaneous-development-shocks-cusps-2d-euler,Neal-Rickard-Shkoller-Vicol:stable-shock,Neal-Shkoller-Vicol:characteristics-shock-2d-euler-symmetry-entropy} developed a new framework and studied the shock formation problem in 2D and 3D, even after the first blow-up time or in the context of low regularity, as well as the classification of pre-shocks and the development problem. Luk--Speck \cite{Luk-Speck:shocks-2d-compressible-euler-vorticity,Luk-Speck:stability-shock-3d-euler-vorticity-entropy} proved the first result of shock formation involving non-zero vorticity and variable entropy, within the framework developed by Christodoulou. Abbrescia--Speck \cite{Abbrescia-Speck:singular-boundary-3d-compressible-euler} studied the maximal development problem. An--Chen--Yin \cite{An-Chen-Yin:ill-posedness-2d-mhd,An-Chen-Yin:ill-posedness-3d-mhd} proved ill-posedness for compressible MHD and compressible Euler in 2D and 3D. See also the work of Su \cite{Su:shock-formation-2d-euler-self-similar}. For more references and developments in this direction we refer the reader to the ICM survey of Buckmaster--Drivas--Shkoller--Vicol \cite{Buckmaster-Drivas-Shkoller-Vicol:icm-survey}.

Nonetheless, shock formation is not the only possible singularity that can occur, and self-similar solutions exist. Guderley \cite{Guderley:singularities-radial} was the first to construct radial, self-similar imploding singularities, though non-smooth.
Guderley's construction was later improved and generalized by Jenssen--Tsikkou 
\cite{Jenssen-Tsikkou:amplitude-blowup-radial-isentropic-euler,Jenssen-Tsikkou:radially-symmetric-non-isentropic-euler}. See also the review \cite{MeyerTerVehn-Schalk:selfsimilar-compression-waves} by Meyer-ter-Vehn--Schalk and the work of Sedov \cite{Sedov:book-similarity}.

In the case of compressible Navier-Stokes, Germain--Iwabuchi \cite{Germain-Iwabuchi:self-similar-compressible-ns} constructed forward smooth, self-similar solutions, as well as non-singular ones due to cavitation. In \cite{Germain-Iwabuchi-Leger:backward-self-similar-compressible-ns}, Germain--Iwabuchi--L\'eger studied conditions on the (non)-existence of backward self-similar solutions and settled the existence in the case with degenerate density-dependent viscosity in \cite{Germain-Iwabuchi-Leger:self-similar-degenerate-compressible-ns}. We also mention here the work of Guo--Jiang \cite{Guo-Jiang:self-similar-isothermal-compressible-ns} in the isothermal case and of Li--Chen--Xie \cite{Li-Chen-Xie:self-similar-compressible-1d} in the density-dependent viscosity case and the work by Lazarus 
\cite{Lazarus:selfsimilar-shocks-cavities,Lazarus:selfsimilar-shocks-cavities-erratum} on the converging shock and collapsing cavity problems.
For more examples of self-similar solutions in the context of fluid mechanics we refer the reader to the review paper by Eggers--Fontelos \cite{Eggers-Fontelos:self-similarity}. Xin \cite{Xin:blowup-compressible-navier-stokes-compact-density}, and Rozanova \cite{Rozanova:blow-up-decreasing-solutions-compressible-ns} proved blow-up for compressible Navier-Stokes in the case of compactly supported and rapidly decaying density respectively.

In a series of breakthrough papers \cite{Merle-Raphael-Rodnianski-Szeftel:implosion-i,Merle-Raphael-Rodnianski-Szeftel:implosion-ii,Merle-Raphael-Rodnianski-Szeftel:implosion-nls}, Merle--Rapha\"el--Rodnianski--Szeftel constructed first smooth, radially imploding solutions to the compressible Euler equation and later built upon them to construct imploding singularities for compressible Navier-Stokes (with decaying density at infinity) and the energy supercritical defocusing nonlinear Schrödinger equation. In their construction, they established a sequence of quantized self-similar scalings accumulating to a critical value, leading to a sequence of imploding self-similar profiles. This was done for almost every value of $\gamma$. In \cite{Buckmaster-CaoLabora-GomezSerrano:implosion-compressible} (see also the review paper \cite{Buckmaster-CaoLabora-GomezSerrano:implosion-compressible-review}), the first two authors together with Buckmaster improved the result to cover all cases of $\gamma$. Moreover, they showed the existence of non-decaying imploding singularities for Navier-Stokes. For a very careful and detailed numerical study of smooth imploding solutions as well as their stability we point out to the work of Biasi \cite{Biasi:self-similar-compressible-euler}.

\subsection{Main result} \label{sec:12}
In \cite{Merle-Raphael-Rodnianski-Szeftel:implosion-ii}, Merle--Rapha\"el--Rodnianski--Szeftel wrote:

\begin{itemize}
\item \textit{
One could, in principle, be
able to reconnect the profile to one with constant density for large x and rapidly
decaying velocity, instead. This should lead to a singularity formation result for
Navier-Stokes for solutions with constant density at infinity. Even more generally,
the analysis should be amenable to other boundary conditions and domains, e.g. Navier-Stokes and Euler equations on a torus. 
}

\item ...[The main theorem in \cite{Merle-Raphael-Rodnianski-Szeftel:implosion-ii}] \textit{is proved for spherically symmetric
initial data. The symmetry is used in a very soft way, and we expect that the blow
up ... is stable modulo finitely many instabilities for non symmetric
perturbations.}

\end{itemize}

Our main Theorems answer these two questions on the positive. Broadly speaking, we are able to construct finite time imploding singularities for both Euler and Navier-Stokes with either: 
\begin{enumerate}
    \item smooth periodic initial data (\textbf{Theorem \ref{th:periodic}}).
    \item smooth, non-radially symmetric initial data that does not vanish at infinity (\textbf{Theorem \ref{th:euclidean}}). 
\end{enumerate}

The main difficulty in order to tackle this problem is that, contrary to \cite{Merle-Raphael-Rodnianski-Szeftel:implosion-ii} or \cite{Buckmaster-CaoLabora-GomezSerrano:implosion-compressible}, we need to perform a linear stability analysis that allows for \emph{non-radially symmetric} perturbations. There are numerous examples where the upgrade from the radial to the nonradial stability analysis has required a considerable amount of new ideas: for example the energy critical defocusing quintic NLS \cite{Bourgain:gwp-defocusing-critical-nls-radial,Grillakis:nls,Colliander-Keel-Staffilani-Takaoka-Tao:global-wellposedness-energy-critical-nls-R3} or the energy critical wave equation \cite{Duyckaerts-Kenig-Merle:universality-blowup-energy-critical-wave,Duyckaerts-Kenig-Merle:universality-blowup-energy-critical-wave-nonradial}. An important difference with respect to those results is that the compressible Euler profiles are only linearly stable up to potentially finitely many unstable modes. We expect that both the radial and non-radial settings have instabilities and the non-radial one may have additional unstable modes, but we do not investigate this further. We will divide the linear stability in three parts: the dissipativity of the operator (Section \ref{sec:dissipativity}), the maximality (Section \ref{sec:maximality}) and the smoothness of the unstable modes (Section \ref{sec:smoothness}). All three of them pose obstructions in the non-radial setting and are affected substantially by including non-radial perturbations. 

The dissipativity is based on very delicate properties of the profiles (\eqref{eq:radial_repulsivity} and \eqref{eq:angular_repulsivity}). While \eqref{eq:radial_repulsivity} is a central part of \cite{Merle-Raphael-Rodnianski-Szeftel:implosion-i} and \cite{Buckmaster-CaoLabora-GomezSerrano:implosion-compressible}, angular perturbations also require the new property \eqref{eq:angular_repulsivity}. 
The angular perturbations also require to study the high derivative analysis with respect to $\nabla^{m}$, instead of $\Delta^{m/2}$, since 
one can not afford the embedding constants that would arise if one tries to control angular derivatives by $\Delta^{m/2}$. Studying the high derivatives of the perturbation using $\nabla^m$ complicates the analysis (since one needs to consider an $m$-tensor instead of a scalar quantity) but allows to solve that problem.

The maximality of the operator 
is completely different from \cite{Buckmaster-CaoLabora-GomezSerrano:implosion-compressible,Merle-Raphael-Rodnianski-Szeftel:implosion-ii} because it becomes the existence problem of a singular PDE in three dimensions, which is much more complicated than the radial case (where one just needs to solve a singular ODE). Instead of solving the PDE directly, we use a Galerkin approximation together with uniform bounds on the real part of the spectrum of our approximations, which allow us to deduce the maximality of the limit operator.

For the smoothness of unstable modes, we decompose the system via vector spherical harmonics, which decouple the PDE system among those modes. To the best of our knowledge, this is the first time that vector spherical harmonics are used to study the stability problem near a radial solution for fluids PDE, and we believe this will be a very useful tool for non-radial behaviour around radially symmetric profiles. That way, we obtain a singular ODE for every mode. This singular ODE is still more complicated than in the radial setting, since the terms with angular derivatives cannot be diagonalized. This implies that we obtain some coefficients in the ODE that are not uniformly bounded over all modes (since they correspond to angular derivatives). Therefore, the smoothness of each mode does not imply the smoothness of the sum (unlike in the radial case, where there is only one mode), and we need uniform bounds across all the modes.  

Another obstruction for the smoothness of unstable modes consists of the singular behaviour of the system of ODEs at $R=1$. While this is present in the radial case, the interaction of this issue with the angular derivative terms mentioned above makes it impossible to implement  the same treatment. To resolve this obstacle  we use an abstract argument exploiting the fact that the space of unstable modes is of finite dimension, and therefore the smooth unstable modes will form the same set as the $H^m$ unstable modes for some $m$ sufficiently large.

The non-linear stability argument also poses some obstructions with respect to the radial setting. The main difference is that we cannot diagonalize the system with Riemann invariants, so we need to treat some of the highest derivative terms as errors. In order to do that, we need to propagate new estimates in our bootstrap argument.

The fact that we are now able to construct finite time imploding singularities for both Euler and Navier-Stokes equations in a periodic setting may provide a new tool to address blow up or norm inflation phenomena, and more generally, questions related to wave turbulence theory. In fact since in \cite{Merle-Raphael-Rodnianski-Szeftel:implosion-nls} the authors were able to transfer blow up results from a fluid equation to blow up results for NLS, it is conceivable to think that one may be able to transfer the construction of solutions of fluid equations with growing Sobolev norms (e.g. \cite{Kiselev-Nazarov:simple-energy-pump-sqg,He-Kiselev:small-scale-creation-sqg,Zlatos:exponential-growth-vorticity-gradient-euler-torus,Kiselev-Yao:small-scale-ipm,Kiselev-Sverak:double-exponential-euler-boundary}) to solutions of dispersive equations with growing  Sobolev norms. Let us recall that Bourgain \cite{Bourgain:growth-sobolev-norms-schrodinger-quasiperiodic-potential, Bourgain:growth-sobolev-norms-hamiltonian-pde} was the first to relate the growth of Sobolev norms for solutions to  periodic NLS equations to the notion of energy transfer, a fundamental question in wave turbulence.

\subsection{Setup of the problem}

First of all, let us define $\alpha = \frac{\gamma - 1}{2}$ and the rescaled sound speed $\sigma = \frac{1}{\alpha} \rho^{\alpha}$. We have that
\begin{align*}
\rho \p_t u + \rho u \nabla u &= - \frac{\rho}{\gamma - 1} \nabla  (\rho^{2\alpha})  + \nu \Delta u  = -\frac{\rho}{\gamma - 1} (\alpha^2 \nabla (\sigma^2)) + \nu \Delta u \\
&= -\rho \frac{2 \alpha^2}{\gamma - 1} \sigma \nabla \sigma  + \nu \Delta u = -\alpha \rho \sigma \nabla \sigma  + \nu \Delta u  \\
\p_t \sigma &= \rho^{\alpha-1} \p_t \rho = -\rho^{\alpha - 1} \div \left( \rho u \right) 
= -\rho^{\alpha} \div (u) - \rho^{\alpha - 1} \nabla \rho \cdot u = -\alpha \sigma \div (u) - \nabla \sigma \cdot{u},
\end{align*}
so the equations for $u, \sigma$ read:
\begin{align} \label{eq:US1} \begin{split}
\p_t u &=  -u  \nabla u + \alpha \sigma \nabla \sigma + \nu \frac{\Delta u}{\sigma^{1/\alpha}}, \\
\p_t \sigma &= -\alpha \sigma \div(u) - \nabla \sigma\cdot u.
\end{split}\end{align}

Now we perform the self-similar change of variables (depending on the self-similar parameter $r>1$):
\begin{align} \begin{split} \label{eq:SS_coordinates1}
 u(x, t) &= \frac{(T-t)^{\frac{1}{r} - 1} }{r}  U \left(  \frac{x}{(T-t)^{\frac{1}{r}}} , - \frac{\log (T-t)}{r}\right) \\
 \sigma(x, t) &= \frac{(T-t)^{\frac{1}{r} - 1} }{r}  S \left(  \frac{x}{(T-t)^{\frac{1}{r}}} , - \frac{\log (T-t)}{r}\right)
\end{split} \end{align}
and define the new self-similar space and time coordinates to be
\begin{equation} \label{eq:SS_coordinates2}
s = - \frac{\log (T-t)}{r}, \qquad\mbox{ and } \qquad y = \frac{x}{(T-t)^{\frac{1}{r}}} = e^s x.
\end{equation}
We also define
\begin{equation*}
 s_0 = - \frac{\log (T)}{r},
\end{equation*}
corresponding to $t = 0$.

In self-similar coordinates, the equations read:
\begin{align} \label{eq:US2} \begin{split}
\p_s U &= -(r-1)U - (y + U) \cdot \nabla U - \alpha S \nabla S + \nu C_{\rm{dis}} e^{-\delta_{\rm{dis}}s }\frac{\Delta U}{S^{1/\alpha}}, \\
\p_s S &= -(r-1)S - (y + U) \cdot \nabla S - \alpha S \div (U),
\end{split} \end{align}
with $y\in e^s\mathbb{T}_{L}^3=\{(y_1,y_2,y_3) \; \mbox{s.t.} \; -e^sL \leq y_i \leq e^s L\}$ in the case of the torus, and $y\in \mathbb R^3$ in the Euclidean case. We have denoted
\begin{equation*}
C_{\rm{dis}} = \frac{r^{1 + \frac{1}{\alpha} }}{\alpha^{ \frac{1}{\alpha} }}, \qquad \mbox{ and } \qquad \delta_{\rm{dis}} = \frac{r-1}{\alpha} + r - 2.
\end{equation*}
Moreover, we will restrict ourselves to the range of parameters
\begin{equation} \label{eq:range_r} 
1 < r < \begin{cases}
1 + \frac{2}{\left( 1 + \sqrt{ \frac{2}{\gamma - 1} } \right)^2}, \qquad &\mbox{ for } 1 < \gamma < \frac{5}{3}, \\
\frac{3\gamma - 1}{2 + \sqrt{3} (\gamma - 1)}, \qquad &\mbox{ for } \gamma \geq \frac{5}{3}.
\end{cases}
\end{equation}
Note that both expressions agree for $\gamma = 5/3$. In particular, this implies the bounds
\begin{equation} \label{eq:rough_range_r} 
1 < r < \sqrt 3, \qquad \mbox{ and } \qquad r   <  2 - \frac{1}{\gamma},
\end{equation}
for all $1 < \gamma < \infty$. 

In the case of Navier-Stokes ($\nu = 1$), we need to impose an extra condition on our range of parameters, namely:
\begin{equation} \label{eq:condition_for_treating_dissipation}
\delta_{\rm{dis}} = \frac{r-1}{\alpha} + r - 2 > 0.
\end{equation}

From \cite{Buckmaster-CaoLabora-GomezSerrano:implosion-compressible,Merle-Raphael-Rodnianski-Szeftel:implosion-i}, we know that there exist radially symmetric profiles $(\bar U, \bar S)$ that solve \eqref{eq:US2} for $\nu = 0$, with $r$ in the range \eqref{eq:range_r}. That is
\begin{equation}  \label{eq:ss_profiles}
(r-1) \bar U + (y + \bar U) \cdot \nabla \bar U + \alpha \bar S \nabla \bar S = 0, \qquad \mbox{ and } \qquad 
(r-1) \bar S + (y + \bar U) \cdot \nabla \bar S + \alpha \bar S \div (\bar U) = 0.
\end{equation}

More concretely, in the range \eqref{eq:range_r},  \eqref{eq:condition_for_treating_dissipation}, we know the existence of profiles solving \eqref{eq:ss_profiles} for almost every $\gamma < 1+2/\sqrt 3$, due to \cite{Merle-Raphael-Rodnianski-Szeftel:implosion-i}. The set of possible $\gamma$ is not explicit, although $\gamma = 5/3$ (monoatomic gases) is explicitly excluded. We know the existence of profiles for $\gamma = 7/5$ due to \cite{Buckmaster-CaoLabora-GomezSerrano:implosion-compressible}. 

If we drop condition \eqref{eq:condition_for_treating_dissipation} (which is only needed for the stability of Navier-Stokes, not the Euler one), \cite{Buckmaster-CaoLabora-GomezSerrano:implosion-compressible} gives the existence of profiles solving \eqref{eq:ss_profiles} in the regime \eqref{eq:range_r} for all $\gamma > 1$, including $\gamma = 5/3$.

Moreover, all the profiles discussed above satisfy
\begin{align} 
 \bar{S} &>0, \label{eq:profiles_positive} \\
 |\nabla^j \bar U| +  |\nabla^j \bar S | &\les \langle R \rangle^{-(r-1)-j} , \quad \forall j \geq 0, \quad \mbox{ and } \quad \bar S \gtrsim \langle R \rangle^{-r+1}. \label{eq:profiles_decay} \\
1 + \p_R \bar U_R - \alpha |\p_R \bar S | &> \tilde{\eta}, \label{eq:radial_repulsivity}\\
1 + \frac{\bar U_R}{R} - \alpha |\p_R \bar S | &> \tilde{\eta},\label{eq:angular_repulsivity} \end{align}
for some $\tilde \eta > 0$. Here $R=|y|$ is the radial variable, $j$ is a natural number.

The decay property \eqref{eq:profiles_decay} follows from a standard analysis of a focus point in the phase portrait of the ODE that $(\bar U, \bar S)$ satisfy. This is shown in \cite[Lemma A.39]{Buckmaster-CaoLabora-GomezSerrano:implosion-compressible} and an analogous proof works for the profiles in \cite{Merle-Raphael-Rodnianski-Szeftel:implosion-i} (the $j=0$ case is also contained in \cite[Theorem 2.3 - Item 3]{Merle-Raphael-Rodnianski-Szeftel:implosion-i}). 

The radial repulsivity property \eqref{eq:radial_repulsivity} is essential for performing the stability in the radially symmetric case and it corresponds to \cite[Theorem 2.3 - Item 5; Lemma 2.4]{Merle-Raphael-Rodnianski-Szeftel:implosion-i} and to \cite[Lemma A.36]{Buckmaster-CaoLabora-GomezSerrano:implosion-compressible} for their respective profiles.

The property \eqref{eq:angular_repulsivity} is a new extra repulsivity property that is needed when there is angular dependence. This property does not appear in the previous literature since it is not needed in the radially symmetric case. We show \eqref{eq:angular_repulsivity} in Lemma \ref{lemma:angular_repulsivity} in the Appendix.

We will study equation \eqref{eq:US2} in two settings: the periodic setting and the whole space setting. All of our discussion will focus on the periodic setting and can be adapted to the whole space easily. We set $\mathbb{T}^3$ to be the 3-dimensional torus of period $2$, and let $\mathbb T^3_L$, be the 3-dimensional torus or period $2L$.

\begin{remark}
While we have decided to focus on the physically relevant case of 3 dimensions, our results are expected to hold in any dimension, provided the existence of base objects (i.e. globally self-similar profiles that are solutions to the corresponding ODE similar to  \eqref{eq:ss_profiles}).
\end{remark}

\begin{theorem} \label{th:periodic}  Let $\nu = 1$. Let $\bar U, \bar S$ be self-similar profiles solving \eqref{eq:ss_profiles} and satisfying \eqref{eq:profiles_positive}--\eqref{eq:angular_repulsivity}, for some $r$ in the ranges \eqref{eq:range_r}, \eqref{eq:condition_for_treating_dissipation}. Let $T > 0$ sufficiently small, and $L > 0$ sufficiently large. 

Then, there exists $C^\infty$ initial data $(u_0,\rho_0)$, with $\rho_0 > 0$, for which equation \eqref{eq:CNS} on $\mathbb T^3_L$ blows up at time $T$ in a self-similar manner. More concretely, for any fixed $y\in \mathbb R^3$, we have:
\begin{align*} 
\lim_{t\rightarrow T^-}r (T-t)^{1-\frac{1}{r}} u\left((T-t)^{\frac1r}y,t\right) &= \bar U (|y|), \\
\lim_{t\rightarrow T^-} \left( \alpha^{-1 } r (T-t)^{1-\frac{1}{r}} \right)^{1/\alpha} \rho\left((T-t)^{\frac1r}y,t\right) &= \bar S (|y|)^{1/\alpha} \,.
\end{align*}
Moreover, there exists a finite codimension set of initial data satisfying the above conclusions (see Remark \ref{rem:codimension} for more details).
\end{theorem}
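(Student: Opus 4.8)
The plan is to construct the desired solution by working in self-similar coordinates and perturbing around the radial profile $(\bar U, \bar S)$, following the overall strategy of \cite{Merle-Raphael-Rodnianski-Szeftel:implosion-ii, Buckmaster-CaoLabora-GomezSerrano:implosion-compressible} but upgraded to allow non-radial and periodic data. First I would write the solution of \eqref{eq:US2} as $(U,S) = (\bar U, \bar S) + (\widetilde U, \widetilde S)$ and derive the linearized equation for the perturbation; schematically $\p_s W = \mathcal{L} W + \mathcal{N}(W)$, where $\mathcal{L}$ is the linearization of \eqref{eq:US2} around $(\bar U, \bar S)$ acting on the pair $W = (\widetilde U, \widetilde S)$, and $\mathcal{N}$ collects quadratic and higher terms plus the (small, since $\delta_{\mathrm{dis}}>0$) dissipative term. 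The core analytic input is a spectral/energy decomposition of $\mathcal{L}$: one shows that, after projecting off a finite-dimensional space of unstable directions, the remaining dynamics is \emph{dissipative} in a suitable high-order Sobolev norm $\|\nabla^m W\|_{L^2}$ (with weights), so that the stable part contracts. This is exactly where \eqref{eq:radial_repulsivity} and the new \eqref{eq:angular_repulsivity} enter: they guarantee coercivity of the linear energy estimate in both the radial and the angular directions, and the choice to differentiate with $\nabla^m$ rather than $\Delta^{m/2}$ avoids losing embedding constants on angular derivatives. The three ingredients cited in the introduction — dissipativity (Section \ref{sec:dissipativity}), maximality of $\mathcal{L}$ (Section \ref{sec:maximality}), and smoothness of the unstable modes (Section \ref{sec:smoothness}) — together yield that $\mathcal{L}$ generates a semigroup on the energy space with finite-dimensional unstable subspace spanned by smooth eigenfunctions.

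Next I would set up a bootstrap/trapping argument for the nonlinear flow. Fix $s_0 = -\tfrac1r\log T$ large (i.e. $T$ small) and, for the periodic problem, $L$ large so that $e^{s_0}L$ is large and the torus looks like all of $\mathbb{R}^3$ on the relevant scales; one must also verify the profile, suitably cut off / periodized, is an admissible approximate solution with errors decaying in $s$. Then prescribe initial data at $s=s_0$ of the form $(\bar U,\bar S) + W_{s_0}$, where the stable component of $W_{s_0}$ is small in the energy norm and the unstable component is a free parameter ranging over a small ball in the finite-dimensional unstable space. Define the bootstrap set of times where $W(s)$ stays in a small ball in the high-order norm (plus finitely many lower-order/pointwise estimates propagated through the argument — including the new estimates needed because Riemann invariants are unavailable in the non-radial setting, so some top-derivative terms must be absorbed as errors). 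On this set one shows: (i) the stable part stays strictly inside its ball by dissipativity plus the smallness of $\mathcal{N}$; (ii) the low-order/pointwise quantities improve by interpolation and the structure of the equations; (iii) the map sending the initial unstable parameter to the "exit data" on the unstable subspace has, by a Brouwer-type topological (or Lyapunov–Schauder) argument, a zero — i.e. there is a choice of unstable parameter for which $W$ never exits, hence $W(s)\to 0$ as $s\to\infty$. This last point is what produces the \emph{finite codimension} set of initial data: the codimension equals the dimension of the unstable subspace, and modding out those finitely many directions gives an open set of data for which convergence holds (cf. Remark \ref{rem:codimension}).

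Finally, translating back through \eqref{eq:SS_coordinates1}--\eqref{eq:SS_coordinates2}: from $(U,S)(y,s) \to (\bar U,\bar S)(|y|)$ in the high-order weighted norm (which controls $L^\infty$ on compact sets of $y$ by Sobolev embedding), I would read off, for each fixed $y$,
\[
r(T-t)^{1-\frac1r}\, u\!\left((T-t)^{\frac1r}y, t\right) = U(y,s) \longrightarrow \bar U(|y|),
\]
and similarly $S \to \bar S$, from which the stated limit for $\rho = (\alpha \sigma)^{1/\alpha}$ follows since $\sigma^{1/\alpha} = (\tfrac{(T-t)^{1/r-1}}{r}S)^{1/\alpha}$. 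Smoothness and positivity of $(u_0,\rho_0)$ come from smoothness of the profile, smoothness of the chosen perturbation (here the smoothness-of-unstable-modes result is essential), and $\bar S>0$ together with the smallness of the perturbation forcing $S>0$, hence $\rho_0>0$. The main obstacle, as flagged in the introduction, is the \emph{linear stability analysis with angular perturbations}: establishing the coercive energy estimate requires the genuinely new repulsivity bound \eqref{eq:angular_repulsivity} and a delicate $m$-tensor energy estimate; maximality of $\mathcal{L}$ becomes a singular \emph{PDE} existence problem in 3D (handled via Galerkin approximation with uniform spectral bounds rather than a singular ODE); and smoothness of the unstable modes is complicated by the fact that, after decomposing into vector spherical harmonics, the angular-derivative coefficients in the per-mode singular ODEs are not uniformly bounded, so one needs uniform-in-mode control plus an abstract finite-dimensionality argument to pass from $H^m$ unstable modes to $C^\infty$ ones. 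Everything else — the nonlinear bootstrap and the topological selection of the unstable parameter — is, given those linear inputs, a by-now-standard (if laborious) argument.
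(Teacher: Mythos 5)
Your proposal matches the paper's strategy at every structural level: self-similar variables, linearization around the profile, the dissipativity/maximality/smoothness split, the $\nabla^m$-tensor energy estimate, the nonlinear bootstrap with a separate high-derivative weighted estimate plus lower bounds on $S$ to rule out vacuum, and the Brouwer/topological selection of the unstable parameter. One small clerical note: in the periodic case the ansatz must be built around the \emph{truncated} profile, $U=\hat X\bar U+\tilde U$ with $\hat X(y,s)=\mathfrak X(e^{-s}y)$ (and similarly for $S$), rather than $(\bar U,\bar S)$ itself—your displayed decomposition $(U,S)=(\bar U,\bar S)+(\tilde U,\tilde S)$ does not literally parse on $e^{s}\mathbb T_L^3$, though you correctly flag the need for a cut-off in prose.
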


\begin{theorem} \label{th:euclidean}  Let $\nu = 1$. Let $\bar U, \bar S$ be self-similar profiles solving \eqref{eq:ss_profiles} and satisfying \eqref{eq:profiles_positive}--\eqref{eq:angular_repulsivity}, for some $r$ in the ranges \eqref{eq:range_r}, \eqref{eq:condition_for_treating_dissipation}. Let $T > 0$ sufficiently small, and $c > 0$ sufficiently small. 

Then, there exists $C^\infty$ non-radially symmetric initial data $(u_0,\rho_0)$, with $\rho_0 > c$, for which equation \eqref{eq:CNS} on $\mathbb R^3$ blows up at time $T$ in self-similar manner. More concretely, for any fixed $y \in \mathbb R^3$, we have:
\begin{align*}  
\lim_{t\rightarrow T^-}r(T-t)^{1-\frac{1}{r}} u\left((T-t)^{\frac1r}y,t\right) &= \bar U (|y|), \\
\lim_{t\rightarrow T^-} \left( \alpha^{-1} r (T-t)^{1-\frac{1}{r}} \right)^{1/\alpha} \rho\left((T-t)^{\frac1r}y,t\right) &= \bar S (|y|)^{1/\alpha} \,.
\end{align*}
Moreover, there exists a finite codimension set of initial data satisfying the above conclusions (see Remark \ref{rem:codimension} for more details).
\end{theorem}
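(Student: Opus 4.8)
The plan is to reduce the whole-space statement to the same linear and nonlinear stability machinery that proves Theorem \ref{th:periodic}, the main new point being that the ambient domain is now $\mathbb{R}^3$ and that the initial data must be non-radial with density bounded below by a positive constant $c$ rather than merely positive. As in the periodic case, I would work in the self-similar coordinates \eqref{eq:SS_coordinates1}--\eqref{eq:SS_coordinates2}, so that constructing a solution of \eqref{eq:CNS} blowing up at time $T$ is equivalent to producing a global-in-$s$ solution $(U,S)$ of \eqref{eq:US2} on $y \in \mathbb{R}^3$ that converges as $s \to \infty$ to the radial profile $(\bar U, \bar S)$ in the sense asserted by the two limits in the theorem. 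Write $(U,S) = (\bar U, \bar S) + (\tilde U, \tilde S)$; the perturbation $(\tilde U, \tilde S)$ then satisfies a forced linear equation $\partial_s (\tilde U, \tilde S) = \mathcal{L}(\tilde U, \tilde S) + \mathcal{N}(\tilde U, \tilde S) + \mathcal{F}$, where $\mathcal{L}$ is the linearization of \eqref{eq:US2} around the profile, $\mathcal{N}$ collects the quadratic and higher terms, and $\mathcal{F}$ is the dissipative forcing term $\nu C_{\mathrm{dis}} e^{-\delta_{\mathrm{dis}} s} \Delta \bar U / \bar S^{1/\alpha}$, which decays because of \eqref{eq:condition_for_treating_dissipation}.

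The core inputs are exactly the ones advertised in Section \ref{sec:12}: dissipativity of $\mathcal{L}$ on a high-regularity space built from $\nabla^m$ rather than $\Delta^{m/2}$ (Section \ref{sec:dissipativity}, using \eqref{eq:radial_repulsivity}, \eqref{eq:angular_repulsivity} and the decay \eqref{eq:profiles_decay}); the maximality of $\mathcal{L}$ obtained via Galerkin approximation with uniform spectral bounds (Section \ref{sec:maximality}); and the smoothness of the finitely many unstable modes, obtained by decomposing into vector spherical harmonics, solving a singular ODE in each mode, and then using the finite-dimensionality of the unstable space to upgrade $H^m$ regularity to $C^\infty$ uniformly across modes (Section \ref{sec:smoothness}). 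Given these, the argument is a topological/Brouwer-type shooting: one prescribes the stable part of the perturbation freely (this produces the \emph{finite codimension} family), and chooses the finitely many unstable coordinates so that the trajectory does not escape the bootstrap region; the bootstrap itself is the nonlinear energy estimate in the $\nabla^m$-based norm together with low-norm estimates that control $\mathcal{N}$ and absorb the non-diagonalizable highest-derivative angular terms as errors, propagating the new estimates mentioned in the introduction. Non-radiality of the data is then automatic and in fact generic: choosing any stable perturbation that is not radially symmetric yields non-radial initial data, while the self-similar limits still see only the radial profile $(\bar U, \bar S)$ because the perturbation is designed to decay as $s \to \infty$.

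The genuinely whole-space-specific steps, and where the argument differs from Theorem \ref{th:periodic}, are the following. First, on $\mathbb{R}^3$ there is no need for the torus parameter $L$; instead one must ensure that the constructed $(U,S)$ has the right behaviour as $|y| \to \infty$, namely that $S$ stays bounded below so that, unwinding \eqref{eq:SS_coordinates1}, the physical density $\rho$ satisfies $\rho_0 > c > 0$ on all of $\mathbb{R}^3$ at $t=0$. Since $\bar S \gtrsim \langle R\rangle^{-r+1} \to 0$, one cannot take the perturbation to decay at spatial infinity; rather, as the quoted remark from \cite{Merle-Raphael-Rodnianski-Szeftel:implosion-ii} suggests, one reconnects the profile at large $|y|$ to a state with constant (rescaled) sound speed and rapidly decaying velocity, i.e. one works with a modified background $(\bar U, \bar S) + (\text{corrector})$ that equals the profile on a large ball and is constant-density outside, the corrector being chosen small in the relevant norms and supported where $\langle R \rangle$ is large so that it does not disturb dissipativity or the spectral analysis. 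Second, the functional framework must be adapted so that the weighted Sobolev norms make sense with a non-decaying background; this is where one pays attention to the $e^s$-dilation of the domain and checks that all the boundary terms in the energy estimates still have a good sign at spatial infinity. The main obstacle I expect is precisely this reconnection: one must verify that the non-radial linear stability analysis of Sections \ref{sec:dissipativity}--\ref{sec:smoothness} is insensitive to the far-field modification — in particular that the repulsivity inequalities \eqref{eq:radial_repulsivity}, \eqref{eq:angular_repulsivity}, which are sharp, are not destroyed by the corrector and that the number of unstable modes is unchanged — so that the same codimension count and the same bootstrap go through verbatim on $\mathbb{R}^3$, after which the proof concludes exactly as in the periodic case.
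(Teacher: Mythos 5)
Your high-level strategy matches the paper, but the paper's own proof of Theorem \ref{th:euclidean} is drastically shorter: it is a three-line remark that the argument is \emph{identical} to the torus case, with exactly two adaptations. First, the boundary terms produced by integration by parts in the energy estimates (e.g.\ in \eqref{highenergyestimate01}, \eqref{estimateI1}) vanish identically on $\mathbb{R}^3$, whereas in the torus case they are non-zero but cancel against one another by periodicity; either way they contribute nothing. Second, the whole-space Gagliardo--Nirenberg interpolation inequalities (Lemmas \ref{lemma:GN_general}, \ref{lemma:GN_generalnoweightwholespace}) replace their torus analogues (Lemmas \ref{lemma:GN_generaltorus}, \ref{lemma:GN_generalnoweighttorus}). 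Neither of these appears in your write-up.

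Where you depart from the paper is in how you treat the far field, and your concerns there are misplaced. You propose to alter the \emph{background} by a corrector so that it equals the profile on a ball and is constant-density outside, and you flag as the main risk that this might degrade the repulsivity properties \eqref{eq:radial_repulsivity}, \eqref{eq:angular_repulsivity} or shift the count of unstable modes. The paper does not modify the background at all: it keeps the very same cut-off profile $(\hat X\bar U,\hat X\bar S)$ used already in the torus case, and the bound $\rho_0>c$ is obtained simply by \emph{choosing the initial perturbation} $\tilde S_0'$ to be a positive constant of order $\delta_1$ in the region where $\hat X$ vanishes, which is allowed by the constraints \eqref{initialdatacondition2}--\eqref{initialdatacondition} and \eqref{eq:conditions_for_tilde} (these only impose a decaying \emph{lower} bound on $\tilde S_0'+\hat X\bar S$ and decay of its derivatives, not decay of $\tilde S_0'$ itself). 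Crucially, the linearized operator $\mathcal{L}$ is already localized by the cut-offs $\chi_1,\chi_2$ to the ball $B(0,3C_0)$, so its dissipativity, maximality, and unstable spectrum are completely insensitive to the far-field structure of either the background or the perturbation; the repulsivity inequalities are only ever used inside the support of $\chi_2$, where $\hat X\equiv 1$ and any far-field modification is invisible. Your ``main obstacle'' therefore simply does not arise, and the extra reconnection machinery you set up is not needed.
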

\begin{proof}
The proof is analogous as in the torus case except two differences. First, the boundary terms that appear due to the integration by parts in the energy estimates are 0. This will not affect the proof because in the torus case, the boundary terms always cancel with each other. Second, we can use different interpolation Lemmas \ref{lemma:GN_general}, \ref{lemma:GN_generalnoweightwholespace} in place of Lemmas \ref{lemma:GN_generalnoweighttorus}, \ref{lemma:GN_generaltorus}.  
\end{proof}
\begin{remark} Equation \eqref{eq:CNS} has the following scaling invariance. If $u, \rho$ are solutions to \eqref{eq:CNS}, then
\begin{equation*} 
u_\lambda (x, t) = \frac{1}{\lambda^{\alpha} } u \left( \frac{x}{\lambda^{\alpha+1}}, \frac{t}{\lambda^{2\alpha+1}} \right)
\qquad \mbox{ and } \qquad
\rho_\lambda (x, t) = \frac{1}{\lambda} \rho \left( \frac{x}{\lambda^{\alpha+1}}, \frac{t}{ \lambda^{2\alpha+1} }  \right) 
\end{equation*}
are also solutions. In the periodic setting, if $u, \rho$ are defined on $\mathbb T^3_L$, we obtain that $u_\lambda, \rho_\lambda$ are defined on $\mathbb T^3_{L \cdot \lambda^{\alpha+1} }$.

As a consequence, we can generalize Theorem \ref{th:periodic} for any torus size $2L'$ (not necessarily large) just by taking $(u_\lambda, \rho_\lambda)$ for $\lambda = \left( \frac{L'}{L} \right)^{\frac{1}{1+\alpha}}$, where $u,\rho$ are the solutions from Theorem \ref{th:periodic}.
\end{remark}

\begin{remark} Both results hold also for the Euler case, $\nu = 0$. Moreover, in such a case, condition \eqref{eq:condition_for_treating_dissipation} is not needed. In fact, condition \eqref{eq:condition_for_treating_dissipation} guarantees that $\delta_{\mathrm{dis}} > 0$, hence the term $ \nu C_{\rm{dis}} e^{-\delta_{\rm{dis}}s }\frac{\Delta U}{S^{1/\alpha}}$ in \eqref{eq:US2} has an exponential decay, which is crucial in the proof of stability. If $\nu=0$, this term vanishes and thus the condition is not needed. The proof is analogous to the Navier-Stokes one, except that one does not need to bound the dissipation term when doing energy estimates. Note that the Euler result with periodic boundary conditions and \textit{radially symmetric perturbation} could be deduced from the non-periodic one (\cite{Merle-Raphael-Rodnianski-Szeftel:implosion-nls, Buckmaster-CaoLabora-GomezSerrano:implosion-compressible}) via finite speed of propagation. We have bounded speed of propagation even though the velocity blows up at time $T$, since $\| u \|_{L^\infty (\mathbb R^3 \setminus B(1))}$ remains uniformly bounded up to $T$. Our setting, however, allows us to deduce singularity formation for compressible Euler in the periodic setting for \textbf{any} \textit{small perturbation} (in a finite codimension subspace in which we show stability, which will be defined later).
\end{remark}

\begin{remark}

Given the local behaviour of the singularity, one can construct solutions that blow-up at $m$ points by gluing $m$ copies of the blow-up profile from Theorem \ref{th:euclidean} in an $m$-fold configuration, far apart from each other so that they can be treated as small perturbations of each of the individual singularities (this is in the spirit of \cite{Duyckaerts-Kenig-Merle:universality-blowup-energy-critical-wave-nonradial,Kenig-Merle:gwp-scattering-blowup-critical-focusing-nlw,Krieger-Schlag-Tataru:slow-blowup-critical-focusing-wave,Cortazar-delPino-Musso:infinite-time-bubbling-critical-nlh,Daskalopoulos-delPino-Sesum:type-ii-ancient-compact-solutions-yamabe}). See for example \cite{Musso-Pacard-Wei:stationary-solutions-euler,Fan:loglog-blow-up-m-points} for related works in the context of NLS.

\end{remark}

\subsection{Structure of the proof of Theorems \ref{th:periodic} and \ref{th:euclidean}}

In this subsection we explain how to construct finite energy solutions for the Navier-Stokes equation out of the globally self-similar solutions (which are stationary solutions of the self-similar equations \eqref{eq:ss_profiles}) of the Euler equation, both in the periodic and whole space case. From now on, we will only focus on the former case, though our estimates can be made uniform in $L$ (assumed sufficiently large).

We first perform a truncation to the aforementioned stationary solution such that the truncated solution $(\mathfrak X({e^{-s}}y)\bar{U}, \mathfrak X({e^{-s}}y)\bar{S})$ is periodic on $e^s \mathbb T_L^3$. Here $\mathfrak X(x)$ is a smooth cut off function with 
\[
\supp \mathfrak  X(x)\subset B(0,1), \ \ \mathfrak X(x)=1 \ \text{on}\ B(0,\frac{1}{2}), \ |\nabla { \mathfrak X}|\leq 10. 
\]

Moreover, we define $\hat{X}(y, s) = \mathfrak X(e^{-s}y)$. We further add a periodic perturbation $(\tilde{U},\tilde{S})$ to this truncated profile:

\begin{equation} \label{eq:periodic_perturbation}
 U = \tilde{U} + \mathfrak X({e^{-s}}y)\bar U=\tilde{U}+\hat{X}  \bar{U}, \qquad \mbox{ and } \qquad S = \tilde{S}  + \mathfrak X({e^{-s}}y) \bar S=\tilde{S} +\hat{X} \bar{S}.
\end{equation}
The equation for the perturbation $(\tilde{U},\tilde{S})$ reads:
\begin{align}\label{nsequation1}
&\partial_{s}\tilde{U}=\underbrace{-(r-1)\tilde{U}-(y+\hat{X}\bar{U})\cdot{\nabla\tilde{U}}-\al(\hat{X}\bar{S})\cdot{\nabla\tilde{S}}-\tilde{U}\cdot\nabla(\hat{X}\bar{U}) -\alpha \tilde{S} \nabla (\hat{X}\bar{S})}_{\mathcal L_{u}^{e}}\underbrace{-\tilde{U}\cdot \nabla\tilde{U}-\tilde{S}\cdot\nabla \tilde{S}}_{\mathcal N_{u}} \\\nonumber
&\underbrace{-\partial_{s}(\hat{X}\bar{U})-(r-1)(\hat{X}\bar{U})-(\hat{X}\bar{U}+y)\cdot\nabla(\hat{X}\bar{U})-\alpha(\hat{X}\bar{S})\cdot\nabla(\hat{X}\bar{S})}_{\mathcal E_u}
+ \underbrace{ \nu C_{\rm{dis}} e^{-\delta_{\rm{dis}}s }\frac{\Delta U}{S^{1/\alpha}} }_{\mc F_{dis}}
\end{align}
\begin{align} \begin{split} \label{nsequation2}
&\partial_{s}\tilde{S}=\underbrace{-(r-1)\tilde{S}-(y+\hat{X}\bar{U})\cdot{\nabla\tilde{S}}-\al(\hat{X}\bar{S})\div(\tilde{U})-\tilde{U}\cdot\nabla(\hat{X}\bar{S})-\alpha \tilde{S} \div(\hat{X}\bar{U})}_{\mathcal L_{s}^{e}}\underbrace{-\tilde{S}\cdot \nabla\tilde{U}-\alpha\tilde{S}\div(\tilde{U})}_{\mathcal N_{s}}\\
&\underbrace{-\partial_{s}(\hat{X}\bar{S})-(r-1)(\hat{X}\bar{S})-(\hat{X}\bar{U}+y)\cdot\nabla(\hat{X}\bar{S})-\alpha(\hat{X}\bar{S})\div(\hat{X}\bar{U})}_{\mathcal E_s}.
\end{split} \end{align}
Moreover, since $(\bar{U},\bar{S})$ is a stationary profile, we can simplify $\mathcal E_u$, $\mathcal E_s$ and have

\begin{align}
\mathcal E_u &= -(r-1)(\hat{X}\bar{U})-\hat{X}(\bar{U}+y)\cdot\nabla(\bar{U})-\alpha\hat{X}(\bar{S})\cdot\nabla(\bar{S}) -(\hat{X}^2-\hat{X})\bar{U}\cdot\nabla\bar{U} \notag \\
&\qquad-\hat{X}\bar{U}\cdot\nabla (\hat{X})\bar{U}-
\alpha(\hat{X}^2-\hat{X})\bar{S}\cdot\nabla\bar{S}-\alpha\hat{X}\bar{S}\cdot \nabla(\hat{X})\bar{S}-\partial_{s}(\hat{X})\bar{U}-(y\cdot \nabla\hat{X})\bar{U} \notag \\
&=-(\hat{X}^2-\hat{X})\bar{U}\cdot\nabla\bar{U}-\hat{X}\bar{U}\cdot\nabla (\hat{X})\bar{U}-
\alpha(\hat{X}^2-\hat{X})\bar{S}\cdot\nabla\bar{S}-\alpha\hat{X}\bar{S}\cdot \nabla(\hat{X})\bar{S}, \label{Eudefinition} \\
\mathcal E_s &= -(r-1)(\hat{X}\bar{S})-\hat{X}(\bar{U}+y)\cdot\nabla(\bar{S})-\alpha\hat{X}(\bar{S})\div{\bar{U}}-(\hat{X}^2-\hat{X})\bar{U}\cdot\nabla\bar{S}\notag \\
&\qquad-\hat{X}\bar{U}\cdot\nabla (\hat{X})\bar{S}-\alpha
(\hat{X}^2-\hat{X})\bar{S}\div(\bar{U})-\alpha\hat{X}\bar{U}\cdot \nabla(\hat{X})\bar{S}-\partial_{s}(\hat{X})\bar{S}-(y\cdot \nabla\hat{X})\bar{S} \notag \\
&=-(\hat{X}^2-\hat{X})\bar{U}\cdot\nabla\bar{S}-(\alpha+1)\hat{X}\bar{U}\cdot\nabla (\hat{X})\bar{S}-\alpha
(\hat{X}^2-\hat{X})\bar{S}\div(\bar{U}),\label{Esdefinition}
\end{align}

where we have used that $\partial_s(\hat{X}) = -y \cdot \nabla \hat{X}$. 

Let us define $C_0$ sufficiently large as in Lemma \ref{C0choice}. Let $\chi_1: \mathbb R \rightarrow [0, 1]$ be a smooth symmetric cut-off function supported on $\left[ -\frac32 C_0, \frac32 C_0 \right]$ and satisfying $\chi_1 (x) = 1$ for $|x| \leq C_0$. Let $\chi_2 : \mathbb R \rightarrow [0, 1]$ be a smooth symmetric cut-off function supported on $\left[ -\frac52 C_0, \frac52 C_0 \right]$ and satisfying $\chi_2 (x) = 1$ for $|x| \leq 2 C_0$, $\chi_2(x)>0 $ in $\left(-\frac52 C_0, \frac52 C_0 \right)$ . 
\begin{figure}[h]
\centering
\includegraphics[width=0.6\textwidth]{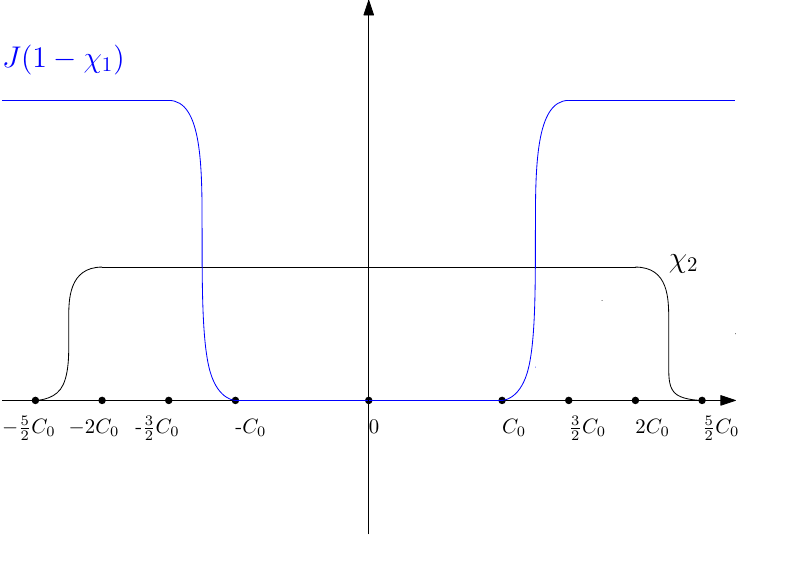}
\caption{The graph of $J(1-\chi_1)$ and $\chi_2$.}
\label{fig:cut off}
\end{figure}

Using those cut-offs, we define the cut-off linear operator as:
\begin{equation} \label{eq:cutoffL}
\mc L_u = \chi_2 \mc L_u^e - J(1-\chi_1), \quad \mc L_s = \chi_2 \mc L_s^e - J(1-\chi_1),
\end{equation}
where $J$ is a sufficiently large constant that  will be fixed later.

We will study the cut-off linearized operator $(\mathcal L_u, \mathcal L_s)$ in the space $X$, which we define as follows. First, we consider $\tilde X$ to be the space $H^{m}_u(Q , \mathbb{R}^3) \times  H^{m}_s (Q, \mathbb{R})$  where the $u, s$ subscripts make reference to $\tilde U$ (vector field) and $\tilde S$ (scalar field). $Q$ is a cube centered at the origin of length $4C_0$ and we take periodic boundary conditions. We select $m$ to be a sufficiently large integer. We define $X$ to be the subspace of $\tilde X$ formed by $(U, S)\in \tilde X$ such that both $U$ and $S$ are compactly supported on $B(0, 3C_0)$. One can thus think of $X$ as a subspace of $H^{m}_u (B(0, 3C_0), \mathbb R^3) \times H^{m}_s (B(0, 3C_0), \mathbb R)$ with the appropriate vanishing conditions at the boundary. Whenever we take Fourier series of $(U, S) \in X$ it should be understood that we do Fourier series on the cube $Q$.

We put in our space $X$ the standard inner product
\begin{equation*}
\langle (f_u, f_s), (g_u, g_s) \rangle_X = \int_{B(0, 3C_0)} \left(  \nabla^m f_u \cdot \nabla^m g_u + \nabla^m f_s \cdot \nabla^m g_s + f_u \cdot g_u + f_s g_s \right)
\end{equation*}
and we obtain the inherited norm
\begin{equation}\label{spacexnorm}
\| (f_u, f_s) \|_X^2 = \int_{B(0, 3C_0)} \left( |\nabla^m f_u |^2 + (\nabla^m f_s )^2 + |f_u|^2 + f_s^2 \right),
\end{equation}
where $\nabla^m f_s$ is an $m$-tensor indexed by $[3]^m = \{ 1, 2, 3 \}^m$  containing all possible combinations of directional derivatives. In the case of $\nabla^m f_u$, we have an $(m+1)$-tensor since we have an extra index indicating the component of the vector $f_u$.

We now introduce the definition of a maximally dissipative operator.

\begin{definition}\label{def:dissimax}
A dissipative operator is a linear operator $A$ defined on a linear subspace $D(A)$ of Hilbert space X such that for all $x \in D(A)$
\begin{align*}
\Re \langle Ax,x\rangle \leq 0.
\end{align*}
 A dissipative operator is called maximally dissipative if it is dissipative and for all $\lambda > 0 $ the operator $\lambda I- A$ is surjective, meaning that the image when applied to the domain $D$ is the whole space $X$.
 \end{definition}

We will show that $\mc L = (\mc L_u, \mc L_s)$ is maximally dissipative modulo a finite-rank perturbation and that its eigenfunctions are smooth. 

\begin{proposition} \label{prop:maxdissmooth} For $\delta_g$ sufficiently small, $J, m$ sufficiently large, the following holds. The Hilbert space $X$ can be decomposed as $V_{\rm{sta}} \oplus V_{\rm{uns}}$, where both $V_{\rm{sta}}, V_{\rm{uns}}$ are invariant subspaces of $\mc L$. $V_{\rm{uns}}$ is finite dimensional and formed by smooth functions. Moreover, there exists a metric  $B$ of $V_{uns}$ such that the decomposition satisfies:
\begin{align} \begin{split} \label{eq:decomposition_condition}
\Re\langle \mc L v,  v \rangle_B &\geq \frac{-6}{10}\delta_g \| v \|_B^2,  \qquad  \forall v \in V_{\rm{uns}}, \\
\left\| e^{t\mc L} v \right\|_X &\leq e^{-t\delta_g/2} \| v \|_X, \qquad \forall v \in V_{\rm{sta}},
\end{split} \end{align}
where we use $\Re$ to denote the real part.

\end{proposition}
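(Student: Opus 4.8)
The plan is to obtain Proposition~\ref{prop:maxdissmooth} by combining the three pillars announced in the introduction—dissipativity, maximality, and smoothness of the unstable modes—into a single spectral decomposition, and then to upgrade the resulting algebraic splitting to one that is simultaneously compatible with the $X$-norm (on the stable part) and with a suitably chosen equivalent metric $B$ (on the unstable part). First I would establish that the cut-off operator $\mc L = (\mc L_u, \mc L_s)$, with the $-J(1-\chi_1)$ correction of \eqref{eq:cutoffL} and $J$ taken large enough, is \emph{maximally dissipative up to a finite-rank perturbation}: concretely, that there is a constant $c_0$ and a finite-rank bounded operator $K$ on $X$ such that $\mc L + K - c_0 I$ is dissipative in the sense of Definition~\ref{def:dissimax} and $\lambda I - (\mc L + K)$ is surjective for all $\lambda$ large. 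The dissipativity estimate is the energy estimate performed in $\nabla^m$ (rather than $\Delta^{m/2}$), where the transport term $-(y+\hat X \bar U)\cdot\nabla$ contributes the good coefficient coming from \eqref{eq:radial_repulsivity} and \eqref{eq:angular_repulsivity}, the lower-order terms $\tilde U\cdot\nabla(\hat X\bar U)$, $\alpha\tilde S\nabla(\hat X\bar S)$ etc.\ are controlled by the decay \eqref{eq:profiles_decay} away from the origin and absorbed by $-J(1-\chi_1)$ plus a finite-rank truncation $K$ supported where $\chi_1 \equiv 1$, and the $L^2$-level terms are handled by the Gronwall constant $c_0$. The surjectivity of $\lambda I - (\mc L+K)$ is the maximality statement, which by the discussion in the introduction is \emph{not} an ODE problem but a genuinely three-dimensional singular PDE; I would obtain it via a Galerkin approximation in the Fourier basis on the cube $Q$, using uniform bounds on the real part of the spectrum of the finite-dimensional approximants (these bounds are exactly the dissipativity estimate applied at each level) to pass to the limit and solve $(\lambda I - \mc L - K)w = f$.

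Given maximal dissipativity modulo finite rank, the generation theorem (Lumer–Phillips) gives a $C_0$-semigroup $e^{t\mc L}$ on $X$ with $\|e^{t\mc L}\|_{X\to X}\le e^{(c_0 + \|K\|)t}$ or so, and—more importantly—the resolvent $(\lambda I - \mc L)^{-1}$ is compact (because $X \hookrightarrow$ lower-order Sobolev space is compact on the bounded domain $B(0,3C_0)$ and the perturbation is finite rank), so the spectrum of $\mc L$ to the right of the line $\Re \lambda = -\delta_g$ consists of finitely many eigenvalues of finite algebraic multiplicity. I would then \emph{define} $V_{\rm uns}$ to be the sum of the generalized eigenspaces for eigenvalues with $\Re\lambda \ge -\tfrac{6}{10}\delta_g$ (choosing $\delta_g$ small and generic so that no eigenvalue sits exactly on the threshold), and $V_{\rm sta}$ the complementary spectral subspace, via the Riesz projection $P = \frac{1}{2\pi i}\oint (\lambda I - \mc L)^{-1}\,d\lambda$. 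By construction both are $\mc L$-invariant, $V_{\rm uns}$ is finite dimensional, and the spectral mapping / Gearhart–Prüss type estimate on the sectorial-minus-bounded generator gives $\|e^{t\mc L}v\|_X \le C e^{-t\delta_g}\|v\|_X$ on $V_{\rm sta}$; to replace the constant $C$ by $1$ and get the clean bound $e^{-t\delta_g/2}$ in \eqref{eq:decomposition_condition}, I would renorm $X$ on $V_{\rm sta}$ by $\|v\|_{\rm new}^2 = \int_0^\infty e^{t\delta_g}\|e^{t\mc L}v\|_X^2\,dt$ (finite and equivalent to $\|\cdot\|_X$ by the decay), under which $\mc L|_{V_{\rm sta}} + \tfrac{\delta_g}{2}I$ is dissipative, hence the contraction estimate holds; since the statement as written keeps $\|\cdot\|_X$ on the stable side, I would instead note the equivalence of norms absorbs the constant after adjusting $\delta_g$ by a harmless factor, or carry the renormed stable norm through the rest of the paper. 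On $V_{\rm uns}$ the existence of the metric $B$ with $\Re\langle \mc L v, v\rangle_B \ge -\tfrac{6}{10}\delta_g\|v\|_B^2$ is pure finite-dimensional linear algebra: since every eigenvalue of $\mc L|_{V_{\rm uns}}$ has real part $\ge -\tfrac{6}{10}\delta_g$, one puts $\mc L|_{V_{\rm uns}}$ in real Jordan form, rescales the generalized eigenvectors so the off-diagonal (nilpotent) entries are as small as one wishes, and lets $B$ be the inner product making that rescaled basis orthonormal; the accumulated error from the nilpotent parts is then $\ll \tfrac{1}{10}\delta_g\|v\|_B^2$, giving the claimed lower bound.

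Finally, smoothness of $V_{\rm uns}$: each $v = (\tilde U, \tilde S) \in V_{\rm uns}$ satisfies $(\mc L - \lambda)^k v = 0$ for some eigenvalue $\lambda$ and some $k$, i.e.\ a finite hierarchy of transport-type equations; I would decompose $v$ into vector spherical harmonics, which—as emphasized in the introduction—decouples the linearized system mode by mode into a singular ODE in $R=|y|$, regular on $(0,1)\cup(1,\infty)$ with singular points at $R=0$, $R=1$ (the acoustic cone), and $R=\infty$. Away from $R=1$, a Frobenius/indicial-root analysis at $R=0$ and a decay analysis at $R=\infty$ give smoothness of each mode; at $R=1$ the interaction of the repeated singularity with the angular-derivative coefficients (which are \emph{not} uniformly bounded across modes) blocks the naive mode-by-mode regularity proof for the sum, and \emph{this is the main obstacle}. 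To get around it I would use the abstract argument flagged in the introduction: since $V_{\rm uns}$ is finite dimensional, it suffices to show that the set of \emph{smooth} unstable solutions coincides with the set of $H^{m'}$ unstable solutions for some large finite $m'$; one proves an a priori estimate showing that an $H^{m_0}$ unstable eigenfunction automatically lies in $H^{m_0+1}$ (bootstrapping one derivative at a time using the ODE structure, crucially with constants \emph{uniform in the spherical-harmonic mode} so that summing over modes is legitimate), and iterating gives $C^\infty$. The uniform-in-mode bootstrap is where the bulk of the work lies, and it is handled by the careful singular-ODE estimates of Sections~\ref{sec:dissipativity}--\ref{sec:smoothness}; combined with the spectral decomposition above this completes the proof.
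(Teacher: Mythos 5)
Your proposal follows the paper's scaffold closely in the first two pillars: dissipativity of $\mc L+1$ on a finite-codimension subspace via $\nabla^m$ energy estimates exploiting \eqref{eq:radial_repulsivity}--\eqref{eq:angular_repulsivity} and the damping $-J(1-\chi_1)$ (Section~\ref{sec:dissipativity}); maximality via Galerkin in the Fourier basis on $Q$ (Lemma~\ref{lemma:maximality}); these give the representation $\mc L = \mc L_0 - \delta_g + \mc K$ (Proposition~\ref{prop:maxdismod}), and the abstract spectral decomposition (Lemma~\ref{lemma:abstract_result}) then yields $X = V_{\rm sta} \oplus V_{\rm uns}$, the metric $B$ by rescaling the Jordan blocks, and the semigroup decay. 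Your renorming discussion for the stable part is a reasonable way to address the constant appearing in \eqref{eq:prim}.

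The genuine gap is in the smoothness step. You correctly reduce, via finite-dimensionality, to showing that smooth unstable modes coincide with $H^{m'}$ unstable modes for some large $m'$, and you correctly flag the $R=1$ degeneracy together with the mode-dependent angular coefficients as the obstacle. But the mechanism you then propose --- a bootstrap $H^{m_0}\to H^{m_0+1}$ on unstable eigenfunctions ``with constants uniform in the spherical-harmonic mode so that summing over modes is legitimate'' --- is precisely the naive argument the paper says fails. The decoupled radial system \eqref{eq:sys} carries a coefficient $n_1(n_1+1)$, and the estimate the paper can actually prove on $[C_0,3C_0]$ (Lemma~\ref{eigenfunctionbound}) loses a factor $\sqrt{n_1(n_1+1)}+1$ for each additional radial derivative, i.e.\ it trades radial for angular regularity mode by mode. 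That estimate is not uniform in $n$ in the sense your bootstrap needs; iterating it consumes the finitely many angular derivatives available in $H^{m_0}$ and cannot by itself produce $C^\infty$.

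The paper's actual resolution (Proposition~\ref{prop:smooth}) is a dimension-stabilization argument which your proposal does not contain. Write $V_{\rm uns}^{C_0}(m)$ for the restriction to $B(0,C_0)$ of the unstable space of $\mc L$ acting on $X^m$; on $B(0,C_0)$ the cut-offs are $\equiv 1$, so the eigenfunction equation there is $m$- and $J$-independent. The extension lemma (Lemma~\ref{lemma:eigenextension}) states that an $H^{2m'+100}$ generalized-eigenfunction chain on $B(0,C_0)$ extends \emph{uniquely} to an $H^{m'}$ chain on $B(0,3C_0)$; this yields the nesting $V_{\rm uns}^{C_0}(m)\subset V_{\rm uns}^{C_0}(m')$ whenever $m\ge 2m'+100$. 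Since each of these spaces is finite-dimensional, along a geometrically growing sequence $m_j$ their dimensions are non-increasing integers and hence stabilize, so for large $j$ the space $V_{\rm uns}^{C_0}(m_j)$ is a fixed space whose elements lie in $H^{m_j}(B(0,C_0))$ for all such $j$, hence $C^\infty$ on $B(0,C_0)$. Fixing $m_*$ at the stabilized level and invoking uniqueness of the extension then upgrades elements of $V_{\rm uns}(m_*)$ to $H^m$ on $B(0,3C_0)$ for every $m$. The crucial point is that the extension step only takes place on $R\in[C_0,3C_0]$, which is bounded away from the $R=1$ degeneracy, so the lossy uniform-in-mode bound of Lemma~\ref{eigenfunctionbound} suffices there: the finite angular-derivative loss is absorbed into the already-established infinite regularity on $B(0,C_0)$. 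In short, the abstract argument you invoke is the right one, but the proof you propose for it (a one-derivative bootstrap) is exactly the step the abstract argument exists to replace.
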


Section \ref{sec:linear} will consist of a proof of Proposition \ref{prop:maxdissmooth}. For the main difficulties we refer to the discussion about the linear stability in Section \ref{sec:12}. Let us first sketch an outline of the proof.

We first define the finite codimension subspace $X_K \subset X$, as the elements of $X$ for which the Fourier coefficients $\mathcal{F}( U_i)(k) = \mathcal{F} (S) (k) = 0$ for all $|k| < K$. The first step of the proof will consist in showing that $\mc L+1$ is dissipative for elements of $X_K$ (cf. Definition \ref{def:dissimax}), concretely:
\begin{equation}
\langle \mc L (U, S), (U, S) \rangle_X \leq -\| (U, S) \|_X^2, \qquad \forall (U, S) \in X_K.
\end{equation}

The second step will consist of a Galerkin type argument showing that $\mc L - \lambda$ is surjective on $X$ for sufficiently large $\lambda$. That will allow us to conclude that $\mc L+1$ is a finite-dimensional perturbation of a maximally dissipative operator on $X$. In particular, this gives a very good understanding of the spectra of $\mc L$ and $\mc L^\ast$ for eigenvalues with real part bigger than $-1$ and allows us to define a finite dimensional space $V_{\rm{uns}}$ and a finite codimensional space $V_{\rm{sta}}$ satisfying \eqref{eq:decomposition_condition} as in Proposition \ref{prop:maxdismod}.

The third and last step will be to conclude that the elements of $V_{\rm{uns}}$ are smooth. In order to do so, we will consider $V_{\rm{uns}}(m)$ depending on $m$ and show that all the spaces $V_{\rm{uns}}(m)$, restricted to $B(0, C_0)$, coincide for all sufficiently large $m$. The main step will be to show that a function defined satisfying the eigenfunction equation on $B(0, C_0)$ can be extended uniquely to an eigenfunction on $B(0, 3C_0)$ (Lemma \ref{lemma:eigenextension}). In order to do so, we will decouple the equation using vector spherical harmonics, and obtain uniform bounds over all modes by controlling a carefully chosen form of energy (Lemma \ref{eigenfunctionbound}).

Once Proposition \ref{prop:maxdissmooth} is proven, we will conclude Theorem \ref{th:periodic} by establishing nonlinear stability. We will use three different ingredients. First, we need to treat the equation outside the cut-offs $\chi_1$ and $\chi_2$ (since the linear analysis only covers the region where those cut-offs are $1$). To do so, we will carefully design our bootstrap assumptions so that we can ensure that the perturbations are being damped in this region along trajectories $y = y_0 e^s$ (which correspond to stationary points after undoing the self-similar change of variables). Secondly, we will handle the dissipation, which is not included in the equation for the profiles, and therefore it cannot be treated as a perturbation. However, condition \eqref{eq:condition_for_treating_dissipation} ensures that the dissipative term will decay exponentially in self-similar variables. In order to bound it, we will use weighted energy estimates at a higher derivative level than the linear stability (regularity higher than $m$), together with lower bounds for $S$ to rule out possible vacuum. Finally, we need to control the (finitely many) unstable modes arising from the linear stability (that is, the modes given by $V_{\mathrm{uns}}$ in Proposition \ref{prop:maxdissmooth}). This can be done by restricting the initial data to a finite codimension set (as it is done in Theorem \ref{th:periodic} and Theorem \ref{th:euclidean}) and the way to formalize it is via a topological argument that exploits the unstable structure via Brouwer's fixed point theorem (see for example \cite{Cote-Martel-Merle:construction-multisoliton-supercritical-gkdv-nls}).

\subsection{Organization of the paper}

The paper is organized as follows: Section \ref{sec:linear} is devoted to the study of the linearized operator around $\bar U, \bar S$, and it consists of the proof of Proposition \ref{prop:maxdissmooth}. It is organized in three parts: Section \ref{sec:dissipativity} shows that the linearized operator is dissipative, Section \ref{sec:maximality} shows that it is maximal, and Section \ref{sec:smoothness} shows that the unstable modes are smooth. Section \ref{sec:nonlinear} upgrades the linear estimates to nonlinear, bootstrapping the bounds of the perturbation and using a topological argument to conclude that there exists a set of initial conditions of finite codimension that lead to a finite time imploding singularity. Finally, the Appendix contains a proof of property \eqref{eq:angular_repulsivity} for the globally self-similar solution, as well as some other properties of the solution and some functional analytic results.

\subsection{Notation}

Throughout the paper we will use the following convention:

We will use $C_x$ (resp. $C$) to denote any constant that may depend on $x$ (resp. independent of all the other parameters). These constants may change from line to line but they are uniformly bounded by a universal constant dependent on $x$ (independent of all other parameters). Similarly, we will denote by 
$X\lesssim Y$ and by 
$X\lesssim_x Y$ whenever $X\leq C Y$ and $X\leq C_x Y$ for some $C$, $C_x$ respectively.

 We use $\nabla^{l}f$ for the $l$-th order tensor containing all $l$ order derivatives of $f$. In the case where $f$ is a vector (typically $f = U$) then $\nabla^l U$ is a $(l+1)$-th tensor. We also denote by $| \nabla^k f|$ its $\ell^2$ norm. Note that this norm controls all the derivatives \textit{accounting for reorderings of indices}. For example, we have
 \begin{equation*}
|\nabla^j f|^2 = \sum_{\beta \in [3]^j} |\p_\beta f |^2 \geq \binom{j}{j'} |\p_1^{j'} \p_2^{j-j'} f|^2 .
 \end{equation*}
 Thus, we see that $|\nabla^j f|$ has a stronger control of the mixed derivatives by a combinatorial number, just because any mixed derivative appears multiple times in $\nabla^j f$ as the indices are reordered.


For $\beta=(\beta_1,\beta_2...,\beta_K)$, we let  $\partial_{\beta}=\partial_{\beta_1,\beta_2,\beta_3...\beta_K}$, and $\partial_{\beta^{(j)}}=\partial_{\beta_1,\beta_2,...\beta_{j-1},\beta_{j+1}....\beta_{K}}$ (that is, $\beta_j$ denotes the $j$-th component of $\beta$ and $\beta^{(j)}$ denotes the subindex obtained by erasing the $j$-th component in $\beta$).

We will use $y$ as our self-similar variable and we will denote $R = |y|$ for the radius (given that $r$ is reserved, since $1/r$ is the self-similar exponent of our ansatz). We will denote with $\hat{R}$ the unitary vector field in the radial direction, that is, $\hat R = \frac{y}{|y|} = \frac{y}{R}$. 

In Subsection \ref{sec:smoothness} we will also use the decomposition into spherical harmonics. For any $n = (n_1, n_2) \in \mathbb Z^2$ such that $|n_2| \leq n_1$ there exists a normalized eigenfunction of the Laplacian on the sphere: $e_n(\theta, \psi)$ (where $\theta$ and $\psi$ are the azimuthal and polar angles of spherical coordinates). It satisfies that $\Delta e_n = -n_1 (n_1+1)e_n$ and the system $\{ e_n \}$ is orthonormal. In particular, any function $f(y)$ can be decomposed into its spherical modes as $$f(y) = \sum_{|n_2|\leq n_1} f_n(R) e_n(\theta, \psi), $$ where the functions $f_n$ only depend on the radius. We will normally omit the condition $|n_2| \leq n_1$ in the summation subindex and whenever we are summing over modes we will just indicate it as $\sum_n$. We will also use vector spherical harmonics \cite{Hill:spherical-harmonics} that allow to decompose any vector field $V(y)$ as $$V(y) = \sum_n \left( V_{R, n}(R) e_n \hat R + V_{\Psi, n} (R) R\nabla e_n + V_{\Phi, n}(R) y \wedge \nabla e_n \right). $$
Here, we note that the modes $V_{R, n}(R)$, $V_{\Psi, n}(R)$ and $V_{\Phi, n}(R)$ are radially symmetric functions.

\subsection{Acknowledgements}
GCL and GS have been supported by NSF under grant DMS-2052651 and DMS-2306378. GCL, JGS and JS have been partially supported by the MICINN (Spain) research grant number PID2021–125021NA–I00.
JGS has been partially supported by NSF under Grants DMS-2245017, DMS-2247537 and DMS-2434314, by the AGAUR project 2021-SGR-0087 (Catalunya). 
JS has been partially supported by an AMS-Simons Travel Grant. GS has been supported by the Simons Foundation through the Simons Collaboration on Wave Turbulence. JGS is also thankful for the hospitality of the MIT Department of Mathematics, where parts of this paper were done.
We thank Tristan Buckmaster for useful conversations.

\section{Study of the linearized operator}
\label{sec:linear}

\subsection{Dissipativity} \label{sec:dissipativity}
In this subsection we will show that $\mc L+1$ is dissipative (cf. Definition \ref{def:dissimax}) on $X_K$, a finite codimension subspace of $X$ where the Fourier modes smaller than $K$ vanish, that is $\mathcal F( U_i)(k) = 0$, $\mathcal F (S)(k) = 0$ for $|k| < K$. 

Note that for any $f \in X_K$ we have the inequality $\| \nabla f \|_{L^2} \geq K \|f\|_{L^2}$.\footnote{With a slight abuse of notation, we denote $\| f \|_{L^2}^2 = \| f_u \|_{L^2}^2 + \| f_s \|_{L^2}^2$} For $f, g \in X_K$, we have
\begin{equation*}
\left| \langle f, g \rangle_X - \langle \nabla^m f, \nabla^m g\rangle_{L^2} \right|  \leq \| f \|_{L^2} \| g \|_{L^2} \leq K^{-2m} \|f\|_X \|g\|_X
\end{equation*}

Therefore, it suffices to show that
\begin{equation}
\int_{B(3C_0)} \nabla^m \mc L_u \cdot \nabla^m \tilde U + \nabla^m \mc L_s \cdot \nabla^m \tilde S \leq - \| (\tilde U, \tilde S )\|_X^2,
\end{equation}
where we denote by $B(3C_0)$ the ball of radius $3C_0$ (for the rest of this section, the center of the ball is always assumed to be zero and we just indicate its radius).

 Now, let us divide the operators $\mc L_u, \mc L_s$ as:
\begin{align} \label{Lequation}
\mc L_u &=  \underbrace{-\chi_{2} (y + \bar U) \cdot \nabla \tilde U -\chi_{2} \alpha \bar S \nabla \tilde S }_{\mc M_u}\underbrace{- J(1-\chi_1)\tilde{U}}_{\mc D_u}
 \underbrace{ -\chi_{2}(r-1) \tilde U  - \chi_{2}\tilde U \cdot \nabla \bar U  - \chi_{2}\alpha \tilde S \nabla \bar S }_{\mc L_u -\mc D_u- \mc M_u}, \\
\mc L_s &= \underbrace{  - \chi_{2}(y + \bar U) \cdot \nabla \tilde S - \alpha\chi_{2} \bar S \text{div} ( \tilde U ) }_{\mc M_s}\underbrace{- J(1-\chi_1)\tilde{S}}_{\mc D_s}
\underbrace{  -(r-1)\chi_{2}\tilde S - \chi_{2}\tilde U \cdot \nabla \bar S  - \chi_{2}\alpha \tilde S \text{div} (\bar U) }_{\mc L_s -\mc D_s- \mc M_s}.
\end{align}
Notice that $\hat{X}=1$ whenever $\chi_1,\chi_2\neq 0$, so the cut-off $\hat X$ does not appear in the expressions of $\mc L_u$, $\mc L_s$.

Now we start to show the dissipativity. 
\begin{lemma}\label{lemma:goodtermestimate} Taking $K$ sufficiently large in terms of $J$, and $J$ large enough depending on $m$, there is a universal constant $C$ (independent of $m$) such that
\begin{align}
&\int_{B(3C_0)} | \nabla^m (\mc L_u - \mc M_u-\mc D_u) \cdot \nabla^m \tilde U | \leq C \| (\tilde U, \tilde S) \|_X^2,\\
&\int_{B(3C_0)} | \nabla^m (\mc L_s - \mc M_s-\mc D_s) \cdot \nabla^m \tilde S | \leq C \| (\tilde U, \tilde S) \|_X^2,
\end{align}
and
\begin{align}\label{Duestimate}
\int_{B(3C_0)} \nabla^m (\mc D_u) \cdot \nabla^m \tilde U  &\leq -J\int_{B(3C_0)}(1-\chi_1)|\nabla^m \tilde U|^2 + C \| (\tilde U, \tilde S) \|_X^2,\\
\label{Dsestimate}
\int_{B(3C_0)} \nabla^m (\mc D_s) \cdot \nabla^m \tilde S &\leq -J\int_{B(3C_0)}(1-\chi_1)|\nabla^m \tilde S|^2 + C \| (\tilde U, \tilde S) \|_X^2,
\end{align}
\end{lemma}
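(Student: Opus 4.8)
\textbf{Proof proposal for Lemma \ref{lemma:goodtermestimate}.}

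The plan is to treat the three types of terms separately, exploiting the structure already isolated in \eqref{Lequation}. For the first two estimates, the key observation is that $\mc L_u - \mc M_u - \mc D_u = -\chi_2(r-1)\tilde U - \chi_2 \tilde U\cdot\nabla\bar U - \chi_2\alpha\tilde S\nabla\bar S$ (and similarly for the $s$-component) contains \emph{no derivatives of} $\tilde U$ or $\tilde S$ — the only derivatives fall on the profiles $\bar U, \bar S$ or on the cut-off $\chi_2$. Hence when I apply $\nabla^m$ and use the Leibniz rule, every resulting term is of the schematic form $(\nabla^j \text{coefficient})(\nabla^{m-j}\tilde U)$ or $(\nabla^j\text{coefficient})(\nabla^{m-j}\tilde S)$ with $0 \le m-j \le m$; since the coefficients $\chi_2, \bar U, \bar S$ are smooth and bounded with all derivatives bounded on $B(3C_0)$ (by \eqref{eq:profiles_decay}), each such term is pointwise bounded by $C_m(|\nabla^{\le m}\tilde U| + |\nabla^{\le m}\tilde S|)$. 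Integrating against $\nabla^m\tilde U$ and applying Cauchy--Schwarz, I would bound the result by $C_m\|(\tilde U,\tilde S)\|_{H^m}^2 \lesssim C_m\|(\tilde U,\tilde S)\|_X^2$. The one point requiring care is the claim that $C$ is \emph{universal, independent of $m$}: the naive Leibniz expansion produces binomial coefficients $\binom{m}{j}$, so I must absorb these using the enhanced control that $|\nabla^m f|$ gives on mixed derivatives (the combinatorial gain noted in the Notation section, $|\nabla^j f|^2 \ge \binom{j}{j'}|\p_1^{j'}\p_2^{j-j'}f|^2$), together with the fact that the lower-order factors $\nabla^{m-j}\tilde U$ with $j \ge 1$ can be absorbed by interpolation into $\|\tilde U\|_{L^2}$ and $\|\nabla^m\tilde U\|_{L^2}$ at the cost of a constant that does not grow with $m$ once we fix that only finitely many derivatives hit the (fixed) smooth coefficients. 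Alternatively, since these are genuine lower-order terms, one can keep the $L^2$-based estimate $\|\nabla^m(g\cdot\nabla^{\le m-1}\tilde U)\|_{L^2}$ and use a Moser-type inequality whose constant depends only on $\|g\|_{C^m}$ of the profile — and because the profile is a fixed smooth function, that norm is still $m$-dependent, so the honest route is the interpolation/combinatorial one; I expect this bookkeeping to be the main technical nuisance here.

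For the damping estimates \eqref{Duestimate}--\eqref{Dsestimate}, recall $\mc D_u = -J(1-\chi_1)\tilde U$. Applying $\nabla^m$ via Leibniz I split $\nabla^m(\mc D_u) = -J(1-\chi_1)\nabla^m\tilde U - J\sum_{j\ge 1}\binom{m}{j}(\nabla^j(1-\chi_1))(\nabla^{m-j}\tilde U)$, i.e.\ a ``diagonal'' term plus a ``commutator'' remainder. The diagonal term, integrated against $\nabla^m\tilde U$, gives exactly $-J\int_{B(3C_0)}(1-\chi_1)|\nabla^m\tilde U|^2$, which is the negative main term on the right-hand side. For the commutator remainder, every term has at least one derivative landing on $1-\chi_1$, hence is supported in the annulus $\{C_0 \le |y| \le \frac32 C_0\}$ and involves at most $m-1$ derivatives of $\tilde U$; these are lower-order and, as before, bounded by $C_m\|(\tilde U,\tilde S)\|_X^2$ after Cauchy--Schwarz. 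The catch is again the $J$-dependence and $m$-dependence of the commutator constant: here $J$ multiplies the remainder, so I need $J \cdot C_m \lesssim$ something controllable. The resolution is the hypothesis ordering in the statement — we choose $K$ large \emph{in terms of} $J$, and $J$ large \emph{depending on} $m$: with $m$ (hence $C_m$) fixed first, then $J$ fixed, the constant $J\cdot C_m$ is a fixed (if large) number, which is acceptable since the final $C$ in \eqref{Duestimate} is allowed to be non-universal in the sense of depending on these already-chosen parameters; but re-reading the statement, $C$ must be universal, so instead I should interpolate: $\|(\nabla^j\chi_1)(\nabla^{m-j}\tilde U)\|_{L^2}^2 \lesssim_{\chi_1} \|\nabla^{m-j}\tilde U\|_{L^2}^2 \le \epsilon\|\nabla^m\tilde U\|_{L^2}^2 + C_\epsilon\|\tilde U\|_{L^2}^2$, choose $\epsilon$ so that $J\epsilon$ times the combinatorial factors is small, and absorb the $\epsilon\|\nabla^m\tilde U\|^2$ into the negative diagonal term $-J\int(1-\chi_1)|\nabla^m\tilde U|^2$ — wait, that diagonal term vanishes where $\chi_1 = 1$, so the absorption only works on the annulus; one then also uses the genuine $-\|\tilde U\|_X^2$ gain available from the full operator to soak up the $C_\epsilon\|\tilde U\|_{L^2}^2$ piece, or simply notes the annulus is exactly the support of $1-\chi_1$ so the absorption is consistent. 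Collecting, the remainder contributes $+C\|(\tilde U,\tilde S)\|_X^2$ with $C$ universal after these choices.

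\textbf{Main obstacle.} The substantive difficulty throughout is the insistence that the constant $C$ be \emph{independent of $m$}. This is not automatic from Leibniz + Cauchy--Schwarz, which loses factors like $2^m$ or $\binom{m}{j}$. The mechanism to beat it is precisely the tensorial $\nabla^m$ bookkeeping emphasized in the introduction and Notation sections: because $|\nabla^m f|$ over-counts each mixed partial derivative with multiplicity $\binom{m}{j}$, one has ``free'' combinatorial room to trade against the Leibniz coefficients, and because only a bounded number of derivatives ever hit the smooth coefficients $\chi_1,\chi_2,\bar U,\bar S$ (those being fixed data), the interpolation estimates needed to convert $\nabla^{<m}\tilde U$ into $\{L^2, \dot H^m\}$-norms have constants that do not deteriorate with $m$. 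I would make this quantitative with a single combinatorial lemma (``for $g$ smooth with bounded derivatives, $\|\nabla^m(g\,\nabla^k\tilde U)\|_{L^2} \le C\|g\|_{C^{\lceil m-k\rceil+1}}(\|\nabla^m\tilde U\|_{L^2} + \|\tilde U\|_{L^2})$ with $C$ universal'') proved by induction on $m$, and then apply it termwise. The cut-off $\hat X$ does not appear in this region (as noted after \eqref{Lequation}), so no subtlety enters from the truncation; likewise, there are no boundary terms at this stage since we are only estimating absolute values of integrands, deferring the integration by parts that produces $\mc M_u,\mc M_s$ to a later lemma.
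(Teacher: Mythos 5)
Your decomposition of the terms is correct, and you identify the right leading pieces: when all $m$ derivatives land on the perturbation, $\mc D_u$ produces exactly $-J(1-\chi_1)|\nabla^m\tilde U|^2$, and $\mc L_u-\mc M_u-\mc D_u$ produces terms like $\chi_2(\nabla^m\tilde U)\cdot\nabla\bar U$ that are bounded by a universal constant times $\|(\tilde U,\tilde S)\|_X^2$ because $\|\bar U\|_{W^{1,\infty}}\lesssim 1$ and $|\chi_2|\le 1$. Where the argument breaks down is in the lower-order terms, where at least one derivative hits $\chi_1$, $\chi_2$, $\bar U$, or $\bar S$. You try to make the constant $m$-independent via interpolation plus a combinatorial trade-off against the over-counting in $|\nabla^m f|$, and you propose the lemma $\|\nabla^m(g\,\nabla^k\tilde U)\|_{L^2}\le C\|g\|_{C^{\ldots}}(\|\nabla^m\tilde U\|_{L^2}+\|\tilde U\|_{L^2})$ with $C$ universal. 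That lemma is not true as stated: the Leibniz expansion contributes a sum of roughly $2^m$ middle terms $\binom{m}{j}(\nabla^j g)(\nabla^{m-j}\tilde U)$, and interpolation $\|\nabla^{m-j}\tilde U\|_{L^2}\lesssim\|\tilde U\|_{L^2}^{j/m}\|\nabla^m\tilde U\|_{L^2}^{1-j/m}$ cannot erase the $\binom{m}{j}$ factor or the growth of $\|\nabla^j g\|_{L^\infty}$ in $j$. The over-counting $|\nabla^j f|^2\ge\binom{j}{j'}|\p_1^{j'}\p_2^{j-j'}f|^2$ counts \emph{reorderings of a single multi-index}, whereas $\binom{m}{j}$ in Leibniz counts \emph{splittings between two factors} — these are different combinatorics and do not cancel.

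The ingredient you are missing is the restriction $(\tilde U,\tilde S)\in X_K$, which is implicit throughout Section 2.1 and gives the spectral-gap inequality $\|\tilde U\|_{H^{m-1}}\le \frac1K\|\tilde U\|_{H^m}$. That is the actual purpose of the hypothesis "$K$ sufficiently large in terms of $J$, $J$ large depending on $m$" — not a mere choice of ordering, as you half-guess, but a quantitative damping of every term in which fewer than $m$ derivatives fall on the perturbation. Each such term picks up a factor $\frac1K$, which then absorbs the $O(3^m)$ count of Leibniz terms, the binomial coefficients, and the $m$-dependent bounds $\|\nabla^j\bar U\|_{L^\infty}$, $\|\nabla^j\chi_i\|_{L^\infty}$, $J$. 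With $K$ chosen large enough relative to all of these, the entire lower-order contribution is $\le\|(\tilde U,\tilde S)\|_X^2$ with constant $1$, and the only surviving pieces are the top-order terms you already handled, giving a genuinely $m$-independent $C$. Your argument needs to be rewritten around this $X_K$ gain; as written it circles the obstacle without clearing it.
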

\begin{proof} Note that in the terms $\mc L_\circ - \mc M_\circ - \mc D_\circ$ or in $\mc D_\circ$ (for $\circ \in \{ u, s \}$) there are no derivatives in the perturbation. We distribute all the derivatives of $\nabla^m$ among those terms and consider two cases: either all $m$ derivatives hit the perturbation or not. If we are in the second case, we get a factor $\frac{1}{K}$ because $\| \tilde U \|_{H^{m-1}} \leq \frac{1}{K} \| \tilde U \|_{H^{m}}$. Since $K$ is sufficiently large with respect to $m$, this is enough to absorb any other high derivative hitting the profile and the cut-off, like $\| \nabla^m \bar U \|_{L^\infty}$, $\| \nabla^m \chi_1 \|_{L^\infty}$,$\| \nabla^m \chi_2 \|_{L^\infty}$. Since there are $O\left( 3^m\right)$ such terms, the total contribution of those terms can be bounded by $ \| (\tilde U, \tilde S ) \|_X^2$ taking $K$ sufficiently large.

The remaining case is when all derivatives hit the perturbation. That case for $\mc D_u$ and $\mc D_s$ directly leads to the first terms in the right-hand sides of \eqref{Duestimate} and \eqref{Dsestimate}, respectively. This concludes the proof of \eqref{Duestimate}--\eqref{Dsestimate}.

We just need to treat the case where all derivatives fall in the perturbation for $\mc L_u - \mc M_u - \mc D_u$ or $\mc L_s - \mc M_s - \mc D_s$. In that case, we obtain terms like 
\begin{align*}
\sum_{\beta \in [3]^K}\int_{B(3C_0)} (\chi_2(\p_\beta \tilde  U) \cdot \nabla \bar U) \cdot \p_\beta \tilde U &=
\sum_{j, k} \sum_{\beta \in [3]^K}\int_{B(3C_0)} (\chi_2(\p_\beta \tilde  U_j) \cdot \p_j \bar U_k) \cdot \p_\beta \tilde U_k \\
&\les \sum_{\beta \in [3]^K} \int_{B(3C_0)} \chi_2 |\p_\beta \tilde U|^2 \les \| (\tilde U, \tilde S) \|_X^2,
\end{align*}
given that $|\chi_2| \leq 1$ and $\| \bar U \|_{W^{1, \infty}} \les 1$. Other possibilities are very similar to this term but interchanging $\tilde U$ with $\tilde S$ or $\bar U$ with $\bar S$.
\end{proof}

Now, let us focus on the bounds of the main contribution, coming from $\mc M_u$ and $\mc M_s$. We have that
\begin{equation} 
\mc M_{u, i} = -\sum_j \chi_2(y_j + \bar U_j) \p_j \tilde U_i - \alpha \chi_2\bar S \p_{i} \tilde S \quad \mbox{ and } \quad
\mc M_s = -\sum_j \chi_2 (y_j + \bar U_j) \p_j \tilde S - \alpha \chi_2\bar S \text{div} (\tilde U).
\end{equation}

We will need to compute the integral $$ \int_{B(3C_0)} \nabla^m \mc M_u \cdot \nabla^m \tilde U + \nabla^m \mc M_s \nabla^m \tilde S. $$ We distribute the derivatives of $\nabla^m \mc M_u$ and $\nabla^m \mc M_s$ and identify the terms with $m$ or $m+1$ derivatives in the perturbation (note that $\mc M_u$ and $\mc M_s$ already have one derivative on the perturbations $\tilde U$, $\tilde S$). 

The terms of $\nabla^m \mc  M_u$ and $\nabla^m \mc M_s$ with $m+1$ or $m$ derivatives on the perturbation are:
\begin{align*}
\mc J_{1, i, \beta}' &= -\sum_{j \in [3]} \chi_2 (y_j + \bar U_j) \p_{\beta} \p_j \tilde U_i, \qquad 
&\mc J_{1, i, \beta} &= -  \sum_{\ell = 1}^m \sum_{j\in [3]} \chi_2 (\p_{\beta_\ell} \bar U_j) \cdot \p_{\beta^{(\ell)}} \p_j \tilde U_i, \\
\mc J_{2, i, \beta}' &= -\alpha \chi_2\bar S \p_{\beta} \p_{i} \tilde S, \qquad
&\mc J_{2, i, \beta} &=  -\sum_{\ell = 1}^m \alpha  \chi_2 (\p_{\beta_\ell} \bar S) \cdot \p_{\beta^{(\ell)}} \p_{i} \tilde S, \\
\mc J_{3, \beta}' &= -\sum_{j \in [3]} \chi_2(y_j + \bar U_j) \p_{\beta} \p_j \tilde S, \qquad
& \mc J_{3, \beta} &=  - \sum_{\ell = 1}^m \sum_{j \in [3]} \chi_2 (\p_{\beta_\ell} \bar U_j ) \cdot \p_{\beta^{(\ell)}} \p_j \tilde S, \\
\mc J_{4, \beta}' &=   -\alpha \chi_2\bar S \p_{\beta} \text{div} (\tilde U), \qquad
& \mc J_{4, \beta} &=   - \sum_{\ell = 1}^m \alpha  \chi_2 (\p_{\beta_\ell} \bar S) \cdot \p_{\beta^{(\ell)}} \text{div} (\tilde U),
\end{align*}
where $i \in [3]$ and $\beta \in [3]^m$, together with
\begin{align}\begin{split} \label{eq:chiterms}
 \mc J_{\chi_2,u,i, \beta} &=    -\sum_{\ell=1}^m \p_{\beta_\ell}(\chi_2)\cdot \sum_{j\in [3]} \left( \p_{\beta^{(\ell)}}  \p_j \tilde U_i \cdot (y_j + \bar U_j)-  \alpha \p_{\beta^{(\ell)}} \p_{j} \tilde S  \cdot \bar S\right),  \\
\mc J_{\chi_2,s, \beta} &=- \sum_{\ell = 1}^m \p_{\beta_\ell}(\chi_2)\cdot \sum_{j\in [3]} \left( \p_{\beta^{(\ell)}} \p_j \tilde S \cdot (y_j + \bar U_j)  - \alpha \p_{\beta^{(\ell)}} \div (\tilde U) \bar S \right),
\end{split} \end{align}
and
\begin{align} \begin{split}\label{eq:dominantterms}
\mc J_{1, \beta, i}^\ast &= - \sum_{\ell = 1}^m \chi_2 \sum_{j\in [3]} \p_{\beta_\ell}(  y_j) \cdot  \p_{\beta^{(\ell)}} \p_j \tilde U_i = - m \chi_2  \p_{\beta} \tilde U_i,\\
\mc J_{3, \beta}^\ast &= - \sum_{\ell = 1}^m \chi_2 \sum_{j\in [3]} \p_{\beta_\ell} ( y_j) \cdot \p_{\beta^{(\ell)}} \p_j  \tilde S 
= -m \chi_2 \p_{\beta} \tilde S.
\end{split}\end{align}

The terms $\mc J_\ell'$ correspond to terms with $m+1$ derivatives on the perturbation (that is, all $m$ derivatives from $\nabla^m$ fall on the perturbation, which already has a derivative). In the rest of the terms, only $m-1$ derivatives from $\nabla^m$ hit the perturbation, so we need to distribute that extra derivative. The terms $\mc J_k$ correspond to the cases where that derivative falls on the profile $\bar U$ or $\bar S$, the terms $\mc J_{\chi_2, k}$ correspond to the cases where that derivative falls on the cut-off $\chi_2$, and the terms $\mc J_\ell^\ast$ correspond to the cases where that derivative falls on $y$. 

First of all, let us treat the rest of the cases, where less than $m-1$ derivatives from $\nabla^m$ fall on the perturbation, that is, the terms that we have not written above.
\begin{lemma}[Terms from $\mc M$ with less than $m-1$ derivatives on the perturbation] \label{lemma:Mloworder} For $K$ sufficiently large depending on $J$ and $m$, we have:
\begin{align} \begin{split} \label{eq:Mloworder}
\sum_{\beta \in [3]^m} \sum_{i \in [3]} \int_{B(3C_0)} \left( \p_{\beta} \mc M_{u, i} - (\mc J_{1, i, \beta}^\ast + \mc J_{1, i, \beta}' + \mc J_{1, i, \beta} + \mc J_{2, i, \beta}' + \mc J_{2, i, \beta} + \mc J_{\chi_2, u, i, \beta} ) \right) \p_{\beta} \tilde U_i &\leq \| (\tilde U, \tilde S) \|_X^2, \\
 \sum_{\beta \in [3]^m}\int_{B(3C_0)} \left( \p_{\beta} \mc M_{s, i} - (\mc J_{3, \beta}^\ast + \mc J_{3, \beta}' + \mc J_{3, \beta} + \mc J_{4, \beta}' + \mc J_{4, \beta} + \mc J_{\chi_2, s, \beta} ) \right) \p_{\beta} \tilde S &\leq \| (\tilde U, \tilde S) \|_X^2.
\end{split}\end{align}
\end{lemma}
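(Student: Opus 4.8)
The plan is to Leibniz-expand $\p_\beta \mathcal M_{u,i}$ and $\p_\beta \mathcal M_{s}$ and isolate, by definition, exactly those multi-derivative distributions in which at least $m-1$ of the $m$ derivatives in $\nabla^m$ land on the perturbation $\tilde U$ or $\tilde S$; these are precisely the terms $\mathcal J_{1,i,\beta}', \mathcal J_{1,i,\beta}, \mathcal J_{1,i,\beta}^\ast, \mathcal J_{2,i,\beta}', \mathcal J_{2,i,\beta}, \mathcal J_{\chi_2,u,i,\beta}$ (and their $s$-analogues). Subtracting these, the left-hand side of \eqref{eq:Mloworder} is a finite sum (with $O(3^m)$ summands, after also summing over $\beta \in [3]^m$ and $i \in [3]$) of integrals of the schematic form
\[
\int_{B(3C_0)} (\p^{a}\chi_2)\,(\p^{b} g)\,(\p^{c}\tilde W)\,\p_\beta \tilde U_i,
\]
where $g \in \{ y_j + \bar U_j, \bar S \}$, $\tilde W \in \{\tilde U, \tilde S\}$, the total number of derivatives hitting the perturbation factors is $c + |\beta| = c + m$ with $c \le m-1$ (so $c \le m-1$, i.e.\ at most $m-1$ of the $\nabla^m$ derivatives reached $\tilde W$), and $a + b \ge 2$. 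I would then estimate each such integral by Cauchy--Schwarz, putting the lower-order perturbation factor $\p^{c}\tilde W$ (with $c \le m-1$) in $L^2$ and $\p_\beta \tilde U_i$ in $L^2$.

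The key point is the gain from the spectral gap: since the excerpt restricts to $X_K$, every perturbation satisfies $\|\p^{c}\tilde W\|_{L^2} \le K^{-1}\|\p^{c+1}\tilde W\|_{L^2} \le \cdots \le K^{-(m-c)}\|\nabla^m \tilde W\|_{L^2} \le K^{-1}\|(\tilde U,\tilde S)\|_X$, because $m - c \ge 1$. The factors $\p^{a}\chi_2$ and $\p^{b} g$ are bounded in $L^\infty$ by constants depending only on $m$ (here one uses $\|\bar U\|_{W^{b,\infty}},\|\bar S\|_{W^{b,\infty}} \lesssim_m 1$ from \eqref{eq:profiles_decay}, that $y_j$ is smooth with all derivatives of order $\ge 2$ vanishing, and that $\chi_2$ is a fixed smooth bump so $\|\p^a\chi_2\|_{L^\infty}\lesssim_m 1$; also $\supp\chi_2 \subset B(\tfrac52 C_0)$ confines the $y_j$ factor). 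Hence each summand is bounded by $C_m K^{-1}\|(\tilde U,\tilde S)\|_X^2$, and summing the $O(3^m \cdot 3)$ of them gives a bound $C_m 3^m K^{-1}\|(\tilde U,\tilde S)\|_X^2$; choosing $K$ large enough in terms of $m$ (and $J$, which does not actually enter these particular terms but is kept in the hypothesis for uniformity with Lemma \ref{lemma:goodtermestimate}) makes the constant $\le 1$, and likewise for the $s$-equation. Adding the two estimates yields \eqref{eq:Mloworder}.

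The only bookkeeping subtlety — and the step I expect to be the mild obstacle — is verifying that the list $\{\mathcal J_1', \mathcal J_1, \mathcal J_1^\ast, \mathcal J_2', \mathcal J_2, \mathcal J_{\chi_2,u}\}$ (resp. the $s$-list) exhausts \emph{all} Leibniz terms with $\ge m-1$ derivatives on the perturbation, so that what remains genuinely has $\le m-1$ such derivatives and thus at least the single factor $K^{-1}$ to spare; this is a matter of carefully tracking the three ways a "missing" derivative can be spent (on $\bar U/\bar S$, on $\chi_2$, or on $y_j$) against the one derivative already present in $\mathcal M$. Once that combinatorial check is done, the estimate itself is routine Cauchy--Schwarz plus the $X_K$ Poincaré-type inequality. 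No integration by parts is needed for this lemma — that is reserved for the dominant terms $\mathcal J^\ast, \mathcal J'$ treated subsequently.
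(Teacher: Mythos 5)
Your proposal is correct and follows essentially the same route as the paper: Leibniz-expand $\p_\beta \mc M$, observe that the terms subtracted are precisely those with $m-1$ or $m$ of the $\nabla^m$ derivatives on the perturbation, bound the $O(3^m)$ remaining terms by putting the smooth factors ($\chi_2$, $y_j + \bar U_j$, $\bar S$) in $W^{m,\infty}$ and the two perturbation factors in $L^2$, and then exploit the $X_K$ frequency gap to extract one power of $K^{-1}$. Your side remark that $J$ does not genuinely enter (the paper's bound carries a redundant $\|J(1-\chi_1)\|_{W^{m,\infty}}$ factor even though $\mc M$ contains neither $J$ nor $\chi_1$) is accurate, and the observation that no integration by parts is required at this stage is also consistent with the paper's treatment.
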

\begin{proof} By definition of the $\mc J$ terms, any term obtained by distributing derivatives in $\p_{\beta} \mc M_{u, i} - (\mc J_{1, i, \beta}^\ast + \mc J_{1, i, \beta}' + \mc J_{1, i, \beta} + \mc J_{2, i, \beta}' + \mc J_{2, i, \beta} + \mc J_{\chi_2, u, i, \beta} ) $ has at most $m-1$ derivatives on the perturbation. Therefore, any such term is bounded in $L^2$ by:
\begin{align*}
&C  \left( \| \chi_2 \|_{W^{m, \infty}} + \| J(1-\chi_1) \|_{W^{m, \infty}} \right) \left(  \| \bar U \|_{W^{m, \infty}} +  \| \bar S \|_{W^{m, \infty}} \right) \left( \| \tilde U \|_{H^{m-1}(B(3C_0))} +  \| \tilde S \|_{H^{m-1}(B(3C_0))} \right) \\
&\qquad \les_{J, m} \frac{\| (\tilde U, \tilde S) \|_X}{K}
\end{align*}
where the $\les_{J, m}$ sign indicates that the implied constant is allowed to depend on $J$ and $m$. Given that there are $\les 3^m$ possible terms, we obtain the desired statement provided $K$ is sufficiently large in terms of $m$ and $J$.
\end{proof}

Now, we just need to bound the $\mc J$ terms. The dominant terms will be the terms $\mc J_\ell^\ast$ and most of the work will consist in showing that they dominate the $\mc J_\ell$ terms. Thus, we start with the $\mc J_{\chi_2}$ terms and the $\mc J_{\ell}'$ terms, which will be easier to bound.

\begin{lemma}[Terms with a derivative falling on the cut-off] \label{lemma:cutoffterms}
We have that
\begin{align} \begin{split} \label{eq:cutoffterms}
\sum_{\beta \in [3]^m} &\left( \sum_{i=1}^3 \int_{B(3C_0)} \mc J_{\chi_2, u, i, \beta} \p_{\beta} \tilde U_i
+ \int_{B(3C_0)} \mc J_{\chi_2, s, \beta} \p_{\beta} \tilde S \right) \\
&\les  m \sum_{\beta \in [3]^m }\int_{B(3C_0)} (1-\chi_1) \left( |\p_{\beta} \tilde U|^2 + (\p_{\beta} \tilde S)^2 \right) .
\end{split} \end{align}
\end{lemma}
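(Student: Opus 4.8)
The plan is to exploit the key structural fact that $\p_{\beta_\ell}(\chi_2)$ is supported in the region where $\chi_1 = 0$, i.e. outside $[-C_0, C_0]$ in each coordinate. Recall $\chi_1(x) = 1$ for $|x| \leq C_0$ while $\chi_2(x) = 1$ for $|x| \leq 2C_0$; consequently $\nabla \chi_2$ vanishes on $B(C_0)$ where $1-\chi_1 = 0$, so $|\nabla^j \chi_2| \lesssim (1-\chi_1)$ pointwise for every $j \geq 1$ (the implied constant depending on the fixed cut-off profiles but not on $m$). This is the mechanism that lets us absorb these terms into the dissipative term $\mc D_u, \mc D_s$ later in the argument, which is why the right-hand side of \eqref{eq:cutoffterms} is weighted by $(1-\chi_1)$.

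The first step is to expand $\mc J_{\chi_2, u, i, \beta}$ and $\mc J_{\chi_2, s, \beta}$ according to their definitions in \eqref{eq:chiterms}. Each such term is a sum over $\ell = 1, \dots, m$ and $j \in [3]$ of a factor $\p_{\beta_\ell}(\chi_2)$ times a term of the form $\p_{\beta^{(\ell)}} \p_j \tilde U_i \cdot (y_j + \bar U_j)$ or $\alpha \p_{\beta^{(\ell)}} \p_j \tilde S \cdot \bar S$ (and analogously for the $s$-component). I would pair each against $\p_\beta \tilde U_i$ (resp. $\p_\beta \tilde S$) and use Cauchy--Schwarz. Since $|y_j + \bar U_j| \lesssim 1$ and $\bar S \lesssim 1$ on $B(3C_0)$ (by \eqref{eq:profiles_decay}, the profiles are bounded on the compact ball), and since $|\p_{\beta_\ell}(\chi_2)| \lesssim (1-\chi_1)$, each summand is bounded by
\[
\lesssim \int_{B(3C_0)} (1-\chi_1) \left| \p_{\beta^{(\ell)}} \p_j (\tilde U, \tilde S) \right| \, \left| \p_\beta (\tilde U, \tilde S) \right|.
\]
Here the quantity $\p_{\beta^{(\ell)}} \p_j (\tilde U, \tilde S)$ is, up to reordering of indices, another $m$-th order derivative of the perturbation, so its pointwise $\ell^2$-size over all index choices is controlled by $|\nabla^m(\tilde U, \tilde S)|$; by the index-reordering remark in the Notation subsection, summing $|\p_{\beta^{(\ell)}}\p_j \tilde U|^2$ over $\beta \in [3]^m$, $j \in [3]$, $i \in [3]$ just gives (a combinatorial multiple of, but in fact bounded by $3$ times) $\int |\nabla^m \tilde U|^2$. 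Applying the weighted Cauchy--Schwarz inequality $ab \leq \tfrac12(a^2 + b^2)$ with the weight $(1-\chi_1)$, summing over $\ell$ (which produces the factor $m$), over $j$, over $i$, and over $\beta \in [3]^m$, yields exactly the right-hand side of \eqref{eq:cutoffterms}, namely $\lesssim m \sum_{\beta \in [3]^m} \int_{B(3C_0)} (1-\chi_1)(|\p_\beta \tilde U|^2 + (\p_\beta \tilde S)^2)$.

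I do not expect a genuine obstacle here; this is one of the easier lemmas in the dissipativity section (as the authors flag: "which will be easier to bound"). The only points requiring a little care are: (i) keeping the constant independent of $m$ — this works because the $m$ dependence is displayed explicitly as the factor $m$ coming from the $\ell$-sum, and the per-$\ell$ bound uses only $\|\bar U\|_{L^\infty}, \|\bar S\|_{L^\infty}$ and the fixed geometry of the cut-offs, not $m$-th derivatives of anything; (ii) correctly accounting for index reorderings when passing from the sum over $\beta$ of individual derivative components back to $|\nabla^m(\tilde U,\tilde S)|^2$, using the convention fixed in the Notation subsection; and (iii) noting that the terms in $\mc J_{\chi_2,u}$ involving $\alpha \p_{\beta^{(\ell)}}\p_j \tilde S \cdot \bar S$ are cross terms between $\tilde U$ and $\tilde S$, which is harmless since $ab \le \tfrac12(a^2+b^2)$ distributes the contribution symmetrically and the final bound already contains both $|\p_\beta \tilde U|^2$ and $(\p_\beta \tilde S)^2$.
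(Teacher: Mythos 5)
Your proposal is correct and follows essentially the same route as the paper's proof: bound $|\p_{\beta_\ell}\chi_2|$ by $1-\chi_1$ using the support mismatch between the two cut-offs, control the profile factors $y_j+\bar U_j$ and $\bar S$ by an absolute constant on $B(3C_0)$, apply pointwise Cauchy--Schwarz, and observe that summing over $\ell$ produces the factor $m$ while summing over $\beta,j,i$ reassembles the $m$-th order derivative tensors. The paper first arrives at a bound weighted by $|\nabla\chi_2|$ and invokes $|\nabla\chi_2|\lesssim 1-\chi_1$ in the final step, whereas you insert that inequality at the outset, but the argument is the same.
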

\begin{proof} Let $\circ \in \{ u, s\}$, we have that
\begin{align*}
|\mc J_{\chi_2, \circ, \beta}| &\les \sum_{\ell = 1}^m | \p_{\beta_\ell} \chi_2| ( 1 + |\bar U| + |\bar S| )  \left( | \nabla \p_{\beta^{(\ell)}} \tilde U| + | \nabla \p_{\beta^{(\ell)}} \tilde S |\right)  \\
&\les |\nabla \chi_2| \sum_{\ell = 1}^m (|\nabla \p_{\beta^{(\ell)}} \tilde U| + | \nabla \p_{\beta^{(\ell)} } \tilde S | ).
\end{align*}
Thus, the left hand side of \eqref{eq:cutoffterms} is bounded (up to an absolute constant) by
\begin{equation*}
\sum_{\beta \in [3]^m} \int_{B(3C_0)} |\nabla \chi_2| \left( \sum_{\ell = 1}^m (|\nabla \p_{\beta^{(\ell)}} \tilde U| + | \nabla \p_{\beta^{(\ell)} } \tilde S | ) \right) \left( |\p_{\beta} \tilde U| + | \p_{\beta} \tilde S | \right)
\end{equation*}

Now, we use pointwise Cauchy-Schwarz, that is: $$|\p_{j} \p_{\beta^{(\ell)}} \tilde U| | \p_{\beta} \tilde S| \leq \frac12 |\p_j \p_{\beta^{(\ell)}} \tilde U |^2 + \frac12 |\p_{\beta} \tilde S|^2$$ (and similarly for other terms arising from the product). Thus, we see that the left hand side of \eqref{eq:cutoffterms} is bounded (up to an absolute constant) by
\begin{equation} \label{eq:narita1}
 m \sum_{\beta \in [3]^m} \int_{B(3C_0)} |\nabla \chi_2| \left( \left| \p_{\beta} \tilde U \right|^2 + \left| \p_{\beta} \tilde S \right|^2 \right).
\end{equation}

We conclude the Lemma by noting that $|\nabla \chi_2| \les 1-\chi_1$, since $\chi_1 = 0$ in the annulus where $\nabla \chi_2$ is supported.

\end{proof}

\begin{lemma}[Energy estimates for terms with $m+1$ derivatives]\label{lemma:highderterms} We have that
\begin{align*}
&\sum_{\beta \in [3]^m}\int_{B(3C_0)} \left( \sum_i (\mc J_{1, i, \beta}' + \mc J_{2, i, \beta}') \p_{\beta} \tilde U_i + (\mc J_{3, \beta}' + \mc J_{4, \beta}') \p_{\beta} \tilde S \right) dy \les \| (\tilde U, \tilde S) \|_X^2.
\end{align*}
\end{lemma}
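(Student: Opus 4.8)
The plan is to integrate by parts to move the extra derivative off the perturbation in each of the four primed terms, exploiting that the coefficients $\chi_2(y_j+\bar U_j)$ and $\alpha\chi_2\bar S$ are smooth and bounded on $B(3C_0)$, and that the first-order transport terms $\mc J_{1}'$ and $\mc J_3'$ are anti-self-adjoint up to lower-order commutators. Concretely, for the transport pair I would write
\[
\sum_{\beta,i}\int_{B(3C_0)} \mc J_{1,i,\beta}'\,\p_\beta\tilde U_i
= -\sum_{\beta,i,j}\int_{B(3C_0)} \chi_2(y_j+\bar U_j)\,\p_j\p_\beta\tilde U_i\,\p_\beta\tilde U_i
= -\frac12\sum_{\beta,i,j}\int_{B(3C_0)} \chi_2(y_j+\bar U_j)\,\p_j\big(|\p_\beta\tilde U_i|^2\big),
\]
and integrate by parts in $y_j$. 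Since $\tilde U$ is compactly supported in $B(3C_0)$ there are no boundary terms, and the result is $\tfrac12\sum\int \p_j\big(\chi_2(y_j+\bar U_j)\big)|\p_\beta\tilde U_i|^2$. The factor $\p_j(\chi_2(y_j+\bar U_j)) = (\p_j\chi_2)(y_j+\bar U_j) + \chi_2(1+\p_j\bar U_j)$ is bounded in $L^\infty$ on $B(3C_0)$ uniformly (using $\|\bar U\|_{W^{1,\infty}}\les 1$), so summing over $\beta\in[3]^m$ gives a bound by $C3^m\|\tilde U\|_{H^m}^2$. The term $\mc J_3'$ is handled identically with $\tilde S$ in place of $\tilde U_i$.

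For the pressure-type pair $\mc J_2'$ and $\mc J_4'$ I would integrate by parts to exploit the cancellation between them. Summing,
\[
\sum_{\beta,i}\int_{B(3C_0)}\mc J_{2,i,\beta}'\,\p_\beta\tilde U_i + \sum_\beta\int_{B(3C_0)}\mc J_{4,\beta}'\,\p_\beta\tilde S
= -\alpha\sum_{\beta}\int_{B(3C_0)}\chi_2\bar S\Big(\sum_i\p_\beta\p_i\tilde S\,\p_\beta\tilde U_i + \p_\beta\mathrm{div}(\tilde U)\,\p_\beta\tilde S\Big).
\]
The two bracketed quantities are, respectively, $\sum_i\p_i(\p_\beta\tilde S)\,\p_\beta\tilde U_i$ and $\sum_i\p_i(\p_\beta\tilde U_i)\,\p_\beta\tilde S$; their sum is $\sum_i\p_i\big(\p_\beta\tilde S\,\p_\beta\tilde U_i\big)$, a perfect divergence. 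Integrating by parts in $y_i$ (again no boundary terms) moves the derivative onto $\chi_2\bar S$, producing $\alpha\sum_{\beta,i}\int \p_i(\chi_2\bar S)\,\p_\beta\tilde S\,\p_\beta\tilde U_i$, which after pointwise Cauchy–Schwarz is bounded by $C\|\nabla(\chi_2\bar S)\|_{L^\infty}\sum_\beta\int(|\p_\beta\tilde U|^2+|\p_\beta\tilde S|^2)\les 3^m\|(\tilde U,\tilde S)\|_X^2$, using $\|\bar S\|_{W^{1,\infty}}\les 1$ and $|\chi_2|\le 1$.

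The main subtlety — and the only place one has to be a little careful — is that the constant from summing over $\beta\in[3]^m$ is of size $\sim 3^m$, which depends on $m$; this is acceptable here because the statement allows an $m$-dependent implied constant (only the \emph{good} negative term from $\mc D_u,\mc D_s$ needs the universal constant $C$, and those terms are treated separately in Lemma \ref{lemma:goodtermestimate}). One should also double-check that no term with $m+2$ derivatives is secretly hiding: the $\mc J_\ell'$ by definition collect exactly the contributions where all $m$ derivatives of $\nabla^m$ land on the already-once-differentiated perturbation, so after one integration by parts every summand has at most $m$ derivatives on each factor and the $H^m\times H^m = \|\cdot\|_X$ bound closes. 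Combining the transport estimate and the pressure estimate yields the claim.
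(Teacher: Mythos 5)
Your integration-by-parts computations are correct and follow exactly the paper's route: you integrate by parts in $\mc J_1'$ and $\mc J_3'$ to pick up $\div(\chi_2(y+\bar U))$, and you exploit the perfect-divergence cancellation $\sum_i\p_i\left(\p_\beta\tilde S\,\p_\beta\tilde U_i\right)$ for $\mc J_2'+\mc J_4'$. However, the final paragraph about the constant contains two compensating errors. First, there is no $3^m$ factor. After integration by parts you obtain, for example, $\tfrac12\int\div(\chi_2(y+\bar U))\sum_{\beta\in[3]^m}\sum_i|\p_\beta\tilde U_i|^2$, and by \eqref{spacexnorm} the inner sum is exactly $|\nabla^m\tilde U|^2$: the tensor norm $|\nabla^m\cdot|^2$ \emph{is} the sum over all $\beta\in[3]^m$, so the constant is simply $\|\div(\chi_2(y+\bar U))\|_{L^\infty(B(3C_0))}$, independent of $m$. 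Second, an $m$-dependent constant would \emph{not} be harmless here: this lemma feeds into the accumulated error $C\|(\tilde U,\tilde S)\|_X^2$ in \eqref{eq:alcorcon}, and the dissipativity argument concludes by choosing $m$ large enough that $m\tilde\eta/2>C+1$, which fails if $C$ grows like $3^m$. (This $m$-uniformity is precisely why the authors work with the full tensor $\nabla^m$ rather than $\Delta^{m/2}$, so that sums over multi-indices are free, and it is consistent with the paper's convention that $\les$ with no subscript denotes a constant independent of all parameters.) Your proof closes, but only because the $3^m$ you feared never appears.
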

\begin{proof} The bound for $\mc J_{1,i}'$ follows from integration by parts:
\begin{equation*}
\int_{B(3C_0)} \mc J_{1, i, \beta}' \p_{\beta} \tilde U_i dy = - \int_{B(3C_0)} \chi_2(y+\bar U) \nabla (\p_{\beta} \tilde U_i ) \p_{\beta} \tilde U_i dy = \frac{1}{2} \int_{B(3C_0)} \text{div}(\chi_2(y + \bar U)) (\p_{\beta} \tilde U_i )^2 dy.
\end{equation*}
Note that there are no boundary terms since $\chi_2$ is supported on $B(5C_0/2)$. Similarly, for $\mc J_3'$ we have:
\begin{equation*}
\int_{B(3C_0)} \mc J_{3, \beta}' \p_{\beta} \tilde S dy = -\int_{B(3C_0)} \chi_2(y + \bar U) \cdot \nabla (\p_{\beta}  \tilde S ) \p_{\beta} \tilde S dy = \frac{1}{2} \int_{B(3C_0)} \text{div} (\chi_2(y + \bar U)) (\p_{\beta} \tilde S )^2 dy.
\end{equation*}

With respect to the terms $\mc J_2$ and $\mc J_4$, we have
\begin{align*}
\int_{B(3C_0)} \sum_i (\mc J_{2, i, \beta}' \p_{\beta} \tilde U_i) + \mc J_{4, \beta}' \p_{\beta} \tilde S 
&= -\alpha \int_{B(3C_0)} \chi_2\bar S \left( \sum_i \p_i \p_{\beta}  \tilde S  \p_{\beta} \tilde U_i + \text{div} (\p_{\beta} \tilde U ) \p_{\beta}  \tilde S \right) dy\\
&= -\alpha \int_{B(3C_0)} \chi_2\bar S  \div \left( \p_{\beta}  \tilde S  \p_{\beta} \tilde U_i \right)  dy\\
&= \alpha \int_{B(3C_0)} \nabla(\chi_2 \bar S) \cdot \p_{\beta} \tilde U \p_{\beta} \tilde S dy.
\end{align*}
Using Cauchy-Schwarz, we conclude the statement.
\end{proof}

Combining all the previous Lemmas (Lemmas \ref{lemma:goodtermestimate}, \ref{lemma:Mloworder}, \ref{lemma:cutoffterms}, \ref{lemma:highderterms}) and obtain that for some sufficiently large constant $C$:
\begin{align} \begin{split}
\int_{B(3C_0)} &\left( \nabla^m \mc L_u \cdot \nabla^m \tilde U + \nabla^m \mc L_s \cdot \nabla^m \tilde S \right) \leq C \| (\tilde U, \tilde S )\|_X^2 + (- J + C m) \int_{B(3C_0)} (1-\chi_1) \left( |\nabla^m \tilde U|^2 + (\nabla^m \tilde S)^2 \right)  \\
&\quad 
+ \sum_{\beta\in [3]^m} \int_{B(3C_0)} \left( \mc J_{1, \beta}^\ast \p_{\beta} \tilde U + \mc J_{3, \beta}^\ast \p_{\beta} \tilde S \right) + \sum_{\beta\in [3]^m}  \int_{B(3C_0)} \left( \sum_i (\mc J_{1, i, \beta} + \mc J_{2, i, \beta}) \p_{\beta} \tilde U_i + ( \mc J_{3, \beta} + \mc J_{4, \beta} ) \p_{\beta} \tilde S \right) \\
& \leq C \| (\tilde U, \tilde S )\|_X^2 - \int_{B(3C_0)} \left( \frac{J(1-\chi_1)}{2} + m \chi_2 \right)  \left( |\nabla^m \tilde U|^2 + (\nabla^m \tilde S)^2 \right)  \\
&\quad + \sum_{\beta \in [3]^m }\int_{B(3C_0)} \left( \sum_i (\mc J_{1, i, \beta} + \mc J_{2, i, \beta}) \p_{\beta} \tilde U_i + ( \mc J_{3, \beta} + \mc J_{4, \beta} ) \p_{\beta} \tilde S \right) \label{eq:alcorcon}
\end{split} \end{align}
where in the second inequality we used that $J$ is sufficiently large depending on $m$ and the explicit expressions for $\mc J_1^\ast$, $\mc J_3^\ast$.
Now, we proceed to study the terms arising from $\mc{J}_{1, i}$, $\mc{J}_{2, i}$, $\mc{J}_{3}$, $\mc{J}_{4}$.

\subsubsection{Term $\mathcal{J}_{1, i}$}

We have that 
\begin{align} \label{eq:ponteceso}
\sum_{i=1}^3 \sum_{\beta \in [3]^m} \int_{B(3C_0)} \mc J_{1, i, \beta} \p_{\beta} \tilde U_i 
&= - \sum_{\ell = 1}^m \sum_{i \in [3]} \sum_{j \in [3]} \sum_{\beta \in [3]^m}
\int_{B(3C_0)} \chi_2 (\p_{\beta_\ell} \bar U_j) \cdot \p_j \p_{\beta^{(\ell)}} \tilde U_i \p_{\beta_\ell} \p_{\beta^{(\ell)}} \tilde U_i \\
&= - \sum_{\ell = 1}^m \sum_{i \in [3]} \sum_{j \in [3]} \sum_{\beta^{(\ell)} \in [3]^{m-1}} \sum_{\beta_\ell \in [3]}
\int_{B(3C_0)} \chi_2 (\p_{\beta_\ell} \bar U_j) \cdot \p_j \p_{\beta^{(\ell)}} \tilde U_i \p_{\beta_\ell} \p_{\beta^{(\ell)}} \tilde U_i. \nonumber
\end{align}

Using the radial symmetry of $\bar U$ (that is $\bar U = \bar U_R \frac{y}{R}$), we have
\begin{equation} \label{eq:pontecesures}
    \p_{\beta_\ell} \bar U_j = \p_{\beta_\ell} \left( \frac{y_j}{R} \bar U_R \right) = \left( \p_R \bar U_R - \frac{\bar U_R}{R} \right) \frac{y_j y_{\beta_\ell}}{R^2} + \delta_{\beta_\ell, j} \frac{\bar U_R}{R},
\end{equation}

Using \eqref{eq:pontecesures} into \eqref{eq:ponteceso}, and noting that $\sum_j \frac{y_j}{R}\p_j = \p_R$ we have that
\begin{align}
\sum_{i\in [3]} \sum_{\beta \in [3]^m} \int_{B(3C_0)} \mc J_{1, i, \beta} \p_{\beta} \tilde U_i 
&= - \sum_{\ell = 1}^m \sum_{i \in [3]} \sum_{\beta^{(\ell)} \in [3]^{m-1} }
\int_{B(3C_0)} \chi_2\left( \p_R \bar U_R - \frac{\bar U_R}{R} \right)  \cdot | \p_R \p_{\beta^{(\ell)}} \tilde U_i |^2 \notag  \\
&\qquad - \sum_{\ell = 1}^m \sum_{i \in [3]} \sum_{\beta \in [3]^{m} }
\int_{B(3C_0)} \chi_2 \frac{\bar U_R}{R}   | \p_{\beta} \tilde U_i |^2  \notag \\
&= - m \sum_{i \in [3]} \sum_{\tilde \beta \in [3]^{m-1} }
\int_{B(3C_0)} \chi_2\left( \p_R \bar U_R - \frac{\bar U_R}{R} \right)  \cdot | \p_R \p_{\tilde \beta} \tilde U_i |^2 \notag  \\
&\qquad - m \sum_{i \in [3]} \sum_{\tilde \beta \in [3]^{m-1} }
\int_{B(3C_0)} \chi_2 \frac{\bar U_R}{R}   \left( | \p_R \p_{\tilde \beta} \tilde U_i |^2 + | \nabla_{\theta} \p_{\tilde \beta} \tilde U_i |^2 \right) \notag, 
\end{align}
where we decomposed $|\nabla \p_{\tilde \beta} \tilde U_i|^2 = |\p_R\p_{\tilde \beta} \tilde U_i|^2 + |\nabla_{\theta} \p_{\tilde \beta} \tilde U_i|^2$, where $\nabla_{\theta}$ corresponds to all the angular components of the gradient in spherical coordinates. 

Finally, we obtain 
\begin{equation} \label{eq:J1}
\sum_{i\in [3]} \sum_{\beta \in [3]^m} \int_{B(3C_0)} \mc J_{1, i, \beta} \p_{\beta} \tilde U_i = -m \sum_{i=1}^3 \int_{B(3C_0)} \chi_2 \left( \p_R \bar U_R \left| \p_R \nabla^{m-1} \tilde U_i \right|^2 + \frac{\bar U_R}{R} \left| \nabla_\theta \nabla^{m-1} \tilde U_i \right| ^2 \right).
\end{equation}

\subsubsection{Term $\mc{J}_3$}

Similarly as in the case of $\mc J_1$, we compute:
\begin{align*}
\sum_{\beta \in [3]^m }\int_{B(3C_0)} \mc J_{3, \beta} \p_{\beta} \tilde S
&= - \sum_{\ell = 1}^m \sum_{\beta \in [3]^m } \sum_{j \in [3]}\int_{B(3C_0)} \chi_2 (\p_{\beta_\ell} \bar U_j ) \p_{\beta^{(\ell)}} \p_j \tilde S \p_{\beta} \tilde S \\ 
&= - \sum_{\ell = 1}^m \sum_{\beta^{(\ell)} \in [3]^{m-1} } \sum_{j \in [3]} \sum_{\beta_\ell \in [3]}\int_{B(3C_0)} \chi_2 (\p_{\beta_\ell} \bar U_j )  \p_j \p_{\beta^{(\ell)}}  \tilde S \p_{\beta_\ell} \p_{\beta^{(\ell)}} \tilde S. 
\end{align*}
Using again \eqref{eq:pontecesures}, we have that 
\begin{align}
\sum_{\beta \in [3]^m }\int_{B(3C_0)} \mc J_{3, \beta} \p_{\beta} \tilde S
&= - \sum_{\ell = 1}^m \sum_{\beta^{(\ell)} \in [3]^{m-1} } \int_{B(3C_0)} \chi_2 \left( \p_R \bar U_R - \frac{\bar U_R}{R}\right) |\p_R \p_{\beta^{(\ell)}}  \tilde S|^2 \notag \\
&\qquad  - \sum_{\ell = 1}^m \sum_{\beta \in [3]^{m} } \int_{B(3C_0)} \chi_2 \frac{\bar U_R}{R} | \p_{\beta}  \tilde S|^2 \notag \\
&= - m \sum_{\tilde \beta \in [3]^{m-1} } \int_{B(3C_0)} \chi_2 \left( \p_R \bar U_R - \frac{\bar U_R}{R}\right) |\p_R \p_{\tilde \beta}  \tilde S|^2 \notag \\
&\qquad  - m \sum_{\tilde \beta \in [3]^{m-1} } \int_{B(3C_0)} \chi_2 \frac{\bar U_R}{R} \left( \left| \p_R \p_{\tilde \beta}  \tilde S \right|^2 + \left| \nabla_\theta \p_{\tilde \beta}  \tilde S \right|^2 \right) \notag,
\end{align}

\begin{equation} \label{eq:J3}
\sum_{\beta \in [3]^m }\int_{B(3C_0)} \mc J_{3, \beta} \p_{\beta} \tilde S
\leq -m  \int_{B(3C_0)} \chi_2 
\left(
\p_R \bar U_R \left| \p_R \nabla^{m-1}\tilde S \right|^2 + 
\frac{\bar U_R}{R} \left| \nabla_\theta \nabla^{m-1}\tilde S \right|^2.
\right)  
\end{equation}

\subsubsection{Terms $\mc J_{2, i}$ }
%
With respect to $\mc J_2$, we have
\begin{align*}
\sum_{\beta\in [3]^m} \sum_{i=1}^3 \int_{B(3C_0)} \mc J_{2, i, \beta} \p_{\beta} \tilde U_i &= - \sum_{\ell = 1}^m \sum_{i \in [3]}  \sum_{\beta \in [3]^m} \int_{B(3C_0)} \chi_2 \alpha \p_{\beta_\ell} \bar S \p_i \p_{\beta^{(\ell)}} \tilde S \p_{\beta} \tilde U_i \\ 
&= - \sum_{\ell = 1}^m \sum_{i \in [3]}  \sum_{\beta^{(\ell)} \in [3]^{m-1} } \int_{B(3C_0)} \chi_2 \alpha \p_R \bar S  \p_i \p_{\beta^{(\ell)}} \tilde S \sum_{\beta_{\ell} \in [3] }
\frac{y_{\beta_\ell}}{R}  \p_{\beta_\ell} \p_{\beta^{(\ell)}} \tilde U_i \\
&= - \sum_{\ell = 1}^m \sum_{i \in [3]}  \sum_{\beta^{(\ell)} \in [3]^{m-1} } \int_{B(3C_0)} \chi_2 \alpha \p_R \bar S  \p_i \p_{\beta^{(\ell)}} \tilde S 
\p_R  \p_{\beta^{(\ell)}} \tilde U_i \\
&\leq \sum_{\ell = 1}^m   \sum_{\beta^{(\ell)} \in [3]^{m-1} } \int_{B(3C_0)} \chi_2 \alpha |\p_R \bar S| \sum_{i \in [3]} \left( \frac{|\p_i \p_{\beta^{(\ell)}} \tilde S|^2}{2} +
\frac{|\p_R  \p_{\beta^{(\ell)}} \tilde U_i|^2}{2} \right) \\ 
&\leq \sum_{\ell = 1}^m   \int_{B(3C_0)} \chi_2 \alpha |\p_R \bar S| \sum_{\beta \in [3]^{m} }\left( \frac{| \p_{\beta} \tilde S|^2}{2} +
 \sum_{i \in [3]}  \frac{|  \p_{\beta} \tilde U_i|^2}{2} \right).
\end{align*}
We obtain that
\begin{equation} \label{eq:J2}
\sum_{\beta\in [3]^m} \sum_{i=1}^3 \int_{B(3C_0)} \mc J_{2, i, \beta} \p_{\beta} \tilde U_i \leq \frac{m \alpha}{2} \int_{B(3C_0)} \chi_2 |\p_R \bar S| \left( |\nabla^m \tilde S|^2 + | \nabla^m \tilde U |^2 \right).
\end{equation}

\subsubsection{Term $\mc J_{4}$}

We have that
\begin{align*}
\sum_{\beta \in [3]^m} \int_{B(3C_0)} \mc J_{4, \beta} \p_\beta \tilde S 
&= - \sum_{\ell = 1}^m \sum_{i\in [3]} \sum_{\beta \in [3]^m} \int_{B(3C_0)} \chi_2 \alpha \p_{\beta_\ell} \bar S \p_i \p_{\beta^{(\ell)}} \tilde U_i \p_{\beta} \tilde S \\
&= - \sum_{\ell = 1}^m \sum_{i\in [3]} \sum_{\beta^{(\ell)} \in [3]^{m-1}} \int_{B(3C_0)} \chi_2 \alpha \p_R \bar S \p_i \p_{\beta^{(\ell)}} \tilde U_i \sum_{\beta_\ell \in [3]}\frac{y_{\beta_\ell}}{R}\p_{\beta_\ell} \p_{\beta^{(\ell)}} \tilde S \\
&= -  \sum_{\ell = 1}^m \sum_{i\in [3]} \sum_{\beta^{(\ell)} \in [3]^{m-1}} \int_{B(3C_0)} \chi_2 \alpha \p_R \bar S \p_i \p_{\beta^{(\ell)}} \tilde U_i \p_{R} \p_{\beta^{(\ell)}} \tilde S \\
&\leq  \sum_{\ell = 1}^m  \sum_{\beta^{(\ell)} \in [3]^{m-1}} \int_{B(3C_0)} \chi_2 \alpha | \p_R \bar S | \sum_{i\in [3]} \left( \frac{| \p_i \p_{\beta^{(\ell)}} \tilde U_i |^2}{2} + \frac{ | \p_{R} \p_{\beta^{(\ell)}} \tilde S |^2}{2}\right) \\
&\leq \sum_{\ell = 1}^m  \int_{B(3C_0)} \chi_2 \alpha | \p_R \bar S | \sum_{\beta \in [3]^{m}}  \left(  \sum_{i\in [3]} \frac{| \p_{\beta} \tilde U_i |^2}{2} + \frac{ |  \p_{\beta} \tilde S |^2}{2}\right).
\end{align*}
Thus, we obtain that
\begin{equation} \label{eq:J4}
\sum_{\beta \in [3]^m} \int_{B(3C_0)} \mc J_{4, \beta} \p_\beta \tilde S  
\leq \frac{m\alpha}{2} \int_{B(3C_0)} \chi_2 |\p_R \bar S| \left( |\nabla^m \tilde U|^2 + |\nabla^m \tilde S|^2 \right).
\end{equation}

\subsubsection{Conclusion}

Recall the radial and angular repulsivity properties of the profile from \eqref{eq:radial_repulsivity}--\eqref{eq:angular_repulsivity}:
\begin{equation*}
     - \p_R \bar U_R +  \alpha | \p_R \bar S | < 1-\tilde{\eta} \qquad \mbox{ and } \qquad - \frac{\bar U_R}{R} + \alpha |\p_R \bar S | < 1-\tilde{\eta},
\end{equation*}
for some $\tilde{\eta} > 0$. Combining \eqref{eq:J1}, \eqref{eq:J3}, \eqref{eq:J2}, \eqref{eq:J4}, and using the repulsivity properties of the profile, we obtain
\begin{align*}
&\int_{B(3C_0)} \left( \sum_{i=1}^3 (\mc J_{1, i} + \mc J_{2, i}) \nabla^m \tilde U_i + ( \mc J_3 + \mc J_4 ) \nabla^m \tilde S \right) \leq
m (1-\tilde{\eta}) \left( \| \nabla^m \tilde U \|_{L^2}^2 + \| \nabla^m \tilde S \|_{L^2}^2 \right),
\end{align*}
for some $\tilde{\eta} > 0$ fixed.\footnote{$\tilde{\eta}$ depends only on the profile from \cite{Buckmaster-CaoLabora-GomezSerrano:implosion-compressible}, that is, it depends on $r$ and $\gamma$. We let $r$, $\gamma$ (and therefore our profile) to be fixed throughout all this work.} Now, we plug this back into \eqref{eq:alcorcon}, and get
\begin{align*}
\int_{B(3C_0)} \left( \nabla^m \mc L_u \cdot \nabla^m \tilde U + \nabla^m \mc L_s \nabla^m \tilde S \right) 
 &\leq C \| (\tilde U, \tilde S )\|_X^2 - \int_{B(3C_0)} \left( \frac{J(1-\chi_1)}{2} + m \chi_2 \right)  \left( |\nabla^m \tilde U|^2 + (\nabla^m \tilde S)^2 \right)  \\
&\quad + m (1-\tilde{\eta}) \left( \| \nabla^m \tilde U \|_{L^2}^2 + \| \nabla^m \tilde S \|_{L^2}^2 \right).
\end{align*}
Since $J$ is sufficiently large depending on $m$, we can assume $J \geq 2m$. Noting that $\chi_1$ is $0$ whenever $\chi_2 < 1$ (that is, $\chi_1 = 0$ for $|y| \geq 2C_0$), we see that $\left( \frac{J(1-\chi_1)}{2} + m \chi_2 \right) \geq m$. Therefore
\begin{align*}
\int_{B(3C_0)} \left( \nabla^m \mc L_u \cdot \nabla^m \tilde U + \nabla^m \mc L_s \nabla^m \tilde S \right) 
 &\leq C \| (\tilde U, \tilde S )\|_X^2 -  m \tilde{\eta} \int_{B(3C_0)}  \left( |\nabla^m \tilde U|^2 + (\nabla^m \tilde S)^2 \right)  \\
 & \leq \left( C- \frac{m\tilde{\eta}}{2} \right) \| (\tilde U, \tilde S )\|_X^2.
\end{align*}
Here we used that for $(\tilde U, \tilde S) \in X_K$ we have that $\|\nabla^m \tilde U \|_{L^2}^2 + \| \nabla^m \tilde S \|_{L^2}$ is greater than $\frac12 \| (\tilde U, \tilde S)\|_X^2$.

Taking $m$ sufficiently large so that $\frac{m\tilde{\eta}}{2} > C+1$, we conclude the desired inequality: 
\begin{equation} \label{eq:diss_final}
    \int \nabla^m \mc L_u \cdot \nabla^m \tilde U + \nabla^m \mc L_s \cdot \nabla^m \tilde S \leq - \|(\tilde U, \tilde S ) \|_X^2, \qquad \forall (\tilde U, \tilde S) \in X_K.
\end{equation}

\subsection{Maximality} \label{sec:maximality}
In this section we show that $\mc L:X \to X$ satisfies that $\mc L - \lambda$ is surjective for some sufficiently large $\lambda$. As a consequence, we will obtain that $\mc L$ is a maximally dissipative operator modulo compact perturbations, and this will give a good description of the spectra of $\mc L$ and $\mc L^\ast$ that will allow us to define the spaces of stability and instability of $\mc L$.

 Now we show an equivalent definition of maximality:
 \begin{lemma}\label{maximalitylemma}
 For a dissipative operator $A$ defined on a linear subspace $D(A)$ of a Hilbert space X, if there exists $\lambda_0>0$ such that $\lambda_0I-A$ is surjective, then for all $\lambda>0$, $\lambda I-A$ is surjective.
 \end{lemma}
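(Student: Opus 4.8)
The plan is to run the standard Neumann-series continuation argument underlying the surjectivity half of the Lumer--Phillips theorem. First I would record the a priori bound coming from dissipativity: for every $\lambda>0$ and $x\in D(A)$,
\[
\|(\lambda I-A)x\|^2=\lambda^2\|x\|^2-2\lambda\,\Re\langle Ax,x\rangle+\|Ax\|^2\geq \lambda^2\|x\|^2,
\]
so $\|(\lambda I-A)x\|\geq\lambda\|x\|$. In particular $\lambda I-A$ is injective for every $\lambda>0$; hence, if it is moreover surjective, it is a bijection of $D(A)$ onto $X$ whose inverse is bounded with $\|(\lambda I-A)^{-1}\|\leq\lambda^{-1}$.

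Next I would use the hypothesis at $\lambda_0$ to propagate surjectivity to a neighbourhood. Since $D(\lambda I-A)=D(\lambda_0 I-A)=D(A)$, we may factor, for $\lambda>0$,
\[
\lambda I-A=\bigl(I+(\lambda-\lambda_0)(\lambda_0 I-A)^{-1}\bigr)(\lambda_0 I-A).
\]
The first factor is invertible by a Neumann series whenever $|\lambda-\lambda_0|\,\|(\lambda_0 I-A)^{-1}\|<1$, which by the bound above holds whenever $|\lambda-\lambda_0|<\lambda_0$; as the second factor is already a bijection onto $X$, this shows $\lambda I-A$ is surjective for all $\lambda\in(0,2\lambda_0)$.

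Finally I would bootstrap this local statement to the global one. Let $S=\{\lambda>0:\lambda I-A\text{ is surjective}\}$, which is nonempty since $\lambda_0\in S$. The argument just given shows that for each $\mu\in S$ one has $(0,2\mu)\subseteq S$, whence $\sup S\geq 2\mu$ for every $\mu\in S$, i.e. $\sup S\geq 2\sup S$; for a nonempty $S$ with $\sup S>0$ this forces $\sup S=\infty$, and then $S\supseteq\bigcup_{\mu\in S}(0,2\mu)=(0,\infty)$. I do not expect a genuine obstacle: the only point requiring a little care is that the dissipativity inequality must be used in the form $\Re\langle Ax,x\rangle\leq 0$ so that the resolvent bound $\|(\lambda I-A)^{-1}\|\leq\lambda^{-1}$ is uniform in $\lambda$, and that the domains of $\lambda I-A$ and $\lambda_0 I-A$ coincide so the factorization makes sense; after that the continuation is purely formal.
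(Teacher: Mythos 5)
Your proposal is correct and follows essentially the same strategy as the paper: derive the uniform resolvent bound $\|(\lambda I-A)^{-1}\|\leq\lambda^{-1}$ from dissipativity, apply a Neumann-series perturbation to extend surjectivity from $\lambda_0$ to $(0,2\lambda_0)$, and iterate. The paper phrases the final step as ``induction on $\lambda$'' whereas you argue via a supremum, but these are the same bootstrap.
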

 \begin{proof}
Since $\lambda_0I -A$ is surjective,  for any $g\in X$, there exists $f\in X$ such that $(\lambda_0I-A)g=f$. From the dissipativity, we also have
\begin{align*}
\langle g,f\rangle_X=\langle (\lambda_0I -A)f,f\rangle _{X}\geq \lambda_0 \langle f,f\rangle_{X}.
\end{align*}
Therefore, $\lambda_0I-A$ is invertible and 
\[
\|(\lambda_0I-A)^{-1}\|\lesssim \frac{1}{\lambda_0}.
\]
Then the convergence radius of $(\lambda I-\lambda_0 I-A)^{-1}$ is  at least $\lambda_0$. Hence for all $\lambda \in (0,2\lambda_0)$, $\lambda I-A$ is surjective. By induction on $\lambda$, we have the result.
 \end{proof}

We now introduce a energy estimate Lemma in $X$. 
\begin{lemma} \label{lemma:rough_energy_bound}
There exists  some constant $C_{J, m}$ (depending on $m$ and $J$), such that
\begin{equation*}
\langle \mc L(\tilde{U},\tilde{S}), (\tilde{U},\tilde{S}) \rangle_X \leq C_{J, m} \| (\tilde{U},\tilde{S}) \|_X^2, \qquad \forall (\tilde{U},\tilde{S}) \in X.
\end{equation*}
\end{lemma}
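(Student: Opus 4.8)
The plan is to expand $\langle \mc L(\tilde U,\tilde S),(\tilde U,\tilde S)\rangle_X$ using the definition of the inner product on $X$ and to bound each resulting piece crudely: because the constant is allowed to depend on $m$ and $J$, none of the delicate cancellations of Section \ref{sec:dissipativity} are needed, so the estimate reduces to bookkeeping together with a single integration by parts. First I would write
\[
\langle \mc L(\tilde U,\tilde S),(\tilde U,\tilde S)\rangle_X = \int_{B(3C_0)}\!\big(\nabla^m\mc L_u\cdot\nabla^m\tilde U + \nabla^m\mc L_s\cdot\nabla^m\tilde S\big) + \int_{B(3C_0)}\!\big(\mc L_u\cdot\tilde U + \mc L_s\tilde S\big).
\]
The zeroth-order piece is immediate: $\mc L_u,\mc L_s$ are first-order differential operators whose coefficients (namely $\chi_2(y+\bar U)$, $\alpha\chi_2\bar S$, $J(1-\chi_1)$, $\chi_2(r-1)$, $\chi_2\nabla\bar U$, $\alpha\chi_2\nabla\bar S$) are all bounded on $B(3C_0)$ by a constant depending only on $J$ (recall $C_0$, $r$, $\gamma$ and the profile are fixed throughout), so $\|\mc L_u\|_{L^2}+\|\mc L_s\|_{L^2}\lesssim_J\|(\tilde U,\tilde S)\|_{H^1}\leq\|(\tilde U,\tilde S)\|_X$ and Cauchy--Schwarz bounds that integral by $C_J\|(\tilde U,\tilde S)\|_X^2$.

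For the top-order piece I would distribute $\nabla^m$ over each product in $\mc L_u,\mc L_s$ by Leibniz. Every resulting term in which at least one of the $m$ derivatives lands on a coefficient ($\chi_1$, $\chi_2$, $\bar U$, $\bar S$, or a factor $y_j$) leaves at most $m$ derivatives on $(\tilde U,\tilde S)$; since $\|\chi_2\|_{W^{m,\infty}}$, $\|J(1-\chi_1)\|_{W^{m,\infty}}$, $\|\bar U\|_{W^{m,\infty}}$, $\|\bar S\|_{W^{m,\infty}}$ and $\|y\|_{W^{1,\infty}(B(3C_0))}$ are all finite, each such term is bounded in $L^2(B(3C_0))$ by $C_{J,m}\|(\tilde U,\tilde S)\|_X$, and pairing with $\nabla^m\tilde U$ or $\nabla^m\tilde S$ and summing over the $O(3^m)$ such terms contributes $\leq C_{J,m}\|(\tilde U,\tilde S)\|_X^2$. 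The terms where all $m$ derivatives fall on the perturbation split in two. Those originating from the non-transport summands of $\mc L_u,\mc L_s$ (the $J(1-\chi_1)\tilde U$, $(r-1)\tilde U$, $\tilde U\cdot\nabla\bar U$, $\tilde S\nabla\bar S$ pieces and their $s$-analogues) still carry only $m$ derivatives on the perturbation against a bounded coefficient, so Cauchy--Schwarz again gives $\leq C_{J,m}\|(\tilde U,\tilde S)\|_X^2$ (the $-J\int(1-\chi_1)|\nabla^m\tilde U|^2$ contribution is even favourably signed). Those originating from the transport summands $-\chi_2(y+\bar U)\cdot\nabla\tilde U$, $-\alpha\chi_2\bar S\nabla\tilde S$ (and their $s$-versions) carry $m+1$ derivatives on the perturbation and are the only place requiring integration by parts: exactly as in Lemma \ref{lemma:highderterms}, one has
\[
\int_{B(3C_0)}\!\!\big(-\chi_2(y+\bar U)\cdot\nabla\partial_\beta\tilde U_i\big)\,\partial_\beta\tilde U_i = \tfrac12\!\int_{B(3C_0)}\!\!\text{div}\big(\chi_2(y+\bar U)\big)\,(\partial_\beta\tilde U_i)^2,
\]
with no boundary term because $\chi_2$ is supported in $B(5C_0/2)\subset B(3C_0)$, and similarly for the $\tilde S$ transport term; meanwhile the two mixed contributions $-\alpha\chi_2\bar S\,\partial_\beta\partial_i\tilde S\,\partial_\beta\tilde U_i$ and $-\alpha\chi_2\bar S\,\partial_\beta\,\text{div}\,\tilde U\,\partial_\beta\tilde S$ combine into $\alpha\int\nabla(\chi_2\bar S)\cdot\partial_\beta\tilde U\,\partial_\beta\tilde S$. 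After this integration by parts all the resulting integrands involve only $m$ derivatives of the perturbation times a bounded coefficient, so Cauchy--Schwarz yields $\leq C_{J,m}\|(\tilde U,\tilde S)\|_X^2$.

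Summing the three contributions then gives $\langle\mc L(\tilde U,\tilde S),(\tilde U,\tilde S)\rangle_X\leq C_{J,m}\|(\tilde U,\tilde S)\|_X^2$, as claimed. The only step I expect to need any thought is the integration by parts on the transport terms — specifically checking that no boundary contribution appears — which works precisely because of the support conditions on $\chi_1$ and $\chi_2$ built into the definition of $\mc L$ in \eqref{eq:cutoffL}; everything else is routine Leibniz-rule and Cauchy--Schwarz bookkeeping, with constants permitted to blow up in $m$ and $J$.
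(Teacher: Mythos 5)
Your proof is correct and follows essentially the same route as the paper's: isolate the transport terms carrying $m+1$ derivatives on the perturbation, integrate them by parts (using the compact support of $\chi_2$ to avoid boundary terms, exactly as in Lemma~\ref{lemma:highderterms}), and control every remaining contribution — which carries at most $m$ derivatives on the perturbation — by Cauchy--Schwarz with an $(m,J)$-dependent constant. The only (cosmetic) point worth flagging is that $\|(\tilde U,\tilde S)\|_{H^1}\leq\|(\tilde U,\tilde S)\|_X$ should really be $\lesssim_m$ rather than $\leq$, since $\|\cdot\|_X$ only records the $L^2$ and $\dot H^m$ seminorms and the intermediate derivatives come from interpolation on the bounded domain; this is harmless because the lemma allows the constant to depend on $m$.
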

\begin{proof}
Since $C_m$ is allowed to depend on $m$, the proof is straightforward and follows from integration by parts. From \eqref{Lequation}, we have
\[
\nabla^{m}\mc L_u =  -\chi_{2} ((y + \bar U) \cdot \nabla ) (\nabla^{m} \tilde{U}) -\chi_{2} \alpha \bar S  (\nabla^{m+1}\tilde S) + \mathcal T_{u},
\]
\[
\nabla^{m}\mc L_s =  -\chi_{2} ((y + \bar U) \cdot \nabla ) (\nabla^{m} \tilde S) -\chi_{2} \alpha \bar S \nabla^{m}\div (\tilde {U}) + \mathcal T_{s},
\]
where the terms $\mc T_{u}$, $\mc T_{s}$ have at most $m$ derivatives on $U$ or $S$. Thus, they satisfy

\[
\| \mathcal{T}_{\circ}\|_{L^2} \lesssim_{J, m} \| \nabla^m \tilde{U} \|_{L^2} + \| \nabla^m \tilde{S} \|_{L^2} + \| \tilde{U} \|_{L^2} + \| \tilde{S} \|_{L^2} \les \| (\tilde{U},\tilde{S}) \|_X, \qquad \mbox{ for } \circ \in \{ u, s \}.
\]
Using those bounds and integrating by parts the top order terms, we get
\begin{align*}
&\langle \nabla^{m} \mc L_{u}, \nabla^{m} \tilde{U} \rangle_{L^2} + \langle \nabla^{m} \mc L_{s}, \nabla^{m} \tilde{S} \rangle_{L^2} \\
&\quad \leq \int \left| \nabla(\chi_{2} (y + \bar U)) \right| \left( |\nabla^{m} \tilde{U}|^2 + |\nabla^{m} \tilde{S}|^2 \right) dy + \int |\div(\chi_2 \al \bar{S} )| | \nabla^{m} \tilde{U}| | \nabla^{m} \tilde{S}| dy + C_{J, m}' \|(\tilde{U},\tilde{S}) \|_{X}^{2}\\
&\quad \leq C_{J, m} \|(\tilde{U},\tilde{S})\|_{X}^{2}.
\end{align*}
\end{proof}
Now, we state the main Lemma of this section.

\begin{lemma}[Maximality of $\mc L$] \label{lemma:maximality} There exists some $\lambda$ (depending on $m$ and $J$) such that $\mc L - \lambda : X \to X$ is a surjective operator.
\end{lemma}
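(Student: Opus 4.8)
The plan is to prove surjectivity of $\mc L - \lambda$ on $X$ via a Galerkin approximation in Fourier modes on the cube $Q$, combined with uniform control on the real part of the spectrum of the finite-dimensional truncations, as outlined in Section \ref{sec:12}. Fix $g = (g_u, g_s) \in X$; we seek $f = (\tilde U, \tilde S) \in X$ with $(\mc L - \lambda) f = g$. First I would set up the Galerkin projections $P_N$ onto the span of the first $N$ Fourier modes on $Q$ (intersected with the subspace of functions supported in $B(0,3C_0)$, or rather working in $\tilde X$ first and recovering the support condition at the end via the cutoffs already present in $\mc L$), and consider the finite-dimensional problem $(P_N \mc L P_N - \lambda) f_N = P_N g$. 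Since $P_N \mc L P_N$ is a linear operator on a finite-dimensional space, this is solvable as long as $\lambda$ is not an eigenvalue of $P_N \mc L P_N$; the key point is to choose $\lambda$ (depending on $m$, $J$) once and for all so that this holds for every $N$.

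The mechanism for that is Lemma \ref{lemma:rough_energy_bound}: for any $f_N$ in the image of $P_N$ we have $\langle P_N \mc L P_N f_N, f_N \rangle_X = \langle \mc L f_N, f_N \rangle_X \le C_{J,m} \| f_N \|_X^2$, so every eigenvalue of $P_N \mc L P_N$ has real part at most $C_{J,m}$. Hence for any $\lambda > C_{J,m}$ the operator $P_N \mc L P_N - \lambda$ is invertible on the range of $P_N$, uniformly in $N$, with $\| (P_N \mc L P_N - \lambda)^{-1} \| \le (\lambda - C_{J,m})^{-1}$. This immediately gives the uniform bound $\| f_N \|_X \le (\lambda - C_{J,m})^{-1} \| g \|_X$. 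So the $f_N$ are bounded in the Hilbert space $X$, and after passing to a subsequence $f_N \rightharpoonup f$ weakly in $X$.

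The remaining step is to pass to the limit in $(P_N \mc L P_N - \lambda) f_N = P_N g$ and identify $(\mc L - \lambda) f = g$. Here I would exploit that $X \hookrightarrow X'$ compactly, where $X'$ is the same type of space with $m$ replaced by $m-1$ (Rellich on the bounded domain $B(0,3C_0)$ with the given boundary conditions), so that $f_N \to f$ strongly in $H^{m-1}$. Since $\mc L$ is a first-order differential operator (with smooth, bounded coefficients on the relevant region, thanks to the cutoffs $\chi_1, \chi_2$ and the decay \eqref{eq:profiles_decay} of $\bar U, \bar S$), $\mc L f_N \to \mc L f$ in $H^{m-2}$, hence in the sense of distributions; and $P_N g \to g$, $P_N \mc L P_N f_N \to \mc L f$ as distributions (the outer $P_N$ converges to the identity strongly, the inner one is absorbed since $P_N f_N = f_N$). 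Therefore $(\mc L - \lambda) f = g$ as distributions. Finally, since $g \in X \subset H^m$ and $\lambda f \in L^2$, the equation $\mc L f = g + \lambda f$ together with elliptic-in-the-good-direction / transport regularity along the vector field $\chi_2(y + \bar U)$ upgrades $f$ to $H^m$, so $f \in X$; one also checks the support condition $\supp f \subset B(0, 3C_0)$ is inherited (the cutoff $J(1-\chi_1)$ term in $\mc L$ forces $f$ to vanish where $\chi_1 = 0$ outside $B(0, 3C_0/2)$ is replaced by the appropriate annulus — more precisely $\mc L$ maps into functions supported where $\chi_2 \ne 0$ plus the damping region, and a short argument using the transport structure pins down the support). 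Taking $\lambda$ to be any fixed constant exceeding $C_{J,m}$ then proves the claim.

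The main obstacle I anticipate is the last step: passing to the weak limit is soft, but verifying that the limit $f$ genuinely lies in $X$ — i.e. recovering the full $H^m$ regularity and the compact-support / vanishing-at-boundary conditions built into the definition of $X$ — requires care, because $\mc L$ is not elliptic and the Galerkin projections do not respect the support constraint. The resolution is to observe that the "bad" directions are transported by the smooth vector field $y + \bar U$ (whose flow is complete and, on the support of $\chi_2$, controlled), so propagation of regularity and of support along characteristics recovers both properties; alternatively one works from the start in the Fourier basis on $Q$ and uses the explicit structure of $\mc L$ (the $-J(1-\chi_1)$ damping plus the $\chi_2$-localized transport and pressure terms) to see that the weak limit automatically satisfies the required boundary behaviour. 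This is exactly the kind of singular-PDE-in-3D difficulty flagged in Section \ref{sec:12}, and it is why the Galerkin route — which only needs the one-sided bound of Lemma \ref{lemma:rough_energy_bound} rather than a direct solution of the PDE — is the natural approach.
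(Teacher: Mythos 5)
Your Galerkin strategy is essentially the paper's: truncate to $P_N\mc L P_N$, use Lemma \ref{lemma:rough_energy_bound} to obtain a uniform numerical-range bound (which gives the resolvent bound $\|(P_N\mc L P_N-\lambda)^{-1}\|\leq(\lambda-C_{J,m})^{-1}$ by the standard dissipativity estimate $\Re\langle(\mc L_N-\lambda)f,f\rangle\leq-(\lambda-C_{J,m})\|f\|^2$ — note that an eigenvalue bound alone would not suffice for a non-normal operator, so be sure you are using the numerical-range inequality and not just its spectral consequence), extract a weak limit, and pass to the limit via Rellich compactness. The one place you over-complicate is the closing step of showing $f\in X$: the $H^m$ regularity is automatic because the weak limit of a sequence bounded in $H^m$ lies in $H^m$ (no transport or propagation-of-regularity argument is needed), and the compact-support/vanishing conditions follow from the elementary observation that on $|y|\geq\tfrac52 C_0$ both $\chi_1$ and $\chi_2$ vanish, so there $\mc L$ is just multiplication by $-J$ and $(\mc L-\lambda)f=F$ reduces to $f=F/(-J-\lambda)$, forcing $f$ to inherit the support of $F\in X$.
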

\begin{proof}

Let us define the operator $\mc L_N = P_N (\mc L_u, \mc L_s) P_N$ to be a two-sided truncation in frequency of $\mc L$, where $P_N$ is a projection to frequencies $\leq N$. Note that $P_N$ is defined from $H^m (\mathbb T_{4 C_0})$ to itself (it does not map $X$ to itself, because $P_N (f_u, f_s)$ does not need to satisfy the vanishing conditions that $(f_u, f_s) \in X$ satisfy). In particular, $P_N u \in C^\infty$. 

Now, from Lemma \ref{lemma:rough_energy_bound}, we see that
\begin{equation*}
\langle \mc L_N (\tilde{U},\tilde{S}), (\tilde{U},\tilde{S}) \rangle_{H^m} = \langle \mc L P_N (\tilde{U},\tilde{S}), P_N (\tilde{U},\tilde{S}) \rangle_{H^m} \leq C_{J, m} \| (\tilde{U},\tilde{S}) \|_{H^m}^2,
\end{equation*}
and we obtain that $\mc L_N - C_{J, m}$ is a dissipative operator.

Since the cokernel and image of $\mc L_N$ are both finite dimensional spaces, it is clear that
\begin{equation*}
\| \mc L_N (\tilde{U},\tilde{S}) \|_{H^m} \leq C'_{N,J,m} \| (\tilde{U},\tilde{S}) \|_{H^m}
\end{equation*}
for some constant $C'_{N,J,m}$. In particular, $\mc L_N - 2C_{J,m} - 2C_{N,J,m}'$ is surjective. Here $\mathcal L_{N}$ is a bounded operator, because $\| \frac{ \mathcal L_N}{2C_{J,m}+2C_{N,J,m}^{'}} \|_{H^m}< 1$, so we know $ \frac{\mc L_N}{2C_{J,m} + 2C_{N,J,m}'} -I$ is invertible. Using Lemma \ref{maximalitylemma} we conclude that $\mc L_N - C_{J, m}$ is maximally dissipative. 

This gives the bound
\begin{equation*}
\left\| ( \mc L_N - \lambda )^{-1} \right\|_X \lesssim \frac{1}{\lambda - C_{J, m}}
\end{equation*}
for $\lambda > C_{J, m}$. We take $\lambda = C_{J, m}+1$ and show that $\mc L - \lambda$ is surjective. From the equation above, we have $\| (\mc L_N - \lambda)^{-1} \|_{H^m} \leq 1$.

In order to solve the equation $(\mc L - \lambda  )f = F$, we define $f_N = (\mc L_N - \lambda)^{-1} F$, which are bounded in $X$ due to the bound above. Thus, since $X$ is a separable Hilbert space, there exists some subsequence converging weakly, $f_{N_i} \rightharpoonup f$, to some $f\in X$. Since we are in a bounded domain, the Rellich-Kondrachov compact embedding theorem gives us that the convergence $f_{N_i} \to f$ is strong in the topology of $H^{m-3}$ (with $f \in H^{m}$).

Recalling 
\begin{equation*}
(\mc L_N - \lambda )f_N = F,
\end{equation*}
and denoting $Y = H^{m-10}$, we obtain 
\begin{align*}
\left\| (\mc L - \lambda  )f - F \right\|_Y &\leq 
\underbrace{ \| P_N (\mc L - \lambda) P_N f - P_{N}F\|_Y }_0
+\|F - P_{N}F\|_Y+ \| P_N(\mc L - \lambda ) P_N f - (\mc L - \lambda) f\|_Y \\
&\leq  \|F - P_{N}F\|_Y+\lambda \| P_N f - f \|_Y + \| P_N \mc L (P_N f - f) \|_Y + \| (P_N - \text{Id} ) \mc L f \|_Y
\end{align*}
Let us take $N = N_i$ and $i \to \infty$. Since $X \subset Y$, the first two  summands clearly tend to zero. Moreover, $\|P_N f - f \|_{H^m} \to 0$, combined with the fact that $\| \mc L f\|_Y \les_{J, m} \| f \|_{H^m}$ yields that the third summand also tends to zero. The fourth summand tends to zero given that $\mc L f \in Y$. In summary,  
\begin{equation*}
\| (\mc L  - \lambda ) f - F \|_Y = 0
\end{equation*}
and we get
\begin{equation*}
\mc L f - \lambda f = F.
\end{equation*}
Then we are left to show $f = (\tilde{U},\tilde{S}) \in X$.  From the definition of $\chi_1, \chi_2$ before \eqref{eq:cutoffL}, we have that for $|y| \geq 5C_0/2$
\[
\mc L_{u}(\tilde{U},\tilde{S})=-J\tilde{U}, \qquad \mbox{ and } \qquad \mc L_{s}(\tilde{U},\tilde{S})=-J\tilde{S}.
\]
Therefore, for $|y| \geq 5C_0/2$
\[f= \left( \frac{F_U}{-J-\lambda}, \frac{F_S}{-J-\lambda} \right).
\]
Then since $F=(F_{U}, F_{S})\in (H^{m}_0(B(0,3C_0)))^2$, $(\tilde{U},\tilde{S})\in (H^{m})^2$ , we have $(\tilde{U},\tilde{S})\in (H^{m}_0(B(0,3C_0)))^2 $.

\end{proof}

We conclude the maximal dissipativity modulo compact perturbations. 

\begin{proposition}\label{prop:maxdismod} There exist $m_0$ and some $J_m$ such that for every $m\geq m_0$, and for every $J\geq J_m$, we have the following. For any $\delta_g \in (0, 1)$, we can express the cut-off linearized operator $\mc L$ as:
\begin{equation}\label{eq:maxdismod}
\mc L = \mc L_0 - \delta_g + \mc K,
\end{equation}
where $\mc L_0: X \to X$ is a maximally dissipative operator and $\mc K$ is a compact operator, both depending on $\delta_g$ (as well as $m$ and $J$).
\end{proposition}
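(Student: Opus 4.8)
The plan is to obtain $\mc L_0$ by modifying $\mc L$ only on the finite-dimensional orthogonal complement of $X_K$, using the strict dissipativity \eqref{eq:diss_final} on $X_K$ together with Lemmas \ref{lemma:rough_energy_bound} and \ref{lemma:maximality}. Let $\Pi\colon X\to X$ be the $X$-orthogonal projection onto $X_K$. Since $X_K$ is the kernel of the continuous finite-rank operator on $X$ given by reading off the Fourier coefficients with $|k|<K$, the range $Z:=(I-\Pi)X$ is finite-dimensional; moreover, writing out the equation satisfied by the projection shows that elements of $Z$ solve, in the weak sense, a $2m$-th order elliptic boundary value problem with smooth right-hand side and Dirichlet-type conditions at $\partial B(0,3C_0)$, so $Z\subset H^{m+1}$ by elliptic regularity, all norms on $Z$ are equivalent, and $Z\subset D(\mc L)\cap D(\mc L^{\ast})$. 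Fix $\delta_g\in(0,1)$ and a constant $N>0$ to be chosen, and put
\begin{equation*}
\mc K:=N(I-\Pi),\qquad \mc L_0:=\mc L+\delta_g-\mc K,\qquad D(\mc L_0):=D(\mc L).
\end{equation*}
Then $\mc K$ is finite-rank (hence compact) and nonnegative, \eqref{eq:maxdismod} holds by construction, and it remains to check that $\mc L_0$ is maximally dissipative.

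For dissipativity, let $v\in D(\mc L_0)=D(\mc L)$ and split $v=v_1+v_2$ with $v_1:=\Pi v\in X_K\cap D(\mc L)$ and $v_2:=(I-\Pi)v\in Z$ (note $v_1=v-v_2\in D(\mc L)$ since $Z\subset D(\mc L)$). Since $\langle\mc K v,v\rangle_X=N\|v_2\|_X^2$,
\begin{equation*}
\Re\langle\mc L_0 v,v\rangle_X=\Re\langle\mc L v_1,v_1\rangle_X+\Re\langle\mc L v_1,v_2\rangle_X+\Re\langle\mc L v_2,v_1\rangle_X+\Re\langle\mc L v_2,v_2\rangle_X+\delta_g\|v\|_X^2-N\|v_2\|_X^2.
\end{equation*}
By \eqref{eq:diss_final}, $\Re\langle\mc L v_1,v_1\rangle_X\le-\|v_1\|_X^2$. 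For the three remaining $\mc L$-terms, integrating by parts once (there are no boundary contributions because $\chi_2$ is supported strictly inside $B(3C_0)$) moves the one spare derivative onto $v_2$, and since $v_2$ ranges over the fixed finite-dimensional space $Z\subset H^{m+1}$ one has $\|v_2\|_{H^{m+1}}\le C_Z\|v_2\|_X$, so these terms are bounded by $C_Z(\|v_1\|_X\|v_2\|_X+\|v_2\|_X^2)$ with $C_Z$ depending only on $Z$ (hence on $m$ and $J$). Applying Young's inequality to the cross term $C_Z\|v_1\|_X\|v_2\|_X$ with weight $1-\delta_g$, the $\|v_1\|_X^2$-contributions combine to $-\tfrac{1-\delta_g}{2}\|v_1\|_X^2\le0$, while the $\|v_2\|_X^2$-contributions are at most $(C_Z'-N)\|v_2\|_X^2$ for some $C_Z'$ depending on $C_Z$ and $\delta_g$; choosing $N\ge C_Z'$ makes this nonpositive as well. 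Hence $\Re\langle\mc L_0 v,v\rangle_X\le0$ for all $v\in D(\mc L_0)$, which is dense in $X$.

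For the range requirement it suffices, by Lemma \ref{maximalitylemma}, to exhibit a single $\lambda_0>0$ with $\lambda_0 I-\mc L_0$ surjective. Take $\lambda^{\ast}=C_{J,m}+1$ (the constant of Lemma \ref{lemma:rough_energy_bound}), so that by the proof of Lemma \ref{lemma:maximality} the operator $\mc L-\lambda^{\ast}$, hence $A_0:=\lambda^{\ast}I-\mc L$, is surjective, and set $\lambda_0:=\lambda^{\ast}+\delta_g$, so that $\lambda_0 I-\mc L_0=A_0+\mc K$. By Lemma \ref{lemma:rough_energy_bound}, $\Re\langle A_0 f,f\rangle_X=\lambda^{\ast}\|f\|_X^2-\Re\langle\mc L f,f\rangle_X\ge\|f\|_X^2$, so $A_0$ is injective with $\|A_0^{-1}\|_X\le1$; together with surjectivity, $A_0$ is boundedly invertible and $A_0+\mc K=A_0(I+A_0^{-1}\mc K)$ with $A_0^{-1}\mc K$ compact. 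By the Fredholm alternative $I+A_0^{-1}\mc K$ is surjective iff injective; and $(I+A_0^{-1}\mc K)f=0$ forces $(\lambda_0 I-\mc L_0)f=0$, i.e.\ $\mc L_0 f=\lambda_0 f$, which is incompatible with $\Re\langle\mc L_0 f,f\rangle_X\le0<\lambda_0\|f\|_X^2$ unless $f=0$. Thus $I+A_0^{-1}\mc K$, and therefore $\lambda_0 I-\mc L_0$, is surjective. Combined with dissipativity and Lemma \ref{maximalitylemma}, this shows $\mc L_0$ is maximally dissipative, which completes the proof.

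The step I expect to be the main obstacle is the dissipativity estimate, specifically the control of the cross term $\langle\mc L v_1,v_2\rangle_X$: here $v_1$ runs over directions along which $\mc L$ is unbounded as an operator on $X$, so one cannot afford a factor $\|v_1\|_{H^{m+1}}$ and must instead integrate by parts to land the spare derivative on $v_2$. This is legitimate precisely because $v_2$ lies in the fixed finite-dimensional smooth space $Z$ (where $\|v_2\|_{H^{m+1}}\le C_Z\|v_2\|_X$) and because \eqref{eq:diss_final} provides the strict gain $-\|v_1\|_X^2$, which — after the harmless shift by $\delta_g<1$ — leaves exactly enough margin for Young's inequality. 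The only other point requiring care is the order in which constants are fixed ($m$, then $J$, then $K$ and hence $Z$ and $C_Z$, then $\delta_g$, then $N$), which is what forces $\mc K$ and $\mc L_0$ to depend on $\delta_g$ as well as on $m$ and $J$.
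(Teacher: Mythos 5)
Your construction is correct and, as far as I can tell, reconstructs exactly what the paper outsources to [Corollary 7.7, Buckmaster--Cao-Labora--G\'omez-Serrano]: combine the strict dissipativity on the finite-codimension subspace $X_K$ \eqref{eq:diss_final} with the surjectivity from Lemma \ref{lemma:maximality}, via a finite-rank perturbation supported on $X_K^{\perp}$ and the Fredholm alternative. The choice $\mc K = N(I-\Pi)$ is a natural and clean way to do this, the Young's-inequality bookkeeping closes, and the maximality step via $A_0(I+A_0^{-1}\mc K)$ together with Lemma \ref{maximalitylemma} is sound.

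One place where you should be more careful is the assertion ``$Z\subset H^{m+1}$.'' The Riesz representatives $g_k$ of the functionals $f\mapsto\hat f(k)$ solve $(-\Delta)^m g_k + g_k = e^{-ik\cdot y}/|Q|$ on $B(0,3C_0)$ with Dirichlet data, so they are indeed $C^\infty$ on $\overline{B(0,3C_0)}$; but their zero extensions to $Q$ are still only in $H^m(Q)$, not $H^{m+1}(Q)$, because $\nabla^m g_k$ generally jumps across $\partial B(0,3C_0)$. What your argument actually uses, and what is true, is \emph{interior} smoothness: the extra derivative $\nabla^{m+1}v_2$ that appears after integrating by parts only ever gets multiplied by coefficients supported in $\supp\chi_2\subset B(0,5C_0/2)\Subset B(0,3C_0)$, and the zeroth-order part $-J(1-\chi_1)$ contributes no extra derivative. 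Similarly, $Z\subset D(\mc L)$ needs the small observation that near $\partial B(0,3C_0)$ the operator reduces to $-J\,\mathrm{Id}$, so $\mc L v_2$ retains the $H^m_0$ boundary conditions and stays in $X$. These are not gaps in the logic but they should be stated precisely, since ``$Z\subset H^{m+1}$'' as written (in the ambient $H^{m}(Q)$ topology of $X$) is false.

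One cosmetic remark: you invoke $Z\subset D(\mc L^{\ast})$ but never actually use it; the cross terms are handled by direct integration by parts rather than by passing to the adjoint.
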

\begin{proof} The proof is a consequence of: \begin{itemize}
\item $\langle \mc L (\tilde U, \tilde S), (\tilde U, \tilde S) \rangle_X \leq -\| (\tilde U, \tilde S) \|_X^2$ for every $(\tilde U, \tilde S) \in X_K$ (by \eqref{eq:diss_final}).
\item $\mc L - \lambda : X\to X$ is surjective for sufficiently large $\lambda$ (by Lemma \ref{lemma:maximality}).
\end{itemize}
The argument that combines those two facts to obtain the statement of the Proposition can be found in \cite[Corollary 7.7]{Buckmaster-CaoLabora-GomezSerrano:implosion-compressible}.
\end{proof}

\begin{lemma} \label{lemma:abstract_result} Let $m \geq m_0$, $J\geq J_m$, and $\delta_g \in (0, 1)$. Then we have: \begin{enumerate}
\item\label{item:spectrum} We denote by $\sigma (\mc L)$ the spectrum of $\mc L$. The set $\Lambda  = \sigma (\mc L) \cap \{ \lambda \in \mathbb{C} : \Re (\lambda) > -\delta_g/2  \}$ is finite and formed only by eigenvalues of $\mc L$. Moreover each $\lambda \in \Lambda$ has finite algebraic multiplicity. That is, if we let $\mu_{\lambda}$ to be the first natural such that ${\rm ker} (\mc L-\lambda  )^{\mu_{\lambda}} = {\rm ker} (\mc L- \lambda  )^{\mu_\lambda + 1}$, we have that the vector space
\begin{equation} \label{eq:spaceV}
V_{\rm{uns}} = \bigoplus_{\lambda \in \Lambda} {\rm ker} (\mc L - \lambda )^{\mu_{\lambda}}\,,
\end{equation}
is finite dimensional. 

\item \label{item:invariance} We denote by $\mc L^\ast$ the adjoint of $\mc L$. Let $\Lambda^\ast = \sigma (\mc L^\ast) \cap \{ \lambda \in \mathbb{C} : \Re (\lambda) >- \delta_g/2\}$. As before, we define
\begin{equation} \label{eq:spaceVast}
V_{\rm{sta}} = \left( \bigoplus_{\lambda \in \Lambda^\ast} {\rm ker} (\mc L^\ast - \lambda )^{\mu_{\lambda}^\ast}\right)^{\perp}.
\end{equation}
We have that both $V_{\rm{uns}}$ and $V_{\rm{sta}}$ are invariant under $\mc L$. We also have that $\Lambda^\ast = \overline{\Lambda}$ and $\mu_\lambda = \mu_{\overline{\lambda}}^\ast$. Moreover, we have the decomposition $X = V_{\rm{uns}} \oplus V_{\rm{sta}}$.

\item \label{item:stability_outwards} The linear transformation $\mc L|_{V_{\rm{uns}}}: V_{\rm{uns}} \rightarrow V_{\rm{uns}} $ obtained by restricting $\mc L$ to the finite dimensional space $V_{\rm{uns}}$ has all its eigenvalues with real part larger than $-\delta_g/2$. In particular, there is some basis such that we can express
\begin{equation*}
\mc L|_{V_{\rm{uns}}}  = 
\begin{bmatrix}
J_1 &  & &  \\
 & J_2 & &  \\
 & & \ddots & \\
 &  & & J_\ell
 \end{bmatrix},  
 \qquad \mbox{ where } \qquad J_i = 
 \begin{bmatrix}
\lambda_i & \frac{\delta_g}{10} & & \\
 & \lambda_i & \ddots & \\
 & & \ddots & \frac{\delta_g}{10} \\
 & & & \lambda_i
 \end{bmatrix},
\end{equation*}
where $\lambda_i$ are the eigenvalues of $\mc L|_{V_{\rm{uns}}}$. In that basis, we have that
\begin{equation} \label{eq:chisinau}
\Re \left(\bar{w}^T \cdot \mc L|_{V_{\rm{uns}}} \cdot w \right) \geq -\frac{6 \delta_g}{10} \| w \|^2, \qquad \forall w \in \mathbb{C}^N.
\end{equation}

Moreover, letting $T(t)$ be the semigroup generated by $\mc L$, for any  $v \in V_{\rm{sta}}$ we have
\begin{equation} \label{eq:prim}
\| T(t) v \|_X \lesssim e^{-\delta_g t/2} \| v \|_X.
\end{equation}
\end{enumerate}
\end{lemma}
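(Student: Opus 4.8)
The plan is to derive the three items from Proposition \ref{prop:maxdismod} together with standard spectral theory for the sum of a maximally dissipative operator and a compact perturbation. Write $\mc L = \mc L_0 - \delta_g + \mc K$ as in \eqref{eq:maxdismod}. Since $\mc L_0$ is maximally dissipative, its spectrum lies in $\{\Re\lambda \le 0\}$, so $\mc L_0 - \delta_g$ has spectrum in $\{\Re\lambda \le -\delta_g\}$ and in particular the half-plane $\{\Re\lambda > -\delta_g/2\}$ is in its resolvent set, with the resolvent bounded there. For Item \ref{item:spectrum}, I would fix any $\lambda$ with $\Re\lambda > -\delta_g/2$ and write $\mc L - \lambda = (\mc L_0 - \delta_g - \lambda)(I + (\mc L_0 - \delta_g - \lambda)^{-1}\mc K)$; since $(\mc L_0 - \delta_g - \lambda)^{-1}\mc K$ is compact, the analytic Fredholm theorem applies to the operator-valued function $\lambda \mapsto I + (\mc L_0 - \delta_g - \lambda)^{-1}\mc K$ on the connected open set $\{\Re\lambda > -\delta_g/2\}$. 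Either it is nowhere invertible (impossible, since for $\Re\lambda$ large the resolvent norm is small and $I + (\cdots)\mc K$ is a small perturbation of the identity), or it is invertible off a discrete set with no accumulation point in the region. Discreteness plus the fact that $\{\Re\lambda \ge -\delta_g/2\}$ meets the bounded set $\{\Re\lambda \le C_{J,m}\}$ (by Lemma \ref{lemma:rough_energy_bound}, no spectrum with $\Re\lambda > C_{J,m}$) forces $\Lambda$ to be finite; the Fredholm alternative identifies each such $\lambda$ as an eigenvalue with finite-dimensional generalized eigenspace, so $\mu_\lambda$ is well-defined and $V_{\rm uns}$ is finite dimensional.

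For Item \ref{item:invariance}, the decomposition $X = V_{\rm uns} \oplus V_{\rm sta}$ is the Riesz spectral projection decomposition associated to $\Lambda$: let $P = \frac{1}{2\pi i}\oint_\Gamma (\lambda - \mc L)^{-1}\,d\lambda$ with $\Gamma$ a contour in $\{\Re\lambda > -\delta_g/2\}$ enclosing $\Lambda$; then $P$ is a bounded projection commuting with $\mc L$, $\mathrm{ran}\,P = V_{\rm uns}$, and $\ker P$ is $\mc L$-invariant. The identification $\ker P = V_{\rm sta}$ as defined via $\mc L^\ast$ in \eqref{eq:spaceVast} follows because the adjoint of the Riesz projection for $\mc L$ at $\Lambda$ is the Riesz projection for $\mc L^\ast$ at $\overline\Lambda$, and $(\mathrm{ran}\,P)^\perp = \ker(P^\ast)$ in general; the facts $\Lambda^\ast = \overline\Lambda$ and $\mu_\lambda = \mu^\ast_{\overline\lambda}$ are the standard correspondence between the spectrum and Jordan structure of a bounded-resolvent operator and its adjoint. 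Invariance of $V_{\rm uns}$ and $V_{\rm sta}$ under $\mc L$ is immediate from $[\mc L, P] = 0$.

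For Item \ref{item:stability_outwards}, the eigenvalues of $\mc L|_{V_{\rm uns}}$ are exactly the elements of $\Lambda$, hence have real part $> -\delta_g/2$ by construction; putting $\mc L|_{V_{\rm uns}}$ into Jordan form and rescaling each generalized eigenvector chain by powers of a small parameter replaces the off-diagonal $1$'s by $\delta_g/10$, which is the displayed normal form. The quadratic-form bound \eqref{eq:chisinau} then follows by a direct computation: the diagonal contributes $\sum \Re\lambda_i |w_i|^2 \ge -\frac{\delta_g}{2}\|w\|^2$ and the off-diagonal $\frac{\delta_g}{10}$ entries contribute, via Cauchy–Schwarz, at most $\frac{\delta_g}{10}\|w\|^2$ in absolute value, for a total lower bound $-\frac{6\delta_g}{10}\|w\|^2$. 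Finally, for \eqref{eq:prim}, I would argue on $V_{\rm sta} = \ker P$: $\mc L$ restricted there is $\mc L_0 - \delta_g + \mc K$ modulo the projection, a maximally dissipative operator minus $\delta_g$ plus compact, all of whose spectrum lies in $\{\Re\lambda \le -\delta_g/2\}$ by the choice of the spectral cut; since the restriction still generates a $C_0$-semigroup with bounded-in-the-right-half-plane resolvent, the Gearhart–Prüss theorem (or, more elementarily here, the fact that on the complement of finitely many modes the dissipativity estimate \eqref{eq:diss_final} is available up to the finite-rank correction) yields the exponential decay rate $e^{-\delta_g t/2}$. The main obstacle I expect is the last point: passing from "spectrum of the generator sits left of $-\delta_g/2$" to an actual semigroup decay estimate is not automatic in infinite dimensions, so one must either invoke Gearhart–Prüss with a uniform resolvent bound on the line $\{\Re\lambda = -\delta_g/2\}$ (which one gets by combining the compactness of $\mc K$ with the resolvent bound for the maximally dissipative part) or, as the excerpt's companion paper does, package this into the abstract statement cited from \cite[Corollary 7.7]{Buckmaster-CaoLabora-GomezSerrano:implosion-compressible}; I would follow the latter route and simply verify its hypotheses.
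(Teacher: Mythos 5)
Your proposal is correct and takes essentially the same approach as the paper, which simply verifies the form $\mc L = \mc L_0 - \delta_g + \mc K$ (Proposition~\ref{prop:maxdismod}) and then invokes the abstract spectral result for such operators from \cite[Lemma 7.11]{Buckmaster-CaoLabora-GomezSerrano:implosion-compressible} (a reformulation of \cite[Lemma 3.3]{Merle-Raphael-Rodnianski-Szeftel:implosion-ii}). What you do additionally is unpack the proof of that abstract lemma — analytic Fredholm for discreteness of $\Lambda$, Riesz spectral projections for the $V_{\rm uns}\oplus V_{\rm sta}$ splitting and the $\Lambda^\ast=\overline\Lambda$ correspondence, Jordan rescaling plus Cauchy–Schwarz for \eqref{eq:chisinau} — and you correctly identify the semigroup decay bound \eqref{eq:prim} as the non-trivial step, for which you ultimately fall back (as the paper does) on citing the abstract result rather than carrying out the Gearhart–Pr\"uss resolvent-bound argument in full.
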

\begin{proof}
The statement holds for any operator of the form \eqref{eq:maxdismod} in a Hilbert space (see \cite[Lemma 7.11]{Buckmaster-CaoLabora-GomezSerrano:implosion-compressible}, which is itself a slightly different formulation of \cite[Lemma 3.3]{Merle-Raphael-Rodnianski-Szeftel:implosion-ii}). The form \eqref{eq:maxdismod} is guaranteed by estimate \eqref{eq:diss_final}, since $m\geq m_0$, $J\geq J_m$, $\delta_g \in (0, 1)$.
\end{proof}

\subsection{Smoothness of $V_{\rm{uns}}$} \label{sec:smoothness}

The main result of this subsection is to show that all the elements of $V_{\rm{uns}}$ are smooth provided we choose again $m$ and $J$ sufficiently large (possibly larger than the restriction coming from Proposition \ref{prop:maxdismod}). Clearly, it suffices to show this for a basis. We will follow the choice of basis made in Lemma \ref{lemma:abstract_result} so that the elements of the basis are a union of chains $\{\psi_j\} = 
\{(\psi_{u,j}, \psi_{s,j})\}$ that satisfy:
\begin{align} \nonumber
&\psi_{0} = 0, \ \psi_{j}\neq 0 \text{ when }j\geq 1,\\
&\mc L \psi_{j+1} = \lambda \psi_{j+1} + \frac{\delta_g}{10} \psi_{j},\label{eq:GEE}
\end{align}
where $\lambda$ is the eigenvalue of $\psi_1$ (it will be fixed through all this subsection, so we omit subindices that would distinguish different eigenvalues). Let us also stress that $\lambda$ and $\psi$ may be complex-valued and $\{\psi_j\}$ must be a finite sequence. The operator $\mc L$ acts on complex-valued functions like $\psi$ just component-wise (if $\psi = \psi_r + i \psi_i$ with $\psi_r, \psi_i$ real, then the real and imaginary parts of $\mc L \psi$ are given by $\mc L \psi_r$ and $\mc L \psi_i$ respectively). We will also refer to \eqref{eq:GEE} as the generalized eigenfunction equation.

First of all, let us claim the following Lemma:
\begin{lemma}[Extension of generalized eigenfunctions]  \label{lemma:eigenextension} Let $m$ be sufficiently large and $J\geq J_{*}$ sufficiently large independent of $m$. Let $\{\psi_{j}\} = \{( \psi_{U,j}, \psi_{S,j})\}$ be in $H^{2m+100}$ defined on $B(0, C_0)$ so that it satisfies the generalized eigenfunction equation \eqref{eq:GEE} for $|y| \leq C_0$. Then, there exists a unique generalized eigenfunction $\{\tilde \psi_{j}\} \in X$ that agrees with $\{\psi_{j}\}$ on $B(0, C_0)$ and satisfies \eqref{eq:GEE} for all $|y| \leq 3C_0$. Moreover, such extension is constantly equal to zero for $|y| \in (5C_0/2, 3C_0)$.
\end{lemma}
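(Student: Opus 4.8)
The strategy is to regard \eqref{eq:GEE} as a first order ODE system in the radial variable $R=|y|$ after decomposing into (vector) spherical harmonics, and to propagate the data from $R=C_0$ outward to $R=3C_0$. I would first note that for $|y| \in [C_0, 3C_0]$ the cut-off $\chi_1$ vanishes only once $|y| \geq \tfrac{3}{2}C_0$ and $\chi_2$ vanishes once $|y| \geq \tfrac{5}{2}C_0$, so the operator $\mc L$ restricted to this annulus is an explicit first order transport operator plus the damping $-J(1-\chi_1)$ plus zeroth order terms built from the (smooth, radial) profile $\bar U, \bar S$. Writing $\psi_{j} = \psi_j(R,\theta,\psi)$ and expanding each scalar component in spherical harmonics $e_n$ and each vector component in the vector spherical harmonics $e_n\hat R$, $R\nabla e_n$, $y\wedge\nabla e_n$ (as in the Notation subsection), the system \eqref{eq:GEE} decouples into, for each mode $n$ and each chain index $j$, a linear ODE in $R$ of the schematic form
\begin{equation*}
\bigl(1 + \tfrac{\bar U_R}{R}\bigr)\,\p_R \Psi_{n,j} = A_n(R)\,\Psi_{n,j} + B(R) \Psi_{n,j-1},
\end{equation*}
where $\Psi_{n,j}$ collects the finitely many radial scalar profiles of $\psi_j$ in that mode, $A_n(R)$ is a matrix whose angular-derivative entries grow like $n_1$ (they are \emph{not} uniformly bounded over $n$), and $B(R)$ is bounded. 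The transport coefficient $1 + \bar U_R/R$ is strictly positive on $[C_0, 3C_0]$ by the angular repulsivity \eqref{eq:angular_repulsivity} together with the decay \eqref{eq:profiles_decay}, so the ODE is non-degenerate (no singular point like $R=1$ is crossed, since $C_0 > 1$) and can be uniquely solved forward from $R=C_0$; this gives existence and uniqueness of the extension as an $L^2$, and in fact $H^{\mathrm{local}}$, object.

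The substantive content is then twofold. First, I would establish the \textbf{uniform-in-$n$ energy bound} (this is essentially the content of the cited Lemma \ref{eigenfunctionbound}): one designs a weighted energy $\mc E_n(R)$ — a suitable quadratic form in $\Psi_{n,j}$, with weights chosen so that the dangerous $O(n_1)$ terms in $A_n$ either cancel (using that $R\nabla e_n$ and $y\wedge\nabla e_n$ enter the transport term with matched coefficients) or appear with a favorable sign thanks to the damping $-J(1-\chi_1)$ once $J$ is large — and shows a Grönwall inequality $\p_R \mc E_n \lesssim \mc E_n + \mc E_n^{(\text{lower chain})}$ with constant independent of $n$. Summing over $n$ and over the (finitely many) chain indices recovers $\sum_n \mc E_n(3C_0) \lesssim \sum_n \mc E_n(C_0) < \infty$, i.e. the extension lies in $H^m$ (indeed in any Sobolev space the data allows, here $H^{2m+100}$ is more than enough), hence in $X$ after checking the support statement below. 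Because the hypotheses only ask for $H^{2m+100}$ data, one should track how many derivatives are lost in closing the energy estimate and choose the numerology accordingly; this is where the precise "$2m+100$" is consumed.

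Second, the \textbf{support claim}: for $|y| \in (\tfrac{5}{2}C_0, 3C_0)$ we have $\chi_1 = \chi_2 = 0$, so there \eqref{eq:GEE} reduces, componentwise, to $-J\psi_{j} - \lambda \psi_{j} = \tfrac{\delta_g}{10}\psi_{j-1}$, i.e. $\psi_j = -\tfrac{\delta_g}{10(J+\lambda)}\psi_{j-1}$ on that annulus, and inducting on $j$ with $\psi_0 = 0$ forces $\psi_j \equiv 0$ there for every $j$ (here $J$ large guarantees $J + \lambda \neq 0$, in fact $\Re\lambda > -\delta_g/2$ so $|J+\lambda|$ is bounded below). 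Combined with continuity of the forward ODE solution and the fact that $\psi_j$ vanishes identically on a full annulus, this pins down the unique continuation and gives membership in $X$ (the vanishing-at-the-boundary-of-$B(0,3C_0)$ condition). I would assemble the proof as: (i) set up the spherical-harmonic-decoupled ODE and verify non-degeneracy of the transport coefficient on $[C_0,3C_0]$ via \eqref{eq:angular_repulsivity}; (ii) solve forward, obtaining a unique local solution and, by the computation just described, vanishing on $(\tfrac52 C_0, 3C_0)$; (iii) invoke the uniform-in-$n$ weighted energy estimate (Lemma \ref{eigenfunctionbound}) to upgrade to $H^m$ regularity and conclude $\{\tilde\psi_j\}\in X$; (iv) uniqueness follows since any two extensions satisfy the same forward ODE with the same Cauchy data at $R=C_0$.

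\textbf{Main obstacle.} The crux — and the place where the non-radial setting genuinely differs from \cite{Buckmaster-CaoLabora-GomezSerrano:implosion-compressible} — is step (iii): the angular-derivative terms produce coefficients of size $\sim n_1$ in the mode-$n$ ODE, so naive Grönwall gives a bound like $e^{C n_1 C_0}$, which is useless after summing in $n$. One must exploit structural cancellations among the vector-spherical-harmonic components (the transport operator $(y+\bar U)\cdot\nabla$ is "diagonal" in a way that the angular pieces $R\nabla e_n$, $y\wedge\nabla e_n$ rotate into each other with matched weights) and the sign of the strong damping $-J(1-\chi_1)$ for $|y|$ near $\tfrac32 C_0$, to get a bound \emph{uniform in $n$}; that is exactly the technical heart encapsulated in Lemma \ref{eigenfunctionbound}, and getting the weighted energy's weights right so that every $O(n_1)$ term is controlled is the delicate part.
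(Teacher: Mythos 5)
Your plan has the right skeleton (vector spherical harmonic decoupling, forward ODE in $R$, separate vanishing argument on $(\tfrac52 C_0, 3C_0)$), and it correctly identifies the key auxiliary Lemma \ref{eigenfunctionbound}. But there are two substantive gaps, plus some imprecisions, relative to what is actually needed.

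\textbf{The uniqueness argument is incomplete.} You conclude uniqueness by asserting that ``any two extensions satisfy the same forward ODE with the same Cauchy data at $R=C_0$''. This is not enough, because the decoupled ODE \eqref{eq:sys} has the coefficient $\chi_2(R+\bar U_R\pm\alpha\bar S)$ or $\chi_2(R+\bar U_R)$ multiplying $\p_R$, and $\chi_2\to 0$ as $R\to\tfrac52 C_0$. Once you divide through to obtain a standard first order ODE, the right-hand side acquires a factor $J(1-\chi_1)/\chi_2$, which blows up at $\tfrac52 C_0$; ODE uniqueness across this degeneracy is not automatic and needs an argument. The paper does not argue uniqueness mode by mode at all: it runs a genuinely different, global weighted energy estimate on $B(0,\rho)$ with the exponential weight $\phi=e^{-2JR/\tilde\eta}$, chosen so that the favorable term $-\chi_2\frac{\p_R\phi}{2\phi}(R+\bar U_R-\alpha|\bar S|)$ (controlled from below via Lemma \ref{lemma:integrated_repulsivity}, not \eqref{eq:angular_repulsivity}) plus the damping $J(1-\chi_1)$ together dominate the zeroth order remainders uniformly over the whole annulus, including the degenerate region $\chi_2<1$. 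Some mechanism of this kind is required; your ODE uniqueness statement as written is a gap.

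\textbf{The energy bound is not uniform in $n$.} You describe the crux as producing a Gr\"onwall inequality with constants \emph{uniform in $n$}, so that all $O(n_1)$ terms are ``either cancelled or absorbed''. That is not what Lemma \ref{eigenfunctionbound} gives, and it is not what is true. The cancellation in the proof removes only the worst $n_1(n_1+1)$ coupling between $W_n,Z_n$ and $U_{\Psi,n}$ arising from the $\alpha\bar S$ (pressure) terms — not a transport-operator structure as you suggest. After that cancellation, each additional $R$-derivative still costs a factor $\sqrt{n_1(n_1+1)}+1$, and indeed \eqref{modesmoothestimate}, \eqref{estimate:eigenfunctions2} explicitly record a loss of $(\sqrt{n_1(n_1+1)}+1)^k$ at the $k$-th $R$-derivative. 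The reason the construction still closes in $X=H^m$ is that the data are assumed in $H^{2m+100}$ on $B(0,C_0)$, so one can afford to trade roughly $m+C_0$ angular derivatives on $B(0,C_0)$ for the $n_1$-losses that accumulate in the energy estimate; this is precisely what the hypothesis $H^{2m+100}$ pays for. Your framing that the energy weights are designed so that the $n_1$ growth disappears is incorrect, and if you tried to carry it out you would find it does not hold; the correct bookkeeping is ``lose $O(m)$ angular derivatives'' rather than ``lose nothing''.

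\textbf{Smaller imprecisions.} The transport coefficient in the decoupled system is not $1+\bar U_R/R$: after the Riemann-type change of variables $W=U_R+S$, $Z=U_R-S$ one gets three distinct coefficients $\chi_2(R+\bar U_R+\alpha\bar S)$, $\chi_2(R+\bar U_R-\alpha\bar S)$, $\chi_2(R+\bar U_R)$, and the non-vanishing of the smallest one on $[C_0,3C_0)$ is what matters; the relevant positivity comes from Lemma \ref{lemma:integrated_repulsivity} and $\bar S>0$, not from \eqref{eq:angular_repulsivity}. Finally, smoothness of the extension across $R=\tfrac52 C_0$ (where $\chi_2\to 0$) is a separate point you did not address; the paper handles it via a Taylor expansion argument using that $J$ is large.
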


Let us delay the proof of this Lemma to the end of the section and show our main result of this section assuming Lemma \ref{lemma:eigenextension}:

\begin{proposition} \label{prop:smooth} Let $\delta_g \in (0, 1)$. For sufficiently large $\bar{m}$, $\bar{J}$, there exist $m,J$ such that $m>\bar{m}$, $J>\bar{J}$ and  we have that the space $V_{\rm{uns}} \subset X$ defined in Lemma \ref{lemma:abstract_result} is formed by smooth functions.
\end{proposition}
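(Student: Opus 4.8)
The plan is to run an abstract dimension-counting argument over the regularity parameter $m$, combined with the extension result of Lemma~\ref{lemma:eigenextension}, to reduce smoothness to a statement that holds automatically. For each admissible $m$ (that is, $m \geq m_0$ and $J \geq J_m$ in the notation of Proposition~\ref{prop:maxdismod}), write $V_{\mathrm{uns}}(m) \subset X = X(m)$ for the unstable subspace produced by Lemma~\ref{lemma:abstract_result}, where now we make explicit the dependence of the Hilbert space $X = H^m_u \times H^m_s$ on $m$. The key observation is that restriction to the small ball $B(0, C_0)$ is injective on $V_{\mathrm{uns}}(m)$: if a generalized eigenfunction $\{\psi_j\}$ vanishes on $B(0,C_0)$, then by the uniqueness part of Lemma~\ref{lemma:eigenextension} the zero extension is the only element of $X$ satisfying \eqref{eq:GEE} and agreeing with it on $B(0, C_0)$, hence $\{\psi_j\} \equiv 0$. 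Thus we may regard each $V_{\mathrm{uns}}(m)$, via restriction, as a finite-dimensional subspace of (chains of) functions defined on $B(0, C_0)$.

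The second step is the containment $V_{\mathrm{uns}}(m') \subseteq V_{\mathrm{uns}}(m)$ — interpreted through the above identification on $B(0,C_0)$ — whenever $m' > m$ are both admissible. Indeed, any chain realizing an element of $V_{\mathrm{uns}}(m')$ lies in $X(m') \subset X(m)$ and still satisfies \eqref{eq:GEE}; since Lemma~\ref{lemma:abstract_result} characterizes $V_{\mathrm{uns}}(m)$ as the full span of generalized eigenfunctions with $\Re(\lambda) > -\delta_g/2$ in $X(m)$, the element belongs to $V_{\mathrm{uns}}(m)$. (One should check the eigenvalue set $\Lambda$ is independent of $m$ among admissible $m$, which again follows from Lemma~\ref{lemma:eigenextension}: an $H^{m}$ eigenfunction is an $H^{m'}$ eigenfunction for all admissible $m'$ after re-extension, since the extension is forced and is identically zero near the boundary, where no regularity is lost.) Therefore $\dim V_{\mathrm{uns}}(m)$ is a non-increasing function of $m$ over the admissible range. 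Being a non-increasing sequence of non-negative integers, it is eventually constant: there exists $m_\star$ such that for all admissible $m \geq m_\star$ the restriction map identifies $V_{\mathrm{uns}}(m)$ with $V_{\mathrm{uns}}(m_\star)$ as subspaces of functions on $B(0, C_0)$, and in particular $V_{\mathrm{uns}}(m) \subseteq V_{\mathrm{uns}}(m_\star) \subseteq V_{\mathrm{uns}}(m)$ forces equality of the underlying function spaces.

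The final step: fix any such $m = m_\star$ and pick $\bar m, \bar J$ in the statement; choose an admissible $m > \max(\bar m, m_\star)$ and $J > \max(\bar J, J_m, J_*)$. Every element of $V_{\mathrm{uns}}(m)$, restricted to $B(0, C_0)$, coincides with an element of $V_{\mathrm{uns}}(M)$ for all admissible $M \geq m$; since $V_{\mathrm{uns}}(M) \subset H^M$, each generalized eigenfunction lies in $\bigcap_{M} H^M(B(0, C_0)) = C^\infty(B(0, C_0))$. Outside, on $\{|y| \geq 5C_0/2\}$, Lemma~\ref{lemma:eigenextension} gives that the extension is identically zero, hence smooth there; and on the annulus $C_0 \leq |y| \leq 5C_0/2$ the generalized eigenfunction equation \eqref{eq:GEE} becomes, after the cut-offs $\chi_1,\chi_2$ are evaluated, a linear transport-type ODE system along the rays $y = y_0 e^s$ with smooth coefficients and smooth data prescribed on $\{|y| = C_0\}$ (coming from the now-known smoothness there) and on $\{|y| = 5C_0/2\}$ (coming from the vanishing), so standard ODE regularity propagates $C^\infty$ across the annulus. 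Piecing the three regions together gives that $V_{\mathrm{uns}}(m)$ consists of $C^\infty$ functions on all of $B(0, 3C_0)$, which is the claim.

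The main obstacle is the second step — establishing the monotonicity $\dim V_{\mathrm{uns}}(m') \leq \dim V_{\mathrm{uns}}(m)$ and, intertwined with it, that the set $\Lambda$ of unstable eigenvalues does not depend on $m$. Both hinge delicately on the \emph{uniqueness} clause in Lemma~\ref{lemma:eigenextension}: without it, a lower-regularity eigenvalue could fail to persist to higher $m$, or the restriction map could have a kernel, breaking the injectivity that makes the dimension count meaningful. The rest of the argument is soft (pigeonhole on integers, $\bigcap_M H^M = C^\infty$) or a routine ODE regularity statement, so the whole proposition rests on getting Lemma~\ref{lemma:eigenextension} — in particular its interplay between the angular-derivative terms and the singularity at $R=1$ — exactly right, which is why its proof is deferred.
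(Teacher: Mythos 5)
Your overall strategy — pass to the restrictions $V_{\rm uns}^{C_0}(m)$ on $B(0,C_0)$, use the uniqueness in Lemma~\ref{lemma:eigenextension} to make the restriction map injective, run a pigeonhole argument on the (finite, integer) dimensions to stabilize, and conclude $\bigcap_M H^M = C^\infty$ — is exactly the paper's strategy. However, the way you establish the containment between the unstable spaces at different regularities has a real gap.

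Your step asserting $V_{\rm uns}(m')\subseteq V_{\rm uns}(m)$ for $m'>m$ is justified by the plain Sobolev inclusion $X(m')\subset X(m)$ together with the claim that the chain ``still satisfies \eqref{eq:GEE}.'' That claim presupposes that the operator $\mc L=\mc L_J$ on $X(m')$ is the same operator as on $X(m)$, i.e.\ that a single $J$ works for both. But Proposition~\ref{prop:maxdismod} requires $J\geq J_m$, and the dissipativity estimate explicitly uses $J\geq 2m$, so $J_m$ is unbounded in $m$. Consequently, a chain satisfying $\mc L_{\bar J_{m'}}\psi_{j+1}=\lambda\psi_{j+1}+\tfrac{\delta_g}{10}\psi_j$ in $X(m')$ does \emph{not} automatically satisfy the corresponding equation for $\mc L_{\bar J_m}$ in $X(m)$, because the term $-J(1-\chi_1)$ has changed. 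The paper circumvents this precisely by working on $B(0,C_0)$, where $\mc L_J$ is $J$-independent (that is where $1-\chi_1\equiv 0$), and then invoking Lemma~\ref{lemma:eigenextension} to re-extend to $B(0,3C_0)$ with whatever $J$ is appropriate for the target regularity; crucially, Lemma~\ref{lemma:eigenextension} only needs $J\geq J_*$ \emph{independent of $m$}. That lemma, in turn, needs the source chain to be in $H^{2m'+100}$, which forces the sparse sequence $m_{j+1}\geq 2m_j+100$ rather than your generic $m'>m$. Your parenthetical remark about ``re-extension'' gestures at this, but the main argument you actually give is the flawed direct inclusion, and the required gap between regularities is missing.

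A secondary issue: in the last step you propagate smoothness across the annulus $C_0\leq|y|\leq\tfrac52 C_0$ by appealing to ``standard ODE regularity'' with smooth two-sided data. That is not a routine assertion — the system degenerates as $\chi_2\to 0$ at $|y|=\tfrac52 C_0$, and the coefficients coming from angular derivatives are unbounded over spherical modes, which is exactly why Lemma~\ref{eigenfunctionbound} is needed inside the proof of Lemma~\ref{lemma:eigenextension}. The cleaner route (and the one the paper takes) is to apply Lemma~\ref{lemma:eigenextension} once more for each $m\geq m_*$ with the fixed $J=\bar J_{m_*}$: the $H^m$ extension it produces must, by uniqueness, equal the original $\psi_j$, so $\psi_j\in H^m$ for every $m$, hence smooth on all of $B(0,3C_0)$. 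You should replace the ad hoc ODE argument by this re-application of Lemma~\ref{lemma:eigenextension}.
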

\begin{proof} Given that the space $X$ depends on $m$ and the operator $\mc L$ depends on $J$, let us denote $X$ by $X^m$ and $\mc L$ by $\mc L_J$ for the rest of this proof.

First of all, we restrict ourselves to $m \geq m_0$, $J\geq \max\{J_m,J_{*}\}$, so that we satisfy the hypotheses of Proposition \ref{prop:maxdismod}, Lemma \ref{lemma:abstract_result} and Lemma \ref{lemma:eigenextension}. We take the definition of $V_{\rm{uns}}$ from Lemma \ref{lemma:abstract_result} for $\bar{J}_{m} = \max\{J_m,J_{*}\}$ and make the dependence on $m$ explicit, so that $V_{\rm{uns}} (m) \subset X^m$ refers to the space of unstable modes of $\mc L_{\bar{J}_m}: X^m \to X^m$ (if there are no unstable modes, $V_{\rm{uns}} (m) = \{ 0 \} $). In particular, Lemma \ref{lemma:abstract_result} gives us that $V_{\rm{uns}} (m)$ is finite dimensional. We moreover define $V_{\rm{uns}}^{C_0} (m)$ to be the vector space obtained by restricting the functions of $V_{\rm{uns}} (m)$ to $B(0, C_0)$. 

Let us show that if $m_0\leq m'\leq 2m'+100  \leq m$, then $V_{\rm{uns}}^{C_0}(m) \subset V_{\rm{uns}}^{C_0}(m')$. We take $\{\psi_{i}\}$ be a chain of eigenfunctions in $V_{\rm{uns}}(m)$ such that $\{\psi_i\}  \subset V_{\rm{uns}}(m)$, and it satisfies the generalized eigenfunction equation \eqref{eq:GEE}: 
\[
\mc L_{\bar J_m} \psi_{i+1} = \lambda \psi_{i+1} + (\delta_g/10) \psi_{i}.
\]
Given that the operator $\mc L_{J}$ is independent of $J$ on $B(0, C_0)$ (the only dependence on $J$ comes from $(1-\chi_1)J$, which vanishes on $B(0, C_0)$), we have that the restriction function $ \psi_{i}|_{B(0, C_0)}$ satisfies the generalized eigenfunction equation for $\mc L_{J_{m'}} $ on $B(0, C_0)$:
\[
\mc L_{\bar J_{m'}} \psi_{i+1} = \lambda \psi_{i+1} + (\delta_g/10) \psi_{i}.
\]
  Since it is clearly in $H^{2m'+100}$ (as $m \geq 2m'+100$), we are in the hypotheses of Lemma \ref{lemma:eigenextension}. Therefore, there exists a unique extension $\{\tilde \psi_{j}\} \subset X^{m'}$, which is a chain generalized eigenfunction of  $\mc L_{\bar{J}_{m'}} $ in and agrees with $\{\psi_{j}|_{B(0, C_0)}\}$ on $B(0, C_0)$. Therefore, $\{\psi_{j}|_{B(0, C_0)}\} \subset V_{\rm{uns}}^{C_0}(m')$. We conclude the desired property:
\begin{equation} \label{eq:Vunschain} V_{\rm{uns}}^{C_0}(m) \subset V_{\rm{uns}}^{C_0}(m'), \qquad \mbox{ for } m \geq 2m'+100.
\end{equation}

Let $\{m_{j}\}$ be an integer sequence such that $m_{j+1}\geq 2m_{j}+100$ with $m_{1}=2m_0$.  Since $\dim (V_{\rm{uns}}^{C_0}(m_{j}))$ is finite, integer, and non-increasing for $j\geq 1$ (due to \eqref{eq:Vunschain}); there must exist a value $j_{*}$ such that $\dim (V_{\rm{uns}}^{C_0}(m))$ is constant for every $m \in [m_j, \infty)$ with $j\geq j_{*}$. Then, since $V_{\rm{uns}}^{C_0}(m_{j+1}) \subset V_{\rm{uns}}^{C_0}(m_j)$ by \eqref{eq:Vunschain}, we deduce that $V_{\rm{uns}}^{C_0}(m_j)$ is the same vector space for $j\geq j_*$. 

Now, let us fix  $m_*=m_{j_*}$ and $J =\bar J_{m_{*}}$. We will show $V_{\rm{uns}}(m_{*})$ only consists of smooth functions. Let $\{\psi_{j}\}$ be a chain of generalized eigenfunctions in $V_{\rm{uns}}(m)$ and $\{\bar \psi_{j}\}$ be their restriction on $B(0,C_0)$.  As a consequence of the above, all $\bar \psi_{j}$ are smooth on $B(0, C_0)$.  Thus, by Lemma \ref{lemma:eigenextension} for any $m\geq m_*$, the $\bar \psi_j$ admit a unique $H^{m}$ extension to $B(0, 3C_0)$ satisfying \eqref{eq:GEE} for $\mc L_J$, which we call $\tilde \psi_{j}$ (note here that $m$ may be much larger than $J$, it is fundamental that in Lemma \ref{lemma:eigenextension} we do not require $J$ to be sufficiently large in terms of $m$). Now, both $\{\psi_{j}\}$ and $\{\tilde \psi_{j}\}$ are $H^{m}$ extensions of $\{\psi_{j}|_{B(0,C_0)}\}$ satisfying the generalized eigenvalue equation \eqref{eq:GEE} for $\mc L_J$. By the uniqueness in Lemma \ref{lemma:eigenextension}, they are equal. Then $\{\psi_j\}$ are in $H^{m}$ for any $m\geq m_{*}.$
\end{proof}

Thus, it only remains to show Lemma \ref{lemma:eigenextension}. 

\begin{proof}[Proof of Lemma \ref{lemma:eigenextension}]
 Let us denote $\psi_{j}=(\psi_{U,j}, \psi_{S,j}) = (U_j, S_j)$.

\textbf{Vanishing:} First, it is very easy to justify that 
\begin{equation}\label{vanishing}
\psi_{j}=0, \qquad \text{ for all } \,j
\end{equation}
 for $|y| \in \left( \frac52 C_0, 3C_0 \right)$ by induction. First, $\psi_0=0$ by definition \eqref{eq:GEE}. Then we  assume \eqref{vanishing} holds for $j\leq j_0$. Since $\chi_1 = \chi_2 = 0$ in that region due to \eqref{eq:cutoffL} we have that $\mathcal L = -J$ , we see that \eqref{eq:GEE} reads
\begin{equation*}
-J \psi_{U,j_0+1} = \lambda U_{j_0+1}, \qquad \mbox{ and } \qquad -J \psi_{S,j_0+1} = \lambda S_{j_0+1},
\end{equation*}
for $|y| \in \left( \frac52 C_0, 3 C_0 \right)$. Since $\Re(\lambda) > -\delta_g/2 \geq -1/2$ by construction, and we can assume $J\geq 1$, we conclude that $\psi = 0$ in the region $|y| \in \left( \frac52 C_0, 3 C_0 \right)$.

\textbf{Uniqueness:}

We consider two generalized eigenfunction chains agreeing on $B(0, C_0)$ (in particular, if they are not identically zero, they share the same $\lambda$). Letting $\{(U_j, S_j)\}$ be the difference of those eigenfunctions, we have $U_j|_{B(0,C_0)}=0$, $S_{j}|_{B(0,C_0)}=0$ and we want to show $(U_j,S_j)=0$ everywhere.

Now we proceed by induction. We  first assume that for $j\leq j_0$, $U_{j_0}=S_{j_0}=0$. Thus, for $j=j_0+1$, we have $\mc L (U_{j_0+1}, S_{j_0+1}) = \lambda (U_{j_0+1}, S_{j_0+1})$.

For the sake of simplicity, we will use $(U,S)$ to denote $(U_{j_0+1}, S_{j_0+1})$. Developing the equation $(\mc L_U, \mc L_S) (U,S) = \lambda (U, S) $,  we have that
\begin{align*}
\lambda U  &= -\chi_2 (r-1) U - \chi_2 (y + \bar U) \nabla U - \chi_2 \alpha \bar S \nabla S - \chi_2 U \nabla \bar U - \alpha \chi_2 S \nabla \bar S - J(1-\chi_1)U, \\
\lambda S &= -\chi_2 (r-1) S - \chi_2 (y + \bar U) \nabla S - \chi_2 \alpha \bar S \div (U) - \chi_2 S \div (\bar U ) - \alpha \chi_2 U \nabla \bar S - J(1-\chi_1)S.
\end{align*}

For some weight $\phi$ that will be defined later, we multiply the equations by $U^\ast \phi$ and $S^\ast \phi$ respectively, where $f^\ast$ denotes the complex conjugate of $f$. Then, we take the real part and integrate in some ball $B = B(0, \rho )$ with $\rho<\frac{5}{2}C_0$. We get that
\begin{align*}
\int_B \Re (\lambda ) |U|^2  \phi &= \frac12 \int_B \p_R (\chi_2 (R+\bar U_R))\phi |U|^2 + \frac12 \int_B \left( \frac{\p_R \phi}{\phi} (\chi_2 (R+\bar U_R)) \right) |U|^2 \phi+\int_B \frac{\chi_2 (R+\bar U_R)}{R} \phi |U|^2 \\
&\qquad
- \frac12 \int_{\p B} \chi_2(R+\bar U_R) \phi |U|^2
- \alpha \int_B \chi_2 \bar S \phi \Re (\nabla S \cdot U^\ast) \\
&\qquad - J\int (1-\chi_1) |U|^2\phi
+ \int_B \chi_2 (f_1 |U|^2 + f_2 |S|^2)\phi,
\end{align*}
and 
\begin{align*}
\int_B \Re (\lambda ) |S|^2    \phi 
&= \frac12 \int \p_R (\chi_2 (R+\bar U_R)) \phi |S|^2 + \frac12 \int_B \left( \frac{\p_R \phi}{\phi} (\chi_2 (R+\bar U_R)) \right) |S|^2 \phi+\int_B \frac{\chi_2 (R+\bar U_R)}{R} \phi |S|^2
\\
&\qquad - \frac12 \int_{\p B}\chi_2 (R+\bar U_R) \phi |S|^2
- \alpha \int_B \chi_2 \bar S \phi \Re (\div (U) S^\ast ) \\
&\qquad - J\int (1-\chi_1) |S|^2 \phi
+  \int_B \chi_2 (f_3 |U|^2 + f_4 |S|^2)\phi,
\end{align*}
where $f_i$ are radially symmetric functions with an $L^\infty$ norm bounded by some universal constant. Adding up both equations, noting that $$\Re (\nabla S \cdot U^\ast)  + \Re (\div (U) S^\ast)  = \Re ( U \cdot \nabla S^\ast ) + \Re ( \div (U ) S^\ast ) =   \Re\div (U S^\ast),$$ and integrating by parts, we see that
\begin{align}\nonumber
\Re (\lambda )\int_B ( |U|^2 + |S|^2 )\phi  &=  \frac12 \int_B \left( \frac{\p_R \phi}{\phi} (\chi_2 (R+\bar U_R)) \right) (|U|^2 + |S|^2) \phi + \alpha \int_B \chi_2 \bar S \frac{\p_R \phi}{\phi} \Re ( \frac{y}{R}\cdot( U S^\ast) )\phi \\
&\quad- J\int_B (1-\chi_1) (|U|^2 + |S|^2) \phi + O \left( \int_B (\chi_2 + |\chi_2'|  ) (|U|^2 + |S|^2) \phi \right) \nonumber \\
&\quad -\frac12 \int_{\p B} \chi_2 (R+\bar U_R)\phi (|U|^2 + |S|^2) - \alpha \int_{\p B} \chi_2 \bar S \phi \Re (  S^{*} \frac{y}{R}\cdot U ), \label{eq:edmundodantes}
\end{align}
where we used that $|US^\ast| \leq \frac12 (|U|^2 + |S|^2) $. Using it again, since $\partial_{R}\phi\leq 0$, we get 
\begin{align*}
&-\frac12 \int_B \chi_2 \frac{\p_R \phi}{\phi}\phi  (R + \bar U_R-\alpha |\bar S|)(|U|^2 + |S|^2) + J \int_B (1-\chi_1) \phi (|U|^2 + |S|^2) + \Re (\lambda )\int_B (|U|^2 + |S|^2 ) \phi \\
&\qquad \leq \frac12 \int_{\p B} \chi_2 (-R-\bar U_R+\alpha |\bar S|)\phi (|U|^2 + |S|^2) + O \left( \int_B (\chi_2 + |\chi_2'| ) (|U|^2 + |S|^2)\phi \right),
\end{align*}
where we used equation \eqref{eq:edmundodantes} to bound the last term. Now, recall that $U, S = 0$ for $R \leq C_0$ and we want to show they are zero in all $B(0, 3C_0)$. Note also that from Lemma \ref{lemma:integrated_repulsivity} and \eqref{eq:profiles_positive}, we have that 
$$R + \bar U_R (R) - \alpha |\bar S (R)| \geq \tilde \eta (R-1),$$ for some $\tilde \eta > 0$ and where we denote with $\bar U_R$ the radial component of the profile (which is the only nonzero component since the profile is radially symmetric). Since the contribution of the first integral is zero for $R \leq C_0$, we can lower bound $R + \bar U_R (R) - \alpha \bar S (R)$ by $\tilde \eta (R-1)$. Moreover, assuming $\rho > C_0$, we also get a sign for the boundary term in the right hand side. Thus, we obtain

\begin{align*}
\int_{B\backslash B(0,C_0)} \left( -\chi_2 \frac{\p_R \phi}{\phi}  \frac{\tilde \eta}{2} + J(1-\chi_1) 
 + \Re (\lambda )\right) (|U|^2 + |S|^2) \phi   \leq C \int_{B\backslash B(0,C_0)}  (|U|^2 + |S|^2)\phi,
\end{align*}
where $C$ is a sufficiently large constant. Finally, note that $\Re (\lambda ) > -\delta_g/2\geq -1/2$ from the definition of $V_{\rm{uns}}$ in Lemma \ref{lemma:abstract_result}. Take $\phi=e^{-\frac{2J}{\tilde{\eta}}R}$ to satisfy $\p_R \phi = -\phi \frac{2J}{\tilde \eta}$ in the region $|y| \in \left( C_0, \frac32 C_0 \right)$  outside. Since $(1-\chi_1)+\chi_2 \geq 1$ by definition, we conclude 
\begin{equation*}
\int_B (J-1/2) (|U|^2 + |S|^2)\phi \leq C  \int_B  (|U|^2 + |S|^2)\phi.
\end{equation*}
This yields a contradiction for $J$ sufficiently large, unless $U = S = 0$ everywhere. We conclude the proof of uniqueness.

\textbf{Existence:} 
We consider the equations 
\[
\mc L (U_j, S_j) = \lambda (U_j, S_j) + (\delta_g/10) (U_{j-1}, S_{j-1}).
\]
We have a solution on $B(0, C_0)$ and want to extend it to $B(0, 3C_0)$.

For the sake of simplicity, we first consider the following equation by letting $U,S$ denote $U_{j}, S_{j}$ and $U^{-},S^{-}$ denote $U_{j-1}, S_{j-1}$. Let us recall:
\begin{align*}
\lambda U + \frac{\delta_g}{10} U^- &= -\chi_2 (r-1) U - \chi_2 (R + \bar U_R) \p_R U - \chi_2 \alpha \bar S \nabla S - \chi_2 U \nabla \bar U - \alpha \chi_2 S \nabla \bar S - J(1-\chi_1)U, \\
\lambda S + \frac{\delta_g}{10} S^- &= -\chi_2 (r-1) S - \chi_2 (R + \bar U_R) \p_R S - \chi_2 \alpha \bar S \div U  - \chi_2 S \div \bar U - \alpha \chi_2 U \nabla \bar S - J(1-\chi_1)S, 
\end{align*}
Now, we express the complex scalar $S$ as $S = \sum_n S_n(R) e_n$, where $e_n$ are spherical harmonics. Moreover, any complex vector field can be expressed uniquely \cite{Hill:spherical-harmonics} as
\begin{equation*}
U = \sum_n \left( U_{R,n}(R) \hat R e_n + U_{\Psi, n}(R) R \nabla e_n + U_{\Phi, n}(R) y \wedge \nabla e_n  \right).
\end{equation*}
Let us note that
\begin{equation*}
\nabla S = \sum_n \left( \p_R S_n \hat R e_n + \frac{S_n}{R} R\nabla e_n  \right), \qquad 
\div U = \sum_n \left( \p_R U_{R, n}(R)  + \frac{2U_{R, n}(R)}{R} - \frac{U_{\Psi, n}(R)}{R} n_1(n_1+1) \right)e_n,
\end{equation*}
using that $R^2 \Delta e_n = -n_1 (n_1+1) e_n$. Note also that in any spherical coordinate system (that is, in any basis formed by $\hat R$ and vectors perpendicular to $\hat R$), we have $\p_R \hat R = 0$ and $\p_v \hat R = \frac{\p_v y}{R} = \frac{v}{R}$ for any $v$ perpendicular to $\hat R$. Let us recall that $\hat R$ is the unitary vector in the radial direction, as discussed in the notation.

Therefore
\begin{align*}
U\nabla \bar U &= U \nabla ( \bar U_R \hat R) =  \bar U_R' U_R \hat R + \bar U_R U\cdot \nabla\hat R = \sum_n \left( \bar U_R' U_{R, n} e_n \hat R + \bar U_R (U_{R, n} e_n \p_R + U_{\Psi, n} \p_{R \nabla e_n} + U_{\Phi, n} \p_{y \wedge \nabla e_n}) \hat R  \right) \\ 
&= \sum_n \left( \bar U_R' U_{R, n} e_n \hat R +   \frac{\bar U_R U_{\Psi, n}}{R}  R\nabla e_n + \frac{\bar U_R U_{\Phi, n}}{R} y  \wedge \nabla e_n  \right).
\end{align*}

We obtain the equations
\begin{align*}
\lambda U_{R, n} + \frac{\delta_g}{10} U^-_{R, n}  &= -\chi_2 (r-1) U_{R, n} - \chi_2 (R + \bar U_R) \p_R U_{R, n} - \chi_2 \alpha \bar S S_n' - \chi_2 \bar U_R' U_{R, n} - \alpha \chi_2 S_n \p_R \bar S - J(1-\chi_1)U_{R, n}, \\
\lambda U_{\Psi, n} + \frac{\delta_g}{10} U^-_{\Psi, n}  &= -\chi_2 (r-1) U_{\Psi, n} - \chi_2 (R + \bar U_R) \p_R U_{\Psi, n} - \chi_2 \alpha \bar S \frac{S_n(R)}{R} - \chi_2 \frac{\bar U_R U_{\Psi, n}}{R} - J(1-\chi_1)U_{\Psi, n}, \\
\lambda U_{\Phi, n} + \frac{\delta_g}{10} U^-_{\Phi, n} &= -\chi_2 (r-1) U_{\Phi, n} - \chi_2 (R + \bar U_R) \p_R U_{\Phi, n}  - \chi_2 \frac{\bar U_R U_{\Phi, n}}{R} - J(1-\chi_1)U_{\Phi, n}, \\
\lambda S_{n} + \frac{\delta_g}{10} S^-_{n} &= -\chi_2 (r-1) S_n - \chi_2 (R + \bar U_R) \p_R S_n - \chi_2 \alpha \bar S \left( U_{R, n}'(R)  + \frac{2U_{R, n}(R)}{R} - \frac{U_{\Psi, n}(R)}{R} n_1(n_1+1) \right) \\
&\quad - \chi_2 \alpha S_n \left( \p_R \bar U_R + \frac{2\bar U_R}{R} \right) - \alpha \chi_2 U_{R, n} \p_R \bar S - J(1-\chi_1)S_n, 
\end{align*}
so, as we can see, the equations diagonalize in the different modes (the evolution of the $n$-th spherical harmonic mode only depends on the other $n$-th spherical harmonic modes).

Defining $W = U_R + S$ and $Z = U_R - S$, we have
\begin{align} \begin{split}\label{eq:sys}
\chi_2 (R + \bar U_{R} + \alpha \bar S) \p_R W_{n} &= -\lambda W_{n} - \frac{\delta_g}{10} W^-_{n}  - J(1-\chi_1)W_{n} + \chi_2 \alpha \bar S \frac{U_{\Psi, n}(R)}{R} n_1 (n_1+1) + \chi_2 \mathcal E_{W, n}, \\
\chi_2 (R + \bar U_{R} - \alpha \bar S ) \p_R Z_n &= -\lambda Z_{ n} - \frac{\delta_g}{10} Z^-_{n}  - J (1-\chi_1) Z_n - \chi_2 \alpha \bar S \frac{U_{\Psi, n}(R)}{R} n_1 (n_1+1) + \chi_2 \mathcal E_{Z, n}, \\
\chi_2 (R + \bar U_{R}) \p_R U_{\Psi, n} &= -\lambda U_{\Psi, n} - \frac{\delta_g}{10} U^-_{\Psi, n} - J(1-\chi_1)U_{\Psi, n} - \chi_2 \left( (r-1) U_{\Psi, n} + \alpha \bar S \frac{W_n-Z_n}{2R} + \frac{\bar U_R U_{\Psi, n}}{R} \right), \\
\chi_2 (R + \bar U_{R}) \p_R U_{\Phi, n} &= -\lambda U_{\Phi, n} - \frac{\delta_g}{10} U^-_{\Phi, n}   - J(1-\chi_1)U_{\Phi, n}  - \chi_2 \left( (r-1) + \frac{\bar U_R}{R} \right)  U_{\Phi, n}, 
\end{split}\end{align}
where we defined the lower order terms
\begin{align*}
\mathcal E_{W, n} &= - \left( (r-1) + \alpha \p_R \bar S
 \right) W_n-\partial_{R}\bar{U}_{R}(U_{R,n}+\alpha S_{n})-\frac{2\alpha}{R}[\bar{S}U_{R,n}+\bar{U}_{R}S_n], \\  
 \mathcal E_{Z, n} &= -\left( (r-1) - \alpha \p_R \bar S
 \right) Z_n -\partial_{R}\bar{U}_{R}(U_{R,n}-\alpha S_{n})-\frac{2\alpha}{R}[\bar{S}U_{R,n}+\bar{U}_{R}S_n].
\end{align*}
The possibility of complex eigenfunctions is handled in the spherical mode decomposition, given that real functions correspond to the coefficient of the  mode $(n_1, n_2)$ being the conjugate of the  coefficient of the mode $(n_1, -n_2)$. 

Since $U^-, S^-$ are given in $B(0, 3C_0)$ by induction, we can solve \eqref{eq:sys} by standard ODE theory for $R\in [C_0, 3C_0)$ given that the terms multiplying the derivative do not vanish. 

With respect to $R > \frac52 C_0$ we obtain directly that $W_n = Z_n = U_{\Psi, n} = U_{\Phi, n} = 0$ from \eqref{eq:sys} (either in the diagonalizable or non-diagonalizable case).

Taking Taylor series, and using that $J$ is sufficiently large, one can also conclude smoothness for each $W_n,Z_n, U_{\phi,n},  U_{\psi,n} $ at $R = \frac{5}{2}C_0$ (see \cite[Lemma 7.5]{Buckmaster-CaoLabora-GomezSerrano:implosion-compressible}).

\begin{lemma}\label{eigenfunctionbound}
Consider the spherical energy
\begin{equation*}
E_{n,k}(R) =  ( R + \bar U_R + \alpha \bar S ) \frac{|\partial_{R}^{k}W_n|^2}{2} + ( R + \bar U_R - \alpha \bar S ) \frac{|\partial_{R}^{k}Z_n|^2}{2} + ( R + \bar U_R ) \left( |\partial_{R}^{k}U_{\Psi, n}|^2 + |\partial_{R}^{k}U_{\Phi, n}|^2 \right) (n_1\left( n_1+1\right)+1),
\end{equation*}
and 
\begin{equation*}
E_{n,k}^{-}(R) =  ( R + \bar U_R + \alpha \bar S ) \frac{|\partial_{R}^{k}W_n^{-}|^2}{2} + ( R + \bar U_R - \alpha \bar S ) \frac{|\partial_{R}^{k}Z_n^{-}|^2}{2} + ( R + \bar U_R ) \left( |\partial_{R}^{k}U_{\Psi, n}^{-}|^2 + |\partial_{R}^{k}U_{\Phi, n}^{-}|^2 \right) (n_1\left( n_1+1\right)+1).
\end{equation*}
Then we have for $C_0\leq R<\frac{5}{2}C_0$,
\begin{align}\label{modesmoothestimate}
E_{n,k}(R)\lesssim E_{n,k}(C_0)+\sup_{C_0\leq R<\frac{5}{2}C_0}[E_{n,k}^{-}(R)]+\sum_{k'=0}^{k-1}\sup_{C_0\leq R<\frac{5}{2}C_0}[E_{n,k'}(R)][\sqrt{n_1(n_1+1)}+1].
\end{align}
Here the implicit constant in $\lesssim$ can depend on $m,J$.
\end{lemma}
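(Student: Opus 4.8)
The plan is to prove \eqref{modesmoothestimate} by a weighted energy estimate for the decoupled radial system \eqref{eq:sys} at the level of the $k$-th radial derivative, closed by Grönwall on the fixed-length interval $[C_0,\tfrac52 C_0)$. First I would differentiate each of the four equations in \eqref{eq:sys} $k$ times in $R$, obtaining for the representative unknown $W_n$ an equation of the schematic form
\[
\chi_2\,a_W\,\p_R^{k+1}W_n=-\bigl(\lambda+J(1-\chi_1)\bigr)\p_R^k W_n+(\text{coupling})+(\text{forcing})+(\text{lower order}),
\]
and similarly for $Z_n,U_{\Psi,n},U_{\Phi,n}$ with $a_W$ replaced by $a_W=R+\bar U_R+\alpha\bar S$, $a_Z=R+\bar U_R-\alpha\bar S$, $a_\Psi=R+\bar U_R$, $a_\Psi$ respectively (all positive for $R\ge C_0$ by Lemma \ref{lemma:integrated_repulsivity} and \eqref{eq:profiles_positive}), the $U_{\Phi,n}$ equation having no coupling term, the forcing involving $\p_R^{k}W_n^-,\dots$, and the lower-order terms involving at most $\p_R^{k-1}$ of the unknowns together with derivatives of $\chi_1,\chi_2,\bar U,\bar S$. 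Multiplying each equation by the complex conjugate of the corresponding unknown, taking real parts, weighting by $a_\bullet$ and, for the $U_{\Psi,n},U_{\Phi,n}$ equations, by the extra factor $(n_1(n_1+1)+1)$ as in $E_{n,k}$, and summing, I would compute $\tfrac{d}{dR}E_{n,k}$; the contributions of differentiating the weights and of the transport coefficient are $\lesssim_{m,J}E_{n,k}$, and the damping term contributes $-J(1-\chi_1)\,|\p_R^k(\cdot)|^2\le 0$. The goal is the differential inequality $\tfrac{d}{dR}E_{n,k}\lesssim_{m,J} E_{n,k}+E_{n,k}^-+(\sqrt{n_1(n_1+1)}+1)\sum_{k'<k}E_{n,k'}$, which integrates to \eqref{modesmoothestimate}.

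The heart of the matter is the angular coupling. In \eqref{eq:sys} the only terms carrying the factor $n_1(n_1+1)$ are $\pm\chi_2\alpha\bar S\tfrac{U_{\Psi,n}}{R}n_1(n_1+1)$ in the $W_n$ and $Z_n$ equations, fed back through the $O(1/R)$ term $-\chi_2\alpha\bar S\tfrac{W_n-Z_n}{2R}$ in the $U_{\Psi,n}$ equation. The weight $(n_1(n_1+1)+1)$ on the $U_{\Psi,n}$ block of $E_{n,k}$ is chosen precisely so that the \emph{leading} contributions of this coupling — those in which all $k$ derivatives hit the coupled unknown and none the coefficient — cancel: from the $W_n$ block one gets $+n_1(n_1+1)\tfrac{\alpha\bar S}{R}\Re\bigl(\p_R^k U_{\Psi,n}\,\overline{\p_R^k W_n}\bigr)$ and from the $U_{\Psi,n}$ block $-(n_1(n_1+1)+1)\tfrac{\alpha\bar S}{R}\Re\bigl(\p_R^k W_n\,\overline{\p_R^k U_{\Psi,n}}\bigr)$, whose sum is the $n$-free quantity $-\tfrac{\alpha\bar S}{R}\Re\bigl(\p_R^k W_n\,\overline{\p_R^k U_{\Psi,n}}\bigr)\lesssim E_{n,k}$; the $Z_n$–$U_{\Psi,n}$ pair cancels the same way with the opposite sign. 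This cancellation, which relies on the sign structure of \eqref{eq:sys}, is exactly what keeps the coefficient of $E_{n,k}$ in the differential inequality independent of $n$ (though it may depend on $m$ and $J$), so that Grönwall over a fixed interval does not generate a factor growing in $n_1$.

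The remaining terms are errors. When a derivative falls on the bounded coefficient $\chi_2\alpha\bar S/R$ of a coupling term, one loses a derivative on the coupled unknown but retains the factor $n_1(n_1+1)$; bounding these by Cauchy–Schwarz (and, where convenient, one integration by parts in $R$), and using that $|\p_R^{k'}U_{\Psi,n}|^2$ carries weight $\gtrsim n_1(n_1+1)$ inside $E_{n,k'}$ to absorb half of the angular weight, these contribute terms of the permitted type $(\sqrt{n_1(n_1+1)}+1)E_{n,k'}$ with $k'<k$. The zeroth-order terms ($(r-1)$, $\bar U_R/R$) and the terms $\mathcal E_{W,n},\mathcal E_{Z,n}$ carry no $n$-factors and are $\lesssim_{m,J}E_{n,k}+\sum_{k'<k}E_{n,k'}$; the forcing terms with $\p_R^k W_n^-$ etc.\ are $\lesssim E_{n,k}+E_{n,k}^-$. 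The $U_{\Phi,n}$ equation decouples entirely from $W_n,Z_n$ and is treated identically, with no coupling terms. Boundary terms from integration by parts in $R$ at an upper endpoint $\rho<\tfrac52 C_0$ are controlled by $\sup_{[C_0,\frac52 C_0)}E_{n,k}$ and absorbed after taking $J$ large, and there is no boundary contribution at $R=\tfrac52 C_0$ because $W_n,Z_n,U_{\Psi,n},U_{\Phi,n}$ and all their $R$-derivatives vanish there, as established before the statement of the Lemma.

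The main obstacle is twofold. First — and this is the crux — one must organize the bookkeeping so that \emph{only} the coefficient of $E_{n,k}$ remains $n$-uniform while the $(\sqrt{n_1(n_1+1)}+1)$ losses are confined to the strictly lower-order terms $E_{n,k'}$, $k'<k$; this is where the precise weight $(n_1(n_1+1)+1)$ and the matching $\pm$ signs in \eqref{eq:sys} are essential, and it is what makes \eqref{modesmoothestimate} summable over $n$ when fed into Proposition \ref{prop:smooth}. Second, one must handle the degeneration of $\chi_2$ near $R=\tfrac52 C_0$: on $[C_0,2C_0]$ one has $\chi_2\equiv 1$, so there is nothing to do, while on $[2C_0,\tfrac52 C_0)$ one has $\chi_1\equiv 0$, so after division by $\chi_2 a_\bullet$ the damping $-J(1-\chi_1)|\p_R^k(\cdot)|^2=-J|\p_R^k(\cdot)|^2$ has size $\gtrsim J/\chi_2$, which for $J$ large dominates the $O(1/\chi_2)$ error contributions in that region; combined with the vanishing at $R=\tfrac52 C_0$, this closes the estimate on the entire interval.
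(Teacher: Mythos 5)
Your proposal follows the paper's proof essentially step by step: differentiate the decoupled radial system $k$ times in $R$, compute $\frac{d}{dR}E_{n,k}$, exploit the fact that the weight $(n_1(n_1+1)+1)$ on the $U_{\Psi,n},U_{\Phi,n}$ blocks makes the top-order angular coupling cancel up to an $n$-free residual, and close by a Grönwall/integrating-factor argument on $[C_0,\tfrac52 C_0)$ using the $J(1-\chi_1)/\chi_2$ damping where $\chi_2$ degenerates. Your account of the cancellation is in fact slightly more precise than the paper's (you correctly identify the leftover $-\frac{\alpha\bar S}{R}\Re(\overline{\p_R^k U_{\Psi,n}}\,\p_R^k W_n)$ from the $+1$ in the weight, which is $O(E_{n,k})$). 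The one place you are a bit loose is the endgame near $R=\tfrac52 C_0$: saying the damping ``dominates the $O(1/\chi_2)$ error contributions'' is misleading because those errors involve $E_{n,k}^{-}$ and $E_{n,k'}$ rather than $E_{n,k}$ and hence cannot be absorbed by the damping directly; what actually happens (as in the paper) is that the damping makes the integrating factor $e^{\int J/(2\chi_2)}$ work, and one must also choose a thin subinterval near $\tfrac52C_0$ where $|\chi_2'|$ is small enough to control the $\tfrac{\chi_2'}{\chi_2}k E_{n,k}$ term, a detail your sketch omits. These are refinements of bookkeeping rather than different ideas, so the proposal stands.
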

\begin{proof} 
In the following proof we use the notation:
\[
O(f(R))\Leftrightarrow \text{bounded by } C f(R)\text{ for some fixed constant $C$,}
\]
\[
O_{m,J}(f(R))\Leftrightarrow  \text{ bounded by } C_{m,J}f(R) \text{ for some constant depending on $m,J$}. 
\]
First, we show the bound for $k=0$. For $R\in [C_0, \frac{5}{2}C_0)$, since $\chi_2\neq 0$, we have
\begin{align*}
\p_R E_{n,0} &= - \frac{J(1-\chi_1) + \Re\lambda}{\chi_2}\left( |W_{n,0}|^2 + |Z_{n,0}|^2 + 2(n_1(n_1+1)+1) (|U_{\Psi, n,0}|^2 + |U_{\Phi, n,0}|^2 )\right)\\
&\quad +O( E_{n,0} ) + O\left( \frac{1}{\chi_2}\sqrt{E_{n,0} E_{n,0}^{-} }\right).
\end{align*}
because $\mathcal E_{W, n,0}, \mathcal E_{Z, n,0} = O(E_{n,0})$ and the terms $\alpha \bar S \frac{\Re (U_{\Psi, n,0}W^{\ast}_{n,0})}{2R}n_1(n_1+1)$ and $\alpha \bar S \frac{\Re(U_{\Psi, n,0}Z_{n,0}^{\ast})}{2R}n_1(n_1+1)$ cancel between the contribution coming from the equations of $W_{n,0}, Z_{n,0}$ and the contribution coming from the equation of $U_{\Psi, n,0}$. In fact, from \eqref{eq:sys}, we have
\begin{align*}
&\quad \Re( W_{n,0}^{\ast}\frac{U_{\Psi, n,0}}{R}n_1(n_1+1)\chi_2\alpha\bar{S}-Z_{n,0}^{\ast}\frac{U_{\Psi, n,0}}{R}n_1(n_1+1)\chi_2\alpha\bar{S}-n_1(n_1+1)U_{\Psi, n,0}^{\ast}(\frac{W_{n,0}-Z_{n,0}}{R})\chi_2\alpha\bar{S})\\
&=\chi_2\alpha\bar{S}n_1(n_1+1)\frac{1}{R}\Re(W_{n,0}^{\ast}U_{\Psi, n,0}-Z_{n,0}^{\ast}U_{\Psi, n,0}-U_{\Psi, n,0}^{\ast}(W_{n,0}-Z_{n,0}))\\
&=0.
\end{align*}                     
Here, $E_{n,0}^{-}$ corresponds to the energy of the $n$-th mode of the previous eigenfunction, in the non-diagonal case.

In particular, using that $\Re(\lambda)\geq \frac{-\delta_{g}}{2}$ as in Lemma \ref{lemma:abstract_result}, and the lower bounds on $R + \bar U_R \pm \alpha \bar S$ for $R \in [R_1, R_2]$, there exists an absolute constant $C$ such that
\begin{align*}
\quad \p_R E_{n,0} &\leq \frac{1}{\chi_2} C E_{n,0} - \frac{J(1-\chi_1)}{\chi_2} \frac{E_{n,0}}{100}
+ \frac{C}{\chi_2}\sqrt{E_{n,0} E_{n,0}^{-} } \\
&\leq \frac{2}{\chi_2} C E_{n,0} - \frac{J(1-\chi_1)}{\chi_2} \frac{E_{n,0}}{100}
+ \frac{C}{\chi_2}E_{n,0}^{-},
\end{align*}
where we apply Cauchy-Schwarz in the second inequality.

Now, note that $\chi_1 = 0$ in the region where $\chi_2<1$. Thus, the term $\frac{CE_{n,0}}{\chi_2}$ in the region $\chi_2<1$ can be absorbed assuming $J$ is sufficiently large. Then we have the estimate: $$\frac{1}{\chi_2}C\leq \frac{J}{400\chi_2}(1-\chi_1)+C. $$ Enlarging $C$ if needed, we obtain
\begin{equation*}
\p_R E_{n,0} \leq CE_{n,0} - \frac{J(1-\chi_1)}{\chi_2} \frac{E_{n,0}}{200} + \frac{C E_{n,0}^{-}}{\chi_2}.
\end{equation*}
Again, enlarging $C$ if needed
\begin{align*}
\p_R E_{n,0} &\leq C(E_{n,0}+E_{n,0}^{-}) - \frac{(1-\chi_1)}{\chi_2} \left( \frac{J E_{n,0}}{200} - CE_{n,0}^{-} \right) \\
&\leq C(E_{n,0}+E_{n,0}^{-}) - \frac{J(1-\chi_1)}{400\chi_2} \left( E_{n,0} - E_{n,0}^{-} \right) - \frac{J(1-\chi_1)}{400\chi_2} E_{n,0}
\end{align*}
taking $J \geq 400C$.
Then we have 
\begin{align}\label{es:eigenfunctionsmode}
\p_R (E_{n,0}e^{-2CR}) &\leq \left(C(E_{n,0}^{-}-E_{n,0}) - \frac{J(1-\chi_1)}{400\chi_2} \left( E_{n,0} - E_{n,0}^{-} \right) - \frac{J(1-\chi_1)}{400\chi_2} E_{n,0}\right)e^{-2CR}.
\end{align}
Therefore, we claim $E_{n,0}(R)e^{-2CR}\leq \sup_{R}(E_{n,0}^{-})+E_{n,0}(C_0)$. When $R=C_0$, the estimate is satisfied naturally. Moreover, from estimate \eqref{es:eigenfunctionsmode}, when $E_{n,0}(R)e^{-2CR}= \sup_{R}(E_{n,0}^{-})+E_{n,0}(C_0)$, we have $\frac{dE_{n,0}(R)e^{-2CR}}{dR}\leq 0$. We get 
\begin{align}\label{smootheigenfunction0}
E_{n,0}\lesssim e^{2CR}\left(\sup_{R}(E_{n,0}^{-})+E_{n,0}(C_0)\right)
\end{align}
and \eqref{modesmoothestimate} follows when $k=0$.

Now we estimate the higher derivatives with respect to $R$. From \eqref{eq:sys}, we have
\begin{align} \label{higherorderderivativeeigenfunctions} 
&\chi_2 (R + \bar U_R + \alpha \bar S) \p_R^{k+1} W_{n}+\sum_{j\leq k}C_{j,k}\p_{R}^{-j+1+k}(\chi_2 (R + \bar U_R + \alpha \bar S))\p_R^{j} W_{n} \\\nonumber
&= -\lambda \p_R^{k}W_{n} - \frac{\delta_g}{10} \p_R^{k}W^-_{n}  - J(1-\chi_1)\p_R^{k}W_{n}- \sum_{j\leq k-1}C_{j,k}\p_R^{k-j}(J(1-\chi_1))\p_R^{j}W_{n}\\\nonumber
&+ \chi_2 \alpha \bar S \frac{\p_R^{k}U_{\Psi, n}(R)}{R} n_1 (n_1+1)+ \sum_{j\leq k-1}C_{j,k}\p_R^{k-j} \left( \frac{\chi_2 \alpha \bar S}{R} \right) \p_R^{j}U_{\Psi, n}(R) n_1 (n_1+1) +\p_R^{k} (\chi_2 \mathcal E_{W, n}).
\end{align}
Here we use same $C_{j,k}$ but actually mean possibly different constants depending on $k,j $.
 Let $\delta_{m,J}$ sufficiently small to be fixed later. From the definition of $\chi_2$, there exists $\epsilon_{m,J}>0$  depending on $\delta_{m,J}$, such that for $R\in[C_0,\frac{5}{2}C_0-\delta_{m,J}]$, $|\chi_2|\geq \epsilon_{m,J}$. 
Then from \eqref{higherorderderivativeeigenfunctions}, when $R\in[C_0,\frac{5}{2}C_0-\delta_{m,J}]$, we have 
\begin{align*}
(R + \bar U_R + \alpha \bar S) \p_R^{k+1} W_{n}&=\sum_{k'=0}^{k-1}O_{m,J} \left( \sqrt{E_{n,k'}(R)} \right) \left( \sqrt{n_1(n_1+1)}+1 \right) +O_{m,J} \left( \sqrt{E_{n,k}(R)} \right)\\
&\quad+O_{m,J} \left( \sqrt{E_{n,k}^{-}(R)} \right) + \alpha \bar S \frac{\p_R^{k}U_{\Psi, n}(R)}{R} n_1 (n_1+1).
\end{align*}
Similarly, we also have the following estimates for other terms in the spherical energy:
\begin{align*}
(R + \bar U_R + \alpha \bar S) \p_R^{k+1} Z_{n}&= \sum_{k'=0}^{k-1}O_{m,J} \left( \sqrt{E_{n,k'}(R)} \right) \left( \sqrt{n_1(n_1+1)}+1 \right) + O_{m,J} \left( \sqrt{E_{n,k}(R)} \right)\\
&\quad+O_{m,J} \left( \sqrt{E_{n,k}^{-}(R)} \right)- \alpha \bar S \frac{\p_R^{k}U_{\Psi, n}(R)}{R} n_1 (n_1+1),
\end{align*}
\begin{align*}
(R + \bar U_R - \alpha \bar S) \p_R^{k+1} U_{\psi,n}&= \sum_{k'=0}^{k-1}O_{m,J} \left( \sqrt{E_{n,k'}(R)} \right) +O_{m,J} \left( \frac{\sqrt{E_{n,k}(R)}}{\sqrt{n_1(n_1+1)}+1)} \right)\\
&\quad+O_{m,J} \left( \frac{\sqrt{E_{n,k}^{-}(R)}}{(\sqrt{n_1(n_1+1)}+1)} \right)- \alpha \bar S \frac{\p_R^{k}(W_{n}(R)-Z_{n}(R))}{R},
\end{align*}
and
\begin{align*}
(R + \bar U_R ) \p_R^{k+1} U_{\psi,n}&= \sum_{k'=0}^{k-1}O_{m,J} \left( \sqrt{E_{n,k'}(R)} \right) +O_{m,J} \left( \frac{\sqrt{E_{n,k}(R)}}{(\sqrt{n_1(n_1+1)}+1)} \right)\\
&\quad+O_{m,J} \left( \frac{\sqrt{E_{n,k}^{-}(R)}}{(\sqrt{n_1(n_1+1)}+1)} \right).
\end{align*}
Then a similar cancellation as in the $k=0$ case happens, and we have the following estimate for the spherical energy:
\[
\partial_{R}E_{n,k}(R)= \sum_{k'=0}^{k-1}O_{m,J}(E_{n,k'}(R)) \left( \sqrt{n_1(n_1+1)}+1 \right) +O_{m,J}(E_{n,k}(R))+O_{m,J}(E_{n,k}^{-}(R)). 
\]
Therefore we get:
 \begin{align} \begin{split} \label{smootheigenfunction1}
 \sup_{R\in[C_0,\frac{5}{2}C_0-\delta_{m,J}]}E_{n,k}(R)&\lesssim |E_{n,k}(C_0)|+\sup_{R\in [C_0,\frac{5}{2}C_0-\delta_{m,J}]}E_{n,k}^{-}(R) \\ 
 &\qquad +\sum_{k'=0}^{k-1}\sup_{R\in [C_0,\frac{5}{2}C_0-\delta_{m,J}]}E_{n,k'}(R) \left( \sqrt{n_1 (n_1+1)}+1 \right).
 \end{split} \end{align}

Now if $\delta_{m,J}\leq \frac{1}{10}C_0$, for $\frac{5}{2}C_0\geq R\geq \frac{5}{2}C_0-\delta_{m,J}$, we have $\chi_1=0$. Then from \eqref{higherorderderivativeeigenfunctions}, the following estimate holds:
\begin{align*}
(R + \bar U_R + \alpha \bar S) \p_R^{k+1} W_{n}=&-\frac{\chi_2'}{\chi_2}k(R + \bar U_R + \alpha \bar S)(\p_R^{k}W_{n}(R))-\frac{J}{\chi_2}\partial_{R}^{k}W_{n}-\frac{\lambda}{\chi_{2}}\partial_{R}^{k}W_{n}(R)+O(\partial_{R}^{k}W_n(R))\\
&\qquad+\frac{1}{\chi_2}O_{m,J} \left( \sqrt{E_{n,k}^{-}(R)} \right) -\frac{1}{\chi_2}\sum_{k'=0}^{k-1}O_{m,J} \left( \sqrt{E_{n,k'}(R)} \right) \left( \sqrt{n_1(n_1+1)}+1 \right) \\
&\qquad+\alpha \bar S \frac{\p_R^{k}U_{\Psi, n}(R)}{R} n_1 (n_1+1).
\end{align*}
Similarly, we have the estimate for $\partial_{R}^{k}Z_n$:
\begin{align*}
(R + \bar U_R - \alpha \bar S) \p_R^{k+1} Z_{n}(R)
&=-\frac{\chi_2'}{\chi_2}k(R + \bar U_R - \alpha \bar S)\p_R^{k}Z_{n}(R)-\frac{J}{\chi_2}\partial_{R}^{k}Z_{n}(R)-\frac{\lambda}{\chi_{2}}\partial_{R}^{k}Z_{n}(R)+O(\partial_{R}^{k}Z_n(R))\\
&+\frac{1}{\chi_2} O_{m,J} \left( \sqrt{E_{n,k}^{-}(R)} \right) + \frac{1}{\chi_2}\sum_{k'=0}^{k-1} O_{m,J} \left( \sqrt{E_{n,k'}(R)} \right) \left( \sqrt{(n_1)(n_1+1)}+1 \right) \\
&-\alpha \bar S \frac{\p_R^{k}U_{\Psi, n}(R)}{R} n_1 (n_1+1).
\end{align*}

for $\partial_{R}^{k}U_{\psi,n}$:

\begin{align*}
(R + \bar U_R)\p_R^{k+1} U_{\psi,n}(R) &= 
-\frac{\chi_2'}{\chi_2}k(R + \bar U_R)\p_R^{k} U_{\psi,n}(R)-\frac{J}{\chi_2}\partial_{R}^{k}U_{\psi,n}(R) - \frac{\lambda}{\chi_{2}}\partial_{R}^{k}U_{\psi,n}(R) + O \left( \partial_{R}^{k}U_{\psi,n}(R) \right) \\
&+\frac{1}{\chi_2}\frac{O_{m,J} \left( \sqrt{E_{n,k}^{-}(R)} \right) }{\sqrt{n_1(n_1+1)}+1}+\frac{1}{\chi_2}\sum_{k'=0}^{k-1}O_{m,J} \left( \sqrt{E_{n,k'}(R)} \right) \\
&-\alpha \bar S \frac{(\p_R^{k}W_{n}(R)-\p_R^{k}Z_{n}(R))}{R}.
\end{align*}

and $\partial_{R}^{k}U_{\phi,n}:$
\begin{align*}
(R + \bar U_R)\p_R^{k+1} U_{\phi,n}&=-\frac{\chi_2'}{\chi_2}k(R + \bar U_R)\p_R^{k} U_{\phi,n}(R)-\frac{J}{\chi_2}\partial_{R}^{k}U_{\phi,n}(R)-\frac{\lambda}{\chi_{2}}\partial_{R}^{k}U_{\phi,n}(R)+O(\partial_{R}^{k}{U_{\phi,n}(R)})\\
&+\frac{1}{\chi_2}O_{m,J}\left(\frac{\sqrt{E_{n,k}^{-}(R)}}{\sqrt{n_1(n_1+1)}+1}\right)+\frac{1}{\chi_2}\sum_{k'=0}^{k-1}O_{m,J} \left( \sqrt{E_{n,k'}(R)} \right).
\end{align*}
Then for the energy $E_{n,k}(R)$, we have
\begin{align*}
\p_{R}E_{n,k}(R)&\leq -\frac{2\chi_2'}{\chi_2}k E_{n,k}(R)-\frac{J}{2\chi_2}E_{n,k}(R)\\
&+\frac{1}{\chi_2}O_{m,J}(E_{n,k}^{-}(R))+\frac{1}{\chi_2}\sum_{k'=0}^{k-1}O_{m,J}(E_{n,k'}(R)) \left( \sqrt{n_1(n_1+1)}+1 \right),
\end{align*}
where we also used that $\Re(\lambda) > -\delta_g/2 \geq -1/2$, $\chi_2\leq 1$ and $J$ sufficiently large.

Next we will use a similar method as in \cite[Lemma 7.5]{Buckmaster-CaoLabora-GomezSerrano:implosion-compressible} to bound the energy. In fact, let 
\begin{align*}
R_{m,J}=\frac{5}{2}C_0-\delta_{m,J}, \quad B_{m,J}=O_{m,J}(E_{n,k}^{-}(R))+\sum_{k'=0}^{k-1}O_{m,J}(E_{n,k'}(R) \left( \sqrt{n_1(n_1+1)}+1) \right).
\end{align*} 
    
Then 
$$\frac{d}{d\gamma}e^{\int_{R_{m,J}}^{\gamma}\frac{J}{2\chi_2(\beta)}}=e^{\int_{R_{m,J}}^{\gamma}\frac{J}{2\chi_2(\beta)}d\beta}\frac{J}{2\chi_2(\gamma)}.$$ 

We have 
\begin{align*}
E_{n,k}(R) &\leq e^{-\int_{R_{m,J}}^{R}\frac{J}{2\chi_2(\beta)}d\beta}(|E_{n,k}(R_m)|+m\int_{R_{m,J}}^{R}\frac{d}{d\gamma}e^{\int_{R_{m,J}}^{\gamma}\frac{J}{2\chi_2(\beta)}d\beta}(|B_{m,J}(\gamma)|+C_{m,J}|\chi_{2}'(\gamma)|E_{n,k}(\gamma))d\gamma)\\
& \leq |E_{n,k}(R_m)|+ 2m \sup_{\gamma \in [R_{m,J},\frac{5}{2}C_0]}|B_{m,J}(\gamma)|+C_{m,J}\sup_{\gamma \in [R_{m,J},\frac{5}{2}C_0]}(|\chi_2'(\gamma)||E_{n,k}(\gamma)|).
\end{align*}
 Since $\displaystyle \lim_{R\to\frac{5} {2}C_0}\chi'_2(R)=0$ from the definition, we choose $\delta_{m,J}$ sufficiently small to make 
 \[
 \sup_{\gamma \in {[R_{m,J},\frac{5}{2}C_0]}}C_{m,J}|\chi_2'(\gamma)|\leq \frac{1}{2}.
 \]
 Therefore, we have
 \begin{align} \begin{split}\label{smootheigenfunction2}
 \sup_{R\in[R_{m,J},\frac{5}{2}C_0]}E_{n,k}(R)&\lesssim |E_{n,k}(R_{m,J})|+\sup_{R\in [R_{m,J},\frac{5}{2}C_0]}E_{n,k}^{-}(R) \\ 
&\qquad +\sum_{k'=0}^{k-1}\sup_{R\in [R_{m,J},\frac{5}{2}C_0]}E_{n,k'}(R)(\sqrt{n_1 (n_1+1)}+1).
 \end{split} \end{align}
From \eqref{smootheigenfunction0}, \eqref{smootheigenfunction1} and \eqref{smootheigenfunction2}, we get \eqref{modesmoothestimate}.
\end{proof}
Now we change back $(U,S)$ to $(U_{j},S_{j})$, and $(U^{-},S^{-})$ to $(U_{j-1},S_{j-1})$. Let $E_{n,k}^{j}$ denote the corresponding $k$-th order spherical energy of $(U_{j},S_{j})$ . Then $E_{n,k}^{0}=0$ and from \eqref{modesmoothestimate}, we have
\begin{align}\label{estimate:eigenfunctions3}
E_{n,k}^{j}(R) \lesssim E_{n,k}^{j}(C_0) + \sup_{C_0\leq R<\frac{5}{2}C_0} E_{n,k}^{j-1}(R) 
 + \sum_{k'=0}^{k-1} \sup_{C_0 \leq R < \frac{5}{2} C_0} E_{n,k'}^{j}(R) \left( \sqrt{n_1(n_1+1)}+1 \right).
\end{align}
Hence we get that for all $j$, $0\leq k\leq m$,
\begin{align}\label{estimate:eigenfunctions2}
E_{n,k}^{j}(R)\lesssim \sum_{j'=1}^{j}\sum_{k'=0}^{k}E_{n,k'}^{j'}(C_0) \left( \sqrt{n_1(n_1+1)}+1 \right)^{k}.
\end{align}
First, for $k=0$ (from \eqref{smootheigenfunction0}) and $J=0$ ($E_{n,k}^{0}=0$), the estimate holds. Now we only need to show if for both $\tilde{j}\leq j-1$, $0\leq k\leq m$ and $\tilde{j}=j$, $\tilde{k}\leq k$, \eqref{estimate:eigenfunctions2} holds, then for $(j, k+1)$, \eqref{estimate:eigenfunctions2} also holds.
From the induction assumption and \eqref{estimate:eigenfunctions3} we have 
\begin{align*}
E_{n,k+1}^{j}(R)&\lesssim E_{n,k+1}^{j}(C_0)+\sum_{j'=1}^{j-1}\sum_{k'=0}^{k+1}E_{n,k'}^{j}(C_0)[\left( \sqrt{n_1(n_1+1)}+1 \right)^{k+1}\\
&\quad+\sum_{k'=0}^{k}\sum_{j'=1}^{j}\sum_{k''=0}^{k'}E_{n,k''}^{j'}(C_0)\left( \sqrt{n_1(n_1+1)}+1 \right)^{k+1}\\
&\lesssim \sum_{j'=1}^{j}\sum_{k'=0}^{k+1}E_{n,k'}^{j'}(C_0) \left( \sqrt{n_1(n_1+1)}+1\right)^{k+1}.
\end{align*}

Therefore, the regularity of $\{\psi_{j}\}$ in $\{C_0\leq R\leq 3 C_0\}$ is worse than in $\{\frac{1}{2}C_0\leq R\leq C_0\}$ by at most $(m+C_0)$ derivatives in the angular direction. Thus we are done.


\end{proof}

\section{Nonlinear Stability}
\label{sec:nonlinear}

While the linear stability is treated with cut-off functions for the linearized operator, the nonlinear stability must be done for the full equation \eqref{nsequation1}. They are connected by the analysis done in subsection \ref{subsec:agreement}. After that, we perform the nonlinear stability analysis by bootstrap arguments with a decay assumption on the unstable mode. We formulate the bootstrap in Subsection \ref{subsec:bootstrap}, and defer the proof to Subsections \ref{lowerestimate} (low derivative bounds) and \ref{higherorderestimate} (high derivative bounds). We can get rid of the assumptions on the unstable modes by choosing initial data in a finite codimension manifold (see \eqref{initialdatatruncated}). This is done via a classical topological argument that we develop in Subsection \ref{subsec:topological}.

\subsection{Agreement of equations} \label{subsec:agreement}

In order to use our analysis for the linearized cut-off operator $\mc L = \chi_2 (\mc L_u^{e}, \mc L_s^{e}) - J(1-\chi_1)$, we consider an additional equation, which we call the \textit{truncated equation}:
\begin{align} 
\p_s (\mt U_t, \mt S_t) &= \mc L (\mt U_t, \mt S_t) + \chi_2 \mc F, \nonumber \\
(\mt U_t, \mt S_t)|_{s=s_0}&= (\mt U_{t, 0}, \mt S_{t, 0}) ,
\label{eq:truncated}\end{align}
where $\mc F$ is the forcing coming from the original (extended) equation \eqref{nsequation1} and $(\mt U_{t,0}, \mt S_{t,0})$ is the initial data for the extended equation which will be further specified later.

\begin{lemma}\label{truncatedestimate} Suppose that a classical solution $(\mt U, \mt S)$ to \eqref{nsequation1}--\eqref{nsequation2} exists up to time $s'$. Suppose that $(\mt U_t, \mt S_t)$ is a classical solution to \eqref{eq:truncated} until time $s'$ as well. If at $s=s_0$, $|y|\leq C_0$, we have $(\tilde{U}_{t},\tilde{S}_{t})$ agrees with $(\tilde{U},\tilde{S}),$ then, for $|y| \leq C_0$, $s \in [s_0, s')$, we also have that $(\mt U_t, \mt S_t)$ agrees with $(\mt U, \mt S)$.
\end{lemma}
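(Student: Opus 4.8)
The plan is a finite-speed-of-propagation (domain-of-dependence) argument; the only genuine subtlety is that the relevant propagation speed is the \emph{acoustic} speed of the linearized system, not merely the transport speed.

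\textbf{Step 1 (reduction to a homogeneous transport system on $\{|y|\le C_0\}$).} On $\{|y|\le C_0\}$ all the cut-offs are inactive: $\chi_1\equiv 1$ and $\chi_2\equiv 1$ there by their definition (since $C_0$ is well inside the regions where they equal $1$), and $\hat{X}(y,s)=\mathfrak X(e^{-s}y)\equiv 1$ there because $s\ge s_0$ is large enough that $e^{s_0}/2>C_0$. Hence on $\{|y|\le C_0\}$ the error terms $\mc E_u,\mc E_s$ of \eqref{Eudefinition}--\eqref{Esdefinition} vanish, $\mc L=\mc L^e$, $\chi_2\mc F=\mc F$, and both \eqref{nsequation1}--\eqref{nsequation2} and \eqref{eq:truncated} read $\partial_s(\cdot)=\mc L^e(\cdot)+\mc F$ with the \emph{same} source $\mc F$; here $\mc F$ is $\mc N+\mc E+\mc F_{dis}$ evaluated along the extended solution $(\tilde U,\tilde S)$, i.e. a prescribed function of $(s,y)$ --- it is essential that the dissipation $\mc F_{dis}\propto \Delta U$ is not fed the truncated unknown, or the problem would cease to be hyperbolic. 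Setting $D=(\tilde U_t-\tilde U,\ \tilde S_t-\tilde S)$ and subtracting, using linearity of $\mc L^e$, we get the homogeneous system
\[ \partial_s D=\mc L^e D\quad\text{on }\{|y|\le C_0\}\times[s_0,s'),\qquad D|_{s=s_0}=0\ \text{ on }\{|y|\le C_0\}. \]

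\textbf{Step 2 (domain-of-dependence energy estimate).} The first-order part of $\mc L^e$ is the symmetric-hyperbolic operator $\partial_s D+\sum_i A_i\partial_i D$, where $A_i=v_i\,\mathrm{Id}$ plus the $\alpha\bar S$-coupling between $D_u$ and $D_s$ ($D_u$-equation: $-\alpha\bar S\nabla D_s$; $D_s$-equation: $-\alpha\bar S\,\div D_u$) and $v=y+\bar U=(R+\bar U_R)\hat R$. The symbol $A(n)=\sum_i n_i A_i$ has eigenvalues $v\cdot n$ (twice) and $v\cdot n\pm\alpha\bar S$, so (using $\bar S>0$ from \eqref{eq:profiles_positive}) its largest eigenvalue is $v\cdot n+\alpha\bar S$. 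Fix $s_1\in[s_0,s')$ and let $\rho(s)$ solve, backward from $\rho(s_1)=C_0$, the ODE $\dot\rho=\rho+\bar U_R(\rho)+\alpha\bar S(\rho)$; by \eqref{eq:angular_repulsivity} and \eqref{eq:profiles_positive} the right-hand side is $>\tilde\eta\rho>0$ on $(0,C_0]$, so $\rho$ is increasing and $\rho(s)\in(0,C_0]$ for $s\le s_1$ (if the ball degenerates to $\{0\}$ at some $s_\ast\in[s_0,s_1)$ first, interpret $B(0,\rho(s))=\emptyset$ for $s\le s_\ast$). Put $\Omega_s=B(0,\rho(s))\subset B(0,C_0)$ and $\mc E(s)=\int_{\Omega_s}|D|^2$. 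By the transport theorem and integration by parts in $\Omega_s$,
\[ \frac{d}{ds}\mc E(s)=\int_{\partial\Omega_s}\Big(\dot\rho\,|D|^2-\langle A(n)D,D\rangle\Big)\,dS+O\Big(\int_{\Omega_s}|D|^2\Big), \]
the $O(\cdot)$ arising from the bounded coefficients $\partial_i v_i,\ \partial_i(\alpha\bar S)$ and the zeroth-order part of $\mc L^e$ (which has $\bar U,\bar S$ coefficients, bounded on $B(0,C_0)$). On $\partial B(0,\rho)$ one has $n=\hat R$ and $\dot\rho=\rho+\bar U_R(\rho)+\alpha\bar S(\rho)$ equals the top eigenvalue of $A(n)$, so the boundary integrand is $\le 0$; hence $\frac{d}{ds}\mc E\le C\mc E$. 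Since $\mc E$ vanishes at the left endpoint of the interval on which $\Omega_s$ is non-degenerate --- either because $D|_{s_0}=0$ on $B(0,C_0)$, or because $\Omega_{s_\ast}$ has no interior --- Grönwall yields $\mc E\equiv 0$, so $D(\cdot,s_1)\equiv 0$ on $B(0,C_0)$. As $s_1\in[s_0,s')$ was arbitrary, $D\equiv 0$ on $\{|y|\le C_0\}\times[s_0,s')$, which is the assertion.

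\textbf{Main obstacle.} The delicate point is exactly the choice of the shrinking domain: transporting $B(0,C_0)$ backward along the bulk flow $v=y+\bar U$ would \emph{not} close the estimate, since acoustic signals travel at the faster speed $v\cdot n+\alpha\bar S$; one must contract $\Omega_s$ at the acoustic rate $\dot\rho=\rho+\bar U_R(\rho)+\alpha\bar S(\rho)$. It is precisely the profile's repulsivity and positivity --- that $R+\bar U_R+\alpha\bar S\gtrsim R>0$ on $B(0,C_0)$ --- that guarantees these balls genuinely shrink backward and remain inside $\{|y|\le C_0\}$, where the clean homogeneous equation of Step 1 is available. Everything else (the transport theorem, the integration by parts, Grönwall) is routine because both $(\tilde U,\tilde S)$ and $(\tilde U_t,\tilde S_t)$ are classical.
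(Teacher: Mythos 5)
Your Step 1 (the reduction to the homogeneous linear system $\partial_s D=\mc L^e D$ on $\{|y|\le C_0\}$, with the observation that the forcing in \eqref{eq:truncated} is the \emph{prescribed} function $\mc F(\tilde U,\tilde S)$ rather than a function of the truncated unknown) is correct and is the same reduction the paper carries out in \eqref{eq:diff_truncated_extended}.

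Step 2, however, has a sign error at the decisive point. With the equation written as $\partial_s D+\sum_i A_i\partial_i D=BD$ and $A_i$ symmetric, the Reynolds transport theorem gives
\begin{equation*}
\frac{d}{ds}\int_{\Omega_s}|D|^2=\int_{\partial\Omega_s}\Big(\dot\rho\,|D|^2-\langle A(n)D,D\rangle\Big)\,dS+O\Big(\int_{\Omega_s}|D|^2\Big),
\end{equation*}
and the boundary integrand is $\le 0$ for \emph{all} $D$ if and only if $\dot\rho\le\lambda_{\mathrm{min}}(A(n))$. You chose $\dot\rho$ to equal the \emph{top} eigenvalue $\lambda_{\mathrm{max}}(A(n))=(y+\bar U)\cdot n+\alpha\bar S$, for which $\langle A(n)D,D\rangle\le\lambda_{\mathrm{max}}|D|^2=\dot\rho\,|D|^2$, i.e.\ the integrand is $\ge 0$, not $\le 0$. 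So the Gr\"onwall estimate does not close with your choice: energy can enter through the lateral boundary. The heuristic that ``one must contract $\Omega_s$ at the fastest acoustic speed'' is what misled you: it is correct for a backward domain-of-dependence cone of a single space-time point, but the energy argument for propagation of zero data needs the \emph{slowest} outgoing characteristic, since that is the fastest rate at which an outside disturbance could encroach on the interior. The correct choice is any $\dot\rho\le\lambda_{\mathrm{min}}(A(n))=(y+\bar U)\cdot n-\alpha\bar S=R+\bar U_R-\alpha\bar S$.

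Once this is corrected, the argument goes through, and in fact simplifies: since Lemma \ref{lemma:integrated_repulsivity} gives $R+\bar U_R-\alpha\bar S>(R-1)\tilde\eta>0$ at $R=C_0$, one may take $\dot\rho=0$, i.e.\ a \emph{fixed} ball $\Omega_s\equiv B(0,C_0)$, which is exactly what the paper does. There the boundary term after integrating by parts is
\begin{equation*}
\int_{\partial B(0,C_0)}\frac{-R-\bar U_R}{2}\big(|\delta_U|^2+\delta_S^2\big)-\alpha\bar S\,\delta_S\,(\delta_U\cdot\vec n)\,d\sigma
\le \int_{\partial B(0,C_0)}\frac{-R-\bar U_R+\alpha\bar S}{2}\big(|\delta_U|^2+\delta_S^2\big)\le 0,
\end{equation*}
and Gr\"onwall with $E(s_0)=0$ concludes. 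So the fixed-domain estimate already encodes exactly the condition ``all characteristics outgoing at $R=C_0$,'' and no moving domain is needed. (Note also that your Step 1 additionally invokes \eqref{eq:angular_repulsivity} and \eqref{eq:profiles_positive} to argue $\rho+\bar U_R(\rho)+\alpha\bar S(\rho)>\tilde\eta\rho$; neither gives that directly, since \eqref{eq:angular_repulsivity} concerns $\partial_R\bar S$ rather than $\bar S$ itself. The statement you actually need, $R+\bar U_R-\alpha\bar S>0$ for $R\ge 1$, is Lemma \ref{lemma:integrated_repulsivity}, which integrates \eqref{eq:radial_repulsivity}.)
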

\begin{proof} 

Let us show that they are equal by energy estimates. Let $(\delta_U, \delta_S) = (\mt U, \mt S) - (\mt U_t, \mt S_t)$. Given that $(\mt U_t, \mt S_t)$ satisfies equation \eqref{eq:truncated} (where we recall $\mc F$ is the forcing from the \textit{extended} equation), we have that
\begin{align}\begin{split} \label{eq:diff_truncated_extended}
\p_s (\delta_U, \delta_S) &= \mc L^{e} (\mt U, \mt S) - \chi_2 \mc L ^{e} (\mt U_t, \mt S_t) + (1-\chi_1) J (\mt U_t, \mt S_t) \\
&= \chi_2\mc L^{e} ( \delta_U, \delta_S) + (1-\chi_2) \mc L^{e}(\mt U, \mt S) + (1-\chi_1) J (\mt U_t, \mt S_t).
\end{split} \end{align}

We compute the evolution of the energy:
$$E(s) = \int_{B(0, C_0)}( |\delta_U(y, s)|^2 + \delta_S (y, s)^2 ) dy.$$
Clearly $E(s_0) = 0$ because of the hypothesis of the lemma. Moreover from \eqref{eq:diff_truncated_extended} we get that the evolution of $E$ is dictated by
\begin{align*}
\frac12 \frac{d}{ds} E
&= \int_{B(0, C_0)} - (y+\bar U)\frac12  \nabla \left( |\delta_U|^2 \right) - \alpha \bar S \nabla \delta_S \cdot \delta_U - (r-1)|\delta_U|^2 - \delta_U \cdot \nabla \bar U \cdot \delta_U - \alpha \delta_S \delta_U \cdot \nabla \bar Sdy \\
&\qquad + \int_{B(0, C_0)} -(y+\bar U) \frac12 \nabla \left( \delta_S^2 \right) - \alpha \bar S \delta_S \text{div}(\delta_U) - (r-1) \delta_S^2 - \delta_S \delta_U \cdot \nabla \bar S - \alpha \delta_S^2 \text{div} (\bar U)dy \\
&=
\int_{B(0, C_0)} \frac{3+\text{div}(\bar U)}{2} (|\delta_U|^2 + \delta_S^2) + \alpha \nabla \bar S \cdot (\delta_S\delta_U) - (r-1) |\delta_U|^2 - ((\delta_U \cdot \nabla) \bar U) \cdot \delta_U - \alpha \delta_S\delta_U \cdot \nabla \bar S \\
&\qquad- (r-1) \delta_S^2 - \delta_S \delta_U \cdot \nabla \bar S - \alpha \delta_S^2 \text{div} (\bar U)dy\\
&\qquad + \int_{\p B(0, C_0)} \frac{-R-\bar U_R}{2} (|\delta_U|^2 + \delta_S^2) - \alpha \bar S \delta_S (\delta_U\cdot \vec{n})d\sigma \\
&\leq C E  + \int_{\p B(0, C_0)} \frac{-R - \bar U_R + \alpha \bar S}{2} (|\delta_U|^2 + \delta_S^2) \leq CE.
\end{align*}
In the last inequality we used $-R - \bar U_R + \alpha \bar S < 0$ for $R =C_0$, due to Lemma \ref{lemma:integrated_repulsivity}.

In particular, since $\frac{d}{ds} E \leq 2CE$ and $E(s_0) = 0$, we conclude that $E(s) = 0$ for all $s \in [s_0, s')$, and therefore $(\tilde U, \tilde S)$ agrees with $(\tilde U_t, \tilde S_t)$ for $|y| \leq C_0$, $s\in [s_0, s')$. 
\end{proof}

\subsection{Bootstrap of bounds} \label{subsec:bootstrap}
In this subsection, we plan to show that given a small enough initial perturbation, if the unstable mode $P_{\rm{uns}}$ is controlled, then the perturbation $\tilde{U}$, $\tilde{S}$ is also controlled and has exponential decay as $s$ evolves. 

We will choose our parameters as follows: 
\begin{equation}\label{choiceofparameter}
\frac{1}{s_0}\ll \delta_{0}^{\frac{3}{2}}\ll \delta_1 \ll 
 \delta_0 \thicksim \frac{1}{R_0^{r-1}} \ll \frac{1}{E}\ll \epsilon^\sharp\ll \frac{1}{K}\ll\frac{1}{m} \ll \eta  \ll \epsilon =\frac{12}{25} \delta_g \ll \delta_{dis} = O(1),
\end{equation}
and
\begin{equation}\label{choiceofparameter2}
 \frac{1}{K}\ll\tilde{\eta} .
\end{equation} 
In the above chain of inequalities, we can think that any parameter is allowed to depend on all the parameters that it has to the right, with the exception of $\delta_0$ and $\delta_1$ which should satisfy $\delta_0^{3/2} \ll \delta_1 \ll \delta_0$ (we can take $\delta_1 = \delta_0^{5/4}$, for example). For the convenience of the reader, let us mention briefly where all these inequalities are used in our argument:

We control $\mc F_{dis}$ to be sufficiently small  via $\frac{1}{s_0}\ll \delta_0^{\frac{3}{2}}$, as well as in \eqref{eq:I4}.

In $\delta_0^{\frac{3}{2}}\ll \delta_1\ll\delta_0$, the second inequality is chosen to control the initial perturbation \eqref{initialdatacondition2}, which is sufficiently small compared to the bootstrap estimates in Proposition \ref{prop:bootstrap}. The first one is chosen to control the nonlinear terms $N_{u,s}$ as in Lemma \ref{forcingestimate1} and the lower bound of  $S$ Lemma \ref{lemma:Sbounds}.

$\delta_0\ll E$ is used to control the lower order Sobolev norm of $\tilde{U},\tilde{S}$ in Lemma \ref{boosstrapestimate2}, and Lemma \ref{lowerorderestimate}.

$\frac{1}{E}\ll \epsilon^\sharp$ is not crucial and we only need $\frac{1}{s_0}\ll\epsilon^\sharp$, which is also used to control $\mc F_{dis}$  in \eqref{eq:I4}.

$\epsilon^\sharp\ll \frac{1}{K}$ is used in the estimate of the $\mc F_{dis}$ in \eqref{epsicondition01} and \eqref{eq:grenada}. 

$\frac{1}{K}\ll \frac{1}{m}$ is used in Lemma \ref{forcingestimate2}. In order to bound the $X$ norm using our bootstrap assumptions \eqref{higherestimate3}, we need to perform the non-linear stability analysis at a higher regularity than the linear stability analysis. 

$\eta$ is defined in \eqref{weightdefi} and $\tilde{\eta}$ is defined in \eqref{eq:angular_repulsivity} and \eqref{eq:radial_repulsivity}. $\eta\ll\frac{1}{m}$ is not crucial and we only need $\frac{1}{k}\ll\eta,\tilde{\eta}$, which is used in the estimate \eqref{energyestimatewithweight}.

For $\eta\ll \delta_g$, we only use the weaker estimate $\eta \ll 1$. This estimate is used in \eqref{epsicondition01} and \eqref{eq:grenada}.

$\delta_g\ll \delta_{dis}$ is used in Lemmas \ref{boosstrapestimate1} and \ref{boosstrapestimate2} in order to control the term $\mc F_{dis}$ \eqref{nsequation1}. Because $\mc F_{dis}$  has decay not faster than $e^{-\delta_{dis}s}$, our bootstrap estimate can only have a lower decay.

We will define later $\epsilon$ via the relationship $\eps=\frac{12\delta_g}{25}$ in Proposition \ref{prop:bootstrap}. 

$R_0 \thicksim \delta_0^{\frac{-1}{r-1}}$  is defined in \eqref{choiceR0} and used in the choice of weight \eqref{weightdefi}.

For the initial data, let
\begin{align}\label{eq:tildeplusunstable}
\begin{cases}
\tilde{U}_0=\tilde{U}_0^{'}+\sum_{i=1}^{N}a_i,
\varphi_{i,u}\\
\tilde{S}_0=\tilde{S}_0^{'}+\sum_{i=1}^{N}a_i
\varphi_{i,s},
\end{cases}
\end{align}
where 
\begin{equation} \label{eq:tilde_is_stable}
(\chi_2 \tilde U_0', \chi_2 \tilde S_0' ) \in V_{\rm{sta}},
\end{equation}
and $(\varphi_{i,u},\varphi_{i,s})$ is an orthogonal and normal basis of $V_{\rm{uns}}$. ${a_i}$ is an $N$-dimensional vector to be fixed later. In this setting, the unstable component of the solution  will be determined by $\{a_i\}$. Moreover, we will pick $\tilde U_0', \tilde S_0'$ so that our initial data $(\tilde U_0, \tilde S_0)$ satisfies:
\begin{equation} \label{initialdatacondition2}
\|\tilde{U}_0\|_{L^{\infty}}, \|\tilde{S}_0\|_{L^{\infty}}\leq  \delta_1, \qquad  E_{K}(s_0)\leq\frac{E}{2}, \qquad \mbox{and} \qquad \tilde{S}_0+\hat{X}\bar{S}_0\geq \frac{\delta_1}{2} \left\langle\frac{R}{R_0} \right\rangle^{1-r},
\end{equation}
and the decay of the derivatives:
\begin{equation}\label{initialdatacondition}
|\nabla(\tilde{U}_0+\hat{X} \bar{U})|+|\nabla(\tilde{S}_0+\hat{X} \bar{S})|\lesssim \frac{1}{\langle y\rangle^r}.
\end{equation}
Recall that $\hat{X}$ is the cut-off function defined in \eqref{eq:periodic_perturbation}.

In order to make use of the property of the truncated linear operator, we also define the truncated solutions and initial data.
Recall that $(\tilde{U},\tilde{S})$ satisfy the following Navier-Stokes equations:
\begin{align}\begin{split} \label{navierstokesperturb1}
\p_s \tilde U &= \mc L_{u}^{e}(\tilde{U},\tilde{S})+\mc F_{u}(\tilde{U},\tilde{S}),\\
\p_s \tilde S &= \mc L_{s}^{e}(\tilde{U},\tilde{S})+\mc F_{s}(\tilde{U},\tilde{S}).
\end{split} \end{align}
with $\mc F_{u}=N_{u}+\mc F_{dis}+\mc E_{u}$, $\mc F_{s}=N_{s}+\mc E_{s}$. Here $\mc E_{u}$, $\mc E_{s }$, $N_{s}$, $N_{u}$, $\mc F_{dis}$ are defined in \eqref{nsequation1}, \eqref{nsequation2}, \eqref{Esdefinition}, \eqref{Eudefinition}. Now let $(\tilde{U}_{t},\tilde{S}_{t})$ be the solution of the truncated equation:
\begin{align} \begin{split} \label{navierstokesperturb1trun}
\p_s \tilde U_t &= \mc L_{u}(\tilde{U}_t,\tilde{S}_t)+\chi_2\mc F_{u}(\tilde{U},\tilde{S}),\\
\p_s \tilde S_t &= \mc L_{s}(\tilde{U}_t,\tilde{S}_t)+\chi_2\mc F_{s}(\tilde{U},\tilde{S}),
\end{split} \end{align}
with $\mc L = \chi_2 (\mc L_u^{e}, \mc L_s^{e}) - J(1-\chi_1)$ and the initial value
\begin{align}\label{initialdatatruncated}
\begin{cases}
\tilde{U}_{t,0}=\chi_2\tilde{U}_0^{'}+\sum_{i=1}^{N}a_i,
\varphi_{i,u}\\
\tilde{S}_{t,0}=\chi_2\tilde{S}_0^{'}+\sum_{i=1}^{N}a_i
\varphi_{i,s}.
\end{cases}
\end{align}
Here $(\varphi_{i,u},\varphi_{i,s})$ are a basis of the unstable space $V_{uns}$ as chosen in Lemma \ref{lemma:abstract_result}. From Lemma \ref{truncatedestimate}, since when $|y|\leq C_0$, $(\tilde{U}_{t,0},\tilde{S}_{t,0})=(\tilde{U}_{0},\tilde{S}_{0}),$ we have
$(\tilde{U}_{t},\tilde{S}_{t})=(\tilde{U},\tilde{S})$ for all time when $|y|\leq C_0$, as long as the solution exists.


Before we state the main proposition of this section (Proposition \ref{prop:bootstrap}), we discuss the choice of $C_0$ and the weight $\phi$ that will be used in the energy estimates.

\begin{lemma}\label{C0choice}
We can choose $C_0$  sufficiently large such that the profile $(\bar{U},\bar{S})$ satisfies the following conditions for all $s\geq s_0$:  
\begin{equation}\label{profilecondition1}
|\al\hat{X}\bar{S}|, |\hat{X}\bar{U}|, |\al\nabla(\hat{X}\bar{S})|, |\nabla(\hat{X}\bar{U})|\leq\frac{1}{100C_1} \quad \text{ for } |y|\geq C_0,
\end{equation}
\begin{equation}\label{profilecondition4}
\|\nabla(\hat{X}\bar{U})\|_{L^{\infty}(|y|\geq C_0)}+\|\nabla(\hat{X}\bar{S})\|_{L^{\infty}(|y|\geq C_0)} \leq \frac{1}{C C_2}, \end{equation}
\begin{equation}\label{profilecondition2}
\|\nabla^{2}(\hat{X}\bar{U})\|_{L^{8}({|y|\geq C_0})}+\|\nabla^2(\hat{X}\bar{S})\|_{L^{8}({|y|\geq C_0})} \leq \frac{1}{C C_2},
\end{equation}
\begin{equation}\label{profilecondition3}
\sum_{j=3}^{5}(\|\hat{X}\bar{U}\|_{\dot{H}^{j}(|y|\geq C_0)}+\|\hat{X}\bar{S}\|_{\dot{H}^{j}(|y|\geq C_0)})\leq \frac{1}{C C_2},
\end{equation}
with $C_2 = 100$, $C_1$ determined via \begin{equation}\label{parameterchosen}
\frac{1}{C_2}=\frac{32}{r-1}\frac{1}{C_1}\left(\frac{1}{C_2}\right)^{\frac{1}{20}},
\end{equation}
and $C$ being some universal constant used in lemma \ref{boosstrapestimate2} which is independent of all parameters, r being the self-similar parameter as in \eqref{eq:SS_coordinates1}. Here the $L^{8}$ estimate \eqref{profilecondition2} will be used in the energy estimate of $\|\tilde{U}\|_{\dot{H}^{4}}+\|\tilde{S}\|_{\dot{H}^{4}}$ (Lemma \ref{boosstrapestimate2}) through a H\"older estimate.
\end{lemma}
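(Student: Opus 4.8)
The plan is to reduce everything to the decay estimate \eqref{eq:profiles_decay}, i.e. $|\nabla^j \bar U| + |\nabla^j \bar S| \lesssim \langle R\rangle^{-(r-1)-j}$ for all $j\ge 0$, together with the scaling of the cut-off $\mathfrak X$: since $\hat{X}(y,s) = \mathfrak X(e^{-s}y)$ we have $|\nabla^k \hat{X}(y,s)| \lesssim_k e^{-ks}$, supported in the annulus $\mathcal A_s = \{\tfrac12 e^s \le |y| \le e^s\}$. I would first record that the right-hand sides in \eqref{profilecondition1}--\eqref{profilecondition3} are fixed positive numbers: $C_2 = 100$, $C$ is universal, and \eqref{parameterchosen} forces $C_1^{-1} = \tfrac{r-1}{32}\,C_2^{-19/20}$, depending only on $r$. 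Hence it suffices to show that each norm on the left, taken over $\{|y|\ge C_0\}$, becomes smaller than any prescribed threshold once $C_0$ and $s_0$ are chosen large, uniformly in $s\ge s_0$.

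For every derivative I would expand by Leibniz, $\nabla^j(\hat{X}\bar U) = \sum_{k=0}^j \binom{j}{k}\,\nabla^k\hat{X}\otimes\nabla^{j-k}\bar U$ (and similarly with $\bar S$), and treat the $k=0$ term separately. For $k=0$ one has $|\hat{X}|\le 1$, so \eqref{eq:profiles_decay} gives the $L^\infty$ bounds \eqref{profilecondition1}, \eqref{profilecondition4} directly from $\langle C_0\rangle^{-(r-1)}\to 0$ and $\langle C_0\rangle^{-r}\to 0$, while for the $L^8$ bound in \eqref{profilecondition2} and the $\dot H^j$ bounds ($j\in\{3,4,5\}$) in \eqref{profilecondition3} one integrates in polar coordinates,
\begin{equation*}
\int_{|y|\ge C_0}|\nabla^j\bar U|^p\,dy \;\lesssim\; \int_{C_0}^{\infty} R^{\,2-p((r-1)+j)}\,dR \;\lesssim\; C_0^{\,3-p((r-1)+j)},
\end{equation*}
which tends to $0$ as $C_0\to\infty$ because $p((r-1)+j)>3$ in all relevant cases: $p=8,\,j=2$ gives $8(r+1)>3$, and $p=2,\,j\in\{3,4,5\}$ gives $2(r-1+j)>2(r+2)>3$, using only $r>1$. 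The sum in \eqref{profilecondition3} runs over finitely many $j$, so this settles all $k=0$ contributions.

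For the terms with $k\ge 1$, each is supported in $\mathcal A_s$ and, by the bounds on $\nabla^k\hat{X}$ and $\nabla^{j-k}\bar U$, is pointwise $\lesssim e^{-ks}\,e^{-((r-1)+j-k)s}=e^{-((r-1)+j)s}$ there. Since $|\mathcal A_s|\lesssim e^{3s}$, its contribution to an $L^p(\{|y|\ge C_0\})$-norm is $\lesssim \big(e^{3s}e^{-p((r-1)+j)s}\big)^{1/p}=e^{(3/p-(r-1)-j)s}$ (and just $e^{-((r-1)+j)s}$ in the $L^\infty$ case), with strictly negative exponent in every case above; hence these contributions are $\lesssim e^{-c s_0}$ for some $c=c(r,j)>0$, which is as small as we like since $s_0=-\tfrac1r\log T$ is large for $T$ small. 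Combining the two cases, I would fix $C_0$ large (depending only on $r$ and universal constants) and then $s_0$ large, which gives \eqref{profilecondition1}--\eqref{profilecondition3}. The argument is essentially bookkeeping; the only points that need a little care are checking that the tail integrals converge with room to spare — exactly where $r>1$ together with $j\ge 2$ for the weighted norms enters — and ensuring the annulus terms are controlled uniformly for $s\ge s_0$ rather than at a single $s$.
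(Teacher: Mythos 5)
Your proposal is correct and follows essentially the same route as the paper: for the terms with no derivatives on $\hat X$ you use the decay \eqref{eq:profiles_decay} and a tail integral in $C_0$, and for the terms with at least one derivative on $\hat X$ you use $|\nabla^k\hat X|\lesssim e^{-ks}$, the fact that $\nabla\hat X$ is supported in the annulus $\{\tfrac12 e^s\le |y|\le e^s\}$, and the measure bound $|\mathcal A_s|\lesssim e^{3s}$ to obtain exponential decay in $s$, exactly as the paper does (the paper's exponent computation $\|\nabla^{j}\hat{X}\nabla^{l}\bar{S}\|_{L^q}^q\lesssim e^{-s(2q-3)}$ is the same arithmetic as yours after re-indexing). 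The only cosmetic difference is that you make the $k=0$ tail-integral step explicit, while the paper folds it into the statement that those terms "have the corresponding decay."
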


\begin{proof}
Since $|\hat{X}|\leq 1$, $|\nabla\hat{X}|\leq 2e^{-s}$, \eqref{profilecondition1}, \eqref{profilecondition4} follows from the decay of $(\bar{S},\bar{U})$ \eqref{eq:profiles_decay}.

For \eqref{profilecondition2}, \eqref{profilecondition3}, the terms with no derivatives hitting on $\hat{X}$ also have the corresponding decay. Moreover, we have 
\[
\supp (\nabla\hat{X}) \subset B(0, e^s)\cap B^c(0,\frac{e^s}{2}), \  |\nabla^{j}\hat{X}|\lesssim e^{-js}.
\]
Then for $\nabla^{j}\hat{X}\nabla^{l}\tilde{S},$ with $j\geq 1$, $j+l\geq 2$, $q\geq 2$,  we get
\begin{align*}
\|\nabla^{j}\hat{X}\nabla^{l}\bar{S}\|_{L^{q}(\mathbb{R}^3)}^{q} &\lesssim e^{-jsq}\int_{\frac{1}{2}e^s}^{e^s}|y|^{-q(r-1)-ql+2}dy\\
&\lesssim e^{-jsq} |y|^{-q(r-1)-ql+3}\big|^{e^s}_{\frac{1}{2}e^s}\\
&\lesssim e^{-jsq} e^{(-q(r-1)-ql+3)s}\\
&\lesssim e^{-s(2q-3)}\\
&\lesssim e^{-s}.
\end{align*}
Then, since $s\geq s_0\gg 1$, those terms with at least one derivative on $\hat{X}$ are sufficiently small.
\end{proof}
We define the weight $\phi$ as follows: 
\begin{align}\label{weightdefi}
\phi(y)=\begin{cases}
1 & \text{ for } |y|\leq R_0,\\
\frac{|y|^{2(1-\eta)}}{2R_0^{2(1-\eta)}}, &\text{ for } |y|\geq 4R_0,
\end{cases}
\end{align}
with $R_0$ chosen such that  
\begin{equation}\label{Rochoice estimate}
\begin{split}
\bar{S}(y)\geq 2\delta_0,  &\text{ for all $|y|\leq R_0$,}\\
|\nabla \bar{S}(y)|,\ |\nabla \bar{U}(y)|\leq \frac{\delta_1}{2},\ \bar{S}(y)\leq C\delta_0, \bar{U}(y)\leq C\delta_0,&\text{ for all $|y|\geq R_0.$}
\end{split}
\end{equation}
The existence of $R_0$ follows from $\delta_0^{\frac{3}{2}}\ll \delta_1\ll\delta_0 \ll 1 $ and the decay of the profile \eqref{eq:profiles_decay}. We have:
\begin{align}\label{choiceR0}
\delta_0 \thicksim \frac{1}{R_0^{r-1}}.
\end{align}

Now we state our main proposition:
\begin{prop} \label{prop:bootstrap}
For the initial data chosen above, if the solution exists for $s\in [s_0,s_1]$ and the unstable mode satisfies $\|P_{\rm{uns}}(\tilde{U}_t,\tilde{S}_t)\|_{X} \leq \delta_1 e^{-\epsilon(s-s_0)}$, then 
\begin{align}\label{lowerestimate1}
|\tilde{U}|,|\tilde{S}| &\leq \delta_0 \frac{1}{C_2}e^{-\epsilon(s-s_0)}, \\ 
\label{lowerestimate2}
\|\tilde{U}\|_{\dot{H}^4(|y|\geq C_0)}, \|\tilde{S}\|_{\dot{H}^4(|y|\geq C_0)} &\leq \delta_0 e^{-\epsilon(s-s_0)}, \\
\label{higherestimate3}
E_{K}=\int_{\mathbb{R}^3}(|\nabla^{K}U|^2+|\nabla^{K}S|^2)\phi^{K}dy &\leq E.
\end{align}
where $\nabla^K S$ and $\nabla^K U_i$ are $K$-tensors containing all possible combinations of $K$ derivatives, and $\epsilon=\frac{12\delta_g}{25},$ $C_2=100$.

\end{prop}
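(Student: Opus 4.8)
The plan is to run a continuity (bootstrap) argument on $[s_0,s_1]$. We take as bootstrap assumptions the three bounds \eqref{lowerestimate1}--\eqref{higherestimate3} with relaxed constants, say $|\tilde U|,|\tilde S|\le 2C_2^{-1}\delta_0 e^{-\epsilon(s-s_0)}$, $\|\tilde U\|_{\dot H^4(|y|\ge C_0)},\|\tilde S\|_{\dot H^4(|y|\ge C_0)}\le 2\delta_0 e^{-\epsilon(s-s_0)}$, and $E_K\le 2E$, together with the standing hypothesis $\|P_{\rm uns}(\tilde U_t,\tilde S_t)\|_X\le\delta_1 e^{-\epsilon(s-s_0)}$. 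Since the initial data \eqref{initialdatacondition2} satisfies these with a definite margin (using $\delta_1\ll\delta_0$ and $E_K(s_0)\le E/2$), by local well--posedness and continuity it suffices to show that whenever the bootstrap assumptions hold on a subinterval $[s_0,s']\subset[s_0,s_1]$, then in fact the sharper bounds \eqref{lowerestimate1}--\eqref{higherestimate3} hold there; an open--closed argument then propagates everything to all of $[s_0,s_1]$.

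The region $|y|\le C_0$ is controlled through the truncated solution $(\tilde U_t,\tilde S_t)$ of \eqref{navierstokesperturb1trun}, which coincides with $(\tilde U,\tilde S)$ there by Lemma \ref{truncatedestimate}. Splitting $(\tilde U_t,\tilde S_t)=P_{\rm uns}(\tilde U_t,\tilde S_t)+P_{\rm sta}(\tilde U_t,\tilde S_t)$ in the decomposition $X=V_{\rm uns}\oplus V_{\rm sta}$ of Lemma \ref{lemma:abstract_result}, the unstable part is bounded by hypothesis, while the stable part is estimated by Duhamel's formula against the semigroup using $\|e^{t\mc L}v\|_X\le e^{-t\delta_g/2}\|v\|_X$ from \eqref{eq:decomposition_condition}. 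The forcing $\chi_2\mc F$ decomposes into the quadratic nonlinearities $\mc N_{u}, \mc N_{s}$, the profile--truncation errors $\mc E_{u}, \mc E_{s}$, and the dissipative term $\mc F_{dis}$; the forcing estimates (Lemmas \ref{forcingestimate1}, \ref{forcingestimate2}) bound these respectively by $O(\delta_1^2 e^{-2\epsilon(s-s_0)})$ in $X$, by something supported near $|y|\sim e^s$ and exponentially small by the profile decay \eqref{eq:profiles_decay}, and by $O(e^{-\delta_{\rm dis}s})$ thanks to \eqref{eq:condition_for_treating_dissipation}. Since $\epsilon=12\delta_g/25<\delta_g/2$ and $\delta_g\ll\delta_{\rm dis}$ (cf. \eqref{choiceofparameter}), convolving the decaying semigroup against these forcings yields $\|P_{\rm sta}(\tilde U_t,\tilde S_t)\|_X\lesssim \delta_g^{-1}\delta_1^2 e^{-\epsilon(s-s_0)}\ll\delta_1 e^{-\epsilon(s-s_0)}$, which on $|y|\le C_0$ upgrades \eqref{lowerestimate1} and the $\dot H^4$ bound there via Sobolev embedding.

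For $|y|\ge C_0$ the cut--offs $\chi_1,\chi_2$ are no longer identically $1$, so the linear theory does not apply and we run weighted energy estimates (Lemmas \ref{boosstrapestimate1}, \ref{boosstrapestimate2}, \ref{lowerorderestimate}) directly on \eqref{nsequation1}--\eqref{nsequation2} with the weight $\phi$ of \eqref{weightdefi}. Differentiating $E_K=\int(|\nabla^KU|^2+|\nabla^KS|^2)\phi^K$, the transport term $-(y+\hat X\bar U)\cdot\nabla$, after commuting $\nabla^K$ through the equation and integrating by parts, contributes a damping coefficient of size $\sim -(K\eta-\tfrac32)$ (using $y\cdot\nabla\phi=2(1-\eta)\phi$ for $|y|\ge 4R_0$, which exactly compensates the growth of $\phi^K$); since $K\eta\gg1$ by \eqref{choiceofparameter}, this large negative term absorbs: the error contribution $\nabla^K\mc E$, which despite the weight is exponentially small in $L^2_{\phi^K}$ because it lives at $|y|\sim e^s$ and the profile enjoys the extra decay $|\nabla^j\bar U|+|\nabla^j\bar S|\lesssim\langle R\rangle^{-(r-1)-j}$; the dissipative term $\nu C_{\rm dis}e^{-\delta_{\rm dis}s}\Delta U/S^{1/\alpha}$, controlled through its exponential prefactor together with the no--vacuum lower bound $S\gtrsim\delta_1\langle R/R_0\rangle^{1-r}$ of Lemma \ref{lemma:Sbounds}; and the non--diagonalizable top--order cross terms $\chi_2\alpha\bar S\,\nabla^{K+1}(\text{other unknown})$, which cannot be cancelled by Riemann invariants and must be closed using the lower--order bounds \eqref{lowerestimate1}--\eqref{lowerestimate2} — it is precisely this coupling that forces the three estimates to be propagated simultaneously. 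The same scheme at regularity $4$ produces \eqref{lowerestimate2}, and the $L^\infty$ bound \eqref{lowerestimate1} on all of $\mathbb{R}^3$ then follows by Gagliardo--Nirenberg interpolation between the low--order $L^2$ decay and $E_K\le E$, with the smallness hierarchy \eqref{choiceofparameter} ensuring that $E^{\theta}$ with $\theta\sim 1/K$ does not spoil the gain.

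The main obstacle is closing \eqref{lowerestimate1}--\eqref{higherestimate3} at once in the presence of two genuinely different damping mechanisms — the spectral/semigroup damping of $\mc L$ for $|y|\le C_0$ versus the transport damping along the trajectories $y=y_0e^s$ for $|y|\ge C_0$ — while simultaneously tracking the non--diagonalizable top--order terms and the dissipative term, which is not a perturbation of the profile equation \eqref{eq:ss_profiles} and survives only because of its $e^{-\delta_{\rm dis}s}$ decay and the lower bound on $S$. Matching the two regions across $|y|=C_0$ with no loss (using the repulsivity $-R-\bar U_R+\alpha\bar S<0$ at $R=C_0$ from Lemma \ref{lemma:integrated_repulsivity} to kill the boundary term in the interior energy estimate, as in Lemma \ref{truncatedestimate}) and tuning the parameter chain \eqref{choiceofparameter}, \eqref{choiceR0} so that every error is absorbed with room to spare are the delicate technical points.
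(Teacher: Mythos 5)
The overall architecture you describe matches the paper: bootstrap on the three quantities, Duhamel and semigroup decay for $|y|\le C_0$ via the truncated solution, and weighted energy estimates at top order for $E_K$ with the $K\eta$-damping coming from the weight and repulsivity. However, there is a genuine gap in your treatment of the $L^\infty$ bound \eqref{lowerestimate1} for $|y|\ge C_0$.

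You claim that \eqref{lowerestimate1} follows by Gagliardo--Nirenberg interpolation between the $\dot H^4$ decay \eqref{lowerestimate2} and $E_K\le E$. This cannot work, for two reasons. First, both $\dot H^4$ and $\dot H^K_{\phi^K}$ sit strictly above the Sobolev threshold $3/2$ for $L^\infty$ in three dimensions, so interpolating between them can only produce derivative bounds $\|\nabla^j\tilde U\|_{L^\infty}$ with $j\ge 1$; it cannot descend to $\|\tilde U\|_{L^\infty}$ without importing some genuinely lower-order control, and the only available such control is the bootstrap assumption \eqref{lowerestimate1} itself, which would make the step circular. Second, even for the derivative bounds, interpolating against $E_K\le E$ — a quantity with \emph{no} exponential decay in $s$ — necessarily degrades the rate: you obtain $e^{-\theta\epsilon(s-s_0)}$ with $\theta<1$, which is strictly worse than the $e^{-\epsilon(s-s_0)}$ needed to close the bootstrap. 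The paper's mechanism for $|y|\ge C_0$ is entirely different (Lemma \ref{boosstrapestimate1}): it is a pointwise transport/barrier argument. One views \eqref{nsequation1}--\eqref{nsequation2} as the ODE $(\partial_s + (r-1))\tilde U = -y\cdot\nabla\tilde U + (\text{errors})$, integrates along the characteristics $y=y_0 e^{s-\bar s}$, bounds the error terms pointwise using the profile decay \eqref{eq:profiles_decay}, the forcing estimates (Lemmas \ref{forcingestimate1}, \ref{forcingestimate2}), and the interpolated gradient bound, and then shows that the rescaled quantity $\tilde U(e^{s-\bar s}y_0)e^{\epsilon(s-s_0)}$ cannot escape a small barrier box whose size is tuned through $C_1,C_2$ via \eqref{parameterchosen}. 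The transport term $-(r-1)$ supplies the damping at the sharp rate. This trajectory argument is a distinct ingredient that interpolation cannot replace.

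A secondary inaccuracy: you lump Lemmas \ref{boosstrapestimate1}, \ref{boosstrapestimate2}, \ref{lowerorderestimate} together as "weighted energy estimates with $\phi$." In fact only the top-order $E_K$ estimate carries the weight $\phi^K$; the $\dot H^4$ estimate of Lemma \ref{boosstrapestimate2} is an \emph{unweighted} $L^2$ energy on $\{|y|\ge C_0\}$ whose boundary term at $|y|=C_0$ is absorbed using the interior semigroup bound from Lemma \ref{linfitnityinside}, and Lemma \ref{boosstrapestimate1} is not an energy estimate at all. You do mention "transport damping along the trajectories $y=y_0e^s$" in your final paragraph, so you are aware of the mechanism; the issue is that you do not actually invoke it to prove \eqref{lowerestimate1}, and the substitute you offer fails.
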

\begin{proof}

This proposition consists of the lower bound decaying estimates \eqref{lowerestimate1}--\eqref{lowerestimate2} and the higher order estimate \eqref{higherestimate3}. The lower bound estimates \eqref{lowerestimate1}--\eqref{lowerestimate2} give the decay in the stability result. We note that $\delta_0 e^{-\eps (s-s_0)}$ is the typical size of the perturbation, which implies that any quadratic nonlinear terms are much smaller and enjoy a faster decay.  The higher bound estimate is needed to deal with the derivative loss in the nonlinear terms through a Duhamel formula.

The proof follows by a bootstrap argument: \eqref{lowerestimate1} and \eqref{lowerestimate2} will be shown in Subsection \ref{lowerestimate}. In particular, \eqref{lowerestimate1} will follow from the trajectory estimates developed in Lemmas \ref{linfitnityinside} and \ref{boosstrapestimate1}. Equation \eqref{lowerestimate2} will follow from energy estimates developed in \eqref{boosstrapestimate2}. In contrast to the radial case, \eqref{lowerestimate2} is needed to control the angular derivative term in the trajectory argument.

Equation \eqref{higherestimate3} will be shown in Subsection \ref{higherorderestimate}. The proof is similar to the non-radial dissipativity developed in the linear estimates, but we make use of the growing weight $\phi$ to control the behavior at infinity. Moreover, we need to develop additional decaying rate estimates on $S$ to control the dissipative term $\frac{\Delta U}{S^{1/\alpha}}$ as $|y| \to +\infty$.

\end{proof}

\subsection{Propagation of lower order estimates} \label{lowerestimate}
In this and the next subsection, we always assume \eqref{lowerestimate1}, \eqref{lowerestimate2}, \eqref{higherestimate3} are true and aim to show the better bounds: 

\[
LHS\leq \frac{1}{2}RHS .
\]
 In order to bound the last terms in $L^\infty$,
 We first show a few Lemmas regarding the decay of $S$ and $\nabla S$, $\nabla U$. We will use interpolation Lemmas \ref{lemma:GN_generaltorus}, \ref{lemma:GN_generalnoweighttorus} proved in the Appendix.

\begin{lemma}
We have 
\begin{equation}\label{perturbationenergy}
\tilde{E}_{K}:=\int_{e^s\mathbb{T}_{L}^{3}}(|\nabla^{K}\tilde{U}|^2+|\nabla^{K}\tilde{S}|^2)\phi^{K}dy\leq 2E.
\end{equation}
\end{lemma}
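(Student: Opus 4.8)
The statement to prove is: $\tilde E_K := \int_{e^s \mathbb T_L^3} (|\nabla^K \tilde U|^2 + |\nabla^K \tilde S|^2) \phi^K \, dy \leq 2E$, where we have the bootstrap assumption $E_K = \int (|\nabla^K U|^2 + |\nabla^K S|^2)\phi^K \leq E$, with $U = \tilde U + \hat X \bar U$, $S = \tilde S + \hat X \bar S$.

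So essentially: we have a bound on $\nabla^K U$ in weighted $L^2$, where $U = \tilde U + \hat X \bar U$, and we want a similar bound on $\nabla^K \tilde U = \nabla^K U - \nabla^K(\hat X \bar U)$.

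**The plan.** By the triangle inequality in the weighted $L^2$ norm,
$$\|\nabla^K \tilde U\|_{L^2(\phi^K)} \leq \|\nabla^K U\|_{L^2(\phi^K)} + \|\nabla^K (\hat X \bar U)\|_{L^2(\phi^K)}.$$
The first term is controlled by $\sqrt{E}$ via the bootstrap assumption $E_K \leq E$. So it suffices to show that $\|\nabla^K(\hat X \bar U)\|_{L^2(\phi^K)}^2 + \|\nabla^K(\hat X \bar S)\|_{L^2(\phi^K)}^2$ is small — much smaller than $E$. Since $E$ is a large parameter (we have $\delta_0 \ll 1/E$, i.e., $E$ is large), "small" here means bounded by something like $O(1)$ or better.

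Now I need to estimate $\|\nabla^K(\hat X \bar U)\|_{L^2(\phi^K)}$. Recall $\hat X(y,s) = \mathfrak X(e^{-s}y)$, so $\nabla^j \hat X$ is supported on the annulus $\frac12 e^s \leq |y| \leq e^s$ and satisfies $|\nabla^j \hat X| \lesssim e^{-js}$. When all $K$ derivatives hit $\bar U$ (no derivatives on $\hat X$): we use the decay $|\nabla^K \bar U| \lesssim \langle R\rangle^{-(r-1)-K}$ from \eqref{eq:profiles_decay}, and the weight $\phi^K \lesssim \langle R \rangle^{2K(1-\eta)}$. The integrand is then $\lesssim \langle R \rangle^{-2(r-1) - 2K} \cdot \langle R \rangle^{2K(1-\eta)} = \langle R\rangle^{-2(r-1) - 2K\eta}$, which is integrable in $\mathbb R^3$ provided $2(r-1) + 2K\eta > 3$. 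Since $K\eta$ is large (the choice of parameters in \eqref{choiceofparameter} makes $1/K \ll 1/m \ll \eta$, so $K\eta \gg 1$), this is fine and gives an $O(1)$ bound independent of $s$. For the terms where at least one derivative hits $\hat X$: these are supported on $\frac12 e^s \leq |y| \leq e^s$, where $\bar U$ and its derivatives decay like $\langle R\rangle^{-(r-1)-l} \sim e^{-s((r-1)+l)}$, and we pick up $e^{-js}$ from the derivatives on $\hat X$; a computation analogous to the one in the proof of Lemma \ref{C0choice} shows these contributions are exponentially small in $s$, hence $\ll 1$ since $s \geq s_0 \gg 1$.

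**Main obstacle.** The only delicate point is making sure the weight $\phi^K$ does not overwhelm the decay of $\bar U$ at infinity — this is exactly where $K\eta$ being large is used, and one must check the exponent inequality $2(r-1) + 2K\eta > 3$ carefully (it holds comfortably since $\eta$ is fixed and $K$ can be taken as large as needed, with $K$ chosen after $\eta$ but before the final bootstrap). Once that is in place, everything is a routine triangle inequality plus the decay estimates already established. Concretely, I would write:

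\begin{proof}
Write $U = \tilde U + \hat X \bar U$ and $S = \tilde S + \hat X \bar S$ as in \eqref{eq:periodic_perturbation}. By Minkowski's inequality in the weighted space $L^2(\phi^K \, dy)$,
\begin{align*}
\sqrt{\tilde E_K} &= \left\| \left( |\nabla^K \tilde U|^2 + |\nabla^K \tilde S|^2 \right)^{1/2} \right\|_{L^2(\phi^K)} \\
&\leq \left\| \left( |\nabla^K U|^2 + |\nabla^K S|^2 \right)^{1/2} \right\|_{L^2(\phi^K)} + \left\| \left( |\nabla^K (\hat X \bar U)|^2 + |\nabla^K (\hat X \bar S)|^2 \right)^{1/2} \right\|_{L^2(\phi^K)} \\
&= \sqrt{E_K} + \left( \int_{e^s \mathbb T_L^3} \left( |\nabla^K(\hat X \bar U)|^2 + |\nabla^K(\hat X \bar S)|^2 \right) \phi^K \, dy \right)^{1/2}.
\end{align*}
By the bootstrap assumption \eqref{higherestimate3}, $E_K \leq E$. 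It therefore suffices to show
\begin{equation} \label{eq:profile_weighted_small}
\int_{e^s \mathbb T_L^3} \left( |\nabla^K(\hat X \bar U)|^2 + |\nabla^K(\hat X \bar S)|^2 \right) \phi^K \, dy \lesssim 1,
\end{equation}
since then $\sqrt{\tilde E_K} \leq \sqrt E + C \leq \sqrt{2E}$ for $E$ large (as $\delta_0 \ll 1/E$).

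To prove \eqref{eq:profile_weighted_small}, expand $\nabla^K(\hat X \bar U)$ by the Leibniz rule into terms $\nabla^j \hat X \cdot \nabla^{K-j} \bar U$ with $0 \leq j \leq K$. For $j = 0$: using the decay \eqref{eq:profiles_decay}, $|\nabla^K \bar U| \lesssim \langle R \rangle^{-(r-1)-K}$, and $\phi^K \lesssim \langle R \rangle^{2K(1-\eta)}$ from \eqref{weightdefi}, so
\begin{equation*}
\int |\hat X \nabla^K \bar U|^2 \phi^K \, dy \lesssim \int_{\mathbb R^3} \langle R \rangle^{-2(r-1)-2K} \langle R \rangle^{2K(1-\eta)} \, dy = \int_{\mathbb R^3} \langle R \rangle^{-2(r-1)-2K\eta} \, dy \lesssim 1,
\end{equation*}
which is finite because $K \eta$ is large (by \eqref{choiceofparameter}, $\frac1K \ll \frac1m \ll \eta$ so $2(r-1) + 2K\eta > 3$). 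For $j \geq 1$: $\nabla^j \hat X$ is supported on $\{ \frac12 e^s \leq |y| \leq e^s \}$ with $|\nabla^j \hat X| \lesssim e^{-js}$, and on this region $|\nabla^{K-j} \bar U| \lesssim \langle R \rangle^{-(r-1)-(K-j)} \lesssim e^{-s((r-1)+K-j)}$ while $\phi^K \lesssim e^{2sK(1-\eta)}$. Arguing exactly as in the proof of Lemma \ref{C0choice},
\begin{equation*}
\int |\nabla^j \hat X \, \nabla^{K-j} \bar U|^2 \phi^K \, dy \lesssim e^{-2js} e^{-2s((r-1)+K-j)} e^{2sK(1-\eta)} \int_{\frac12 e^s}^{e^s} r^2 \, dr \lesssim e^{s(3 - 2(r-1) - 2K\eta)} \lesssim e^{-s},
\end{equation*}
using again $2(r-1) + 2K\eta > 3$. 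Summing over the $O(3^K)$ terms and treating $\bar S$ identically (using $|\nabla^j \bar S| \lesssim \langle R \rangle^{-(r-1)-j}$), and recalling $s \geq s_0 \gg 1$, we obtain \eqref{eq:profile_weighted_small}. This completes the proof.
\end{proof}
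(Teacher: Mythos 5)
Your proof is correct and follows essentially the same route as the paper's: reduce to showing $\int(|\nabla^K(\hat X\bar U)|^2+|\nabla^K(\hat X\bar S)|^2)\phi^K\lesssim_K 1$, then split the Leibniz expansion into the ``no derivative on $\hat X$'' term (integrable by the profile decay since $K\eta$ is large) and the ``at least one derivative on $\hat X$'' terms (supported on the annulus $\tfrac12 e^s\leq|y|\leq e^s$, hence exponentially small). Your indexing — isolating $j=0$ before summing $j\geq 1$ — is in fact slightly cleaner than the paper's, which sums $1\leq j\leq K$ with $j$ the derivatives on $\bar U$ and thereby absorbs the zero-derivatives-on-$\hat X$ case redundantly.
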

\begin{proof}
Since $(\tilde{U},\tilde{S})=(U,S)-(\hat{X}\bar{U},\hat{X}\bar{S}),$ and $K\ll E$, from \eqref{higherestimate3}, we only need to show
\begin{equation}\label{profilekestimate}
\int_{e^s\mathbb{T}_{L}^{3}}(|\nabla^{K}(\hat{X}\bar{U})|^2+|\nabla^{K}(\hat{X}\bar{S})|^2)\phi^{K}dy\lesssim_{K}  1.
\end{equation}
From the decay of the profile \eqref{eq:profiles_decay}, we have
\[
\left( |\nabla^{K} \bar{U} |^2+|\nabla^{K} \bar{S} |^2 \right) \phi^{K}\lesssim_{K} \langle R\rangle^{-2(r-1)-2K\eta}.
\]
From $\frac{1}{K}\ll \eta$ \eqref{choiceofparameter}, we claim 
\begin{equation}\label{profilekestimate1}
\int_{e^s\mathbb{T}_{L}^{3}} \left( |\nabla^{K}\bar{U}|^2+|\nabla^{K}\bar{S}|^2 \right) \phi^{K}dy\lesssim_{K}  1.
\end{equation}
From the definition of $\hat{X}$, we have
\begin{equation}\label{estimatehatX}
\supp \hat{X} \subset B \left( 0, e^s \right) ,\  \supp (1-\hat{X}) \subset B^c \left( 0,\frac{e^s}{2} \right), \ \supp \nabla\hat{X} \subset B \left( 0, e^s \right) \cap B^c \left( 0,\frac{e^s}{2} \right), 
\end{equation}
and
\begin{equation}\label{estimatehatX2}
\|\partial^{j}\hat{X}\|_{L^{\infty}}\lesssim e^{-js}.
\end{equation}
Then we have
\begin{align}\label{profilekestimate2}
&\quad\sum_{1\leq j\leq K}\int_{e^s\mathbb{T}_{L}^{3}}|\nabla^{K-j} \hat{X}|^2 \left( |\nabla^{j} \bar{U} |^2+|\nabla^{j} \bar{S} |^2 \right)\phi^{K}dy\\\nonumber
&\lesssim_{K} \sum_{1\leq j\leq K}e^{-2(K-j)s}\int_{\frac{1}{2}e^{s}}^{e^s}|y|^{2(1-\eta)K}|y|^{-2(r-1)-2j}|y|^{2}dy\\\nonumber
&\lesssim_{K} \sum_{1\leq j\leq K}e^{-2(K-j)s+(3+2(1-\eta)K-2(r-1)-2j)s}\\\nonumber
&\lesssim_{K} e^{(-2K\eta+3-2(r-1))s}\lesssim_{K}1.
\end{align}
The last inequality follows again from the choice of parameter $\frac{1}{K}\ll \eta$. \eqref{choiceofparameter}.
Then \eqref{profilekestimate} follows from \eqref{profilekestimate1} and \eqref{profilekestimate2}.
\end{proof}

\begin{lemma}[Bounds for $S$] \label{lemma:Sbounds} We have the lower bound   

\begin{equation*}
    S \gtrsim \delta_1 \left\langle \frac{R}{R_0} \right\rangle^{-(r-1)}.
\end{equation*}
On the region $|y| > R_0$, we have the upper bound
\begin{equation*}
S \les \delta_0 \max \left\{ \left\langle \frac{R}{R_0} \right\rangle^{-(r-1)}, e^{-(s-s_0)(r-1)} \right\}.
\end{equation*}
\end{lemma}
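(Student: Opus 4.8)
The plan is to track the evolution of $S = \tilde S + \hat X \bar S$ along the characteristics of the transport operator appearing in the $\tilde S$-equation \eqref{nsequation2} (or equivalently the $S$-equation in \eqref{eq:US2}), namely the flow $\dot y = y + U$. Writing the full equation for $S$ as $\p_s S = -(r-1)S - (y+U)\cdot \nabla S - \alpha S \operatorname{div} U$, along a trajectory $y(s)$ with $\dot y = y+U$ we get $\frac{d}{ds} S(y(s),s) = -\big((r-1) + \alpha \operatorname{div} U\big) S(y(s),s)$, so that $S$ solves a linear ODE along characteristics and in particular its sign is preserved and its size is controlled by $\exp\big(-\int (r-1) + \alpha \operatorname{div} U \, ds\big)$. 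The bootstrap hypotheses \eqref{lowerestimate1}, \eqref{lowerestimate2} (together with \eqref{initialdatacondition}, the decay of $\bar U$ in \eqref{eq:profiles_decay}, and the Gagliardo--Nirenberg interpolation Lemmas referenced in the text) give $|\operatorname{div} U| \lesssim |\nabla \tilde U| + |\nabla(\hat X \bar U)| \lesssim \langle y\rangle^{-(r-1)} + \delta_0 e^{-\epsilon(s-s_0)}$, which is bounded and in fact small; hence $(r-1) + \alpha \operatorname{div} U$ stays within a small neighbourhood of $r-1 > 0$, giving essentially exponential decay $\sim e^{-(r-1)(s-s_0)}$ of $S$ along each trajectory, up to harmless multiplicative constants.

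The next step is to convert this Lagrangian statement into the stated Eulerian bounds by understanding how the trajectories move. Since $\dot y = y + U$ and $|U|$ is bounded by $\delta_0$-type quantities (from \eqref{lowerestimate1} and the smallness of $\hat X \bar U$ for $|y|\ge R_0$ via Lemma \ref{C0choice}), a trajectory through a point $y$ at time $s$ came, at time $s_0$, from a point $y_0$ with $|y_0| \approx e^{-(s-s_0)}|y|$ up to multiplicative constants, i.e. $|y(s)| \sim e^{s-s_0}|y(s_0)|$ in the outer region. For the \textbf{lower bound}: given the initial condition $\tilde S_0 + \hat X \bar S_0 \ge \tfrac{\delta_1}{2}\langle R/R_0\rangle^{1-r}$ from \eqref{initialdatacondition2}, and the fact that $\bar S \gtrsim \langle R\rangle^{-(r-1)}$ from \eqref{eq:profiles_decay} (recalling $\bar S > 0$ from \eqref{eq:profiles_positive}), at time $s_0$ we have $S_0 \gtrsim \delta_1 \langle R_0'/R_0\rangle^{-(r-1)}$ at the starting point $R_0' = |y(s_0)|$; propagating forward with the exponential factor $e^{-(r-1)(s-s_0)}$ and using $R \sim e^{s-s_0} R_0'$ one checks $e^{-(r-1)(s-s_0)}\langle R_0'/R_0\rangle^{-(r-1)} \sim \langle R/R_0\rangle^{-(r-1)}$ in the relevant regimes (both when $R_0' \lesssim R_0$, where the factor is genuinely exponential, and when $R_0' \gtrsim R_0$, where $e^{-(r-1)(s-s_0)}(R_0'/R_0)^{-(r-1)} = (e^{s-s_0}R_0'/R_0)^{-(r-1)} \sim (R/R_0)^{-(r-1)}$), giving the claimed $S \gtrsim \delta_1 \langle R/R_0\rangle^{-(r-1)}$. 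For the \textbf{upper bound} on $|y| > R_0$ one argues symmetrically: at $s_0$, $S_0 \lesssim \delta_0$ uniformly (since $\tilde S_0$ is $O(\delta_1)$ and $\bar S \lesssim \delta_0$ for $|y|\ge R_0$ by the choice of $R_0$ in \eqref{Rochoice estimate}, while $\bar S \lesssim \langle R\rangle^{1-r}$ globally); propagating the factor $e^{-(r-1)(s-s_0)}$ forward and splitting according to whether the trajectory started inside $B(0,R_0)$ (giving the $e^{-(s-s_0)(r-1)}$ term, after using $S_0 \lesssim \delta_0$) or outside (giving, via $\bar S \lesssim \langle R_0'\rangle^{1-r}$ and $R \sim e^{s-s_0} R_0'$, the $\langle R/R_0\rangle^{-(r-1)}$ term) yields the stated maximum.

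The main obstacle I expect is the careful control of the characteristics and the resulting change of variables, i.e. making rigorous the comparison $|y(s)| \sim e^{s-s_0}|y(s_0)|$ uniformly in the trajectory and in $s$, and ensuring the accumulated multiplicative errors from $\int \alpha \operatorname{div} U\, ds$ and from $|U|$ in the characteristic ODE remain bounded (not just small per unit time but summably small over the whole interval $[s_0,s]$) — this is where the precise hierarchy of parameters \eqref{choiceofparameter}, the decay rate $r-1>0$, and the bootstrap bounds \eqref{lowerestimate1}--\eqref{higherestimate3} all have to be used in concert. A secondary technical point is handling the region near $|y| = R_0$ and matching the two regimes of $\phi$ and of the bounds; this is routine but must be done to get the clean $\max\{\cdot,\cdot\}$ form. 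Throughout, one should also invoke $S = \tilde S + \hat X \bar S$ together with the already-established $\tilde E_K \le 2E$ (the displayed Lemma just before) to justify that the lower-order quantities entering $\operatorname{div} U$ are indeed controlled.
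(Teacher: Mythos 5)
Your strategy is closely related to the paper's but differs in two ways, one benign and one that opens a genuine gap. The benign difference: you use the full Lagrangian flow $\dot y = y+U$, so that along characteristics $\tfrac{d}{ds}\log S = -(r-1)-\alpha\operatorname{div} U$ is a clean scalar ODE, whereas the paper follows the explicit pseudo-trajectories $y(s)=y_0 e^{s-\bar s}$ (the flow of $\dot y = y$), defines $\omega_{y_0}(s)=e^{(r-1)(s-\bar s)}S(y_0 e^{s-\bar s},s)$, and treats both $U\cdot\nabla S$ and $\alpha S\operatorname{div}U$ as an integrable source with an extra factor $e^{-(s-\bar s)/10}$. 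The paper's choice eliminates any need to compare characteristics; yours trades that for a smaller source term. Either route can be made to work.

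The genuine gap is in how you start the trajectories. You propagate from $s_0$ for every trajectory and, for trajectories whose pre-image at $s_0$ lies inside $B(0,R_0)$, you invoke ``$S_0\lesssim\delta_0$ uniformly.'' That bound is false there: by the choice of $R_0$ in \eqref{Rochoice estimate}, $\bar S\geq 2\delta_0$ on $B(0,R_0)$ and in fact $\bar S(0)=O(1)$ (from \eqref{eq:profiles_decay} at $R=0$), so $S_0=\tilde S_0+\hat X\bar S_0\approx\bar S_0$ can be $O(1)$, not $O(\delta_0)$. Your Duhamel factor $e^{-(r-1)(s-s_0)}$ alone would then only give $S\lesssim e^{-(r-1)(s-s_0)}$, missing $\delta_0$. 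Moreover $\operatorname{div} U$ is \emph{not} small inside $B(0,R_0)$ (there $|\nabla\bar U|=O(1)$), so $\exp\bigl(-\int\alpha\operatorname{div}U\bigr)$ over the interior time is not a universal $O(1)$ constant; it is in fact what supplies the missing $\delta_0$ (via the stationarity of $\bar S$, since along profile characteristics $\exp\bigl(-\int((r-1)+\alpha\operatorname{div}\bar U)\bigr)$ equals the ratio of $\bar S$ at the endpoints), but none of this is in your outline. The paper sidesteps all of it by choosing, for each target point $(y,s)$, a starting datum $(y_0,\bar s)$ with \emph{either} $|y_0|=R_0$ \emph{or} $\bar s=s_0$: trajectories are never tracked inside $B(0,R_0)$, and at the crossing $|y_0|=R_0$ one reads $S\approx\delta_0$ directly from $\bar S(R_0)\approx 2\delta_0$ and the $L^\infty$ bootstrap bound on $\tilde S$. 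Consequently your assignment of the two terms in the $\max$ to the two cases is also inverted relative to what actually holds: trajectories crossing $|y|=R_0$ at some $\bar s>s_0$ yield the factor $(R/R_0)^{1-r}=e^{-(r-1)(s-\bar s)}$, while trajectories with $|y_0|\geq R_0$ at $s_0$ yield $e^{-(r-1)(s-s_0)}$. The same concern applies to your lower-bound argument when $R_0'\lesssim R_0$; you should adopt the crossing-time convention there as well.
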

\begin{proof}
For $R \leq R_0$ we have $\bar S \geq 2 \delta_0$, so $S \geq 2 \delta_0 - \| \tilde S \|_{L^\infty} \geq\delta_0$ and the inequality is clear. Thus, let us assume $R \geq R_0$. We choose $(y_0, \bar{s})$ such that either $|y_0| = R_0$ or $\bar{s} = s_0$. We consider the trajectories starting at $(y_0, \bar{s})$.

Let us define $\omega_{y_0} (s) = e^{(r-1)(s-\bar{s})} S (y_0 e^{s-\bar{s}}, s)$. From \eqref{eq:US2}:
\begin{equation*}
\p_s \omega_{y_0} =  - e^{(r-1) (s-\bar{s})} U \cdot \nabla S - \alpha e^{(r-1) (s-\bar{s})} S \div (U).
\end{equation*}
Using the interpolation inequality \eqref{eq:GNresultinfty_simplifiedtorus} between $\| \phi^{K/2} \nabla^K \tilde S \|_{L^2}$ and $\| \tilde S \|_{L^\infty}$ yields $\| \phi^{1/2} \nabla \tilde S \|_{L^\infty}\leq \delta_0^{\frac{9}{10}}$. Moreover, for $R \geq R_0$
\begin{align*}
\phi^{1/2} | \nabla (\hat X \bar S) | \les  \left( \frac{R}{R_0} \right)^{1-\eta} \left( \nabla (\hat X ) \bar S + \hat X | \nabla S |\right)
\end{align*}
Now, note that $|\nabla (\hat X (|y| e^{-s}))| = -e^{-s} \hat X' (|y| e^{-s})$. Since $\hat X'$ is supported on $[1/2, 1]$, we see that $e^{s} \approx y$ on the support, and we obtain that $|\nabla \hat X| \les \frac{1}{R}$. Therefore, using \eqref{eq:profiles_decay} we get that for $R\geq R_0$:
\begin{align*}
\phi^{1/2} | \nabla (\hat X \bar S) | \les \frac{R}{R_0} \cdot \frac{1}{R^r} \leq \frac{1}{R_0} \approx \delta_0^{\frac{1}{r-1}} \leq \delta_0.
\end{align*}
Combining this with  $\| \phi^{1/2} \nabla \tilde S \|_{L^\infty}\leq \delta_0^{\frac{9}{10}}$, we obtain  $\| \phi^{1/2} \nabla S \|_{L^\infty}\leq \delta_0^{\frac{9}{10}}$. Analogously, it can be deduced that  $\| \phi^{1/2} \nabla U \|_{L^\infty (R\geq R_0)} \leq \delta_0^{9/10} $. Thus, we obtain
\begin{align*}
    &\left| e^{(r-1)(s-\bar{s})} U(y_0 e^{(s-\bar{s})}, s) \nabla S (y_0 e^{(s-\bar{s})}, s) \right| \les \eta^{-1} \left( \frac{ y_0 e^{(s-\bar{s})} }{R_0} \right)^{-1+\eta} e^{(r-1) (s-\bar{s})} \delta_0^{19/10} \\
    &\quad\les \eta^{-1} \delta_0^{19/10} \left\langle \frac{y_0}{R_0} \right\rangle^{-1+\eta} e^{(s-\bar{s})(\eta+r-2)}\les \delta_0^{9/5} \left\langle \frac{y_0}{R_0} \right\rangle^{1-r}e^{-(s-\bar{s})/10}.
\end{align*}
Similarly
\begin{equation*}
\left| \alpha \omega_{y_0} \div (U) \right| \les  \delta_0^{9/5} \left\langle \frac{y_0}{R_0} \right\rangle ^{1-r}e^{-(s-\bar{s})/10}.
\end{equation*}
Thus, we get
\begin{equation*}
    |\p_s \omega |\les  \delta_0^{9/5} \left\langle \frac{y_0}{R_0} \right\rangle^{1-r}e^{-(s-\bar{s})/10}.
\end{equation*}
Integrating we obtain 
\begin{equation} \label{eq:cadiz}
   \left|  \omega_{y_0}(s) - \omega_{y_0}(\bar{s}) \right| \les  \delta_0^{9/5} \left\langle \frac{y_0}{R_0} \right\rangle^{1-r},
\end{equation}
uniformly in $s$. Note also that $\delta_0\leq \omega_{y_0}(\bar{s}) \leq C \delta_0$  if $y_0 = R_0$, using that $ 2 \delta_0 \leq \bar S \leq C\delta_0 $. Thus, we get $\omega_{y_0}(s) \approx \delta_0$ if $y_0=R_0                                   $. That is,
\begin{equation} \label{eq:almeria}
S \left( y_0 e^{(s-\bar{s})} , s \right) \approx \delta_0 e^{-(r-1)(s-\bar{s})}.
\end{equation}
Undoing the change of variables, we note $\frac{y}{y_0} = e^{s-\bar{s}}$ and therefore
\begin{equation*}
S \left( y, s \right) \approx \delta_0 \left( \frac{y}{y_0} \right)^{-(r-1)}.
\end{equation*}
Since $y_0 = R_0$ this directly shows the lower bound and the upper bound.
For $s=s_0$, note that for $y\geq R_0$, from the definition of the initial data \eqref{initialdatacondition2}, we have
\[
\frac{\delta_1}{2} \left\langle \frac{y_0}{R_0} \right\rangle ^{1-r}\leq S(y_0,s_0) \leq C \delta_0.
\]
Thus, we get
\begin{align*}
 e^{(1-r)(s-s_0)}\delta_1 \left\langle\frac{y_0}{R_0} \right\rangle ^{1-r}\lesssim S(y,s)=e^{(1-r)(s-s_0)}w_{y_0}(s)\lesssim \delta_0e^{(1-r)(s-s_0)}.
\end{align*}
Then the upper bound directly follows. From $e^{(1-r)(s-s_0)}=(\frac{y}{y_0})^{1-r}$, we get the lower bound.

\end{proof}

\begin{lemma} \label{lemma:Sprimebounds} We have that
\begin{equation} \label{eq:voltaire1}
    |\nabla \mt S | + | \nabla \mt U | \les \delta_0 \left\langle \frac{R}{R_0} \right\rangle^{-r}
\end{equation}
and
\begin{equation} \label{eq:voltaire2}
    |\nabla S | + | \nabla  U | \les  \left\langle \frac{R}{R_0} \right\rangle^{-r}
\end{equation}
\end{lemma}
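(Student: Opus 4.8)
The plan is to reduce the statement to the perturbative bound \eqref{eq:voltaire1} and prove that one by a transport (characteristic) argument, completely parallel to the proof of Lemma~\ref{lemma:Sbounds} but carried out one derivative higher. First note that \eqref{eq:voltaire2} is a consequence of \eqref{eq:voltaire1}: writing $\nabla S=\nabla\mt S+\nabla(\hat X\bar S)$ and using \eqref{eq:profiles_decay} together with $|\nabla\hat X|\les e^{-s}\les\langle R\rangle^{-1}$ on $\supp\nabla\hat X$, one has $|\nabla(\hat X\bar S)|\les\langle R\rangle^{-r}\les\langle R/R_0\rangle^{-r}$, and likewise for $U$; adding gives \eqref{eq:voltaire2}. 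So the task is \eqref{eq:voltaire1}, i.e.\ $|\nabla\mt S|+|\nabla\mt U|\les\delta_0\langle R/R_0\rangle^{-r}$. In the inner region $|y|\le R_0$, where $\phi\equiv1$, this reads just $|\nabla\mt S|+|\nabla\mt U|\les\delta_0$, and I would get it directly by Gagliardo--Nirenberg interpolation (Lemmas~\ref{lemma:GN_generaltorus}, \ref{lemma:GN_generalnoweighttorus}) between the weighted bound \eqref{higherestimate3} (equivalently \eqref{perturbationenergy}) and the $L^\infty$ bound \eqref{lowerestimate1}: the exponent landing on $E$ is $O(1/K)$, and $E^{O(1/K)}$ is negligible against the $\delta_0$ gain by the ordering \eqref{choiceofparameter}. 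The same interpolation, keeping the weight $\phi$, supplies the auxiliary pointwise bounds I need further out, $|\nabla^2\mt S|+|\nabla^2\mt U|\les\delta_0\langle R/R_0\rangle^{-2(1-\eta)}$ and $|\nabla^3\mt S|+|\nabla^3\mt U|\les\delta_0\langle R/R_0\rangle^{-3(1-\eta)}$ on $|y|\ge C_0$, and via \eqref{eq:profiles_decay} the analogous bounds for $U=\mt U+\hat X\bar U$ and $S=\mt S+\hat X\bar S$; from Lemma~\ref{lemma:Sbounds} I also record $S^{-1/\alpha}\les\delta_1^{-1/\alpha}\langle R/R_0\rangle^{(r-1)/\alpha}$ (no vacuum) and $|U|\les\delta_0$ on $|y|\ge R_0$.

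For $|y|\ge R_0$ I would fix $(y,s)$ and run, exactly as in Lemma~\ref{lemma:Sbounds}, the trajectory $\tau\mapsto y(\tau)=y_0e^{\tau-\bar s}$ through it, with either $|y_0|=R_0$, $\bar s\ge s_0$, or $\bar s=s_0$, $|y_0|\ge R_0$ (so that $R(\tau)/R_0=(|y_0|/R_0)e^{\tau-\bar s}$ and the trajectory stays in $|y|\ge R_0$). Differentiating \eqref{nsequation2} (resp.\ the $\mt U$-component of \eqref{nsequation1}) in $\p_k$, one checks that the diagonal part contributes the coefficient $-(r-1)-1=-r$ in front of $\p_k\mt S$; so with $\omega(\tau):=e^{r(\tau-\bar s)}\p_k\mt S(y(\tau),\tau)$ one has $\tfrac{d}{d\tau}\omega=e^{r(\tau-\bar s)}\mathcal S$, where the source $\mathcal S$ splits into: (i) terms bilinear in first derivatives of the perturbation, or in a first derivative of the perturbation against a first derivative of the profile, coming from $\p_k\mc N_s$, $(\p_k(\hat X\bar U))\cdot\nabla\mt S$, $\mt U\cdot\nabla\p_k(\hat X\bar S)$, $\p_k\mt S\,\div(\hat X\bar U)$, and the commutator $-\hat X\bar U\cdot\nabla\p_k\mt S$ (coming from the transport term not being exactly $y\cdot\nabla\mt S$); (ii) the genuinely second-order terms $\alpha\,\hat X\bar S\,\p_k\div\mt U$ and $\alpha\,\mt S\,\p_k\div(\hat X\bar U)$; (iii) the truncation error $\p_k\mc E_s$ (cf.\ \eqref{Esdefinition}), supported on $|y|\sim e^s$ where the profile is $\les e^{-s(r-1)}$ and hence negligible; and, in the $\mt U$-equation only, $\p_k\mc F_{dis}$, built from $e^{-\delta_{\rm{dis}}\tau}\,\nabla^3U/S^{1/\alpha}$ and $e^{-\delta_{\rm{dis}}\tau}\,\nabla^2U\,\nabla S/S^{1+1/\alpha}$.

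I would then estimate $\mathcal S$ pointwise along the trajectory using the $\nabla^2/\nabla^3$ bounds above, Lemma~\ref{lemma:Sbounds}, the profile decay \eqref{eq:profiles_decay}, $|U|\les\delta_0$, and (for the dissipative piece) $e^{-\delta_{\rm{dis}}\tau}\le e^{-\delta_{\rm{dis}}\bar s}\le e^{-\delta_{\rm{dis}}s_0}$ with $\delta_{\rm{dis}}>0$ by \eqref{eq:condition_for_treating_dissipation}. Each resulting bound has the shape $\delta_0^{1+c}\,(|y_0|/R_0)^{-a}\,e^{-a(\tau-\bar s)}$ with a decay exponent $a>1$ (of the form $r+(1-\eta)$, $r+1-2\eta$, $(r-1)+2(1-\eta)$ or $r+1$ for the terms (i)--(ii); for $\p_k\mc F_{dis}$ the balance $r+p-\delta_{\rm{dis}}=3\eta-1<0$ holds because the $(r-1)/\alpha$ growth from $S^{-1/\alpha}$ is exactly cancelled by the corresponding term in $\delta_{\rm{dis}}=\tfrac{r-1}{\alpha}+r-2$ and the weight on $\nabla^3\mt U$ adds $3(1-\eta)>2$). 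Hence $\int_{\bar s}^s e^{r(\tau-\bar s)}|\mathcal S|\,d\tau\les\delta_0\,(|y_0|/R_0)^{-r}$, the leftover constants $\delta_0^c$, $E^{O(1/K)}$, $\delta_1^{-1/\alpha}e^{-\delta_{\rm{dis}}s_0}$ all being $\ll1$ by \eqref{choiceofparameter} and $\tfrac1{s_0}\ll\delta_0^{3/2}\ll\delta_1$. For the initial value, $|\omega(\bar s)|=|\p_k\mt S(y_0,\bar s)|\les\delta_0\langle|y_0|/R_0\rangle^{-r}$: at $|y_0|=R_0$ this is the inner-region bound, and at $\bar s=s_0$ it follows from \eqref{initialdatacondition} and \eqref{eq:profiles_decay} together with $\delta_0R_0^{\,r}\approx R_0\ge1$ (cf.\ \eqref{choiceR0}). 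Grönwall then gives $|\omega(s)|\les\delta_0(|y|/R_0)^{-r}$, i.e.\ $|\p_k\mt S(y,s)|\les\delta_0\langle R/R_0\rangle^{-r}$; the bound for $\p_k\mt U$ is identical, so \eqref{eq:voltaire1} holds, and with it \eqref{eq:voltaire2}.

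The hard part is the second-order terms $\alpha\hat X\bar S\,\p_k\div\mt U$, $\alpha\mt S\,\p_k\div(\hat X\bar U)$ (and, for Navier--Stokes, the third-order term inside $\p_k\mc F_{dis}$): these are \emph{not} lower order for the transport scheme, cannot be iterated, and must be absorbed through the high-regularity bootstrap \eqref{higherestimate3} and the interpolation lemmas. The delicate point is that the decaying weight $\phi$ of \eqref{higherestimate3} furnishes exactly enough spatial decay --- the effective exponents $2(1-\eta)$ and $3(1-\eta)$ both strictly exceed $1$ --- to make these contributions integrable along the characteristics after the $e^{r(\tau-\bar s)}$ amplification, and that $\delta_{\rm{dis}}>0$ (with $\tau\ge s_0\gg1$) is what lets the dissipative source survive. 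The remaining bookkeeping, making sure the surviving $\delta_0$- and $E$-powers still leave the $\delta_0$ prefactor in \eqref{eq:voltaire1}, is routine but is where the parameter hierarchy \eqref{choiceofparameter} gets used repeatedly.
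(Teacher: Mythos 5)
Your proposal is essentially the paper's own proof: a Gagliardo--Nirenberg interpolation between the $L^\infty$ bound \eqref{lowerestimate1} and the weighted $\dot H^K$ bound \eqref{higherestimate3} to get preliminary $\nabla^2$ (and, for the dissipative piece, $\nabla^3$) decay, followed by integrating $e^{r(\tau-\bar s)}$ times the source along the self-similar rays $y=y_0e^{\tau-\bar s}$ with starting point $|y_0|=R_0$ or $\bar s=s_0$, exactly as in the paper's proof of Lemma~\ref{lemma:Sprimebounds} and of Lemma~\ref{lemma:Sbounds}. The one cosmetic difference is that you run the transport argument on the perturbation system \eqref{nsequation1}--\eqref{nsequation2} and extract \eqref{eq:voltaire2} afterwards, whereas the paper differentiates the full system \eqref{eq:US2} to get \eqref{eq:voltaire2} first and then subtracts the profile; both are fine, and your explicit accounting of $\p_k\mc F_{\rm dis}$ (using $\delta_{\rm dis}>0$ and the $\phi^{-3/2}$ weight to verify the exponent $-1+3\eta<0$) actually fills in the $U$-equation dissipation step that the paper compresses into ``the proof for $U$ is analogous.''
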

\begin{proof}
For $R\leq R_0$, \eqref{eq:voltaire1} is satisfied by the $L^{\infty}$ assumption \eqref{lowerestimate1}, the $H^{4}(|y|\geq C_0)$ assumption \eqref{lowerestimate2} and Lemma \ref{linfitnityinside} for the bound inside $B(0,C_0)$. Combining these with the decay of the profile (equation \eqref{eq:profiles_decay}), we get \eqref{eq:voltaire2}  given that $U =\hat{X} \bar U + \mt U$ and $S =\hat{X} \bar S + \mt S$.

For $R\geq R_0$, without loss of generality we will do the proof for $S$ and $\tilde{S}$. The proof for $U$ and $\tilde{U}$ will be analogous. 

Using again the energy estimate assumption \eqref{higherestimate3}, we get
\begin{equation} \label{eq:diderot}
    \int \left( | \nabla^K U |^2 + | \nabla^K S |^2 \right) \phi^K \les_K E.
\end{equation}
Now, we interpolate between $| U |, | S | \lesssim \delta_0$ (from the decay of the profile \eqref{Rochoice estimate} and the assumption of the perturbation \eqref{lowerestimate1} ) when $R\geq R_0$ and \eqref{eq:diderot}. Concretely, using equation \eqref{eq:GNresultinfty_simplified} for $m = K$, $j \in \{ 1, 2 \}$, $q = 2$, we obtain
\begin{equation*}
     | \nabla^j U | + | \nabla^j  S | \les_K \delta_0^{1 - \frac{j}{K-3/2}} E^{\frac{j}{K-3/2}} \phi^{- \frac{Kj}{2(K-3/2)} } + \delta_0 \langle x \rangle^{-j}.
\end{equation*}
Given that $j \in \{ 1, 2 \}$ and $\delta_0$ depends on $K, E$, we obtain that
\begin{equation} \label{eq:rough_decay2}
    | \nabla^2 U | + | \nabla^2 S | \les \delta_0^{9/10} \phi^{-1}, \quad \mbox{ and } \quad
    | \nabla U | + | \nabla S | \les \delta_0^{9/10} \phi^{-1/2}.
\end{equation}

We look at the equation for $\nabla S$. Taking a derivative in \eqref{eq:US2}, we have that
\begin{equation*}
    \p_s \p_i \mc  S = -r \p_i S - y \nabla \p_i S - \p_i (U \nabla S ) - \alpha \p_i ( S \div (U))
\end{equation*}
    For any $|y| \geq R_0$, we consider $y = y_0 e^{(s-\bar{s})}$ where either $\bar{s} = s_0$ or $|y_0| = R_0$. We define $\omega_{y_0}(s) = e^{r(s-\bar{s})} \p_i S( y_0 e^s, s )$, and we obtain the equation
    \begin{equation*}
       \p_s \omega = -e^{r(s-\bar{s})} \left( \p_i U \nabla S + U \nabla \p_i S + \alpha \p_i  S \div (U) + \alpha S \p_i \div (U) \right).
    \end{equation*}
    where everything is evaluated at $(y_0 e^s, s)$. Using the inequalities from \eqref{eq:rough_decay2}, we get
    \begin{equation*}
        | \p_s \omega_{y_0} | \les \delta_0^{9/5} \phi \left( y_0 e^{s-\bar{s}} \right)^{-1} e^{r(s-\bar{s})} \les \delta_0^{9/5}e^{(r-2+2\eta)(s-\bar{s})} \left\langle\frac{y_0}{R_0} \right\rangle^{-2+2\eta}.
    \end{equation*}
    Thus
    \begin{equation*} 
   \left|\partial_{i}S (y_0,s) - e^{-r(s-\bar{s})}\partial_{i}S (y_0,\bar{s})) \right|=e^{-r(s-\bar{s})}\left|  \omega_{y_0}(s) - \omega_{y_0}(\bar{s}) \right| \les \delta_0^{9/5}e^{(-2+2\eta)(s-\bar{s})} \left\langle \frac{y_0}{R_0} \right\rangle^{-r}.
\end{equation*}
    Therefore,  from $|\partial_{i}S(y_0,\bar{s})|\lesssim \delta_0$ when $|y_0|=R_0$ and the decay for the initial value $|\partial_{i}S(y_0,s_0)|\lesssim \delta_0 \left\langle\frac{|y_0|}{R_0} \right\rangle^{-r}$ (from the choice of $R_0$ \eqref{choiceR0} and the initial value \eqref{initialdatacondition}), we have when $R\geq R_0$, 
    \[
    |\nabla S|\lesssim \left\langle\frac{R}{R_0} \right\rangle^{-r}\delta_0. \]
    Then we get \eqref{eq:voltaire2}.
    
    For \eqref{eq:voltaire1}, we only need to show additionally the decay of the profile $|\nabla(\hat{X}\bar{S})|\lesssim \left\langle\frac{R}{R_0} \right\rangle^{-r}.$ Since $\nabla(\hat{X}\bar{S})=\nabla(\bar{S})\hat{X}+\nabla(\hat{X})\bar{S}$, the decay of the first term directly follows from the decay of $\bar{S}$. For the second term, we can combine the estimates $|\nabla \hat{X}|\lesssim e^{-s}$ and in $\supp \nabla{\hat{X}}, |y|\thicksim e^{-s}$ to get the decay.

\end{proof}
\begin{lemma} \label{lemma:brasilia} For every $1 \leq j \leq K-2$, $\epsilon^\sharp > 0$, we have that 
\begin{equation} \label{eq:brazil3}
    \frac{| \nabla^j S  | \phi^{j/2} }{S} + \frac{| \nabla^j U | \phi^{j/2}}{S} \les_{\delta_0, \epsilon^\sharp} \left\langle R \right\rangle^{-\frac{-r(j-1) + (1-\eta) \frac{5j}{2} + K\eta - 5/2 }{K-5/2} + \epsilon^\sharp}\lesssim_{\delta_0,\epsilon^\sharp} \left\langle R \right\rangle^{\epsilon^\sharp}.
\end{equation}
and
\begin{equation} \label{eq:brazil2}
 | \nabla^j U | + | \nabla^j  S | \les_{\delta_0,\epsilon^\sharp} \langle R \rangle^{-j(1-\eta) - (r-1)+\epsilon^\sharp}. 
\end{equation} 
For $j=K-1,$ we have
\begin{equation}\label{eq:brazilk-1}
\left\| \langle R \rangle^{K(1-\eta)\frac{K-2}{K-1}-\epsilon^\sharp}|\nabla^{K-1}S| \right\|_{L^{2+\frac{2}{K-2}}}+ \left\|\langle R \rangle^{K(1-\eta)\frac{K-2}{K-1}-\epsilon^\sharp}|\nabla^{K-1}U| \right\|_{L^{2+\frac{2}{K-2}}}\lesssim_{\delta_0,\epsilon^\sharp} 1.
\end{equation}
\end{lemma}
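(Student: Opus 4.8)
The plan is to derive all three estimates from weighted Gagliardo–Nirenberg interpolation, taking as endpoints the first–order pointwise bounds of Lemma~\ref{lemma:Sprimebounds} and the top–order weighted energy bound \eqref{higherestimate3}. Recall from \eqref{weightdefi} that $\phi\sim\langle R/R_0\rangle^{2(1-\eta)}$ for $|y|$ large and $\phi\equiv 1$ near the origin, so for $|y|\gtrsim R_0$ the bound \eqref{higherestimate3} reads $\|\langle R\rangle^{K(1-\eta)}\nabla^K S\|_{L^2}+\|\langle R\rangle^{K(1-\eta)}\nabla^K U\|_{L^2}\lesssim_{\delta_0}E^{1/2}$, while Lemma~\ref{lemma:Sprimebounds} gives $\|\langle R\rangle^{r}\nabla S\|_{L^\infty}+\|\langle R\rangle^{r}\nabla U\|_{L^\infty}\lesssim_{\delta_0}1$.

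For $1\le j\le K-2$ I would apply the $L^\infty$ weighted interpolation inequality (Lemmas~\ref{lemma:GN_generaltorus}, \ref{lemma:GN_generalnoweighttorus}, in the form already used for \eqref{eq:GNresultinfty_simplified}) to pass from $\nabla^1$ to $\nabla^K$, with interpolation parameter $\theta_j=\frac{j-1}{K-5/2}$ — the $\frac32$ being the $L^2\hookrightarrow L^\infty$ loss in three dimensions — obtaining $|\nabla^j S|+|\nabla^j U|\lesssim_{\delta_0}\langle R\rangle^{-r-\theta_j(K(1-\eta)-r)}$. Multiplying by $\phi^{j/2}\lesssim\langle R\rangle^{j(1-\eta)}$ and dividing by $S$ via the lower bound $S\gtrsim\delta_1\langle R/R_0\rangle^{-(r-1)}$ of Lemma~\ref{lemma:Sbounds}, a direct computation shows the resulting exponent is exactly the one in \eqref{eq:brazil3}; since that exponent is affine in $j$ with negative slope (because $r<\sqrt3<\frac52$) and negative at $j=1$, it is negative throughout $1\le j\le K-2$, which simultaneously proves the final inequality of \eqref{eq:brazil3} and the bound \eqref{eq:brazil2}.

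For the endpoint $j=K-1$ one cannot reach $L^\infty$, so I would instead interpolate in $L^{p}$ with $p=\frac{2(K-1)}{K-2}$ — precisely the exponent for which the parameter between $\nabla^1\in L^\infty$ and $\nabla^K\in L^2$ equals $\theta=\frac{K-2}{K-1}$; the weight $\langle R\rangle^{\theta K(1-\eta)}=\langle R\rangle^{K(1-\eta)\frac{K-2}{K-1}}$ is the $\theta$–power of the weight in \eqref{higherestimate3} and the $L^2$ endpoint contributes $E^{\theta/2}$, giving \eqref{eq:brazilk-1} once the bounded extra factor $\langle R\rangle^{(1-\theta)r}$ is absorbed into $\langle R\rangle^{-\bar\epsilon}$. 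Inside $|y|\le R_0$, where $\phi\equiv1$ and $S$ is bounded above and below by constants of order $\delta_0$, the three bounds are instead immediate from \eqref{lowerestimate1}, \eqref{lowerestimate2}, Lemma~\ref{linfitnityinside}, the profile decay \eqref{eq:profiles_decay}, and ordinary local Gagliardo–Nirenberg on $B(0,2R_0)$, after which one glues the two regions.

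The only real obstacle is the bookkeeping of weight exponents: one must check that $(1-\theta_j)\cdot(\text{endpoint weight})$ reproduces the stated powers, that replacing $\langle R/R_0\rangle$ by $\langle R\rangle$ and handling the transition of $\phi$ near $|y|\sim R_0$ cost only $\delta_0$–dependent constants together with a factor $\langle R\rangle^{\bar\epsilon}$ of slack, and that the Appendix interpolation lemmas apply with the weight $\phi^{j/2}$; no new idea beyond those already used for Lemmas~\ref{lemma:Sbounds} and \ref{lemma:Sprimebounds} is required.
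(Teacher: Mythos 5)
Your proposal is correct and follows essentially the same route as the paper's own proof: both take the weighted $L^\infty$ endpoint from Lemma~\ref{lemma:Sprimebounds} and the weighted $\dot H^K$ endpoint from \eqref{higherestimate3}, interpolate via the weighted Gagliardo–Nirenberg lemmas in the Appendix with $m=K-1$, $i=j-1$, $\theta=\frac{j-1}{K-5/2}$ in the $\bar r=\infty$ case (and with $\theta=\frac{K-2}{K-1}$, $\bar r = 2+\frac{2}{K-2}$ for $j=K-1$), divide by the $S$ lower bound from Lemma~\ref{lemma:Sbounds}, and track exponents; the affine-in-$j$ observation for the exponent is also the same. The only cosmetic difference is that you explicitly split off $|y|\le R_0$ and glue, whereas the paper relies on the weight hypotheses of its global interpolation lemmas (and the condition \eqref{eq:GN_extracond}/\eqref{eq:GN_extracondtorus}, verified since $K\eta\gg1$) to handle the transition region uniformly.
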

\begin{proof} 

In order to show \eqref{eq:brazil3}, we do interpolation between $\| \nabla S  \cdot \langle R\rangle ^{r} \|_{L^\infty}$ and $\| \phi^{K/2} \nabla^K S \|_{L^2}$. We consider Lemma \ref{lemma:GN_general} from the Appendix with $m=K-1$, $i = j-1$, $p=\infty$, $q = 2$, $\bar{r} = \infty$, so that $\theta = \frac{j-1}{K-5/2}$ (which is between $(j-1)/(K-1)$ and $1$ as Lemma \ref{lemma:GN_generaltorus} requires, since $j \leq K-2$). From \eqref{eq:GNresult}, using Lemma \ref{lemma:Sprimebounds} to bound $\| \nabla S  \langle R \rangle^{r} \|_{L^\infty}$, we obtain
\begin{equation*}
| \nabla^j U  | \les_{\delta_0} \phi^{-\frac{K\theta}{2}} \langle R \rangle^{-r(1-\theta ) +\epsilon^\sharp }+\langle R \rangle^{-j+1-r}
 \les_{\delta_0} \phi^{-\frac{K\theta}{2}} \langle R \rangle^{-r(1-\theta ) +\epsilon^\sharp }.
\end{equation*}
The last inequality follows from $1\ll K\eta.$
Thus, we have that
\begin{align}\label{decayestimatederi}
| \nabla^j U | &\les_{\delta_0} \phi^{-j/2} \langle R \rangle^{-(r-1)+\epsilon^\sharp}
\phi^{-\frac{K\theta}{2} + j/2} \cdot \langle R \rangle^{-r(1-\theta)+r-1}\\
&
=
\phi^{-j/2} \langle R \rangle^{-(r-1)+\epsilon^\sharp}
\phi^{\frac{-K(j-1) + j(K-5/2)}{2(K-5/2)}} \cdot \langle R \rangle^{r\frac{j-1}{K-5/2}-1}\\ \nonumber
&= \phi^{-j/2} \langle R \rangle^{-(r-1)+\epsilon^\sharp}
\phi^{\frac{K - \frac52 j}{2(K-5/2)}} \cdot \langle R \rangle^{\frac{rj-r - K + 5/2}{K-5/2}} \\ \nonumber
&\les
\phi^{-j/2} \langle R \rangle^{-(r-1)+\epsilon^\sharp}
 \cdot \langle R \rangle^{\frac{rj-r - K + 5/2 + (1-\eta) (K - \frac52 j )}{K-5/2}}.
\end{align}
This shows the first bound from \eqref{eq:brazil3} corresponding to $U$ and the one for $S$ is analogous. 
For the second bound in \eqref{eq:brazil3}, and \eqref{eq:brazil2}, we can use \eqref{decayestimatederi} and 
\begin{align*}
&rj-r - K + 5/2 + (1-\eta) (K - \frac52 j )= (j-1)(r-\frac{5}{2})+\eta(j\frac{5}{2}-K) \\
 &\quad  \left( \frac{5}{2}-r - \eta K   \right) + j\left( - \frac{5}{2} (1-\eta) + r \right) \leq 0,
\end{align*}
from the fact that $K\eta \gg 1$ and $r < \sqrt{3}$ \eqref{eq:rough_range_r}.

For \eqref{eq:brazilk-1}, we use the interpolation Lemma \ref{eq:GNresulttorus} between $\| \nabla S  \cdot \langle R\rangle ^{r} \|_{L^\infty}$ and $\| \phi^{K/2} \nabla^K S \|_{L^2}$ again. By taking $m=k-1$, $i=K-2$, $p=\infty$, $q=2$, $\theta=\frac{K-2}{K-1}$, we have
\begin{align*}
\left\| \langle R\rangle^{K(1-\eta)\frac{K-2}{K-1}+r\frac{1}{K-1}-\epsilon^\sharp}\nabla^{k-1}S \right\|_{L^{2+\frac{2}{K-2}}}\lesssim_{\delta_0,\epsilon^\sharp}1.
\end{align*}
Here \eqref{eq:GN_extracondtorus} is satisfied because $1\ll K\eta$.
Thus \eqref{eq:brazilk-1} follows from $r>1$.

\end{proof}

\begin{lemma}\label{forcingestimate1}
We have
\[
\chi_2 \mathcal E_{u}=0, \quad \chi_2 \mathcal E_{s}=0,
\]
\[
\|\mathcal E_{u}\|_{L^{\infty}}, \|\mathcal E_{u}\|_{\dot{H}^4(|y|\geq C_0)}\leq \delta_1 e^{-(s-s_0)}, 
\]
and
\[
\|\mathcal E_{s}\|_{L^{\infty}},  \|\mathcal E_{s}\|_{\dot{H}^4(|y|\geq C_0)}\leq \delta_1 e^{-(s-s_0)}.
\]
\end{lemma}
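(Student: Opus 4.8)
The plan is to observe that every summand in the simplified forms \eqref{Eudefinition} of $\mathcal E_u$ and \eqref{Esdefinition} of $\mathcal E_s$ carries a factor of either $\hat X^2-\hat X=\hat X(\hat X-1)$ or $\nabla\hat X$. Since $\mathfrak X\equiv 1$ on $B(0,\frac12)$ and $\supp\mathfrak X\subset B(0,1)$, both of these factors are supported in the dyadic annulus $A_s:=\{\,e^s/2\le |y|\le e^s\,\}$; hence so are $\mathcal E_u$, $\mathcal E_s$ and all their derivatives. The first assertion is then immediate: $\chi_2$ is supported in $B(0,\frac52C_0)$, while $A_s\subset\{|y|\ge e^{s_0}/2\}$, and since $C_0$ is fixed in Lemma \ref{C0choice} before $s_0$ is chosen large (compatibly with \eqref{choiceofparameter}), we may assume $e^{s_0}/2>\frac52C_0$, so the two supports are disjoint and $\chi_2\mathcal E_u=\chi_2\mathcal E_s=0$.

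For the remaining bounds I would first record, on $A_s$ (where $R\approx e^s$), the pointwise estimates $|\nabla^j\bar U|+|\nabla^j\bar S|\lesssim_j e^{-((r-1)+j)s}$ coming from \eqref{eq:profiles_decay}, together with $|\nabla^j\hat X|\lesssim_j e^{-js}$ and consequently $|\nabla^j(\hat X^2-\hat X)|\lesssim_j e^{-js}$. Then one checks, term by term, that each summand of $\mathcal E_u,\mathcal E_s$ is a product of factors whose ``scaling weights'' (the exponents of $e^{-s}$ in the derivative-free bound) sum to exactly $2r-1$: for example $(\hat X^2-\hat X)\,\bar U\cdot\nabla\bar U$ has weights $0$, $r-1$, $r$, while $\hat X\,\bar U\cdot\nabla(\hat X)\,\bar U$ has weights $0$, $r-1$, $1$, $r-1$; all seven terms work out the same way. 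By the Leibniz rule, distributing $k$ derivatives over such a product costs an extra factor $e^{-ks}$ regardless of how they land, so $|\nabla^k\mathcal E_\circ|\lesssim_k e^{-(2r-1+k)s}$ on $A_s$ (and $\mathcal E_\circ$ vanishes off $A_s$), for $\circ\in\{u,s\}$.

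This pointwise bound gives $\|\mathcal E_\circ\|_{L^\infty}\lesssim e^{-(2r-1)s}$ directly, and, using $|A_s|\lesssim e^{3s}$,
\[
\|\mathcal E_\circ\|_{\dot H^4(|y|\ge C_0)}=\|\nabla^4\mathcal E_\circ\|_{L^2(A_s)}\lesssim e^{-(2r+3)s}\,|A_s|^{1/2}\lesssim e^{-(2r+3/2)s}.
\]
Since $r>1$, both exponents $2r-1$ and $2r+3/2$ strictly exceed $1$, so for every $s\ge s_0$ one has $e^{-(2r-1)s}\le e^{-(2r-1)s_0}e^{-(s-s_0)}$ and $e^{-(2r+3/2)s}\le e^{-(2r+3/2)s_0}e^{-(s-s_0)}$. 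Choosing $s_0$ large enough (legitimate, since $1/s_0\ll\delta_1$ in \eqref{choiceofparameter}) that the profile-dependent implicit constants times $e^{-(2r-1)s_0}$ and $e^{-(2r+3/2)s_0}$ are both at most $\delta_1$ then yields $\|\mathcal E_u\|_{L^\infty},\|\mathcal E_u\|_{\dot H^4(|y|\ge C_0)}\le\delta_1 e^{-(s-s_0)}$ and the identical bounds for $\mathcal E_s$.

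The only step that requires genuine care is the weight bookkeeping of the second paragraph: one must confirm that none of the seven summands has total weight strictly less than $2r-1$. This is precisely where it matters that the cancellations using \eqref{eq:ss_profiles} have already removed the lower-weight pieces — e.g.\ the term $-(r-1)\hat X\bar U$, which has weight only $r-1$, and $\hat X\,y\cdot\nabla\bar U$, which also has weight $r-1$, both disappear upon simplification, so that every remaining term genuinely sees the gain $e^{-(2r-1)s}$. Everything else is elementary localization plus the decay of the profile, and I expect no further obstacles.
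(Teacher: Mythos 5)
Your proof is correct and takes essentially the same route as the paper: both exploit the fact that every summand in the simplified forms \eqref{Eudefinition}--\eqref{Esdefinition} carries a factor ($\hat X^2-\hat X$ or $\nabla\hat X$) supported in the annulus $\{e^s/2\le|y|\le e^s\}$, combine this with the profile decay \eqref{eq:profiles_decay} and the scaling of $\hat X$-derivatives, and absorb the resulting $e^{-(2r-1)s}$ (respectively $e^{-(2r+3/2)s}$ after the $L^2$ integration over the annulus) into $\delta_1 e^{-(s-s_0)}$ by taking $s_0$ large. Your first paragraph additionally makes explicit the support-disjointness argument for $\chi_2\mathcal E_\circ=0$, which the paper's proof leaves implicit, but this is a trivial observation rather than a genuine departure in method.
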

\begin{proof}
From the decay of the profile \eqref{eq:profiles_decay}, we have
\[
|\partial^{j}\bar{U}|, |\partial^{j}\bar{S}|\lesssim |y|^{1-r-j}.
\]
Then using the estimates for $\hat{X}$ \eqref{estimatehatX}, \eqref{estimatehatX2}, we have 
\[
\|(\hat{X}^2-\hat{X})\bar{U}\cdot\nabla\bar{U}\|_{L^{\infty}}\lesssim \||\hat{X}^2-\hat{X}||y|^{1-2r}\|_{L^{\infty}}\lesssim e^{(1-2r)s}.
\]
We also have 
\[
\|\hat{X}\bar{U}\cdot\nabla (\hat{X})\bar{U}\|_{L^{\infty}}\lesssim \||\hat{X}||\nabla (\hat{X})||y|^{2-2r}\|_{L^{\infty}}\lesssim e^{(1-2r)s}.
\]
Other terms in $\mathcal E_{u}$ and $\mathcal E_{s}$ (cf. \eqref{Eudefinition}) can be controlled in a similar way and we have
\[
\|\mathcal E_{u}\|_{L^{\infty}}+\|\mathcal E_{s}\|_{L^{\infty}}\lesssim e^{(1-2r)s}\leq \delta_1 e^{-(s-s_0)},
\]
where we used $r>1$ and $s_0\gg 1$.

By taking $4$ derivatives, we have 
\[
|\partial^4((\hat{X}^2-\hat{X})\bar{U}\cdot\nabla\bar{U})|\lesssim 
\sum_{i+j=4}\partial^{i} (\hat{X}^2-\hat{X})\partial^{j}(\bar{U}\cdot\nabla\bar{U})\lesssim e^{-is}|y|^{1-2r-j}.
\]
Then
\begin{align*}
\|\partial^4((\hat{X}^2-\hat{X})\bar{U}\cdot\nabla\bar{U})\|_{L^{2}(\mathbb{R}^3)}^2&\lesssim \sum_{i+j=4}e^{-2is}\int_{\frac{1}{2}e^s}^{e^s}|y|^{2(1-2r-j)}|y|^2dy\\
\qquad&\lesssim \sum_{i+j=4}e^{(-2i+5-4r-2j)s}\\
\qquad&\lesssim e^{(-4r-3)s}\lesssim e^{-2s}.
\end{align*}
Similarly, we have
\begin{align*}
&\|\partial^4(\hat{X}\bar{U}\cdot\nabla (\hat{X})\bar{U})\|_{L^{2}(\mathbb{R}^3)}^2\lesssim e^{-2s}.
\end{align*}
The rest of the terms can be bounded in the same way and we have
\[
\|\mathcal E_{u}\|_{\dot{H}^4(|y|\geq C_0)}+\|\mathcal E_{s}\|_{\dot{H}^4(|y|\geq C_0)}\lesssim e^{-s}\leq \delta_1 e^{-(s-s_0)}.
\]
\end{proof}
\begin{lemma}\label{forcingestimate2}
We have
\[
\|N_u\|_{L^{\infty}},\|\chi_2N_u\|_{X},\|N_s\|_{L^{\infty}},\|\chi_2N_s\|_{X}\leq \delta_1^{ \frac{6}{5}} e^{-\frac{3}{2}\epsilon(s-s_0)},
\]
and
\[
\|\mc F_{dis}\|_{L^{\infty}},\|\chi_2 \mc F_{dis}\|_{X} \leq \delta_1^{ \frac{6}{5}} e^{-\delta_{dis}\frac{s}{2}}.
\]
\end{lemma}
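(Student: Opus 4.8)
The plan is to estimate separately the nonlinear terms $N_u, N_s$ and the dissipative term $\mc F_{dis}$, in each case proving the $L^\infty$ bound and the $X$ bound. For the nonlinear terms, recall from \eqref{nsequation1}--\eqref{nsequation2} that $N_u = -\tilde U \cdot \nabla \tilde U - \tilde S \cdot \nabla \tilde S$ and $N_s = -\tilde S \cdot \nabla \tilde U - \alpha \tilde S \div(\tilde U)$; these are genuinely quadratic in the perturbation. First I would handle the $L^\infty$ bound: by the bootstrap assumption \eqref{lowerestimate1} we have $|\tilde U|, |\tilde S| \les \delta_0 C_2^{-1} e^{-\epsilon(s-s_0)}$, and by Lemma \ref{lemma:Sprimebounds} (equation \eqref{eq:voltaire1}) we have $|\nabla \tilde U| + |\nabla \tilde S| \les \delta_0 \langle R/R_0\rangle^{-r} \les \delta_0$. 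Actually, to pick up the needed decay in $s$ for the gradient, I would instead interpolate $\|\nabla \tilde S\|_{L^\infty}$ between $\|\tilde S\|_{L^\infty}$ (which decays like $e^{-\epsilon(s-s_0)}$) and $\|\phi^{K/2}\nabla^K \tilde S\|_{L^2} \les E^{1/2}$ (from \eqref{perturbationenergy}) using the Gagliardo--Nirenberg inequalities \eqref{eq:GNresultinfty_simplifiedtorus}; the resulting bound on $\|\nabla \tilde S\|_{L^\infty}$ and $\|\nabla \tilde U\|_{L^\infty}$ is $\les (\delta_0 e^{-\epsilon(s-s_0)})^{\theta} E^{(1-\theta)/2}$ for some $\theta \in (0,1)$ close to $1$. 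Multiplying $L^\infty$ by $L^\infty$ gives $\|N_u\|_{L^\infty}, \|N_s\|_{L^\infty} \les \delta_0^{1+\theta} e^{-(1+\theta)\epsilon(s-s_0)} E^{(1-\theta)/2}$; since $\delta_0 \ll \delta_1^{-?}$... more precisely, since $\delta_1 \gg \delta_0^{3/2}$ and $1/E \gg \bar\eps \gg 1/K$, for $K, m$ large enough $\theta$ is close enough to $1$ that $(1+\theta)\epsilon > \frac32 \epsilon$ and the product of constants is $\les \delta_1^{6/5}$ (using $\delta_0 \ll \delta_1$ and absorbing the small power of $E$). This gives the first $L^\infty$ bound.

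For the $X$-norm bound $\|\chi_2 N_u\|_X, \|\chi_2 N_s\|_X$, the key point is that the $X$-norm is (essentially) an $H^m$ norm on $B(0,3C_0)$, and on that ball the cutoff $\chi_2$ is supported, so it suffices to control $\|\chi_2 N_u\|_{H^m}$. I would apply the Leibniz rule to distribute $\nabla^m$ over the quadratic product $\tilde U \cdot \nabla \tilde U$: each term is of the form $(\nabla^{a}\tilde U)(\nabla^{b+1}\tilde U)$ with $a + b = m$. Using the Gagliardo--Nirenberg interpolation inequalities from the Appendix (Lemmas \ref{lemma:GN_generaltorus}, \ref{lemma:GN_general}, as well as \eqref{eq:brazil2} of Lemma \ref{lemma:brasilia}), every such product is controlled by a power of $\|\tilde U\|_{L^\infty}$ (which carries the decay $e^{-\epsilon(s-s_0)}$) times a power of the top-order energy $E_K^{1/2} \les E^{1/2}$ and a power of the weighted lower derivatives; the crucial input is $\frac1K \ll \frac1m$ from \eqref{choiceofparameter}, which guarantees that the $X = H^m$-level Sobolev norm can be bounded by the $H^K$-level bootstrap quantity $E_K$ with room to spare. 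Since the product is quadratic, at least one factor is $\|\tilde U\|_{L^\infty}$ or $\|\tilde S\|_{L^\infty}$-type and hence decays like $e^{-\epsilon(s-s_0)}$; being quadratic with an extra smallness $\delta_1$ from the $L^\infty$ norms of the perturbation and the bootstrap bounds, the total is $\les \delta_1^{6/5} e^{-\frac32 \epsilon(s-s_0)}$, with the exponent $\frac32\epsilon$ rather than $\epsilon$ coming precisely from the fact that two factors each contribute roughly $e^{-\epsilon(s-s_0)/2}$ to $e^{-\theta'\epsilon(s-s_0)}$ with $\theta' > \frac32$ obtainable because of the gain from interpolation with the exponentially decaying $L^\infty$ norm.

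For the dissipative term $\mc F_{dis} = \nu C_{dis} e^{-\delta_{dis}s} \Delta U / S^{1/\alpha}$, the explicit prefactor $e^{-\delta_{dis}s}$ already supplies the decay $e^{-\delta_{dis} s/2}$ (with the other half $e^{-\delta_{dis}s/2}$ to spare). The remaining work is to bound $\|\Delta U / S^{1/\alpha}\|_{L^\infty}$ and $\|\chi_2 \Delta U / S^{1/\alpha}\|_X$ by $\les \delta_1^{6/5} e^{\delta_{dis}s/2}$ — actually, with room, by a sub-exponential (even polynomial in $s$ or just $O(e^{\bar\eps s})$) quantity, which is then swallowed by the spare $e^{-\delta_{dis}s/2}$ since $\bar\eps \ll \delta_{dis}$, plus a constant $\les \delta_1^{6/5}$ absorbed using $\bar\eps \ll 1/K$ and $s_0 \gg 1$. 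The lower bound on $S$ to rule out vacuum comes from Lemma \ref{lemma:Sbounds}: $S \gtrsim \delta_1 \langle R/R_0\rangle^{-(r-1)}$, so $S^{-1/\alpha} \les \delta_1^{-1/\alpha}\langle R/R_0\rangle^{(r-1)/\alpha}$, which grows at most polynomially in $R$ (hence at most exponentially in $s$ at speed controlled by $\bar\eps$ on the relevant support, after using that $\langle R\rangle \les e^s$ on the torus). For $\Delta U$ I would use \eqref{eq:brazil2} (the $j=2$ case) to get $|\nabla^2 U| \les \langle R\rangle^{-2(1-\eta) - (r-1) + \bar\eps}$ in $L^\infty$, and for the $X$-norm distribute $\nabla^m$ over $\chi_2 \Delta U \cdot S^{-1/\alpha}$ using the product rule, bounding the $S^{-1/\alpha}$ factors and their derivatives via Lemmas \ref{lemma:Sbounds}, \ref{lemma:brasilia} (the quotient estimate \eqref{eq:brazil3}) and the $\Delta U$-derivative factors via \eqref{eq:brazilk-1} and interpolation, all against $E_K^{1/2}$. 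The main obstacle — and where most of the care is needed — is precisely this last step: controlling $\|\chi_2 \mc F_{dis}\|_X$ requires pushing $m+2$ derivatives onto $U$ while simultaneously handling negative powers of $S$ and their high derivatives near the (polynomially decaying, but never vanishing) lower bound for $S$, which is exactly why the proof needs the separation of scales $\bar\eps \ll 1/K \ll 1/m$ and the weighted interpolation lemmas of the Appendix; the exponential prefactor $e^{-\delta_{dis}s}$ is what ultimately rescues the estimate, but only because $\delta_{dis} = O(1)$ sits far to the right of all the other small parameters in \eqref{choiceofparameter}.
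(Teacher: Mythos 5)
Your proposal is correct in spirit and largely follows the paper's route: estimate $N_u, N_s$ by interpolating the $L^\infty$ bootstrap bound (which carries $e^{-\epsilon(s-s_0)}$) against the higher-order energy, multiply to get a quadratic gain that beats $\tfrac32 \epsilon$, and absorb the remaining constants using $\delta_0^{3/2}\ll\delta_1\ll\delta_0\ll 1/E$; for $\mc F_{dis}$, use the spare half of the prefactor $e^{-\delta_{dis}s/2}$ together with $1/s_0\ll\delta_0^{3/2}\ll\delta_1$ to absorb everything else. For the cut-off $X$-norm bounds the paper takes a shortcut you miss: since $B(0,3C_0)$ is compact, $\|\chi_2 N_\circ\|_{X}\lesssim\sum_{j\le m}\|\nabla^j N_\circ\|_{L^\infty(B(0,3C_0))}$, so the $X$-bound is just a repackaging of the $L^\infty$ bound; and, crucially, on $B(0,3C_0)$ the paper uses $S\ge \bar S-\delta_0\gtrsim 1$ (an absolute lower bound, since $\bar S$ is continuous positive on the compact ball and the perturbation is $O(\delta_0)$), which makes the $1/S^{1/\alpha}$ factor and all its derivatives uniformly bounded \emph{without} invoking Lemma \ref{lemma:Sbounds}, the quotient estimate \eqref{eq:brazil3}, or \eqref{eq:brazilk-1}. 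You instead reach for the polynomially-decaying lower bound $S\gtrsim\delta_1\langle R/R_0\rangle^{1-r}$ and the weighted GN machinery; this is not wrong (and arguably it is the more robust way to justify the unweighted $\|\mc F_{dis}\|_{L^\infty}$ bound, since $S$ is genuinely small far from the origin and the paper is terse there), but for the cut-off terms it is much heavier than necessary. Two small bookkeeping slips: you write "$\delta_0\ll\delta_1$" when the ordering in \eqref{choiceofparameter} is $\delta_1\ll\delta_0$ (you do quote $\delta_1\gg\delta_0^{3/2}$ correctly elsewhere), and the assertion that $S^{-1/\alpha}$ grows "at speed controlled by $\bar\eps$" is off — on the torus the exponent is $s(r-1)/\alpha=O(1)$, and the boundedness of $\Delta U/S^{1/\alpha}$ actually comes from the compensating decay $|\Delta U|\lesssim\langle R\rangle^{-2(1-\eta)-(r-1)+\bar\eps}$ together with $(r-1)/\alpha<2$, not from $\bar\eps$ being small.
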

\begin{proof}
The proof is similar as \cite[Lemma 8.11]{Buckmaster-CaoLabora-GomezSerrano:implosion-compressible}. From the interpolation inequality  \eqref{eq:GNresultinfty_simplifiedtorus} in the Appendix, we have
for all $0\leq j\leq m$:
\[
\|\nabla^{j}U\|_{L^{\infty}}, \|\nabla^{j}S\|_{L^{\infty}}\lesssim_{\delta_1,E} 1.
\]
Moreover, when $|y|\leq 3C_0$, $S\geq \bar{S}-\delta_0 \gtrsim 1$.

Thus we have 
\begin{equation}
e^{-\delta_{dis}s}\left|\frac{\Delta U}{S^{\frac{1}{\alpha}}}\right|\lesssim_{\delta_1, E} e^{-\delta_{dis}s}\leq (\delta_1)^{\frac{6}{5}} e^{-\delta_{dis}\frac{s}{2}}.
\end{equation}
and
\begin{align}
e^{-\delta_{dis}s}\left\|\chi_2\left(\frac{\Delta U}{S^{\frac{1}{\alpha}}}\right)\right\|_{H^{m}}&\lesssim e^{-\delta_{dis}s}\left(\left\|\nabla^{m}\left(\frac{\Delta U}{S^{\frac{1}{\alpha}}}\right)\right\|_{L^{\infty}(B(0,3C_0))}+\left\|\left(\frac{\Delta U}{S^{\frac{1}{\alpha}}}\right)\right\|_{L^{\infty}(B(0,3C_0))}\right)\lesssim_{\delta_1, E} e^{-\delta_{dis}s} \nonumber \\
&\leq (\delta_1)^{\frac{6}{5}} e^{-\delta_{dis}\frac{s}{2}} .
\end{align}
For $N_u$ $N_s$, from inequality \eqref{eq:GNresultinfty_simplifiedtorus}:
\begin{align*}
\|\nabla^{j}(\tilde{U}\cdot(\nabla \tilde{U}))\|_{L^{\infty}}&\lesssim_{m} \sum_{l\leq j\leq m}\| \nabla^{l}\tilde{U} \|_{L^{\infty}}\|\nabla^{j-l+1}\tilde{U}\|_{L^{\infty}}\\
\qquad&\lesssim_{m} E^{\frac{1}{20}}
 \delta_0
^{\frac{39}{20}}e^{-\frac{39}{20} \epsilon(s-s_0)}\\\qquad&\lesssim\delta_0
^{\frac{19}{10}} e^{-\frac{39}{20} \epsilon(s-s_0)} .\end{align*}

Here we also use the bootstrap estimates \eqref{lowerestimate2}, \eqref{higherestimate3}. A similar estimate holds for $\tilde{U}\cdot (\nabla\tilde{S}),$ $\tilde{S} (\nabla\tilde{S})$ and $\tilde{S}\, \div \tilde{U}$:
\begin{align}
&\quad\|\nabla^{j}(\tilde{U}\cdot(\nabla \tilde{S}))\|_{L^{\infty}}+\|\nabla^{j}(\tilde{S}\cdot(\nabla \tilde{S}))\|_{L^{\infty}}+\|\nabla^{j}(\tilde{S}\cdot(\div\tilde{U}))\|_{L^{\infty}}\\\nonumber
&\lesssim_{m} \sum_{l\leq j\leq m}(\|\nabla^{j}\tilde{S}\|_{L^{\infty}}\|\nabla^{j-l+1}\tilde{U}\|_{L^{\infty}}+\|\nabla^{j}\tilde{S}\|_{L^{\infty}}\|\nabla^{j-l+1}\tilde{S}\|_{L^{\infty}})+\|\nabla^{j}\tilde{U}\|_{L^{\infty}}\|\nabla^{j-l+1}\tilde{S}\|_{L^{\infty}})\\\nonumber
\qquad
&\lesssim_{m} E^{\frac{1}{20}}  \delta_0
^{\frac{39}{20}}e^{-\frac{39}{20}\epsilon(s-s_0)} \\\nonumber
\qquad&\lesssim\delta_0
^{\frac{19}{10}}e^{-\frac{39}{20}\epsilon(s-s_0)} .
\end{align}

We also recall that $\delta_{0}^{\frac{3}{2}}\ll \delta_1 \ll \delta_0$ from \eqref{choiceofparameter}, leading to
\[
\|N_u\|_{L^{\infty}}, \|N_s\|_{L^{\infty}}\lesssim \delta_0
^{\frac{19}{10}} e^{-\frac{39}{20}\epsilon(s-s_0)} \leq \delta_1^{\frac{6}{5}} e^{-\frac{3}{2}\epsilon(s-s_0)}.
\]
Moreover, since $\supp \chi_2\in B(0,3C_0),$ we have
\[
\|\chi_2N_u\|_{H^{m}}\lesssim \sum_{j\leq m}\|\nabla^{j}(N_u)\|_{L^{\infty}}\lesssim\delta_0
^{\frac{19}{10}} e^{-\frac{39}{20}\epsilon(s-s_0)} \leq \delta_1^{\frac{6}{5}} e^{-\frac{3}{2}\epsilon(s-s_0)}.
\]
$\|\chi_2N_s\|_{H^{m}}$ can be bounded in the similar way.
\end{proof}
\begin{lemma}\label{linfitnityinside}
For $j\leq 4$, we have
\begin{equation*}
\|\nabla^{j}\tilde{U}\|_{L^{\infty}(B(0,C_0))}+\|\nabla^{j}\tilde{S}\|_{L^{\infty}(B(0,C_0))}\lesssim \frac{\delta_1}{\delta_g}e^{-\frac{\delta_g}{2}(s-s_0)}.
\end{equation*}
\end{lemma}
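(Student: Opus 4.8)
The plan is to reduce the estimate to a single bound on $\|(\tilde U_t,\tilde S_t)\|_X$ for the \emph{truncated} solution of \eqref{navierstokesperturb1trun}, and then to run a Duhamel argument in the stable/unstable splitting of $\mc L$ furnished by Lemma \ref{lemma:abstract_result}. By Lemma \ref{truncatedestimate}, $(\tilde U,\tilde S)=(\tilde U_t,\tilde S_t)$ on $\{|y|\le C_0\}$; since the elements of $X$ are supported in $B(0,3C_0)$ and $m$ is large, Sobolev embedding on this bounded set gives, for $j\le4$,
\[
\|\nabla^j\tilde U\|_{L^\infty(B(0,C_0))}+\|\nabla^j\tilde S\|_{L^\infty(B(0,C_0))}\lesssim\|(\tilde U_t,\tilde S_t)\|_X ,
\]
so it suffices to bound the right-hand side by $\tfrac{\delta_1}{\delta_g}e^{-\delta_g(s-s_0)/2}$ (plus the contribution $\delta_1 e^{-\epsilon(s-s_0)}$ already supplied by the bootstrap hypothesis on $P_{\rm{uns}}$).

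Next I would decompose $(\tilde U_t,\tilde S_t)=v_{\rm{sta}}+v_{\rm{uns}}$ with $v_{\rm{uns}}=P_{\rm{uns}}(\tilde U_t,\tilde S_t)$, $v_{\rm{sta}}=P_{\rm{sta}}(\tilde U_t,\tilde S_t)$, where $P_{\rm{sta}},P_{\rm{uns}}$ are the bounded spectral projections onto the $\mc L$-invariant subspaces $V_{\rm{sta}},V_{\rm{uns}}$. The hypothesis gives $\|v_{\rm{uns}}(s)\|_X\le\delta_1 e^{-\epsilon(s-s_0)}$ with nothing further to prove. Projecting \eqref{navierstokesperturb1trun} and using that $P_{\rm{sta}}$ commutes with $\mc L$ yields $\partial_s v_{\rm{sta}}=\mc L v_{\rm{sta}}+P_{\rm{sta}}(\chi_2\mc F)$, while by \eqref{initialdatatruncated}, \eqref{eq:tilde_is_stable} and $(\varphi_{i,u},\varphi_{i,s})\in V_{\rm{uns}}$ the initial value is $v_{\rm{sta}}(s_0)=(\chi_2\tilde U_0',\chi_2\tilde S_0')$. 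Duhamel together with the semigroup decay \eqref{eq:prim} then gives
\[
\|v_{\rm{sta}}(s)\|_X\lesssim e^{-\delta_g(s-s_0)/2}\|(\chi_2\tilde U_0',\chi_2\tilde S_0')\|_X+\int_{s_0}^s e^{-\delta_g(s-\tau)/2}\,\|\chi_2\mc F(\tilde U,\tilde S)(\tau)\|_X\,d\tau .
\]

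To close this I would use the forcing and data bounds already established. With $\mc F_u=N_u+\mc F_{dis}+\mc E_u$ and $\mc F_s=N_s+\mc E_s$, Lemma \ref{forcingestimate1} gives $\chi_2\mc E_u=\chi_2\mc E_s=0$, and Lemma \ref{forcingestimate2} gives $\|\chi_2 N_u\|_X+\|\chi_2 N_s\|_X\lesssim\delta_1^{6/5}e^{-\frac32\epsilon(s-s_0)}$ and $\|\chi_2\mc F_{dis}\|_X\lesssim\delta_1^{6/5}e^{-\delta_{dis}s/2}$. Since $\tfrac32\epsilon=\tfrac{18}{25}\delta_g>\tfrac12\delta_g$ and $\delta_{dis}\gg\delta_g$ with $s_0$ large (by \eqref{choiceofparameter}), both convolution integrals are $\lesssim\delta_g^{-1}\delta_1^{6/5}e^{-\delta_g(s-s_0)/2}$, the dissipative one carrying in addition the negligible prefactor $e^{-\delta_{dis}s_0/2}$. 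For the datum: since $\delta_0$ is small one has $R_0>3C_0$, hence $\phi\equiv1$ on $B(0,3C_0)$, so \eqref{perturbationenergy} controls $\|\tilde U_0\|_{\dot H^K(B(0,3C_0))}+\|\tilde S_0\|_{\dot H^K(B(0,3C_0))}$; interpolating this against $\|\tilde U_0\|_{L^\infty},\|\tilde S_0\|_{L^\infty}\le\delta_1$ from \eqref{initialdatacondition2} by a Gagliardo--Nirenberg inequality on $B(0,3C_0)$, and using $K\gg m$ together with the smallness of $\{a_i\}$ (so that $\tilde U_0'=\tilde U_0-\sum_i a_i\varphi_{i,u}$), we obtain $\|(\chi_2\tilde U_0',\chi_2\tilde S_0')\|_X\lesssim\delta_1$. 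Hence $\|v_{\rm{sta}}(s)\|_X\lesssim\tfrac{\delta_1}{\delta_g}e^{-\delta_g(s-s_0)/2}$, which combined with the bound on $v_{\rm{uns}}$ and the reduction of the first paragraph yields the claim.

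The main obstacle is the $X$-norm control of the initial datum $(\chi_2\tilde U_0',\chi_2\tilde S_0')$: the perturbation is small only in $L^\infty$ and is controlled only polynomially at top order (through $E_K\le E$), so one must interpolate and check that the incurred loss is harmless, which is exactly where the hierarchy $\tfrac1K\ll\tfrac1m$ of \eqref{choiceofparameter} is used. A second, purely bookkeeping, point is to arrange the several exponential rates ($\epsilon$, $\tfrac32\epsilon$, $\delta_g/2$, $\delta_{dis}$) and the large $s_0$ so that the Duhamel convolution closes at rate $\delta_g/2$.
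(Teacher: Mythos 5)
Your strategy is exactly the one in the paper: reduce to the truncated solution via Lemma \ref{truncatedestimate} and Sobolev embedding on $B(0,3C_0)$, split $(\tilde U_t,\tilde S_t)$ with $P_{\rm{sta}}$ and $P_{\rm{uns}}$, control the unstable piece by the bootstrap hypothesis, and bound the stable piece by Duhamel using the semigroup decay \eqref{eq:prim} together with the forcing bounds of Lemmas \ref{forcingestimate1}--\ref{forcingestimate2}. The paper does precisely this, writing the initial contribution directly as $\delta_1 e^{-\delta_g(s-s_0)/2}$.

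The step where you go beyond the paper — deriving $\|(\chi_2\tilde U_0',\chi_2\tilde S_0')\|_X\lesssim\delta_1$ by interpolating between $\|\tilde U_0\|_{L^\infty}\le\delta_1$ and $E_K(s_0)\le E/2$ — does not close as written. On $B(0,3C_0)$, Gagliardo--Nirenberg gives
\[
\|\tilde U_0\|_{\dot H^m(B(0,3C_0))}\lesssim \delta_1^{1-\theta}E^{\theta/2}+\delta_1=\delta_1\left[\left(E^{1/2}/\delta_1\right)^{\theta}+1\right],\qquad \theta\approx m/K,
\]
and since $\delta_1\ll 1/E$ and $\delta_1$ is chosen \emph{after} $E,K,m$ in the hierarchy \eqref{choiceofparameter}, the factor $(E^{1/2}/\delta_1)^{\theta}\sim\delta_1^{-m/K}$ is not uniformly bounded as $\delta_1\to0$; having $K\gg m$ does not rescue this because $\delta_1$ may be taken far smaller than any power of $1/K$. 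The paper's own proof is silent here and effectively takes $\|(\chi_2\tilde U_0',\chi_2\tilde S_0')\|_X\lesssim\delta_1$ as an implicit hypothesis on the admissible initial perturbation. If you want to spell this out, you should either add an explicit $H^m$-smallness requirement on $(\tilde U_0',\tilde S_0')$ to \eqref{eq:conditions_for_tilde}, or retain the interpolation loss $\delta_1^{1-m/K}$ and check directly, using the parameter ordering, that it is still absorbed by the target $\delta_1/\delta_g$. As it stands, the asserted interpolation bound is a gap.
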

\begin{proof}
The proof is similar to \cite[Lemma 8.12]{Buckmaster-CaoLabora-GomezSerrano:implosion-compressible}. Since the truncated solution and the extended solution agree in $B(0,C_0).$ Then 
\[
\|\nabla^{j}\tilde{U}\|_{L^{\infty}(B(0,C_0))}=\|\nabla^{j}\tilde{U}_t\|_{L^{\infty}(B(0,C_0))}\lesssim \|\tilde{U}_t\|_{X}.
\]
Recall from \eqref{navierstokesperturb1trun}, we have
\[
\partial_{s}(\tilde{U}_t,\tilde{S}_t)=  \mathcal L (\tilde{U}_t,\tilde{S}_t)+\chi_2F(\tilde{U},\tilde{S}),
\]
 with $\mathcal L  = \chi_2 (\mathcal L_u^e, \mathcal L_s^e)$, $\chi_2F_{u}=\chi_2\mc F_{dis}+\chi_2N_{u}$, $
\chi_2F_{s}=\chi_2N_s.$ 
Using Lemma \ref{lemma:abstract_result}, we could project the equation onto the stable modes and have 
\begin{align*}
\partial_{s}P_{\rm{sta}}(\tilde{U}_t,\tilde{S}_t)=\mathcal{L}P_{\rm{sta}}(\tilde{U}_t,\tilde{S}_t)+P_{\rm{sta}}(\chi_2F(\tilde{U},\tilde{S})).
\end{align*}
Then from Duhamel's formula, we have 
\begin{align*}
P_{\rm{sta}}(\tilde{U}_t,\tilde{S}_t)(s)=T(s-s_0)P_{\rm{sta}}(\tilde{U}_t,\tilde{S}_t)(s_0)+\int_{s_0}^{s}T(s-\bar{s})(P_{\rm{sta}}(\chi_2F(\tilde{U},\tilde{S})))(\bar{s})d\bar{s},
\end{align*}
with $T$ the semigroup generated by $\mathcal{L}.$

From the fact that $\mathcal{L}$ generates a contraction semigroup in $X$, we have
\begin{align*}
&\quad\|P_{\rm{sta}}(\tilde{U}_t,\tilde{S}_t)\|_{X}\leq \delta_1 e^{-(s-s_0)\frac{\delta_g}{2}}+\int_{s_0}^{s}2\delta_1 e^{-(s-\bar{s})\frac{\delta_g}{2}}e^{-\frac{3}{2}\epsilon(\bar{s}-s_0)}d\bar{s}\lesssim \frac{\delta_1}{\delta_g}e^{-\frac{\delta_g}{2}(s-s_0)}.
\end{align*}
The unstable part of the solution is controlled directly from the assumption in Proposition \ref{prop:bootstrap}.
\end{proof}
Now we show the bootstrap when $|y|>C_0$.
\begin{lemma}\label{boosstrapestimate1}
Under the bootstrap assumptions \eqref{lowerestimate1}, \eqref{lowerestimate2}, \eqref{higherestimate3}  we have, for $s\geq s_0$, $|y|\geq C_0$: \begin{equation*}
|\tilde{U}|,|\tilde{S}| \leq {\delta_0}\frac{1}{2}\frac{1}{C_2}e^{-\epsilon(s-s_0)}.
\end{equation*}
\end{lemma}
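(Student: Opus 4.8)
The statement is a pointwise $L^\infty$ bootstrap improvement for the perturbation $(\tilde U,\tilde S)$ in the exterior region $|y|\ge C_0$. The natural strategy is to transport the bound along the self-similar characteristics $y=y_0 e^{s-\bar s}$, which are precisely the curves along which the nonlinear transport operator $-(y+\hat X\bar U)\cdot\nabla$ acts as pure advection (up to the lower-order, exponentially decaying pieces of $\hat X\bar U$). Concretely, for a given point $(y,s)$ with $|y|\ge C_0$, I would pick $(y_0,\bar s)$ on the parabolic boundary of the region $\{|y|\ge C_0,\ s\ge s_0\}$ — that is, either $|y_0|=C_0$ or $\bar s=s_0$ — trace the trajectory back to it, and set up an ODE for the quantity $m(s)=e^{\epsilon(s-\bar s)}\big(|\tilde U|+|\tilde S|\big)$ (or, to avoid differentiability issues at zeros, for $e^{2\epsilon(s-\bar s)}(|\tilde U|^2+|\tilde S|^2)$) evaluated along the characteristic.

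First I would write the perturbation equations \eqref{nsequation1}--\eqref{nsequation2} in the region $|y|\ge C_0$, where $\hat X=1$ wherever $\chi_1,\chi_2\neq 0$; away from the support of $\chi_1$ the cut-off $\hat X$ is still $1$ near $C_0$ but eventually becomes a genuine truncation far out, so the terms $\partial_s(\hat X\bar U)$ etc. in $\mathcal E_u,\mathcal E_s$ must be kept. Then, moving $-(y+\hat X\bar U)\cdot\nabla\tilde U$ to the left as a material derivative along the flow, I get $\tfrac{d}{ds}\tilde U|_{\text{char}} = -(r-1)\tilde U + (\text{linear terms in }\bar U,\bar S)\cdot\tilde U + \mathcal N_u + \mathcal E_u + \mathcal F_{dis}$. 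Using the profile decay \eqref{eq:profiles_decay} and the choice of $C_0$ from Lemma \ref{C0choice} — in particular $|\hat X\bar U|,|\alpha\hat X\bar S|,|\nabla(\hat X\bar U)|,|\alpha\nabla(\hat X\bar S)|\le \tfrac1{100 C_1}$ for $|y|\ge C_0$ — all the linear coefficients multiplying $\tilde U,\tilde S$ on the right are either $-(r-1)$ (a favourable damping since $r>1$) or are bounded by an arbitrarily small constant. The forcing pieces are controlled by the earlier lemmas: $\mathcal E_u,\mathcal E_s$ by Lemma \ref{forcingestimate1} ($\lesssim \delta_1 e^{-(s-s_0)}$), and $\mathcal N_u,\mathcal N_s,\mathcal F_{dis}$ by Lemma \ref{forcingestimate2} ($\lesssim \delta_1^{6/5} e^{-\frac32\epsilon(s-s_0)}$ and $\lesssim \delta_1^{6/5}e^{-\delta_{dis}s/2}$ respectively), all of which decay strictly faster than $e^{-\epsilon(s-s_0)}$ thanks to $\epsilon=\tfrac{12\delta_g}{25}$ and the scale separation \eqref{choiceofparameter}, in particular $\delta_g\ll\delta_{dis}$.

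The resulting scalar ODE along the characteristic has the shape $\tfrac{d}{ds}\big(|\tilde U|+|\tilde S|\big) \le \big(-(r-1)+o(1)\big)\big(|\tilde U|+|\tilde S|\big) + (\text{forcing of size }\lesssim \delta_1^{6/5} e^{-\frac32\epsilon(s-\bar s)})$, where I also replace $s_0$ by $\bar s$ using $\bar s\ge s_0$. Integrating (Grönwall / Duhamel), the homogeneous part decays at rate $r-1$, which is $\gg \epsilon$, so it is absorbed; the Duhamel integral of the forcing produces a term $\lesssim \delta_1^{6/5} e^{-\epsilon(s-\bar s)}$. I then need the initial value at the foot of the characteristic: if $\bar s=s_0$ I use the initial data bound $\|\tilde U_0\|_{L^\infty},\|\tilde S_0\|_{L^\infty}\le\delta_1$ from \eqref{initialdatacondition2}; if $|y_0|=C_0$ I use the interior bound $|\tilde U|+|\tilde S|\lesssim \tfrac{\delta_1}{\delta_g} e^{-\frac{\delta_g}{2}(s-s_0)}$ on $B(0,C_0)$ from Lemma \ref{linfitnityinside}, noting $\tfrac{\delta_g}{2}\ge\epsilon$ fails — actually $\epsilon=\tfrac{12\delta_g}{25}<\tfrac{\delta_g}{2}$, so this is consistent. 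Putting these together I obtain $|\tilde U|+|\tilde S| \lesssim \tfrac{\delta_1}{\delta_g} e^{-\epsilon(s-s_0)}$, and since $\delta_1\ll\delta_0$ and $\tfrac{1}{\delta_g}$ is a fixed constant (with $\delta_1/\delta_g \ll \delta_0/C_2$ by the parameter ordering), the right-hand side is $\le \tfrac{\delta_0}{2C_2} e^{-\epsilon(s-s_0)}$, which is the claimed improvement.

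**Main obstacle.** The delicate point is not the characteristic ODE itself but checking uniformly (in $y_0$ and in the choice of whether $\bar s=s_0$ or $|y_0|=C_0$) that the small linear coefficients really do not overwhelm the $-(r-1)$ damping — one must be careful that along a trajectory leaving $B(0,C_0)$, $\hat X$ transitions from $1$ to $0$ and the extra terms $\partial_s(\hat X\bar U)=-(y\cdot\nabla\hat X)\bar U$ appearing in $\mathcal E_u,\mathcal E_s$ are genuinely of the size claimed in Lemma \ref{forcingestimate1}; this is exactly why one needs the $L^\infty$ bounds on $\mathcal E_u,\mathcal E_s$ rather than just their support properties. A secondary technical nuisance is the non-smoothness of $|\tilde U|$ at zeros of $\tilde U$, which is handled routinely by working with $\sqrt{|\tilde U|^2+\varepsilon}$ and letting $\varepsilon\to 0$, or equivalently by a standard Rademacher-type argument. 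Everything else is bookkeeping with the parameter hierarchy \eqref{choiceofparameter}.
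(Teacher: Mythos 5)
Your plan correctly identifies the transport-along-trajectories strategy that the paper uses, but it has a genuine gap at the crucial step, and as written the argument would not close.

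The gap is the acoustic coupling terms. When you write the equation for $\tilde U$ as a material derivative along a flow, the term $-\alpha(\hat X\bar S)\cdot\nabla\tilde S$ stays on the right-hand side (it multiplies $\nabla\tilde S$, not $\tilde S$), and likewise $-\alpha(\hat X\bar S)\operatorname{div}\tilde U$ in the $\tilde S$ equation. These terms couple $\tilde U$ and $\tilde S$ at highest derivative order and cannot be absorbed into any characteristic system for the scalar quantities $|\tilde U|,|\tilde S|$ (in 1D or under radial symmetry one diagonalizes with Riemann invariants, but the paper stresses that this is exactly what fails in the non-radial setting, which is why the $\dot H^4$ bound \eqref{lowerestimate2} is in the bootstrap at all). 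The same remark applies to $-(\hat X\bar U)\cdot\nabla\tilde U$: you propose to put it in the material derivative, but you also say the characteristics are $y=y_0 e^{s-\bar s}$, which are the integral curves of $y\cdot\nabla$ alone; the two statements are inconsistent. If you really use the characteristics of $(y+\hat X\bar U)\cdot\nabla$, the acoustic terms still survive as gradient forcing; if you use $y=y_0 e^{s-\bar s}$, then $(\hat X\bar U)\cdot\nabla\tilde U$ survives as well. Either way you face terms of the form (small profile coefficient)$\times|\nabla\tilde U|$ or $\times|\nabla\tilde S|$, and the $L^\infty$ bootstrap \eqref{lowerestimate1} alone gives no control on $\nabla\tilde U,\nabla\tilde S$.

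The missing step is the Gagliardo--Nirenberg interpolation between \eqref{lowerestimate1} and \eqref{lowerestimate2}. The paper applies Lemma \ref{lemma:GN_generalnoweighttorus} to obtain
\begin{equation*}
|\nabla\tilde U|,\ |\nabla\tilde S|\ \leq\ \delta_0\, e^{-\epsilon(s-s_0)}\left(\tfrac{1}{C_2}\right)^{1/20},
\end{equation*}
and then multiplies by the smallness $|\hat X\bar U|,|\alpha\hat X\bar S|,|\nabla(\hat X\bar U)|,|\alpha\nabla(\hat X\bar S)|\leq\frac{1}{100 C_1}$ from \eqref{profilecondition1} to bound the whole cluster of linear-in-perturbation, gradient-bearing terms by $\delta_0 e^{-\epsilon(s-s_0)}\frac{1}{C_1}(\frac{1}{C_2})^{1/20}$. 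Only after this reduction does the problem become a scalar ODE of the barrier type you describe along the simple characteristics $y=y_0 e^{s-\bar s}$, with the (small) profile--gradient cluster appearing as forcing and the damping coming from the explicit $-(r-1)$ coefficient. This interpolation step is the heart of the lemma; it is the reason the bootstrap carries the intermediate $\dot H^4$ bound.

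A secondary inaccuracy is your final constant. You estimate the answer as $\lesssim\frac{\delta_1}{\delta_g}e^{-\epsilon(s-s_0)}$, which is only the boundary contribution from Lemma \ref{linfitnityinside}; the dominant Duhamel contribution is in fact $\sim\frac{8}{r-1}\,\delta_0\,\frac{1}{C_1}(\frac{1}{C_2})^{1/20}e^{-\epsilon(s-s_0)}$, coming from the forcing just described. This is why the precise balance \eqref{parameterchosen}, namely $\frac{32}{r-1}\frac{1}{C_1}(\frac{1}{C_2})^{1/20}=\frac{1}{C_2}$, is needed to land exactly inside the target ball of radius $\frac{\delta_0}{2C_2}e^{-\epsilon(s-s_0)}$. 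Without accounting for this term, the constant arithmetic does not close.
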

\begin{proof}
From Gagliardo-Nirenberg's interpolation inequality \eqref{eq:GNresultnoweighttorus} in the Appendix, we have
\[
|\nabla\tilde{U}|, |\nabla \tilde{S}|\leq {\delta_0}e^{-\epsilon(s-s_0)}\left(\frac{1}{C_2}\right)^{\frac{1}{20}}.
\]

Then for $|y|\geq C_0$, from \eqref{profilecondition1}, since $|\hat{X}|\leq 1$, $|\nabla\hat{X}|\leq 10$, we have
\[
|\hat{X}\bar{U}\nabla\tilde{U}|+|\al\hat{X}\bar{S}\nabla\tilde{S}|+|\tilde{U}\cdot\nabla(\hat{X}\bar{U})|+|\al\tilde{S}\nabla(\hat{X}\bar{S})|\leq {\delta_0}e^{-\epsilon(s-s_0)}\frac{1}{C_1}\left(\frac{1}{C_2}\right)^{\frac{1}{20}},
\]
\[
|\hat{X}\bar{U}\nabla\tilde{S}|+|\al\hat{X}\bar{S}\div\tilde{U}|+|\div(\hat{X}\bar{U})\tilde{S}|+|\al\tilde{U}\nabla(\hat{X}\bar{S})|\leq {\delta_0}e^{-\epsilon(s-s_0)}\frac{1}{C_1}\left(\frac{1}{C_2}\right)^{\frac{1}{20}}.
\]
Moreover, by \eqref{nsequation1}, we have 

\begin{align*}
&\partial_{s}\tilde{U}=-(r-1)\tilde{U}-y\cdot{\nabla\tilde{U}}-((\hat{X}\bar{U})\cdot{\nabla\tilde{U}}+\al(\hat{X}\bar{S})\cdot{\nabla\tilde{S}}+\tilde{U}\cdot\nabla(\hat{X}\bar{U}) +\alpha \tilde{S} \nabla (\hat{X}\bar{S}))+\mathcal N_{u} +\mathcal E_u
+ \mc F_{dis}.
\end{align*}
\begin{align*}
&\partial_{s}\tilde{S}=-(r-1)\tilde{S}-y\cdot{\nabla\tilde{S}}-((\hat{X}\bar{U})\cdot{\nabla\tilde{S}}+\al(\hat{X}\bar{S})\div(\tilde{U})+\tilde{U}\cdot\nabla(\hat{X}\bar{S})+\alpha \tilde{S} \div(\hat{X}\bar{S}))+\mathcal N_{s}+\mathcal E_s.
\end{align*}
Then for $1\leq j\leq 3$, $s_0\leq \bar{s} \leq s$, using Lemmas \ref{forcingestimate1} and \ref{forcingestimate2}, we have 
\begin{equation*}
|\partial_{s}\tilde{U}_j(e^{s-\bar{s}}y_0)+(r-1)\tilde{U}_j(e^{s-\bar{s}}y_0)|\leq{\delta_0}e^{-\epsilon(s-s_0)}\frac{1}{C_1} \left( \frac{1}{C_2} \right)^{\frac{1}{20}}+\delta_1^{\frac{6}{5}}e^{-\frac{3}{2}\epsilon(s-s_0)}+\delta_1e^{-(s-s_0)}+ \delta_1^{ \frac{6}{5}} e^{-\delta_{dis}\frac{s}{2}},
\end{equation*}

and\begin{equation*}
|\partial_{s}\tilde{S}(e^{s-\bar{s}}y_0)+(r-1)\tilde{S}(e^{s-\bar{s}}y_0)|\leq{\delta_0}e^{-\epsilon(s-s_0)}\frac{1}{C_1} \left( \frac{1}{C_2} \right)^{\frac{1}{20}}+\delta_1^{\frac{6}{5}}e^{-\frac{3}{2}\epsilon(s-s_0)}+\delta_1e^{-(s-s_0)}.
\end{equation*}
From the choice of parameters \eqref{choiceofparameter}, we have

\begin{align*}
&\quad|\partial_{s}(\tilde{U}_j(e^{s-\bar{s}}y_0)e^{\epsilon(s-s_0)})+(r-1-\epsilon)\tilde{U}_j(e^{s-\bar{s}}y_0)e^{\epsilon(s-s_0)}| \\
&  = |e^{\epsilon(s-s_0)}(\p_s\tilde{U}_j(e^{s-\bar{s}}y_0)+(r-1)\tilde{U}_j(e^{s-\bar{s}}y_0)e^{\epsilon(s-s_0)})| \\
&
 \leq \delta_0 \frac{1}{C_1} \left( \frac{1}{C_2} \right)^{\frac{1}{20}}+\delta_1^{\frac{6}{5}}e^{-\frac{1}{2}\epsilon(s-s_0)}+\delta_1e^{-(1+\epsilon)(s-s_0)}    \\
&  \leq 2{\delta_0}\frac{1}{C_1} \left( \frac{1}{C_2} \right)^{\frac{1}{20}}. 
\end{align*}

Similarly, we have
\begin{equation*}
|\partial_{s}(\tilde{S}(e^{s-\bar{s}}y_0)e^{\epsilon(s-s_0)})+(r-1-\epsilon)(\tilde{S}(e^{s-\bar{s}}y_0)e^{\epsilon(s-s_0)})|\leq 2{\delta_0}\frac{1}{C_1} \left( \frac{1}{C_2} \right)^{\frac{1}{20}}.
\end{equation*}
Then we claim $(\tilde{U}(e^{s-\bar{s}}y_0)e^{\epsilon(s-s_0)}, \tilde{S}(e^{s-\bar{s}}y_0)e^{\epsilon(s-s_0)})$
never cross the domain 
\[
\left\{(x_1,x_2)\left||x_1|, |x_2|\leq \frac{8}{r-1}\delta_0\frac{1}{C_1} \left( \frac{1}{C_2} \right)^{\frac{1}{20}}\right\}\right.
\]
from inside. 

From initial data conditions \eqref{initialdatacondition} and Lemma \ref{linfitnityinside}, for $s=\bar{s}=s_0, |y|\geq C_0,$ $y\in e^{s_0}\T_{L}^3$ or $|y|=C_0$, $s=\bar{s}$, $y\in e^{\bar{s}}\mathbb{T}_{L}^3$, we have  \[|\tilde{U}_j(y_0)|e^{\epsilon(\bar{s}-s_0)},|\tilde{S}(y_0)|e^{\epsilon(\bar{s}-s_0)}\leq \frac{8}{r-1}\delta_0\frac{1}{C_1}\left( \frac{1}{C_2} \right)^{\frac{1}{20}}.
\]
Moreover, for any $|y|\geq C_0$, $s\geq s_0$, $y\in e^s\mathbb{T}_{L}^3$, there exists a trajectory $(y_0e^{s-\bar{s}},s)$ passing through $(y,s)$ that starts either from $s=\bar{s}=s_0, |y|\geq C_0,$ $y\in e^{s_0}\mathbb{T}_{L}^3$ or $|y|=C_0$, $s=\bar{s}$, $y\in e^{\bar{s}}\mathbb{T}_{L}^3$.

Then the result follows from the chosen constant parameters \eqref{parameterchosen}: 
\[
\frac{32}{r-1}\frac{1}{C_1} \left( \frac{1}{C_2} \right)^{\frac{1}{20}}= \frac{1}{C_2}.
\]

\end{proof}
Now we control $\|\tilde{U}\|_{\dot{H}^4(|y|\geq C_0, y\in e^s\mathbb{T}_{L}^3}), $ and $\|\tilde{S}\|_{\dot{H}^4(|y|\geq C_0, y\in e^s\mathbb{T}_{L}^3)}$. 
\begin{lemma} \label{boosstrapestimate2}
Under bootstrap assumptions \eqref{lowerestimate1}, \eqref{lowerestimate2}, \eqref{higherestimate3}, we have \begin{equation*}
\|\tilde{U}\|_{\dot{H}^4(|y|\geq C_0, y\in e^s\mathbb{T}_L^3)}, \|\tilde{S}\|_{\dot{H}^4(|y|\geq C_0, y\in e^s\mathbb{T}_L^3)} \leq  \frac{\delta_0}{2C_2}e^{-\epsilon(s-s_0)}.
\end{equation*}
\end{lemma}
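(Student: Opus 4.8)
The plan is to close this bootstrap bound by a localized, unweighted $\dot H^4$ energy estimate on $\Omega_s := \{|y|\ge C_0\}\cap e^s\mathbb T^3_L$. Concretely, one applies $\partial_\beta$, $\beta\in[3]^4$, to \eqref{nsequation1}--\eqref{nsequation2}, pairs with $\partial_\beta\tilde U$ resp.\ $\partial_\beta\tilde S$, sums over $\beta$, and integrates over $\Omega_s$. The mechanism producing the decay is the transport/scaling part $-(r-1)(\tilde U,\tilde S)-y\cdot\nabla(\tilde U,\tilde S)$: together with the commutator $[\partial_\beta,y\cdot\nabla]=4\,\partial_\beta$ it contributes a damping coefficient $-(r-1)-4$, while integrating by parts the surviving $-y\cdot\nabla\partial_\beta(\cdot)$ returns only $+\tfrac32$ (from $\operatorname{div}y=3$) together with a boundary term on $\{|y|=C_0\}$. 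The net coefficient $-(r+\tfrac32)$ yields $\tfrac{d}{ds}\|\nabla^4(\tilde U,\tilde S)\|_{L^2(\Omega_s)}^2\le -(2r+3)\|\nabla^4(\tilde U,\tilde S)\|_{L^2(\Omega_s)}^2+(\cdots)$ with $2r+3\ge 5$, which is much larger than the desired rate $2\epsilon$, $\epsilon=\tfrac{12\delta_g}{25}$.

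Next one must show that the profile-interaction terms, the boundary terms, and the forcing are all absorbable or of lower order. For the profile terms one distributes $\nabla^4$ over products of the form $\nabla^k(\hat X\bar U)\cdot\nabla^{4-k}\nabla\tilde U$ (and the $\bar S$ analogues): for $k=0$ the top-order piece either integrates by parts onto $\operatorname{div}(\hat X\bar U)$ or $\nabla(\hat X\bar S)$ or carries the small factor $|\hat X\bar U|,|\alpha\hat X\bar S|\le\frac1{100C_1}$ from \eqref{profilecondition1}, and the two cross terms $-\alpha(\hat X\bar S)\nabla\tilde S$, $-\alpha(\hat X\bar S)\operatorname{div}\tilde U$ recombine at top order as $-\alpha(\hat X\bar S)\operatorname{div}(\nabla^4\tilde U\,\nabla^4\tilde S)$ and integrate by parts onto $\nabla(\hat X\bar S)$; for $k=1$ one uses $\|\nabla(\hat X\bar U)\|_{L^\infty}+\|\nabla(\hat X\bar S)\|_{L^\infty}\le\frac1{CC_2}$ from \eqref{profilecondition4}; for $k=2$ a H\"older estimate pairing the $L^8$ bound \eqref{profilecondition2} on $\nabla^2(\hat X\bar U),\nabla^2(\hat X\bar S)$ with a Gagliardo--Nirenberg bound for $\|\nabla^3\tilde U\|_{L^{8/3}}$; and for $k=3,4,5$ the $\dot H^{3},\dot H^4,\dot H^5$ bounds \eqref{profilecondition3} against the $L^\infty$ control of $\nabla^{\le2}(\tilde U,\tilde S)$ coming from \eqref{lowerestimate1}, \eqref{lowerestimate2} and Lemma \ref{linfitnityinside}. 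Thanks to the relation \eqref{parameterchosen} between $C_1$ and $C_2$ these all sit strictly below the $-(2r+3)$ damping. The boundary integrals at $|y|=C_0$ involve only the trace of $|\nabla^{\le4}(\tilde U,\tilde S)|^2$ there, which by Lemma \ref{linfitnityinside} is $\lesssim(\delta_1/\delta_g)^2e^{-\delta_g(s-s_0)}$, hence negligible since $\delta_1\ll\delta_0$ and $\epsilon<\delta_g/2$.

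For the forcing one invokes Lemma \ref{forcingestimate1} (which already provides $\|\mc E_u\|_{\dot H^4(|y|\ge C_0)}+\|\mc E_s\|_{\dot H^4(|y|\ge C_0)}\le\delta_1e^{-(s-s_0)}$), estimates $\mc N_u,\mc N_s$ in $\dot H^4(|y|\ge C_0)$ by distributing derivatives and combining \eqref{lowerestimate2}, \eqref{higherestimate3} with the pointwise decay of Lemmas \ref{lemma:Sprimebounds}--\ref{lemma:brasilia} and Lemma \ref{forcingestimate2} (yielding a quadratically small bound decaying faster than $e^{-\epsilon(s-s_0)}$), and controls $\mc F_{dis}$ --- despite its two extra derivatives --- using the lower bound $S\gtrsim\delta_1\langle R/R_0\rangle^{1-r}$ from Lemma \ref{lemma:Sbounds}, an interpolation of $\nabla^6U$ between $L^\infty$ and the top-order energy $E_K\le E$, and the strong prefactor $e^{-\delta_{dis}s}$ with $\delta_{dis}\gg\delta_g$ (so that, using $s\ge s_0\gg1$, this term is completely negligible). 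Collecting everything gives $\tfrac{d}{ds}\|\nabla^4(\tilde U,\tilde S)\|_{L^2(\Omega_s)}^2\le -5\,\|\nabla^4(\tilde U,\tilde S)\|_{L^2(\Omega_s)}^2+G(s)$ with $G$ decaying strictly faster than $e^{-2\epsilon(s-s_0)}$ and bounded by a $\delta_1$-small constant; an integrating-factor/Gr\"onwall argument, together with the $O(\delta_1)$ bound on the initial $\dot H^4(|y|\ge C_0)$ norm (from \eqref{initialdatacondition2}, \eqref{initialdatacondition} and interpolation against $E_K(s_0)\le E/2$) and $\epsilon\ll\tfrac52$, produces $\|\tilde U\|_{\dot H^4(|y|\ge C_0)},\|\tilde S\|_{\dot H^4(|y|\ge C_0)}\lesssim\delta_1e^{-\epsilon(s-s_0)}$, which is $\le\tfrac{\delta_0}{2C_2}e^{-\epsilon(s-s_0)}$ since $\delta_1\ll\delta_0$ and $C_2=100$. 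I expect the delicate points to be the genuinely second-order dissipation term $\mc F_{dis}$ and the $k=2$ (two derivatives on the profile) term: fitting them into the available damping is exactly what forces the quantitative choices of $C_0,C_1,C_2$ in Lemma \ref{C0choice} and the use of the higher-regularity bootstrap $E_K\le E$.
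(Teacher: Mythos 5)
Your overall plan is the same as the paper's: a localized $\dot H^4$ energy estimate on $\Omega_s=\{|y|\ge C_0\}\cap e^s\mathbb T^3_L$, damping from the $-(r-1)-y\cdot\nabla$ part plus the derivative commutator, profile-interaction terms split by the number of derivatives falling on $\hat X\bar U,\hat X\bar S$ (your $k=0,\dots,5$ is the paper's $B_{\circ,0},\dots,B_{\circ,3}$), the $|y|=C_0$ boundary terms bounded via Lemma \ref{linfitnityinside}, and the forcing via Lemmas \ref{forcingestimate1}, \ref{forcingestimate2}, \ref{lemma:Sbounds}, \ref{lemma:brasilia}. That is all consistent with the paper.

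There is, however, a genuine error in the final accounting. You assert that, after moving the profile interactions to the left, the source $G(s)$ is ``bounded by a $\delta_1$-small constant,'' and from this (plus $O(\delta_1)$ initial data) you conclude $\|\tilde U\|_{\dot H^4(\Omega_s)}\lesssim\delta_1 e^{-\epsilon(s-s_0)}$. This does not hold. The $k=2,3,4,5$ terms are \emph{linear} in $\nabla^4(\tilde U,\tilde S)$ with a prefactor that, by Gagliardo--Nirenberg and the bootstrap bounds \eqref{lowerestimate1}--\eqref{lowerestimate2}, is of size $\mathcal O(\delta_0\cdot\frac{1}{C_2})$ — they are \emph{not} quadratic in the perturbation and they carry the scale $\delta_0$, not $\delta_1$. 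After Young's inequality they leave a source of size $\sim\delta_0^2\cdot(\text{small constant})\cdot e^{-2\epsilon(s-s_0)}$ in the squared energy, whose Gr\"onwall steady state is $\sim\delta_0$ (times a small constant from \eqref{parameterchosen} and the choice of $C_0$), not $\delta_1$. Indeed the paper's closing differential inequality reads $\frac{dA}{ds}\le-\frac12A+\frac1{20}(\frac1{C_2})^{1/20}\delta_0\sqrt A+81(\delta_1/\delta_g)^{17/10}$ with $A=e^{2\epsilon(s-s_0)}(\|\tilde U\|_{\dot H^4}^2+\|\tilde S\|_{\dot H^4}^2)$, and the resulting barrier is $\sqrt A\lesssim(\frac1{C_2})^{1/20}\delta_0$, i.e.\ a $\delta_0$-scale bound with a small numerical constant; this is exactly what closes \eqref{lowerestimate2} and is the reason the explicit constant relation \eqref{parameterchosen} between $C_1$ and $C_2$ is needed. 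Your sketch treats the constant as if it came only from the $\delta_1$-initial data, which would make the $C_0/C_1/C_2$ bookkeeping unnecessary; that misidentifies the dominant contribution. A small additional omission: you do not mention the outer boundary term on $\partial(e^s\mathbb T^3_L)$ produced by the moving-domain energy identity, which the paper shows cancels exactly against the corresponding term in the $I_{0,\beta}$ integration by parts; that cancellation is needed before the inner boundary term at $|y|=C_0$ can be declared the only surviving one.
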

\begin{proof} For any $4$-th order derivative $\partial_{\beta}=\partial_{\beta_1,\beta_2,\beta_3,\beta_4}$,
we have
\begin{equation*}
\frac{d\partial_{\beta} \tilde{U}}{ds}=B_{u,0}(\tilde{U},\tilde{S})+B_{u,1}(\tilde{U},\tilde{S})+B_{u,2}(\tilde{U},\tilde{S})+B_{u,3}(\tilde{U},\tilde{S})+\partial_{\beta}(N_u)+\partial_{\beta}(\mc E_u)+\partial_{\beta}\mc F_{dis},
\end{equation*}
\begin{equation*}
\frac{d\partial_{\beta} \tilde{S}}{ds}=B_{s,0}(\tilde{U},\tilde{S})+B_{s,1}(\tilde{U},\tilde{S})+B_{s,2}(\tilde{U},\tilde{S})+B_{s,3}(\tilde{U},\tilde{S})+\partial_{\beta}(N_s)+\partial_{\beta}(\mc E_s).
\end{equation*}
Here $B_{u,0}$, $B_{s,0}$ contain linear terms with $5$ derivatives hitting on $\tilde{U}$, $\tilde{S}$. $B_{u,1}$, $B_{s,1}$ contain linear terms with $4$ derivatives hitting on $\tilde{U}$, $\tilde{S}$. $B_{u,2}$, $B_{s,2}$ contain linear terms with $3$ derivatives, and $B_{u,3}$, $B_{s,3}$ contains linear terms with less than or equal to 2 derivatives hitting on $\tilde{U}$, $\tilde{S}$.

 For the sake of simplicity in the following estimate we use $\int_{|y|\geq C_0}$ to denote the integration on $\{|y|\geq C_0\}\cap e^s\mathbb{T}_{L}^{3}.$ 
 since $y\cdot \vec{n}=e^s L$ on $\partial( e^s \mathbb{T}_{L}^3)$, we have
\begin{align}\label{highenergyestimate01}
I&=\frac{d}{ds} \left( \|\tilde{U}\|_{\dot{H}^4(|y|\geq C_0)}^{2}+\|\tilde{S}\|_{\dot{H}^4(|y|\geq C_0)}^{2} \right)\\\nonumber
&=\frac{d}{ds}\sum_{\beta\in [3]^{4}}\int_{|y|\geq C_0}|\p_{\beta}\tilde{U}|^2+(\p_{\beta}\tilde{S})^2dy\\\nonumber
&=\sum_{\beta\in [3]^{4}}\int_{|y|\geq C_0}\frac{d}{ds}(|\p_{\beta}\tilde{U}|^2+(\p_{\beta}\tilde{S})^2)dy+\sum_{\beta\in [3]^{4}}e^sL\int_{y\in \partial(e^s\mathbb{T}_L^3)}\sum_{j}|\partial_{\beta}\tilde{U}_j|^2+(\partial_{\beta}\tilde{S}
)^2d\sigma\\\nonumber
&=2\sum_{j=0}^{3} \left( \int_{|y|\geq C_0}\partial_{\beta}\tilde{U}\cdot B_{u,j}dy+\int_{|y|\geq C_0} \partial_{\beta}\tilde{S}B_{s,j}dy \right)\\\nonumber
&\quad+2 \left( \int_{|y|\geq C_0}\partial_{\beta}\tilde{U}\cdot \partial_{\beta}N_udy+\int_{|y|\geq C_0}\partial_{\beta}\tilde{S}\cdot \partial_{\beta}N_sdy \right) \\\nonumber
&\quad+2 \left( \int_{|y|\geq C_0}\partial_{\beta}\tilde{U}\cdot \partial_{\beta}\mc E_udy+\int_{|y|\geq C_0}\partial_{\beta}\tilde{S}\cdot \partial_{\beta}\mc E_sdy \right)\\\nonumber
&\quad+2 \left( \int_{|y|\geq C_0}\partial_{\beta}\tilde{U}\cdot \partial_{\beta}\mc F_{dis} dy \right)\\\nonumber
&\quad+\text{boundary terms}.
\end{align}
Now we start to bound each term separately. Recall that $(\tilde{U},\tilde{S})$ satisfy \eqref{nsequation1}, \eqref{nsequation2}, for $B_{u,0}$ and $B_{s,0}$, we have 
\begin{align*}
I_{0,\beta}&=\quad2\int_{|y|\geq C_0}\partial_{\beta}\tilde{U}\cdot B_{u,0}dy+2\int_{|y|\geq C_0} \partial_{\beta}\tilde{S}B_{s,0}dy\\
&=-2\int_{|y|\geq C_0}\partial_{\beta}\tilde{U}\cdot (y+\hat{X}\bar{U})\cdot \nabla \partial_{\beta}\tilde{U}dy-2\int_{|y|\geq C_0}\partial_{\beta}\tilde{U}\cdot  \nabla \partial_{\beta}\tilde{S}\al \hat{X}\bar{S}dy\\
&\quad-2\int_{|y|\geq C_0}\partial_{\beta}\tilde{S}\cdot (y+\hat{X}\bar{U})\cdot \nabla \partial_{\beta}\tilde{S}dy-2\int_{|y|\geq C_0}\partial_{\beta}\tilde{S}\div (\partial_{\beta}\tilde{U})\al \hat{X}\bar{S}dy\\
&\leq \int_{|y|\geq C_0}  \left( 3+|\hat{X}\bar{U}'(R)|+\frac{2\hat{X}|\bar{U}|}{R} \right) 
 |\partial_{\beta}\tilde{U}|^2dy+\int_{|y|\geq C_0}\left( 3+|\hat{X}\bar{U}'(R)|+\frac{2\hat{X}|\bar{U}|}{R} \right) |\partial_{\beta}\tilde{S}|^2dy\\
&\quad+2\int_{|y|\geq C_0}|\nabla(\al\hat{X}\bar{S})||\partial_{\beta}\tilde{S}||\partial_{\beta}\tilde{U}|dy\\
&\quad + \int_{|y|=C_0}(C_0+|\hat{X}\bar{U}|) |\partial_{\beta}\tilde{U}|^2d\sigma+\int_{|y|=C_0} (C_0+|\hat{X}\bar{U}|)|\partial_{\beta}\tilde{S}|^2d\sigma+\int_{|y|=C_0}\al |\hat{X}\bar{S}||\partial_{\beta}\tilde{U}||\partial_{\beta}\tilde{S}|d\sigma\\
&\quad-\int_{\partial (e^{s}\mathbb{T}_{L}^{3})} \left( (\partial_{\beta}\tilde{S})^2+\sum_{j}(\partial_{\beta}\tilde{U}_j)^2 \right) (y \cdot \vec{n})d\sigma.
\end{align*}
Using Lemma \ref{linfitnityinside}, we have
\begin{align*}
&\quad\int_{|y|=C_0}(C_0+|\hat{X}\bar{U}|)|\partial_{\beta}\tilde{U}|^2d\sigma+\int_{|y|=C_0}(C_0+|\hat{X}\bar{U}|)|\partial_{\beta}\tilde{S}|^2d\sigma+\int_{|y|=C_0}\al|\hat{X}\bar{S}||\partial_{\beta}\tilde{U}||\partial_{\beta}\tilde{S}|d\sigma\\
&\lesssim C_0^2(C_0+|\hat{X}\bar{U}|+\al |\hat{X}\bar{S}|) \left( \frac{\delta_1}{\delta_g} \right)^{2}e^{-\delta_g(s-s_0)}\\
&\leq \left( \frac{\delta_1}{\delta_g} \right)^{\frac{17}{10}}e^{-2\epsilon(s-s_0)}.
\end{align*}
Then 
\begin{align}\label{B0termestimate}
I_{0,\beta}&=2\int_{|y|\geq C_0}\partial_{\beta}\tilde{U}\cdot B_{u,0}dy+2\int_{|y|\geq C_0} \partial_{\beta}\tilde{S}B_{s,0}dy\\\nonumber
&\leq \int_{|y|\geq C_0} \left( 3+|\hat{X}\bar{U}|'(R)+\frac{2\hat{X}|\bar{U}|}{R}+|\nabla(\al\hat{X}\bar{S})| \right) \left( |\partial_{\beta}\tilde{U}|^2+|\partial_{\beta}\tilde{S}|^2 \right) dy+ \left( \frac{\delta_1}{\delta_g} \right)^{\frac{17}{10}}e^{-2\epsilon(s-s_0)}\\\nonumber
&\quad\underbrace{-\int_{\partial (e^{s}\mathbb{T}_{L}^{3})} \left( (\partial_{\beta}\tilde{S})^2+\sum_{j}(\partial_{\beta}\tilde{U}_j)^2 \right) (y \cdot \vec{n})d\sigma}_{\text{boundary term}}.
\end{align}
Notice that since $y \cdot \vec{n}=e^{s}L$ on $\partial(e^{s}\mathbb{T}_{L}^{3}),$ this boundary term in $I_{0,\beta}$ is same as the boundary term in \eqref{highenergyestimate01} with opposite sign.

For $B_{u,1}$, $B_{s,1}$ we have
\begin{align*}
B_{u,1}&=-(r-1)\partial_{\beta}\tilde{U}-\partial_{\beta}\tilde{U}\cdot\nabla(\hat{X}\bar{U})-\al\partial_{\beta}\tilde{S}\nabla(\hat{X}\bar{S})\\&
\quad-\sum_{\lambda=1}^{4}\sum_{i=1}^{3}\partial_{\beta_{\lambda}}(\hat{X}\bar{U}_i+y_i)\partial_{i}\partial_{\beta^{(\lambda)}} \tilde{U}-\al\sum_{\lambda=1}^{4}\sum_{i=1}^{3}\partial_{\beta_{\lambda}}(\hat{X}\bar{S})\partial_{i}\partial_{\beta^{(\lambda)}} \tilde{S}\\
&=-(r-1)\partial_{\beta}\tilde{U}-\partial_{\beta}\tilde{U}\cdot\nabla(\hat{X}\bar{U})-\al\partial_{\beta}\tilde{S}\nabla(\hat{X}\bar{S})\\&
\quad-\sum_{\lambda=1}^{4}\sum_{i=\beta_{\lambda}}\partial_{\beta_{\lambda}}(y_i)\partial_{i}\partial_{\beta^{(\lambda)}} \tilde{U}-\sum_{\lambda=1}^{4}\sum_{i=1 }^{3}\partial_{\beta_{\lambda}}(\hat{X}\bar{U}_i)\partial_{i}\partial_{\beta^{(\lambda)}} \tilde{U}-\al\sum_{\lambda=1}^{4}\sum_{i=1}^{3}\partial_{\beta_{\lambda}}(\hat{X}\bar{S})\partial_{i}\partial_{\beta^{(\lambda)}} \tilde{S}\\
&=-(r-1)\partial_{\beta}\tilde{U}-\partial_{\beta}\tilde{U}\cdot\nabla(\hat{X}\bar{U})-\al\partial_{\beta}\tilde{S}\nabla(\hat{X}\bar{S})\\&
\quad-4\partial_{\beta} \tilde{U}-\sum_{\lambda=1}^{4}\sum_{i=1}^{3}\partial_{\beta_{\lambda}}(\hat{X}\bar{U}_i)\partial_{i}\partial_{\beta^{(\lambda)}} \tilde{U}-\al\sum_{\lambda=1}^{4}\sum_{i=1}^{3}\partial_{\beta_{\lambda}}(\hat{X}\bar{S})\partial_{i}\partial_{\beta^{(\lambda)}} \tilde{S},
\end{align*}
and
\begin{align*}
B_{s,1}&=-(r-1)\partial_{\beta}\tilde{S}-\partial_{\beta}\tilde{U}\cdot \nabla (\hat{X}\bar{S})-\al\partial_{\beta}\tilde{S}\div(\hat{X}\bar{U})\\
&\quad-\sum_{\lambda=1}^{4}\sum_{i=1}^{3}\partial_{\beta_{\lambda}}(\hat{X}\bar{U}_i+y_i)\partial_{i}\partial_{\beta^{(\lambda)}} \tilde{S}-\al\sum_{\lambda=1}^{4}\partial_{\beta_{\lambda}}(\hat{X}\bar{S})\partial_{\beta^{(\lambda)}} \div(\tilde{U})\\
&=-(r-1)\partial_{\beta}\tilde{S}-\partial_{\beta}\tilde{U}\cdot \nabla (\hat{X}\bar{S})-\al\partial_{\beta}\tilde{S}\div(\hat{X}\bar{U})\\
&\quad-\sum_{\lambda=1}^{4}\sum_{i=\beta_{\lambda}}\partial_{\beta_{\lambda}}(y_i)\partial_{i}\partial_{\beta^{(\lambda)}} \tilde{S}-\sum_{\lambda=1}^{4}\sum_{i=1}^{3}\partial_{\beta_{\lambda}}(\hat{X}\bar{U}_i)\partial_{i}\partial_{\beta^{(\lambda)}} \tilde{S}-\al\sum_{\lambda=1}^{4}\partial_{\beta_{\lambda}}(\hat{X}\bar{S})\partial_{\beta^{(\lambda)}} \div(\tilde{U})\\
&=-(r-1)\partial_{\beta}\tilde{S}-\partial_{\beta}\tilde{U}\cdot \nabla (\hat{X}\bar{S})-\al\partial_{\beta}\tilde{S}\div(\hat{X}\bar{U})\\
&\quad-4\partial_{\beta} \tilde{S}-\sum_{\lambda=1}^{4}\sum_{i=1}^{3}\partial_{\beta_{\lambda}}(\hat{X}\bar{U}_i)\partial_{i}\partial_{\beta^{(\lambda)}} \tilde{S}-\al\sum_{\lambda=1}^{4}\partial_{\beta_{\lambda}}(\hat{X}\bar{S})\partial_{\beta^{(\lambda)}} \div(\tilde{U}).
\end{align*}
Then,
\begin{align}\label{B1termestimate}
I_{1,\beta}&=2\int_{|y|\geq C_0}\partial_{\beta}\tilde{U}\cdot B_{u,1}dy+2\int_{|y|\geq C_0}\partial_{\beta}\tilde{S}\cdot B_{s,1}dy\\\nonumber
&\leq -2(4+r-1)\int_{|y|\geq C_0}|\partial_{\beta}\tilde{U}|^2+(\partial_{\beta}\tilde{S})^2dy\\\nonumber
&\quad+C \left( \|\nabla(\hat{X}\bar{U})\|_{L^{\infty}(|y|\geq C_0)}+\|\nabla(\hat{X}\bar{S})\|_{L^{\infty}(|y|\geq C_0)} \right) \left(\|\tilde{U}\|_{\dot{H}^4(|y|\geq C_0)}^2+ \|\tilde{S}\|_{\dot{H}^4(|y|\geq C_0)}^2 \right).
\end{align}

For $B_{u,2}$ and $B_{s,2}$, from H\"older's inequality, we have
\begin{align}\label{B2termestimate}
&\|B_{s,2}\|_{L^2(|y|\geq C_0)}+\|B_{u,2}\|_{L^2(|y|\geq C_0)}\\\nonumber
&\quad\leq C \left( \|\nabla^{2}(\hat{X}\bar{U})\|_{L^{8}({|y|\geq C_0})}+\|\nabla^2(\hat{X}\bar{S})\|_{L^{8}({|y|\geq C_0})} \right) \left( \|\nabla^{3}\tilde{U}\|_{L^{\frac{8}{3}}(|y|\geq C_0)}+\|\nabla^3{\tilde{S}}\|_{L^{\frac{8}{3}}(|y|\geq C_0)} \right) \\ \nonumber
&\quad\leq C \left( \|\nabla^{2}(\hat{X}\bar{U})\|_{L^{8}({|y|\geq C_0})}+\|\nabla^2(\hat{X}\bar{S})\|_{L^{8}({|y|\geq C_0})} \right)  \delta_0 e^{-\epsilon(s-s_0)} \left( \frac{1}{C_2} \right)^{\frac{1}{20}}.
\end{align}
Here we also used the Gagliardo-Nirenberg interpolation Lemma \eqref{lemma:GN_generalnoweighttorus}, by taking $p=\infty$, $q=2$, $\theta=\frac{3}{4}$, $r=\frac{8}{3}$, $i=3$, $m=4$.
Thus
\begin{align*}
I_{2,\beta}&=2\int_{|y|\geq C_0}\partial_{\beta}\tilde{U}\cdot B_{u,2}dy+2\int_{|y|\geq C_0}\partial_{\beta}\tilde{S}\cdot B_{s,2}dy\\
&\leq C(\|\nabla^{2}(\hat{X}\bar{U})\|_{L^{8}({|y|\geq C_0})}+\|\nabla^2(\hat{X}\bar{S})\|_{L^{8}({|y|\geq C_0})})\left(\int_{|y|\geq C_0}|\partial_{\beta}\tilde{U}|^2+(\partial_{\beta}\tilde{S})^2dy\right)^{\frac{1}{2}}{\delta_0}e^{-\epsilon(s-s_0)} \left( \frac{1}{C_2} \right)^{\frac{1}{20}}.
\end{align*}
For $B_{u,3}$ and $B_{s,3}$, from the condition of $C_0$ \eqref{profilecondition3}, the bootstrap assumptions \eqref{lowerestimate1}, \eqref{lowerestimate2}  and interpolation inequality \eqref{eq:GNresultnoweighttorus} with $p=\infty,q=2,\theta=\frac{4}{5}, m=4$, we have
\begin{align*}
&\|B_{s,3}\|_{L^2(|y|\geq C_0)}+\|B_{u,3}\|_{L^2(|y|\geq C_0)} \\
 & \qquad\leq 
C\sum_{j=3}^{5} \left( \|\hat{X}\bar{U}\|_{\dot{H}^{j}(|y|\geq C_0)}+\|\hat{X}\bar{S}\|_{\dot{H}^{j}(|y|\geq C_0)} \right) \left( \|\tilde{U}\|_{W^{2,\infty}(|y|\geq C_0)}+\|\tilde{S}\|_{W^{2,\infty}(|y|\geq C_0)} \right) \\
& \qquad\leq C\sum_{j=3}^{5} \left( \|\hat{X}\bar{U}\|_{\dot{H}^{j}(|y|\geq C_0)}+\|\hat{X}\bar{S}\|_{\dot{H}^{j}(|y|\geq C_0)} \right) \delta_0 \left( \frac{1}{C_2} \right)^{\frac{1}{20}}e^{-\epsilon(s-s_0)}.
\end{align*}
Thus
\begin{align}\label{B3termestimate}
I_{3,\beta}&=2\int_{|y|\geq C_0}\partial_{\beta}\tilde{U}\cdot B_{u,3}dy+2\int_{|y|\geq C_0}\partial_{\beta}\tilde{S}\cdot B_{s,3}dy\\\nonumber
&\leq C\sum_{j=3}^{5} \left( \|\hat{X}\bar{U}\|_{\dot{H}^{j}(|y|\geq C_0)}+\|\hat{X}\bar{S}\|_{\dot{H}^{j}(|y|\geq C_0)} \right) {\delta_0} \left( \frac{1}{C_2} \right)^{\frac{1}{20}}e^{-\epsilon(s-s_0)}\left(\int_{|y|\geq C_0}|\partial_{\beta}\tilde{U}|^2+(\partial_{\beta}\tilde{S})^2dy\right)^{\frac{1}{2}}.
\end{align}
For the nonlinear terms, we use H\"older's inequality and get
\begin{align*}
&\|\partial_{\beta}N_u\|_{L^{2}(|y|\geq C_0)}+\|\partial_{\beta}N_s\|_{L^{2}(|y|\geq C_0)}\\
&\quad \leq C \left( \|\nabla ^{5}\tilde{U}\|_{{L^{2}(|y|\geq C_0)}}+\|\nabla ^{5}\tilde{S}\|_{{L^{2}(|y|\geq C_0)}} \right) \left( \|\tilde{U}\|_{{L^{\infty}(|y|\geq C_0)}}+\|\tilde{S}\|_{{L^{\infty}(|y|\geq C_0)}} \right) \\
&\qquad + C \left( \|\nabla ^{3}\tilde{U}\|_{{L^{\frac{8}{3}}(|y|\geq C_0)}}+\|\nabla ^{3}\tilde{S}\|_{{L^{\frac{8}{3}}(|y|\geq C_0)}} \right) \left( \|\nabla ^{2}\tilde{U}\|_{{L^{8}(|y|\geq C_0)}}+\|\nabla ^{2}\tilde{S}\|_{{L^{8}(|y|\geq C_0)}} \right)\\
&\qquad + C \left( \|\nabla ^{4}\tilde{U}\|_{{L^{2}(|y|\geq C_0)}}+\|\nabla ^{4}\tilde{S}\|_{{L^{2}(|y|\geq C_0)}} \right) \left( \|\nabla\tilde{U}\|_{{L^{\infty}(|y|\geq C_0)}}+\|\nabla \tilde{S}\|_{{L^{\infty}(|y|\geq C_0)}} \right).
\end{align*}
From the Gagliardo-Nirenberg interpolation Lemma \ref{lemma:GN_generalnoweighttorus}, and the bootstrap assumptions \eqref{lowerestimate2}, \eqref{higherestimate3}, 
we get
\begin{align*}
&\|\nabla ^{5}\tilde{U}\|_{{L^{2}(|y|\geq C_0)}}\lesssim \|\nabla ^{K}\tilde{U}\|_{{L^{2}(|y|\geq C_0)}}^{\theta_k}\|\nabla ^{4}\tilde{U}\|_{{L^{2}(|y|\geq C_0)}}^{1-\theta_k}+\|\nabla ^{4}\tilde{U}\|_{{L^{2}(|y|\geq C_0)}}\lesssim E^{\frac{1}{10}}\delta_0^{\frac{9}{10}}e^{-\epsilon(s-s_0)\frac{9}{10}}+\delta_0e^{-\epsilon(s-s_0)},\\
&\|\nabla ^{2}\tilde{U}\|_{L^{8}(|y|\geq C_0)}, \|\nabla ^{3}\tilde{U}\|_{{L^{\frac{8}{3}}(|y|\geq C_0)}}\lesssim \|\nabla ^{4}\tilde{U}\|_{{L^{2}(|y|\geq C_0)}}^{\frac{4}{3}}\|\tilde{U}\|_{{L^{\infty}(|y|\geq C_0)}}^{\frac{1}{4}}+\| \tilde{U}\|_{{L^{\infty}(|y|\geq C_0)}}\lesssim \delta_0e^{-\epsilon(s-s_0)}.
\end{align*}
Similar estimates hold for $\tilde{S}$. Then
\begin{align*}
&\|\partial_{\beta}N_u\|_{L^{2}(|y|\geq C_0)}+\|\partial_{\beta}N_s\|_{L^{2}(|y|\geq C_0)}\\
&\quad \leq C{\delta_0}^{\frac{19}{10}}e^{-\frac{19}{10}\epsilon(s-s_0)}E^{\frac{1}{10}}\\
&\quad \leq {\delta_0}^{\frac{3}{2}}e^{-\frac{3}{2}\epsilon(s-s_0)}.
\end{align*}
Here we used $\delta_0E\ll 1$ in the last step.
Then
\begin{align}\label{Ntermestimate}
I_{N,\beta}&=2\int_{|y|\geq C_0}\partial_{\beta}\tilde{U}\cdot \partial_{\beta}N_u dy+2\int_{|y|\geq C_0}\partial_{\beta}\tilde{S}\cdot \partial_{\beta}N_s dy\leq 2{\delta_0}^{\frac{3}{2}}e^{-\frac{3}{2}\epsilon(s-s_0)}\left(\int_{|y|\geq C_0}|\partial_{\beta}\tilde{U}|^2+(\partial_{\beta}\tilde{S})^2dy)\right)^{\frac{1}{2}}.
\end{align}
For $\p_{\beta}\left(\frac{\Delta U}{S^{\frac{1}{\alpha}}}\right)$, from Lemma \ref{lemma:brasilia} we have
\begin{align*}
    |\nabla^{4-j}\Delta U| &\lesssim_{\delta_0,\epsilon}S \phi^{-3+\frac{j}{2}}R^{\epsilon}.\\
    |\nabla^{j}\frac{1}{S^{\frac{1}{\alpha}}}| &\lesssim \sum_{l\leq j, k_1+k_2+...k_l=j} \frac{|\nabla^{k_1}S||\nabla^{k_2}S|...|\nabla^{k_l}S|}{S^{\frac{1}{\al}+l}}\\
    &\lesssim \sum_{l\leq j, k_1+k_2+...k_l=j} \phi^{-\frac{k_1+k_2+...k_{l}}{2}}\frac{1}{S^{\frac{1}{\alpha}}}R^{j\epsilon}\\
    &\lesssim_{\delta_0, R_0,\epsilon} \phi^{-\frac{j}{2}}\frac{1}{S^{\frac{1}{\alpha}}}R^{j\epsilon} . 
\end{align*}
Hence we have
\begin{align*}
\left| \partial_{\beta}\left(\frac{\Delta U}{S^{\frac{1}{\alpha}}}\right) \right| \lesssim &\sum_{j} \left| \nabla^{4-j}\Delta U \right| \cdot \left| \nabla^{j}\left(\frac{1}{S^{\frac{1}{\alpha}}} \right)\right| \lesssim_{\delta_1,\epsilon} \phi^{-3}S^{1-\frac{1}{\al}}R^{5\epsilon}\lesssim_{\delta_1,R_0,\epsilon} \left( \frac{1}{R} \right)^{6(1-\eta)-(r-1)(\frac{1}{\al}-1)}\\
\qquad &\lesssim_{\delta_0, R_0,\epsilon} \left( \frac{1}{R} \right)^{3},
\end{align*}
where we used $\eta\ll 1, (r-1)(\frac{1}{\alpha}-1)\leq (r-1)(\frac{1}{\alpha}+2)\leq 2$, and Lemma \ref{lemma:Sbounds}.

Then we have
\[
e^{-\delta_{dis}s} \left\| \partial_{\beta}\left(\frac{\Delta U}{S^{\frac{1}{\alpha}}}\right) \right\|_{L^2(|y|\geq C_0)}\lesssim_{\delta_0,R_0}e^{-\delta_{dis}s}\leq \delta_1 e^{-\delta_g(s-s_0)},
\]
and 
\begin{align*}
    I_{dis,\beta}=2\int_{|y|\geq C_0}\partial_{\beta}\tilde{U}\partial_{\beta} \left( e^{-\delta_{dis}s}\frac{\Delta U}{S^{\frac{1}{\alpha}}} \right) dy\leq 2\delta_1  e^{-\delta_g(s-s_0)}\left( \int_{|y|\geq C_0}(|\partial_{\beta}\tilde{U}|^2+|\partial_{\beta}\tilde{S}|^2)dy \right)^{\frac{1}{2}} .
\end{align*}
Additionally, from Lemma \ref{profilekestimate1}, we have 
\begin{align*}
I_{E,\beta}=4\int_{|y|\geq C_0}\partial_{\beta}\tilde{U}\cdot\partial_{\beta}\mathcal{E}_{u}+\partial_{\beta}\tilde{S}\partial_{\beta}\mathcal{E}_{s}dy\leq 2\delta_1  e^{-(s-s_0)} \left( \int_{|y|\geq C_0}\left( |\partial_{\beta}\tilde{U}|^2+|\partial_{\beta}\tilde{S}|^2 \right) dy \right)^{\frac{1}{2}}.
\end{align*}
In conclusion, from \eqref{highenergyestimate01} and estimates for each term, we have
\begin{align*}
I&\leq \left( -2(4+r-1)+3 \left( 1+ \|\nabla(\hat{X}\bar{U})+\|\frac{\hat{X}\bar{U}}{R}\|_{L^{\infty}(|y|\geq C_0)}+ \left\|\nabla(\hat{X}\bar{S}) \right\|_{L^{\infty}(|y|\geq C_0)} \right) \right) \sum_{\beta\in [3]^{4}}\int_{|y|\geq C_0}|\partial_{\beta}\tilde{U}|^2+(\partial_{\beta}\tilde{S})^2dy\\\nonumber
&\quad+C \left( \|\nabla(\hat{X}\bar{U})\|_{L^{\infty}(|y|\geq C_0)}+\|\nabla(\hat{X}\bar{S})\|_{L^{\infty}(|y|\geq C_0)} \right) 
\left( \|\tilde{U}\|^2_{\dot{H}^4(|y|\geq C_0)}+ \|\tilde{S}\|^2_{\dot{H}^4(|y|\geq C_0)} \right) + 3^{4} \left( \frac{\delta_1}{\delta_g} \right)^{\frac{17}{10}}e^{-2\epsilon(s-s_0)}\\
&\quad+\Bigg[C \left( \|\nabla^{2}(\hat{X}\bar{U})\|_{L^{8}({|y|\geq 
C_0})}+\|\nabla^2(\hat{X}\bar{S})\|_{L^{8}({|y|\geq C_0})} \right) \delta_0 e^{-\epsilon(s-s_0)} \left( \frac{1}{C_2} \right)^{\frac{1}{20}}\\
&\qquad+C\sum_{j=3}^{5} \left( \|\hat{X}\bar{U}\|_{\dot{H}^{j}(|y|\geq C_0)}+\|\hat{X}\bar{S}\|_{\dot{H}^{j}(|y|\geq C_0)} \right) \delta_0 \left( \frac{1}{C_2} \right)^{\frac{1}{20}}e^{-\epsilon(s-s_0)}+2{\delta_0}^{\frac{3}{2}}e^{-\frac{3}{2}\epsilon(s-s_0)}+2\delta_1  e^{-\delta_g(s-s_0)}\\
&\qquad+ 4\delta_1  e^{-(s-s_0)} \Bigg] \cdot\sum_{\beta\in[3]^{4}} \left( \int_{|y|\geq C_0}|\partial_{\beta}\tilde{U}|^2+(\partial_{\beta}\tilde{S})^2 dy \right)^{\frac{1}{2}}\\
&\leq
\left( \|\tilde{U}\|_{\dot{H}^4(|y|\geq C_0)}^{2}+\|\tilde{S}\|_{\dot{H}^4(|y|\geq C_0)}^{2} \right)\\\nonumber
&\qquad\times \left( -5-2(r-1)+C \left( \|\nabla(\hat{X}\bar{U})\|_{L^{\infty}(|y|\geq C_0)}+\|\nabla(\hat{X}\bar{S})\|_{L^{\infty}(|y|\geq C_0)}+\|\frac{\hat{X}\bar{U}}{R}\|_{L^{\infty}(|y|\geq C_0)} \right) \right) \\
&\quad+C \left( \|\tilde{U}\|_{\dot{H}^4(|y|\geq C_0)}^{2}+\|\tilde{S}\|_{\dot{H}^4(|y|\geq C_0)}^{2} \right)^{\frac{1}{2}} \Bigg[(\|\nabla^{2}(\hat{X}\bar{U})\|_{L^{8}({|y|\geq C_0})}+\|\nabla^2(\hat{X}\bar{S})\|_{L^{8}({|y|\geq C_0})}) \delta_0 e^{-\epsilon(s-s_0)}\left( \frac{1}{C_2} \right)^{\frac{1}{20}}\\
&\qquad+\sum_{j=3}^{5} \left( \|\hat{X}\bar{U}\|_{\dot{H}^{j}(|y|\geq C_0)}+\|\hat{X}\bar{S}\|_{\dot{H}^{j}(|y|\geq C_0)} \right) \delta_0 \left( \frac{1}{C_2} \right)^{\frac{1}{20}}e^{-\epsilon(s-s_0)}
+2{\delta_0}^{\frac{3}{2}}e^{-\frac{3}{2}\epsilon(s-s_0)}+4\delta_1  e^{-\delta_g(s-s_0)} \Bigg]\\
&\quad+3^{4} \left( \frac{\delta_1}{\delta_g} \right)^{\frac{17}{10}}e^{-2\epsilon(s-s_0)}.
\end{align*}
From the profile conditions \eqref{profilecondition4}, \eqref{profilecondition2}, \eqref{profilecondition3}, we have
\begin{align*}
&\quad\frac{d}{ds} \left( \|\tilde{U}\|_{\dot{H}^4(|y|\geq C_0)}^{2}+\|\tilde{S}\|_{\dot{H}^4(|y|\geq C_0)}^{2} \right) \leq - \left( \|\tilde{U}\|_{\dot{H}^4(|y|\geq C_0)}^{2}+\|\tilde{S}\|_{\dot{H}^4(|y|\geq C_0)}^{2} \right) \\ 
&\qquad +\frac{1}{20} \left( \frac{1}{C_2} \right)^{\frac{1}{20}}\delta_0
e^{-\epsilon(s-s_0)} \left( \|\tilde{U}\|_{\dot{H}^4(|y|\geq C_0)}^{2}+\|\tilde{S}\|_{\dot{H}^4(|y|\geq C_0)}^{2} \right)^{\frac{1}{2}}+81 \left( \frac{\delta_1}{\delta_g} \right)^{\frac{17}{10}} e^{-2\epsilon(s-s_0)}.
\end{align*}
Let $A(s)= \left( \|\tilde{U}\|_{\dot{H}^4(|y|\geq C_0)}^{2}+\|\tilde{S}\|_{\dot{H}^4(|y|\geq C_0)}^{2} \right) e^{2\epsilon(s-s_0)}.$ Then we get:
\begin{align*}
\frac{dA(s)}{ds}\leq -\frac{1}{2}A(s)+\frac{1}{20} \left( \frac{1}{C_2} \right)^{\frac{1}{20}}\delta_0
\sqrt{A(s)}+81 \left( \frac{\delta_1}{\delta_g} \right)^{\frac{17}{10}}.
\end{align*}
We then claim 
\begin{equation}\label{Aenergyestimate}
A(s)< \left( \frac{1}{5} \left( \frac{1}{C_2} \right)^{\frac{1}{20}}\delta_0 \right)^2+4 \cdot 81 \left( \frac{\delta_1}{\delta_g} \right)^{\frac{17}{10}}\leq \frac{1}{4} \delta_0^2,
\end{equation}
since \eqref{Aenergyestimate} holds for $s_0=0$ and if there exists $s^{*}$ such that the equality holds, we have $\frac{dA(s^{*})}{ds}<0$.
\end{proof}

\subsection{Energy estimate for higher order bounds} \label{higherorderestimate} 

In this section, we show how to close the bootstrap assumption for the energy. That is, assuming we are under the hypotheses of Proposition \ref{prop:bootstrap}, we will show that $E_{K} \leq \frac12 E$, see \eqref{higherestimate3} for a definition of $E_K$.

First of all, we let $[3]^K$ to be the set of all possible $K$-th order multiindices formed with $\{ 1, 2, 3 \}$. Thus, from definition \eqref{higherestimate3}, we have that
\begin{equation}
    E_{K} = \sum_{\beta \in [3]^K}  \int_{e^s\mathbb T_{L}^3} 
    \left( | \p_\beta U |^2 + | \p_\beta S |^2 \right) \phi^K dy,
\end{equation}
where $\p_\beta U$ is the vector $(\p_\beta U_1, \p_\beta U_2, \p_\beta U_3)$.

Taking a $\p_\beta$ derivative in \eqref{eq:US2} (with viscosity) and using that $\p_\beta (y \nabla f) = K \p_\beta f + y \nabla \p_\beta f$, we have that: 
\begin{align} \label{eq:US3} \begin{split}
(\p_s + r -1 + K )\p_\beta U_i + y\cdot \nabla \p_\beta U_i + \p_\beta (U \cdot \nabla U_i) + \alpha \p_\beta (S \p_i S) &= \frac{r^{1+\frac{1}{\alpha}}}{\alpha^{1/\alpha}  }
e^{-\delta_{\rm dis} s}  \p_\beta \frac{\Delta U_i}{ S^{1/\alpha }}\,, \\
(\p_s + r - 1 + K)\p_\beta S + y \cdot \nabla \p_\beta S + \alpha \p_\beta (S \div U) + \p_\beta (U \cdot \nabla S) &= 0.
\end{split} \end{align}
Now, multiplying each equation of \eqref{eq:US3} by $\phi^K \p_\beta S$ and $\phi^K \p_\beta  U$ respectively, and integrating over $e^s\mathbb T_{L}^3$, we obtain the following equality, (we will use the notation $\int$ but mean integration over $e^s\mathbb T_{L}^3$):
\begin{align} \begin{split} 
\left( \frac{1}{2}\p_s + r -1 + K \right) E_{K} & + \underbrace{ \sum_{\beta \in [3]^K} \int \phi^K y \cdot \left( \p_\beta S \nabla \p_\beta S + \sum_i \p_\beta U_i \nabla \p_\beta U_i \right) }_{\mc I_1} \\
&\quad+ \underbrace{ \sum_{\beta \in [3]^K}\int \phi^K \left( \sum_i \p_\beta U_i  \p_\beta (U \cdot \nabla U_i) + \p_\beta S  \p_\beta (U \cdot \nabla S) \right) }_{\mc I_2}\\
&\quad + \underbrace{ \sum_{\beta \in [3]^K} \int \phi^K  \alpha \left( \sum_i \p_\beta U_i \p_\beta (S \p_i S) + \p_\beta S   \p_\beta (S \div U)\right) }_{\mc I_3} \\
&\quad \underbrace{-\frac{1}{2}e^{s}L\sum_{\beta \in [3]^K}\int_{\partial (e^s\mathbb{T}_{L}^{3})}(|\partial_{\beta}U|^2+|\partial_{\beta}S|^2)}_{\text{boundary term}}\\
&= \underbrace{ \sum_{\beta \in [3]^K} C_{\rm{dis}}
e^{-\delta_{\rm dis} s}  \int \phi^K \p_\beta U \cdot \p_\beta \frac{\Delta U_i}{ S^{1/\alpha }} }_{\mc I_4}. \label{eq:EE1}
\end{split} \end{align}

\subsubsection{Main energy estimate: terms $\mc I_1$, $\mc I_2$ and $\mc I_3$}

First of all, integrating by parts, we see that
\begin{align} \label{estimateI1}
    \mc I_1 &= \sum_{\beta \in [3]^K}  \int \phi^K \frac{y}{2} \left( \nabla | \p_\beta S |^2 + \sum_i \nabla | \p_\beta U_i |^2 \right) dy \\
    &= \frac{-1}{2} \sum_{\beta \in [3]^K} \int \frac{\div (\phi^K y)}{\phi^K} \left( |\p_\beta U |^2 + |\p_\beta S |^2 \right) \phi^K dy \notag+\underbrace{\frac{1}{2}e^{s}L\sum_{\beta \in [3]^K}\int_{\partial (e^s\mathbb{T}_{L}^{3})}(|\partial_{\beta}U|^2+|\partial_{\beta}S|^2)d\sigma}_{\text{boundary term}}\\\nonumber
    &= \frac{-K}{2} \sum_{\beta \in [3]^K} \int \frac{y \cdot \nabla \phi}{\phi}  \left( |\p_\beta U |^2 + |\p_\beta S |^2 \right) \phi^K dy + O ( E )+\underbrace{\frac{1}{2}e^{s}L\sum_{\beta \in [3]^K}\int_{\partial (e^s\mathbb{T}_{L}^{3})}(|\partial_{\beta}U|^2+|\partial_{\beta}S|^2)d\sigma.}_{\text{boundary term}}
\end{align}
The boundary term cancels the same term in \eqref{eq:EE1}.
Next, we prove the following lemma:
\begin{lemma}\label{lowerorderestimate} Let $\beta_j$ denote the $j-$th index of the multiindex $\beta$, and analogously, let $\beta^{(j)}$ be the $(K-1)$-th order multiindex generated by erasing $\beta_j$. More specifically, we have 
\[
\partial_{\beta^{(j)}}=\partial_{\beta_1}\partial_{\beta_2}...\partial_{\beta_{j-1}}\partial_{\beta_{j+1}}...\partial_{\beta_K}.
\] Under the bootstrap assumptions \eqref{lowerestimate1}, \eqref{lowerestimate2}, \eqref{higherestimate3}, we have
\begin{align} \begin{split}
\Bigg\| 
\p_\beta (U \nabla U_i ) &- U \nabla \p_\beta U_i - \sum_{j=1}^K \sum_{k=1}^3 \left( \left( \p_R (\hat{X}\bar U_R) - \frac{\hat{X}\bar U_R}{R} \right) \frac{y_{\beta_j} y_k}{R^2} + \delta_{k, \beta_j} \frac{\hat{X}\bar U_R}{R} \right) \p_k \p_{\beta^{(j)}} U_i
\Bigg\|_{L^2_{\phi^{K}}} \\
&= O\left( E^{1/2-1/(4K)} + \|\phi^{K/2} \p_\beta U  \|_{L^2} \right), \label{eq:mainder1}\end{split}
\end{align}
\begin{align} \begin{split}
\Bigg\| 
\p_\beta (U \nabla S ) &- U \nabla \p_\beta S - \sum_{j=1}^K \sum_{k=1}^3 \left( \left( \p_R (\hat{X}\bar U_R) - \frac{\hat{X}\bar U_R}{R} \right) \frac{y_{\beta_j} y_k}{R^2} + \delta_{k, \beta_j} \frac{\hat{X}\bar U_R}{R} \right) \p_k \p_{\beta^{(j)}} S
\Bigg\|_{L^2_{\phi^{K}}}  \\
&= O\left( E^{1/2-1/(4K)} + \|  \phi^{K/2} \p_\beta S \|_{L^2} \right), \label{eq:mainder2}  \end{split}
\end{align}
\begin{align}
\left\| 
\p_\beta (S \p_i S ) - S \nabla \p_\beta S - \sum_{j=1}^K  \frac{y_{\beta_j}}{R} \p_R (\hat{X}\bar S) \p_i \p_{\beta^{(j)}} S
\right\|_{L^2_{\phi^{K}}} &= O\left( E^{1/2-1/(4K)} + \|  \phi^{K/2}\p_\beta S \|_{L^2} \right), \label{eq:mainder3} \\
\left\| 
\p_\beta (S \div (U) ) - S  \p_\beta \div (U) - \sum_{j=1}^K \sum_{i=1}^3 \frac{y_{\beta_j}}{R} \p_R (\hat{X}\bar S) \p_i \p_{\beta^{(j)}} U_i
\right\|_{L^2_{\phi^{K}}} &= O\left( E^{1/2-1/(4K)} + \|\phi^{K/2} \p_\beta U  \|_{L^2} \right). \label{eq:mainder4} 
\end{align}
\end{lemma}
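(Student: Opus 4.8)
\textbf{Proof proposal for Lemma \ref{lowerorderestimate}.}

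The plan is to expand each of the four expressions using the Leibniz rule for $\p_\beta$ applied to a product, and then carefully account for which terms are already subtracted off on the left-hand side and which must be estimated. For \eqref{eq:mainder1}, write $\p_\beta(U\nabla U_i) = \sum_{\text{subsets}} (\p_{\gamma} U_j)(\p_{\beta\setminus\gamma}\p_j U_i)$. The term where all $K$ derivatives land on $U_i$ is exactly $U\nabla\p_\beta U_i$. The terms where exactly one derivative $\p_{\beta_j}$ lands on the transport coefficient $U_j$ and the remaining $K-1$ land on $\p_j U_i$ split further: writing $U = \tilde U + \hat X\bar U$, the contribution of $\hat X\bar U$ to this single-derivative piece is precisely the explicitly subtracted sum, once one uses the radial-symmetry identity \eqref{eq:pontecesures} for $\p_{\beta_j}(\hat X\bar U)_k$ (modulo terms where a derivative falls on $\hat X$, which are supported on $|y|\approx e^s$ and decay like $e^{-s}$ times lower-order energies, hence are $O(E^{1/2})$ or smaller). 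The contribution of $\tilde U$ to this single-derivative piece is $\sum_{j,k}(\p_{\beta_j}\tilde U_k)(\p_k\p_{\beta^{(j)}} U_i)$, which must be bounded by the right-hand side. All remaining Leibniz terms have at least two derivatives landing away from $\p_j U_i$, i.e.\ at least two on $U_j$ (or one on $U_j$ and one on $\hat X$), leaving at most $K-1$ derivatives on $U_i$.

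The key estimate is then the following: any Leibniz term of the schematic form $(\nabla^a U)(\nabla^{K+1-a} U)$ with $2\le a\le K$ (weighted by $\phi^{K/2}$) is $O(E^{1/2-1/(4K)})$, and the single term $(\nabla\tilde U)(\nabla^K U)$ is $O(\|\phi^{K/2}\p_\beta U\|_{L^2} + E^{1/2-1/(4K)})$. The first is an interpolation estimate: by Lemma \ref{lemma:brasilia} (and the weighted Gagliardo--Nirenberg Lemmas \ref{lemma:GN_general}, \ref{lemma:GN_generaltorus} from the Appendix), each factor $\phi^{a/2}|\nabla^a U|$ for $1\le a\le K-1$ enjoys a bound $\les_{\delta_0,\bar\eps}\langle R\rangle^{\bar\eps}S$ times a power of $E$ strictly less than $1/2$ coming from interpolation between $\|\nabla^K U\|_{L^2_{\phi^K}}\les_K E^{1/2}$ and the $L^\infty$ decay bounds; pairing one high-order factor ($a = K$ or $K-1$, carrying essentially one power of $E^{1/2}$ but with the exponent docked by the interpolation) against a genuinely low-order factor (carrying $\delta_0^{\text{positive power}}$, hence a small constant that beats the $\langle R\rangle^{\bar\eps}$ losses and the $E$-powers) yields the gain $E^{1/2-1/(4K)}$ after choosing $\bar\eps$ small and $K$ large. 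For the borderline term $(\nabla\tilde U)(\nabla^K U)$ one keeps $\|\phi^{K/2}\p_\beta U\|_{L^2}$ explicitly (it will be absorbed later by the damping in the main energy identity) and uses $\|\nabla\tilde U\|_{L^\infty}\les \delta_0 e^{-\epsilon(s-s_0)}$ from \eqref{lowerestimate1}--\eqref{lowerestimate2} via Lemma \ref{linfitnityinside} and Lemma \ref{lemma:Sprimebounds}. The cases \eqref{eq:mainder2}, \eqref{eq:mainder3}, \eqref{eq:mainder4} are handled identically: \eqref{eq:mainder2} is literally the same with $U_i$ replaced by $S$ in the differentiated slot; \eqref{eq:mainder3}--\eqref{eq:mainder4} use the radial identity $\p_{\beta_j}(\hat X\bar S) = \frac{y_{\beta_j}}{R}\p_R(\hat X\bar S) + (\text{derivative on }\hat X)$ (simpler than \eqref{eq:pontecesures} since $\bar S$ is scalar), and the same interpolation bookkeeping, noting that the $S$-in-front structure means one factor of $S$ is free and the weighted norms $|\nabla^a S|\phi^{a/2}/S$ controlled in \eqref{eq:brazil3} are exactly what is needed.

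The main obstacle I expect is the careful combinatorial bookkeeping of the Leibniz expansion together with the $\hat X$-cutoff terms: one must verify that (i) the \emph{only} terms not absorbed into the error are the explicitly displayed $\hat X\bar U$- (resp.\ $\hat X\bar S$-) single-derivative terms and the $\nabla^K U$ transport term, and (ii) every $\hat X$-derivative term really does produce an extra $e^{-s}$ (or faster) decay — using $\supp\nabla^j\hat X\subset B(0,e^s)\setminus B(0,e^s/2)$ and $\|\nabla^j\hat X\|_{L^\infty}\les e^{-js}$ as in Lemma \ref{C0choice} — so that it is dominated by $E^{1/2}$ (in fact by $\delta_1 e^{-(s-s_0)}$-type quantities). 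A secondary technical point is ensuring the interpolation exponents genuinely close: one needs $K$ large enough (relative to $\eta$, $\bar\eps$) that the loss $\langle R\rangle^{\bar\eps}$ in Lemma \ref{lemma:brasilia}, the weight mismatch $\phi^{K/2}$ vs.\ the $\phi^{a/2}$ from GN, and the fractional power $1/(4K)$ are all compatible; this is exactly the regime $1/K\ll 1/m\ll\eta\ll\delta_g$ and $1/K\ll\tilde\eta$ fixed in \eqref{choiceofparameter}--\eqref{choiceofparameter2}, and the endpoint case $j = K-1$ in Lemma \ref{lemma:brasilia} (the $L^{2+2/(K-2)}$ bound \eqref{eq:brazilk-1}) is what saves the worst pairing via Hölder rather than pure $L^\infty\times L^2$.
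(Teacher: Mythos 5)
Your proposal follows the same plan as the paper's proof: Leibniz expansion of $\p_\beta(U\nabla U_i)$ isolating the $U\nabla\p_\beta U_i$ transport term, the single-derivative-on-transport terms (then split $U = \tilde U + \hat X\bar U$ and use the radial identity for the latter), the all-on-transport term $\p_\beta U\nabla U_i$ kept explicitly via $\|\nabla U\|_{L^\infty}\lesssim 1$, and weighted Gagliardo--Nirenberg interpolation for the remaining $\nabla^\ell U\cdot\nabla^{\ell'}U$ terms with $\ell,\ell'\geq 2$. One minor inaccuracy: since $\hat X$ is itself radial, the identity $\p_{\beta_j}(\hat X\bar U_k) = \left(\p_R(\hat X\bar U_R)-\frac{\hat X\bar U_R}{R}\right)\frac{y_k y_{\beta_j}}{R^2}+\delta_{\beta_j,k}\frac{\hat X\bar U_R}{R}$ holds exactly with no $\nabla\hat X$ remainder, so your ``(modulo terms where a derivative falls on $\hat X$)'' caveat is unnecessary at that step — the $\hat X$-derivative terms instead enter through the GN estimates for $\|\phi^{K\theta/2}\nabla^\ell(\hat X\bar U_R)\|_{L^{\bar r}}$, which the paper controls via the $e^{-s}$ localization as you correctly anticipate; and the paper uses \eqref{eq:GNresult} directly rather than routing through Lemma \ref{lemma:brasilia}, though these are the same underlying tool.
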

\begin{proof}
Since all those bounds are analogous, let us just show \eqref{eq:mainder1}. We develop $\p_\beta (U \nabla U_i)$ by the Leibniz rule and we observe that:
\begin{equation} \label{eq:uruguay1}
\left| \p_\beta(U \nabla U_i) - \p_\beta U \nabla U_i - U \nabla \p_\beta U_i - \sum_{j=1}^K \p_{\beta_j} U \nabla \p_{\beta^{(j)}} U_i  \right| \les_K \sum_{\ell, \ell' \geq 2, \ell + \ell' = K+1} | \nabla^\ell U | \cdot | \nabla^{\ell'} U |,
\end{equation}
where $\les_K$ indicates that the implicit constant is allowed to depend on $K$. Now, using \eqref{eq:GNresult} from Lemma \ref{lemma:GN_general} for $\psi = 1$, $p=\infty$, $q=2$, $m=K$, $\bar{r} = \frac{2(K+1)}{\ell}$, $\theta = \frac{\ell - 3/\bar{r}}{K - 3/2}$ we have that
\begin{align} \begin{split} \label{eq:mendel}
\| \langle R \rangle^{-1/10} \phi^{\frac{K\theta}{2}} \nabla^\ell \tilde U \|_{L^{{\bar{r}}} } &\les \delta_0^{1-\theta} E^{\frac{\theta}{2}} \\ 
\| \langle R \rangle^{-1/10} \phi^{\frac{K\theta}{2}} \nabla^\ell (\hat X \bar U_R) \|_{L^{{\bar{r}}} } &\les_K 1.
\end{split} \end{align}
Thus, we obtain that for $\ell+\ell' = K+1$, $\ell, \ell' \geq 2$, using \eqref{perturbationenergy}:
\begin{align}  \label{eq:trinidad1}
    \left\| |\nabla^\ell \tilde U | | \nabla^{\ell'} \tilde U| \phi^{K/2} \right\|_{L^2} &= \left\| |\nabla^\ell \tilde U | | \nabla^{\ell'} \tilde U| \phi^{\frac{K}{2}\cdot \frac{\ell - 3/\bar{r}}{K-3/2}}\phi^{\frac{K}{2}\cdot \frac{\ell' - 3/r'}{K-3/2}} \phi^{-\frac12 \cdot \frac{K}{K-3/2}} \right\|_{L^2} \notag \\
    &\leq \left\| \langle R \rangle^{-1/10} \phi^{\frac{K}{2}\cdot \frac{\ell - 3/\bar{r}}{K-3/2}} \nabla^\ell \tilde U \right\|_{L^{\frac{2(K+1)}{\ell}}}
    \left\| \langle R \rangle^{-1/10} \phi^{\frac{K}{2}\cdot \frac{\ell' - 3/r'}{K-3/2}} \nabla^{\ell'} \tilde U \right\|_{L^{\frac{2(K+1)}{\ell'}}} \notag \\
    & \les \delta_0^{\frac{K-5/2}{K-3/2}}E^{\frac{K-\frac{1}{2}}{2(K-3/2)}}.
\end{align}
In the first inequality we used that $\ell+\ell' = K+1$ together with $\frac{K(\ell-3/\bar{r})}{K-3/2} + \frac{K(\ell' - 3/r')}{K-3/2} = K\frac{K-1/2}{K-3/2}$ for $\bar{r}= \frac{2(K+1)}{\ell}$, $r' = \frac{2(K+1)}{\ell'}$. \eqref{eq:GN_extracond} is satisfied because $\eta\gg \frac{1}{K}.$ In the second one we used \eqref{eq:mendel} with $\theta + \theta' = \frac{K-1/2}{K-3/2}$.
Similar to \eqref{eq:trinidad1}, using again \eqref{perturbationenergy} and \eqref{eq:mendel}, we obtain that
\begin{align}  \label{eq:trinidad2}
    \left\| |\nabla^\ell \tilde U | | \nabla^{\ell'}  (\hat{X}\bar U_R) | \phi^{K/2} \right\|_{L^2} &\leq
\left\| \langle R \rangle^{-1/10} \phi^{\frac{K}{2}\cdot \frac{\ell - 3/\bar{r}}{K-3/2}} \nabla^\ell \tilde U \right\|_{L^{\frac{2(K+1)}{\ell}}}
    \left\| \langle R \rangle^{-1/10} \phi^{\frac{K}{2}\cdot \frac{\ell' - 3/r'}{K-3/2}} \nabla^{\ell'} (\hat X \bar U_R) \right\|_{L^{\frac{2(K+1)}{\ell'}}} \notag \\
     &\les_K \delta_0^{\frac{K-2}{(K-3/2)(K+1)}}E^{\frac12 \left(1- \frac{K-2}{(K-3/2)(K+1)} \right)}
     \les 1,
\end{align}
and
\begin{align}  \label{eq:trinidad3}
    \left\| |\nabla^\ell  (\hat{X}\bar U_R)  | | \nabla^{\ell'} (\hat{X}\bar U_R) | \phi^{K/2} \right\|_{L^2} \leq\left\| \phi^{\frac{K}{2}\cdot \frac{(\ell - 3/\bar{r})}{K-3/2}} \nabla^\ell   (\hat{X}\bar U_R)  \right\|_{L^{\frac{2(K+1)}{\ell}}}
    \left\| \phi^{\frac{K}{2}\cdot \frac{(\ell' - 3/\bar{r})}{K-3/2}} \nabla^{\ell'} (\hat{X}\bar U_R) \right\|_{L^{\frac{2(K+1)}{\ell'}}}\les_K 1.
\end{align}
Combining \eqref{eq:trinidad1}--\eqref{eq:trinidad3} (and using that $\delta_0$ is sufficiently small depending on $K$, $E$), we obtain
\begin{equation}\label{eq:uruguay2}
\left\| | \nabla^\ell U | \cdot |\nabla^{\ell'} U | \phi^{K/2} \right\|_{L^2} 
\les_K 1.
\end{equation}

From \eqref{eq:uruguay1}--\eqref{eq:uruguay2}, using that $E$ is sufficiently large depending on $K$, we deduce
\begin{equation} \label{eq:uruguay3}
\left\| \p_\beta(U \nabla U_i) - \p_\beta U \nabla U_i - U \nabla \p_\beta U_i - \sum_{j=1}^K \p_{\beta_j} U \nabla \p_{\beta^{(j)}} U_i  \right\|_{L^2_{\phi^{K}}} \les E^{1/4}.
\end{equation}

Using the Gagliardo-Nirenberg inequality between $\| \tilde U \|_{L^\infty} \leq \delta_0$ and $\| \tilde U \|_{\dot{H}^{m}} \leq E$ (and using that $\delta_0$ is sufficiently small depending on $E$), we obtain that 
\begin{equation} \label{eq:nablaUtilde}
    \| \nabla \tilde U \|_{L^\infty} \les \delta_0^{9/10}.
\end{equation}
We deduce that
\begin{equation} \label{eq:nablaUtotal}
    \| \nabla U \|_{L^\infty} \leq \| \nabla \tilde U \|_{L^\infty} + \| \nabla (  \hat{X}\bar U_R)  \|_{L^\infty} \les 1.
\end{equation}
Therefore
\begin{equation} \label{eq:uruguay4}
\| \p_\beta U \nabla U_i \phi^K \|_{L^2} \les \| \nabla U_i \|_{L^\infty} \| \p_\beta U \|_{L^2_{\phi^{K}}} \les \| \p_\beta U \|_{L^2_{\phi^{K}}}.
\end{equation}

Thus, from \eqref{eq:uruguay3} and \eqref{eq:uruguay4}, we conclude
\begin{equation} \label{eq:uruguay5}
\left\| \p_\beta(U \nabla U_i) - U \nabla \p_\beta U_i - \sum_{j=1}^K \p_{\beta_j} U \nabla \p_{\beta^{(j)}} U_i  \right\|_{L^2_{\phi^{K}}} \les E^{1/4} + \| \p_\beta U \phi^{K/2} \|_{L^2}.
\end{equation}
Using \eqref{eq:nablaUtilde}, we also have that
\begin{equation*}
    \left\| \sum_{j=1}^K \p_{\beta_j} \tilde U \nabla \p_{\beta^{(j)}} U_i  \right\|_{L^2_{\phi^{K}}} \les \delta_{0}^{9/10} E^{\frac{1}{2}} \les \delta_{0}^{4/5}.
\end{equation*}
Using this into \eqref{eq:uruguay5}, we deduce
\begin{equation}\label{eq:uruguay6}
\left\| \p_\beta(U \nabla U_i) - U \nabla \p_\beta U_i - \sum_{j=1}^K \p_{\beta_j}  (\hat{X}\bar U_R)  \nabla \p_{\beta^{(j)}} U_i  \right\|_{L^2_{\phi^{K}}} \les E^{1/2-1/(4K)} + \| \p_\beta U \phi^{K/2} \|_{L^2}.
\end{equation}
Finally, we compute explicitly the last term of the norm. Using that $\bar U$ is radially symmetric, we have
\begin{equation*}
    \p_{\beta_j} (\hat{X}\bar U_k) = \p_{\beta_j} \left( \frac{y_k}{R} \hat{X}\bar U_R \right) = \left( \p_R (\hat{X}\bar U_R) - \frac{\hat{X}\bar U_R}{R} \right) \frac{y_k y_{\beta_j}}{R^2} + \delta_{\beta_j, k} \frac{\hat{X} \bar U_R}{R},
\end{equation*}
and plugging this into \eqref{eq:uruguay6}, we conclude the proof.
\end{proof}

We will use this Lemma to estimate $\mc I_2$ and $\mc I_3$. Note that once $j$ is fixed, it is equivalent to say that $\beta$ runs over $[3]^K$ than to say that $\beta^{(j)}$ runs over $[3]^{K-1}$ and $\beta_j$ over $[3] = \{1, 2, 3\}$. For fixed $j$ we will denote $\tilde \beta = \beta^{(j)}$ and $m = \beta_j$. Using \eqref{eq:mainder1}, we see that
\begin{align}
\sum_{\beta \in [3]^K} &\int \phi^K \sum_i \p_\beta U_i \p_\beta (U \cdot \nabla U_i) = \sum_{\beta \in [3]^K } \left( O(E^{1/2-1/(4K)} ) + O(\| \phi^{K/2} \p_\beta U \|_{L^2}) \right) \| \p_\beta U \phi^{K/2} \|_{L^2} \notag \\
&\quad + \sum_{\beta \in [3]^K} \int \phi^K \sum_i \Bigg[ U \nabla \p_\beta U_i \p_\beta U_i + \p_\beta U_i \sum_{j=1}^K \sum_{k=1}^3 \left( \left( \p_R (\hat{X}\bar U_R) - \frac{\hat{X}\bar U_R}{R} \right) \frac{y_{\beta_j} y_k}{R^2} + \delta_{k, \beta_j} \frac{\hat{X}\bar U_R}{R}  \right) \p_k \p_{\beta^{(j)}} U_i \Bigg] \notag \\
&=
O(3^K E^{1-1/(4K)} ) + O\left( \sum_{\beta \in [3]^K } \| \phi^{K/2} \p_\beta U \|_{L^2}^2 \right) - \sum_{\beta \in [3]^K} \int \frac{\div (\phi^K U)}{2 \phi^K } |\p_\beta U |^2 \phi^K \notag \\ 
&\quad + \sum_{j=1}^K \sum_{\tilde \beta \in [3]^{K-1}} \sum_{i, k, m = 1}^3 \int \phi^K \left( \left( \p_R (\hat{X}\bar U_R) - \frac{\hat{X}\bar U_R}{R} \right) \frac{y_k y_m}{R^2} + \delta_{k, m} \frac{\hat{X}\bar U_R}{R} \right)
\p_m \p_{\tilde \beta} U_i \p_k \p_{\tilde \beta} U_i \notag \\
&=
O(E ) - \sum_{\beta \in [3]^K} \int \frac{K \nabla \phi \cdot U+ \phi \div (U)}{2\phi}  |\p_\beta U |^2 \phi^K \notag \\ 
&\quad +K \sum_{\tilde \beta \in [3]^{K-1}} \int \phi^K \left( \p_R(\hat{X}\bar U_R) - \frac{\hat{X}\bar U_R}{R} \right) \sum_{i=1}^3 | \p_R \p_{\tilde \beta} U_i |^2
+ K \sum_{\tilde \beta \in [3]^{K-1}} \sum_{i, k = 1}^3 \int \phi^K \frac{\hat{X}\bar U_R}{R} |\p_k \p_{\tilde \beta} U_i|^2 \notag \\
&\geq
O(E ) + K \sum_{\tilde \beta \in [3]^{K-1}} \int \left( 
\frac{- \nabla \phi \cdot \bar U}{2\phi } | \nabla \p_{\tilde \beta} U|^2  
+ \p_R (\hat X \bar U_R) | \p_R \p_{\tilde \beta} U|^2 + 
\frac{\hat{X}\bar U_R}{R} |\nabla_{\theta} \p_{\tilde \beta} U |^2 \right) \phi^K.   \label{eq:paraguay1}
\end{align}
The first equality is a combination of \eqref{eq:mainder1} with Cauchy-Schwarz. After the second equality, the first two big-$O$ terms arise from the geometric-quadratic inequality, the third term arises from integration by parts, and the fourth term is just obtained by rearranging indices (we extract the $j$-th index in $\beta$ and define $\tilde \beta = \beta^{(j)}$ together with $m = \beta_j$). In the third equality we used that $\sum_{k, m} \frac{y_m y_k}{R^2} \p_{k} f \p_{m} f = |\p_R f|^2$. In the last inequality we used that $\|\div (U)\|_{L^\infty} \les 1$ (from \eqref{eq:nablaUtotal}) and $\|\frac{\nabla \phi}{2\phi}\tilde{U}\|_{L^{\infty}}\lesssim \delta_0$. Recall that $\nabla_{\theta}$ is the gradient in spherical coordinates without the radial derivative. 

An analogous computation using \eqref{eq:mainder2} yields
\begin{align}
&\sum_{\beta \in [3]^K} \int \phi^K \p_\beta S \p_\beta (U \cdot \nabla S) \notag \\
&\qquad  \label{eq:paraguay2} \geq
K \sum_{\tilde \beta \in [3]^{K-1}} \int \left( \frac{-\nabla \phi \cdot \hat{X}\bar U_R}{2\phi} | \nabla \p_{\tilde \beta} S |^2 
+ \p_R (\hat X \bar U_R) |\p_R \p_{\tilde \beta} U|^2
+ \frac{\hat X \bar U_R}{R} | \nabla_{\theta} \p_{\tilde \beta} S|^2\right)  \phi^K  + O(E).
\end{align}

Adding \eqref{eq:paraguay1} and \eqref{eq:paraguay2}, we see that
\begin{align} 
-\mc I_2  \leq O(E) &+ K \int \left( \frac{\nabla \phi \cdot  \hat{X}\bar U_R}{2\phi} - \p_R (\hat X \bar U_R) \right) \phi^K \left( | \p_R \nabla^{K-1} U |^2 + | \p_R \nabla^{K-1} S|^2 \right) \notag \\
&+ K \int \left( \frac{\nabla \phi \cdot  \hat{X}\bar U_R}{2\phi} - \frac{\hat X \bar U_R}{R} \right) \phi^K \left( | \nabla_\theta \nabla^{K-1} U |^2 + | \nabla_\theta \nabla^{K-1} S|^2 \right). \label{eq:paraguay3}
\end{align}

We focus on $\mc I_3$. Using \eqref{eq:mainder3}, we start computing 
\begin{align}
\sum_{\beta \in [3]^K} &\sum_{i=1}^3 \int \phi^K \p_\beta (S \p_i S) \p_\beta U_i = 
\sum_{\beta \in [3]^K} O \left( (E^{1/2-1/(4K)}  + \| \phi^{K/2} \p_\beta S \|_{L^2} ) \| \phi^{K/2} \p_\beta U \|_{L^2} \right)  \notag \\
&\quad + \sum_{\beta \in [3]^K}  \int \phi^K S \nabla \p_\beta S \cdot \p_\beta U  + \sum_{i=1}^3 \sum_{\beta \in [3]^K } \int \phi^K \p_\beta U_i \sum_{j=1}^K \frac{y_{\beta_j}}{R} \p_R (\hat{X}\bar S) \p_{\beta^{(j)}} S\notag  \\
&= 
O \left( 3^K E^{1-1/(4K)} + \sum_{\beta \in [3]^K } (\| 
 \phi^{K/2} \p_\beta U \|_{L^2}^2 + \| \phi^{K/2} \p_\beta S \|_{L^2}^2 ) \right) + \sum_{\beta \in [3]^K } \sum_{i=1}^3 \int \phi^K S \p_i \p_\beta S \p_\beta U_i  \notag \\
 &\quad + 
 \sum_{j=1}^K \sum_{\beta^{(j)} \in [3]^{K-1}} \sum_{\beta_j =1}^3 \sum_{i = 1}^3 \int \phi^K \p_R (\hat{X}\bar S)  \frac{y_{\beta_j}}{R} \p_{\beta_j} \p_{\beta^{(j)}} U_i \p_i \p_{\beta^{(j)} } S  \notag \\
&= 
O \left( 3^K E^{1-1/(4K)} + \sum_{\beta \in [3]^K } (\| 
 \phi^{K/2} \p_\beta U \|_{L^2}^2 + \| \phi^{K/2} \p_\beta S \|_{L^2}^2 ) \right) + \sum_{\beta \in [3]^K } \sum_{i=1}^3 \int \phi^K S \p_i \p_\beta S \p_\beta U_i  \notag \\
 &\quad + 
 \sum_{j=1}^K \sum_{\tilde \beta \in [3]^{K-1}} \sum_{i, m = 1}^3 \int \phi^K \p_R (\hat{X}\bar S)  \frac{y_m}{R} \p_m \p_{\tilde \beta} U_i \p_i \p_{\tilde \beta } S  \notag \\
 &=
 O(E) + \sum_{\beta \in [3]^K} \sum_{i=1}^3 \int \phi^K S \p_i \p_\beta S \p_\beta U_i + K \sum_{\tilde \beta \in [3]^{K-1}} \sum_i \int \phi^K \p_R( \hat{X}\bar S) \p_R \p_{\tilde \beta} U_i  \p_i \p_{\tilde \beta} S \notag \\
 &\geq O(E) + \sum_{\beta \in [3]^K} \sum_{i=1}^3 \int \phi^K S \p_i \p_\beta S \p_\beta U_i - K \sum_{\tilde \beta \in [3]^{K-1}} \sum_{i=1}^3 \int \phi^K |\p_R(\hat{X} \bar S) | \left( \frac{| \nabla \p_{\tilde \beta} U_i |^2}{2} +  \frac{|\p_i \p_{\tilde \beta} S|^2}{2} \right) \notag \\
 &= O(E) + \sum_{\beta \in [3]^K} \sum_{i=1}^3 \int \phi^K S \p_i \p_\beta S \p_\beta U_i - \frac{K}{2} \sum_{\beta \in [3]^{K}}\int \phi^K |\p_R (\hat{X}\bar S)| \left( |  \p_{\beta} U |^2 +  | \p_{ \beta} S|^2\right). \label{eq:bolivia1}
\end{align}
An analogous computation using \eqref{eq:mainder4} yields:
\begin{align} \begin{split}\label{eq:bolivia2}
\sum_{\beta \in [3]^K} \sum_{i=1}^3 \int \phi^K \p_\beta (S \p_i U_i ) \p_\beta S &\geq
O(E) + \sum_{\beta \in [3]^K} \sum_{i=1}^3 \int \phi^K S \p_\beta S \p_i  \p_\beta U_i  \\
&\quad - \frac{K}{2} \int \phi^K |\p_R(\hat{X}\bar S)| ( | \p_\beta U |^2 + | \p_\beta S |^2 ).
\end{split} \end{align}

Adding up \eqref{eq:bolivia1} and \eqref{eq:bolivia2}, and integrating by parts, we get
\begin{align} \label{eq:bolivia3}
-\mc I_3 &\leq O(E) - \alpha \sum_{\beta \in [3]^K} \sum_{i=1}^3 \int \phi^K \frac{\p_i (\phi^K S)}{\phi^K} \p_\beta S \p_\beta U_i + K\alpha \sum_{\beta \in [3]^K} \int \phi^K |\p_R (\hat{X}\bar S) | ( |\p_\beta U |^2 + | \p_\beta S |^2 )\\\nonumber
&\leq O(E) - \alpha \sum_{\beta \in [3]^K} \sum_{i=1}^3 \int \phi^K \frac{\p_i (\phi^K \hat{X}\bar S)}{\phi^K} \p_\beta S \p_\beta U_i + K\alpha \sum_{\beta \in [3]^K} \int \phi^K |\p_R (\hat{X}\bar S) | ( |\p_\beta U |^2 + | \p_\beta S |^2 ), 
\end{align}
where we recall $\alpha = \frac{\ga - 1}{2}$ and we also use $\|\frac{\partial_{i}(\phi^{K})\tilde{S}}{\phi^{K}}\|_{L^{\infty}}\lesssim K\delta_0\lesssim 1.$
Since $\| \nabla \bar S \|_{L^\infty} \les 1$, we have
\begin{align*}
 \left| \sum_{\beta \in [3]^K} \sum_{i=1}^3 \int \phi^K \frac{\p_i (\phi^K \hat{X}\bar S)}{\phi^K} \p_\beta S \p_\beta U_i \right|  &\leq
    \sum_{\beta \in [3]^K} \sum_{i=1}^3 \int \phi^K \left( | \p_i (\hat{X}\bar S) | \frac{| \p_\beta U_i|^2 + | \p_\beta S |^2}{2} + K \frac{\p_i \phi \hat{X}\bar S}{\phi} \p_\beta S \p_\beta U_i \right)  \\
    &\leq O(E) + K \sum_{\beta \in [3]^K} \int \phi^K \frac{| \nabla \phi | \hat{X}\bar S}{\phi} |\p_\beta S| \cdot |\p_\beta U | \\
    &\leq O(E) + \frac{K}{2} \sum_{\beta \in [3]^K} \int \phi^K \frac{| \nabla \phi | \hat{X}\bar S}{\phi} \left( |\p_\beta S|^2 +  |\p_\beta U |^2 \right),
\end{align*}
and using this into \eqref{eq:bolivia3}, we get
\begin{equation} \label{eq:bolivia4}
    -\mc I_3 \leq O(E) + K\alpha  \int \phi^K \left( \frac{| \nabla \phi | \hat{X}\bar S}{2\phi} + | \p_R (\hat{X}\bar S) |\right)\left( |\nabla^K S |^2 + | \nabla^K U |^2  \right) 
\end{equation}

Finally, adding \eqref{estimateI1}, \eqref{eq:paraguay3} and \eqref{eq:bolivia4}, we obtain
\begin{align} \begin{split} \label{eq:argentina}
    -\mc I_1-\mc I_2-\mc I_3 &\leq O(E) + K \int \left(\p_R(\hat{X} \bar U_R) + \alpha |\p_R(\hat{X} \bar S) | \right) \phi^K ( |\p_R \nabla^{K-1} U|^2 + | \p_R \nabla^{K-1} S |^2 ) \\
    & \qquad+ K \int  \left( \frac{\hat{X}\bar U_R}{R} + \alpha |\p_R(\hat{X} \bar S) | \right) \phi^K ( |\nabla_\theta \nabla^{K-1} U|^2 + | \nabla_\theta \nabla^{K-1} S |^2 ) \\
    & \qquad + K  \int \frac{\nabla \phi \cdot (y + \hat{X}\bar U) + | \nabla \phi | \hat{X}\bar S}{2\phi} \phi^K ( |\nabla^K U|^2 + | \nabla^K S |^2 )\\
    &\qquad-\underbrace{\frac{1}{2}e^{s}L\sum_{\beta \in [3]^K}\int_{\partial (e^s\mathbb{T}_{L}^{3})}(|\partial_{\beta}U|^2+|\partial_{\beta}S|^2)d\sigma.}_{\text{boundary term}}.
\end{split} \end{align}

Now, we claim the following modified repulsivity properties: 
\begin{align} \label{eq:argentina2}
  \p_R (\hat{X}\bar U_R) + \alpha |\p_R (\hat{X}\bar S) |+ \frac{\nabla \phi \cdot (y+\hat{X}\bar U) + | \nabla \phi | \hat{X}\bar S}{2\phi} \leq 1 - \frac{\min \{ \tilde \eta, \eta \}}{2}, \\
    \frac{\hat{X}\bar U_R}{R} + \alpha |\p_R (\hat{X}\bar S) |+ \frac{\nabla \phi \cdot (y+\hat{X}\bar U) + | \nabla \phi | \hat{X}\bar S}{2\phi} \leq 1 - \frac{\min \{ \tilde \eta , \eta \} }{2}. \label{eq:argentina5halfs}
\end{align}
In the region $|y| \leq R_0$, we have $\phi = 1$, so the last fraction on the LHS is zero. Also, $|\p_R \hat X| \les e^{-s_0}$. Therefore, by taking $\tilde \eta$ sufficiently small \eqref{eq:argentina2} and \eqref{eq:argentina5halfs} follow from the standard repulsivity properties \eqref{eq:radial_repulsivity}--\eqref{eq:angular_repulsivity}.

In the region $|y| \geq R_0$, since $R_0$ is sufficiently large depending on $ \eta$ \eqref{Rochoice estimate} and the profiles decay (equation \eqref{eq:profiles_decay}), we have that:
\begin{equation*}
-\p_R \bar U_R + \alpha |\p_R \bar S |+ \frac{\nabla \phi \cdot (y+\bar U) + | \nabla \phi | \bar S}{2\phi}  \leq \frac{\eta}{4} + \frac{R\p_R \phi}{2\phi}  \leq \frac{\eta}{4} + 1-\eta < 1-\frac{\eta}{2},
\end{equation*}
and this concludes the proof of \eqref{eq:argentina2}. We can conclude \eqref{eq:argentina5halfs} analogously just interchanging $\p_R \bar U$ by $\bar U/R$. Finally, combining \eqref{eq:argentina} with \eqref{eq:argentina2}--\eqref{eq:argentina5halfs}, we get that
\begin{align} 
-\mc I_1 - \mc I_2 - \mc I_3 &\leq O(E) + K\left( 1 - \frac{\min \{ \eta , \tilde \eta \} }{2} \right) \sum_{\beta \in [3]^K} \int \phi^K ( |\p_\beta U |^2 + | \p_\beta S |^2 )\\\nonumber
& \qquad-\underbrace{\frac{1}{2}e^{s}L\sum_{\beta \in [3]^K}\int_{\partial (e^s\mathbb{T}_{L}^{3})}(|\partial_{\beta}U|^2+|\partial_{\beta}S|^2)d\sigma.}_{\text{boundary term}}
\end{align}
and plugging this into \eqref{eq:EE1}, we get 
\begin{equation} \label{eq:EE2}
    \p_s E_{K} \leq -\frac{K\min \{ \eta, \tilde \eta \} }{2} E_{K} + O(E)+\mc I_{4}.
\end{equation}

\subsubsection{Dissipation term}
 Starting from the expression of $\mc I_4$ in \eqref{eq:EE1}, we commute $\p_\beta$ with $S^{-1/\alpha}$ and integrate by parts:
 \begin{align} \label{eq:Dmain}
     \mc I_4 = \frac{r^{1 + \frac{1}{\alpha}} }{\alpha^{1/\alpha}} e^{-\delta_{\rm{dis}} s } \sum_{i=1}^{3} \sum_{\beta \in [3]^K} 
     \left( \underbrace{ \int \phi^K \p_\beta U_i \left[ \p_\beta, \frac{1}{S^{1/\alpha}} \right] \Delta U_i }_{\mc D_{1, \beta, i}}
     - \underbrace{ \int \frac{\phi^K}{S^{1/\alpha}} |\nabla \p_\beta U_i|^2 }_{\mc D_{2, \beta, i}}
      -\underbrace{ \int \nabla \left( \frac{\phi^K}{S^{1/\alpha}} \right) \nabla \p_\beta U_i \p_\beta U_i }_{\mc D_{3, \beta, i}} \right).
 \end{align}
Clearly $\mc D_{2, \beta, i } > 0$, so that term already has the correct sign. We will bound $\mc D_{1, \beta, i}$ and $\mc D_{3, \beta, i}$. Let us start with the latter: 
\begin{align}
    | \mc D_{3, \beta, i} | &=\left| \int \left( \frac{K \phi^{K-1} \nabla \phi}{S^{1/\alpha}} - \frac{1}{\alpha} \cdot \frac{\phi^K \nabla S}{S^{1+1/\alpha}} \right) \nabla \p_\beta U_i \p_\beta U_i \notag \right|\\
    &\leq D_{2, \beta, i}^{1/2} \left[ K \left( \int \frac{\phi^{K-2} |\nabla \phi |^2}{S^{1/\alpha} } | \p_\beta U_i |^2 \right)^{1/2} + \frac{1}{\alpha} \left( \int \phi^K \frac{|\nabla S|^2}{S^{2+1/\alpha}} | \p_\beta U_i |^2 \right)^{1/2} \right] \notag \\
    &\leq D_{2, \beta, i}^{1/2} E^{1/2} \left( K \left\| \frac{| \nabla \phi |^2}{\phi^2 S^{1/\alpha } }\right\|_{L^\infty} + \frac{1}{\alpha} \left\| \frac{| \nabla S |^2}{S^{2+1/\alpha}} \right\|_{L^\infty} \right). \label{eq:suriname1}
\end{align} 

Using Lemmas \ref{lemma:Sbounds} and \ref{lemma:Sprimebounds}, we see that
\begin{align} \label{eq:suriname2}
    \frac{| \nabla \phi |^2}{\phi^2 S^{1/\alpha} } \les_{\delta_0}  \frac{1}{\langle R \rangle^2 \cdot \langle R \rangle^{-(r-1)/\alpha}} \leq 1, \qquad \mbox{ and } \qquad 
    \frac{| \nabla S |^2}{S^{2+1/\alpha}} \les_{\delta_0}  \frac{\langle R \rangle^{-2r}}{\langle R \rangle^{-(2+1/\alpha)(r-1)}} \leq 1,
\end{align}
where in the last inequality of both equations we used $2>\frac{r-1}{\alpha}$ (due to equation \eqref{eq:range_r}). 

Plugging \eqref{eq:suriname2} into \eqref{eq:suriname1}, and using that $\delta_0$ is sufficiently small depending on $E, K$, we obtain
\begin{equation} \label{eq:suriname3}
    |\mc D_{3, \beta, i}| \les_{\delta_0} \mc D_{2, \beta, i}^{1/2} 
\end{equation}

We move to study $\mc D_{1, \beta, i}$ in \eqref{eq:Dmain}. Note that
\begin{equation*}
    \left| \left[ \p_\beta, \frac{1}{S^{1/\alpha}} \right] f - \sum_{j=1}^K \p_{\beta_j} \frac{1}{S^{1/\alpha}} \p_{\beta^{(j)}}f \right| \les 3^K\max_{\substack{ |\beta| + | \beta ' | = K \\ | \beta | \leq K-2}} |\p_{\beta'} \left( \frac{1}{S^{1/\alpha}} \right) \p_\beta f|
\end{equation*}
just by identifying the terms of order $K-1$ in the commutator. Using this into the expression of $\mc D_{1,\beta, i}$, we obtain
\begin{equation} \label{eq:chile}
   \mc D_{1, \beta, i} \les \underbrace{ \sum_{j=1}^K \int \phi^K \p_\beta U_i \p_{\beta^{(j)}} \Delta U_i \frac{\p_{\beta_j} S}{S^{1+1/\alpha}} }_{\mc D_{1, \beta, i}' }  + 3^K E^{1/2} \underbrace{ \max_{\substack{ |\beta| + | \beta ' | = K \\ | \beta | \leq K-2}} \left\| \p_{\beta'} \left( \frac{1}{S^{1/\alpha}} \right) \p_\beta \Delta U_i \phi^K \right\|_{L^2}. }_{\mc D_{1, \beta, i}'' }
\end{equation}

Defining 
\begin{equation*}
\mc D_2 =   \sum_{\beta \in [3]^K} \sum_{i=1} \mc D_{2, \beta, i} = \int | \nabla^{K+1}  U |^2 \frac{\phi^K}{S^{1/\alpha} }, 
\end{equation*}
we can bound $\mc D_{1, \beta, i}'$ as
\begin{align}
\mc D_{1, \beta, i}' &\les K \int \phi^K | \p_\beta U | \cdot |\nabla^{K+1} U | \frac{| \nabla S |}{S^{1+1/\alpha}} \les_{K} \mc D_2^{1/2} \left( \int \phi^K |\p_\beta U|^2 \frac{| \nabla S |^2}{S^{2+1/\alpha}} \right)^{1/2} \notag \\
&\les_{K}
\mc D_2^{1/2} E^{1/2} \left\| \frac{| \nabla S |^2}{S^{2+1/\alpha}} \right\|_{L^\infty} \les_{\delta_0} \mc D_2^{1/2}. \label{eq:kamchatka}
\end{align}
where in the last inequality we used \eqref{eq:suriname2} and the fact that $\delta_0$ is allowed to depend on $E$.

Using Lemma \ref{lemma:brasilia}, we proceed to bound $\mc D_{1, \beta, i}''$. Developing the derivative $\p_{\beta ' } \frac{1}{S^{1/\alpha}}$, we have
\begin{equation}  \label{eq:stockholm}
\mc D_{1, \beta, i}'' = \max_{\substack{ |\beta| + | \beta ' | = K \\ | \beta | \leq K-2}} \left\| \p_{\beta'} \left( \frac{1}{S^{1/\alpha}} \right) \p_\beta \Delta U_i \phi^{K/2} \right\|_{L^2} 
\les_K \max_{\substack{ |\beta^0| + | \beta^1 |  + \ldots + | \beta^\ell |= K \\ | \beta^0 | \leq K-2}}
\left\| \frac{1}{S^{1/\alpha}} \p_{\beta^0} \Delta U_i \frac{\p_{\beta^1} S}{S} \frac{\p_{\beta^2} S}{S} \ldots \frac{\p_{\beta^\ell} S}{S}  \phi^{K/2} \right\|_{L^2}
\end{equation}
We consider different cases.

\textbf{Generic case. $| \beta^0 | \leq K-4$ and $| \beta^i | \leq K-2$ for all $i \geq 1$.} In this case, we always have less than $K-2$ derivatives falling on each term. Let us define
\begin{equation*}
F(j) = -\frac{-r(j-1) + (1-\eta) \frac{5j}{2} + K\eta - 5/2 }{K-5/2} + \epsilon^\sharp ,
\end{equation*}
so a direct application of inequality \eqref{eq:brazil3} and Lemma \ref{lemma:Sbounds} yields
\begin{align*}
&\left\| \frac{\p_{\beta^0} \Delta U_i }{S^{1/\alpha}} \frac{\p_{\beta^1} S}{S} \frac{\p_{\beta^2} S}{S} \ldots \frac{\p_{\beta^\ell} S}{S}  \phi^{K/2} \right\|_{L^2} \\
&\qquad\leq \left\| \frac{\phi^{\frac{| \beta^0 |+ 2}{2} }\p_{\beta^0} \Delta U_i}{S  \langle R \rangle^{F(|\beta^0|+2 )}} \right\|_{L^\infty}
\left\| \frac{\phi^{ \frac{| \beta^1 |}{2} }\p_{\beta^1} S}{S \langle R \rangle^{F(|\beta^1| )}} \right\|_{L^\infty} \ldots
\left\| \frac{\phi^{\frac{| \beta^\ell |}{2} }\p_{\beta^\ell} S}{S \langle R \rangle^{F(|\beta^\ell| )}} \right\|_{L^\infty} 
\left\| \frac{\langle R \rangle^{F(|\beta^0|+2)} \langle R \rangle^{ \sum_{i=1}^\ell F(| \beta^i |)}}{\phi S^{1/\alpha - 1}  } \right\|_{L^2}  \\
&\qquad\les_{\delta_0}
 \left\| \frac{\langle R \rangle^{ -\frac{-r(K+1-\ell) + (1-\eta) \frac{5(K+2)}{2} + K\eta (\ell+1) - 5(\ell+1) /2 }{K-5/2}+ \epsilon^\sharp (\ell + 1)}   }{\phi \langle R \rangle^{-(r-1)(1/\alpha - 1)}} \right\|_{L^2}.
\end{align*}
Let us look at the exponent in the numerator of the expression above and define
\begin{align*}
\Xi &= -\left( -r(K+1-\ell) + (1-\eta) \frac{5(K+2)}{2} + K\eta (\ell+1) - 5 (\ell+1)/2 \right) \\
&= 
 rK + r -r\ell - \frac{5K}{2}  + \left( \frac{3}{2} - \ell \right)K\eta + 5 \ell/2  -5/2+5\eta.
\end{align*}
If $\ell \geq 2$, since $K$ is sufficiently large with respect to $\ell$, we have that $(3/2 - \ell)K\eta < -100\ell$ and we obtain
\begin{equation}
\Xi \leq \left( r - \frac52 \right)K.
\end{equation}
If $\ell = 1$, we get that
\begin{equation}
\Xi =  \left( r - \frac52 \right) K + \frac12 K\eta + 5\eta \leq \left( r - \frac52 + \eta \right)K,
\end{equation}
using that $K\eta \gg \eta$. Thus, we have
\begin{align}\label{epsicondition01}
\left\| \frac{\p_{\beta^0} \Delta U_i }{S^{1/\alpha}} \frac{\p_{\beta^1} S}{S} \frac{\p_{\beta^2} S}{S} \ldots \frac{\p_{\beta^\ell} S}{S}  \phi^{K/2} \right\|_{L^2} 
&\les_{\delta_0}
 \left\| \frac{\langle R \rangle^{(r-5/2+\eta + \epsilon^\sharp(K+1)) } }{ \langle R \rangle^{2(1-\eta)-(r-1)(1/\alpha - 1)}} \right\|_{L^2} \\\nonumber
 &= \left( \int \langle R \rangle^{-9+2r+2(r-1)(1/\alpha-1)+6\eta + 2(K+1)\epsilon^\sharp}dR \right)^{1/2}.
\end{align}
Finally, we use that $\eta$ is sufficiently small since $\frac{r-1}{\alpha} \leq 2$ (from equation \eqref{eq:range_r}), $\epsilon^\sharp$ is sufficiently small depending on $K$, $2r < 4$ and $\langle R \rangle^{-3+\epsilon^\sharp}$ is integrable. We obtain 
\begin{equation*}
\left\| \frac{\p_{\beta^0} \Delta U_i }{S^{1/\alpha}} \frac{\p_{\beta^1} S}{S} \frac{\p_{\beta^2} S}{S} \ldots \frac{\p_{\beta^\ell} S}{S}  \phi^{K/2} \right\|_{L^2} \les_{\delta_0} 1
\end{equation*}

\textbf{Case where some $| \beta^i | \in \{ K-1, K \}$ for $i \geq 1$.} Without loss of generality assume $i=1$. If $| \beta^1 | = K$, using the energy bound
\begin{equation*}
    \left\| \phi^{K/2} \p_{\beta^1} S \frac{\Delta U_i}{S^{1+1/\alpha}}  \right\|_{L^2} \les_{\delta_0} \left\| \frac{| \nabla^2 U |}{S^{1+1/\alpha}} \right\|_{L^\infty}
\end{equation*}
and if $| \beta^1 | = K-1$, using \eqref{eq:brazilk-1} the two cases are:
\begin{align*}
    \left\| \phi^{K/2} \p_{\beta^1} S \frac{\p_j S \Delta U_i}{S^{2+1/\alpha}}  \right\|_{L^2} &\les_{\delta_0,\epsilon^\sharp} \|\langle R\rangle^{K(1-\eta)\frac{K-2}{K-1}-\epsilon^\sharp}\partial_{\beta^{1}}S \|_{L^{2+\frac{2}{K-2}}}\left\|  \frac{ | \nabla S |\langle R\rangle}{S} \right\|_{L^\infty} \left\| \frac{\langle R\rangle^{K(1-\eta)\frac{1}{K-1}+\epsilon^\sharp-1} |\nabla^2 U|}{S^{1+1/\alpha}} \right\|_{L^{2(K-1)}},  \\
    \left\| \phi^{K/2} \p_{\beta^1} S \frac{\p_j \Delta U_i}{S^{1+1/\alpha}}  \right\|_{L^2} &\les_{\delta_0,\epsilon^\sharp} \|\langle R\rangle^{K(1-\eta)\frac{K-2}{K-1}-\epsilon^\sharp}\partial_{\beta^{1}}S \|_{L^{2+\frac{2}{K-2}}}  \left\| \frac{ \langle R\rangle^{K(1-\eta)\frac{1}{K-1}+\epsilon^\sharp} | \nabla^3 U | }{S^{1+1/\alpha}} \right\|_{L^{2(K-1)}}.
\end{align*}
On the one hand, combining Lemma \ref{lemma:Sbounds} with Lemma \ref{lemma:Sprimebounds}, we see that $\left\|  \frac{ | \nabla S |\langle R \rangle}{S } \right\|_{L^\infty} \les_{\delta_0} 1$. 
 On the other hand, using \eqref{eq:brazil2} for $j = 2, 3$ together with Lemma \ref{lemma:Sbounds}, we get
\begin{align} \label{eq:grenada}
&\left\| \frac{\langle R\rangle^{K(1-\eta)\frac{1}{K-1}+\epsilon^\sharp-1}| \nabla^2 U |}{S^{1+1/\alpha}} \right\|_{L^{2(K-1)}} + \left\| \frac{\langle R\rangle^{K(1-\eta)\frac{1}{K-1}+\epsilon^\sharp} | \nabla^3 U | }{S^{1+1/\alpha}} \right\|_{L^{2(K-1)}}\\\nonumber
& \quad \les_{\delta_0,\epsilon^\sharp}
\left\| \frac{R ^{-r-2+2\eta+\epsilon^\sharp+K(1-\eta)\frac{1}{K-1}+2\epsilon^\sharp}}{\langle R \rangle^{-(r-1)(1+1/\alpha)}} \right\|_{L^{2(K-1)}} + \left\| \frac{ R ^{-r-2+3\eta+\epsilon^\sharp+K(1-\eta)\frac{1}{K-1}+2\epsilon^\sharp}}{\langle R \rangle^{ -(r-1)(1+1/\alpha)}} \right\|_{L^{2(K-1)}} \les_{\delta_0,\epsilon^\sharp} 1,
\end{align}
where in the last inequality we used $(r-1)\left( 2 + \frac{1}{\alpha}  \right) < 2$ (equivalent to $r - 1 < 1-1/\gamma$ from equation \eqref{eq:rough_range_r}) and $r>1$.

\textbf{Case where $| \beta^0  | = K-2$.}
We can bound 
 \begin{align*}
\left\| \p_{\beta^0} \Delta U   \phi^{K/2} \nabla^2 \left( \frac{1}{S^{1/\alpha}} \right) \right\|_{L^2} 
&\leq \| \phi^{K/2} \nabla^K U \|_{L^2} \left( \left\| \frac{\nabla^2 S}{S^{1+1/\alpha}}\right\|_{L^\infty} + \left\| \frac{|\nabla S|^2}{S^{2+1/\alpha}}\right\|_{L^\infty} \right) \\
&\leq E \left( \left\| \frac{\nabla^2 S}{S^{1+1/\alpha}}\right\|_{L^\infty} + \left\| \frac{\langle R\rangle^{-2r}}{\langle R \rangle^{-(r-1)(2+1/\alpha)}}\right\|_{L^\infty} \right) \les_{\delta_0} 1
\end{align*}
where in the last inequality we used again that $2 > (r-1)(2+\frac{1}{\alpha})$ (from equation \eqref{eq:range_r}. We also bounded $\left\| \frac{\nabla^2 S}{S^{1+1/\alpha}}\right\|_{L^\infty}$ in the same way as $\left\| \frac{\nabla^2 U}{S^{1+1/\alpha}}\right\|_{L^\infty}$ in the last case.

\textbf{Case where $| \beta^0  | = K-3$.}
Using \eqref{eq:brazilk-1}, together with $S \gtrsim_{\delta_0} \langle R \rangle^{-r+1}$ (Lemma \ref{lemma:Sbounds}) and $| \nabla S | \les_{\delta_0} \langle R \rangle^{-r}$ (Lemma \ref{lemma:Sprimebounds}), we have that
\begin{align*}
&\quad\left\| \p_{\beta^0} \Delta U   \phi^{K/2} \nabla^3 \left( \frac{1}{S^{1/\alpha}} \right) \right\|_{L^2}  \\
&\leq
\left\| \langle R\rangle^{K(1-\eta)\frac{K-2}{K-1}-\epsilon^\sharp} \nabla^{K-1} U \right\|_{L^{2+\frac{2}{K-2}}}\\
&\quad\cdot\left( \left\| \frac{\langle R\rangle^{K(1-\eta)\frac{1}{K-1}+\epsilon^\sharp}\nabla^{3} S}{S^{1+1/\alpha}}\right\|_{L^{2(K-1)}} + \left\| \frac{\langle R\rangle^{K(1-\eta)\frac{1}{K-1}+\epsilon^\sharp}|\nabla^{2} S| | \nabla S |}{S^{2+1/\alpha}}\right\|_{L^{2(K-1)}} + \left\| \frac{\langle R\rangle^{K(1-\eta)\frac{1}{K-1}+\epsilon^\sharp}|\nabla S|^3}{S^{3+1/\alpha}}\right\|_{L^{2(K-1)}} \right) \\
&\les_{\delta_0,\epsilon^\sharp} 
 \left( \left\| \frac{\langle R\rangle^{K(1-\eta)\frac{1}{K-1}+\epsilon^\sharp}\nabla^{3} S}{S^{1+1/\alpha}}\right\|_{L^{2(K-1)}} + \left\| \frac{\langle R\rangle^{K(1-\eta)\frac{1}{K-1}+\epsilon^\sharp}|\nabla^{2} S| }{S^{1+1/\alpha}}\right\|_{L^{2(K-1)}} + \left\| \frac{\langle R\rangle^{K(1-\eta)\frac{1}{K-1}+\epsilon^\sharp-1}\langle R \rangle^{-2r} }{ \langle R \rangle^{-(r-1)(2+1/\alpha)}}\right\|_{L^{2(K-1)}} \right).
\end{align*}
The same reasoning applied in \eqref{eq:grenada} for $U$ applies here for $S$, and using again that $2 > (r-1)(2+1/\alpha)$ (from equation \eqref{eq:range_r}), we conclude that
\begin{equation*}
\left\| \p_{\beta^0}\Delta U   \phi^{K/2} \nabla^3 \left( \frac{1}{S^{1/\alpha}} \right) \right\|_{L^2} \les_{\delta_0} 1
\end{equation*}

Combining all our different cases and using \eqref{eq:stockholm}, we obtain
\begin{equation} \label{eq:albacete}
\mc D_{1, \beta, i}'' \les_{\delta_0} 1.
\end{equation}

Using \eqref{eq:suriname3}, \eqref{eq:chile}, \eqref{eq:kamchatka}, and \eqref{eq:albacete}, we obtain that
\begin{equation} \label{eq:calamares}
\mc D_{1, \beta, i} + |\mc D_{3, \beta, i}| \les_{\delta_0} \mc D_2^{1/2} + 1.
\end{equation}
Recall that our objective is to bound
\begin{equation*}
\mc I_4 = C_{\rm{dis}} e^{-\delta_{\rm{dis}}s} \sum_{i=1}^3 \sum_{\beta \in [3]^{K}} \left( \mc D_{1, \beta, i} - \mc D_{2, \beta, i} - \mc D_{3, \beta, i} \right),
\end{equation*}
and that $\mc D_2 = \sum_{\beta \in [3]^K} \sum_{i=1}^3 \mc D_{2, \beta, i}$. Using \eqref{eq:calamares} and that $\delta_0$ is allowed to depend on $r, \alpha, K$, we have that
\begin{equation*}
 \mc I_4 + C_{\rm{dis}} e^{-\delta_{\rm{dis}}s} \mc D_2  \leq C_{\delta_0} e^{-\delta_{\rm{dis}}s} \left( \mc D_2^{1/2} + 1 \right), 
\end{equation*}
where $C_{\delta_0}$ is sufficiently large depending on $\delta_0$ (and therefore on $r, \alpha, K$). Using Cauchy-Schwarz, we get
\begin{equation*}
 \mc I_4 + C_{\rm{dis}} e^{-\delta_{\rm{dis}}s} \mc D_2  \leq C_{\delta_0} e^{-\delta_{\rm{dis}}s} + e^{-\delta_{\rm{dis}}s} \left( \mc D_2\frac{C_{\rm{dis}}}{2} + C_{\delta_0}^2 \frac{2 }{C_{\rm{dis}}} \right), 
\end{equation*}
and therefore
\begin{equation*}
\mc I_4 \leq \frac{r^{1+\frac{1}{\alpha}}}{\alpha^{\frac{1}{\alpha}}} e^{-\delta_{\rm{dis}}s} \left( - \mc D_2 + \frac12 \mc D_2 \right) + e^{-\delta_{\rm{dis}}s} \left( C_{\delta_0} + C_{\delta_0}^2 \frac{2 }{C_{\rm{dis}}} \right) \les_{\delta_0} e^{-\delta_{\rm{dis}}s},
\end{equation*}
where we used that $\mc D_2 > 0$. Since $s_0$ is sufficiently large depending on $\delta_0$, we conclude that
\begin{equation} \label{eq:I4}
\mc I_4 \leq e^{-\delta_{\rm{dis}} (s-s_0)}.
\end{equation}

Plugging this into \eqref{eq:EE2}, we have
\begin{equation} \label{energyestimatewithweight}
    \p_s E_{K} \leq -\frac{K\min \{ \eta , \tilde \eta \} }{2} E_{K} + O(E) + e^{-\delta_{\rm{dis}}(s-s_0)}.
\end{equation}
Given that $K$ is sufficiently large depending on $\eta, \tilde \eta$, when $E_k\geq \frac{1}{2}E$, we have $\partial_{s}E_{k}<0$ and we close the bootstrap assumption \eqref{higherestimate3}.

\subsection{Topological argument section for the unstable modes} \label{subsec:topological}

In this section we control the unstable modes.

Recall that $(\varphi_{i,u},\varphi_{i,s})$, $1\leq i\leq N $ is a normalized basis of $V_{\rm{uns}}$. For any $(\tilde{U}_{t}(\cdot, s),\tilde{S}_{t}(\cdot,s))$, we denote 
\[
k(s)=P_{\rm{uns}}(\tilde{U}_{t}(\dot,s),\tilde{S}_{t}(\cdot,s))=\sum_{i=1}^{N}k_i(s)(\varphi_{i,u},\varphi_{i,s}).
\]
Due to the initial data condition \eqref{initialdatatruncated}, together with ($\chi_2 \tilde U_0', \chi_2 \tilde S_0' ) \in V_{\rm{sta}}$ (by \eqref{eq:tilde_is_stable}), we have that 
\begin{equation*}
k_i(0)=a_i.
\end{equation*} 

From Lemma \ref{prop:maxdissmooth}, there exists a metric $B$ such that 
\[
\langle \mathcal{L}k(s),k(s) \rangle_{B}\geq \frac{-6\delta_g}{10}\langle k(s),k(s) \rangle_{B}.
\]
Since the unstable space $V_{\rm{uns}}$ only has finite dimension, $X\subset H^m (B(0, 3C_0))$ we have
\begin{equation}\label{normequivalence}
\|k(s)\|_{X}\lesssim_{m}\|k(s)\|_{B}\lesssim_{m}\|k(s)\|_{X}.
\end{equation}
where we denote with $\| a \|_X$ the norm $\left\| \sum a_i \psi_i \right\|_X$. Here $m$ shows up because of \eqref{spacexnorm}.

We will always restrict ourselves to values of $a_i$ such that 
\begin{equation*}
\left\| a \right\|_B \leq \delta_1^{11/10}.
\end{equation*}
In particular, from \eqref{normequivalence}, we have $\| a \|_X \les_m \delta_1^{11/10}$. Recalling that $\psi_i$ are compactly supported, that $\delta_1$ is sufficiently small depending on $m, E$ and that $\| \psi_i \|_X = 1$ (normalized eigenfunctions), it is clear that the required initial conditions for $\tilde U_0, \tilde S_0$ \eqref{initialdatacondition2}--\eqref{initialdatacondition} are satisfied for any such choice of $a_i$ as long as 
\begin{align} \begin{split} \label{eq:conditions_for_tilde}
\|\tilde{U}_0' \|_{L^{\infty}}, \|\tilde{S}_0' \|_{L^{\infty}}\leq  \delta_1, \qquad \| \tilde U_0' \|_{H^K}, \| \tilde S_0' \|_{H^K} \leq\frac{E}{4}, 
\qquad \tilde{S}_0'+\hat{X}\bar{S}_0\geq \frac{\delta_1}{2} \left\langle \frac{R}{R_0} \right\rangle^{1-r}, \\
|\nabla(\tilde{U}_0'+\hat{X} \bar{U})|+|\nabla(\tilde{S}_0'+\hat{X} \bar{S})|\lesssim \frac{1}{\langle |y| \rangle^r}.
\end{split} \end{align}

Let
\[
\mc R(s)=\{k(s)| \|k(s)\|_{X}\leq \delta_1 e^{-\frac{4}{3}\epsilon(s-s_0)}\},
\]
\[
\tilde{\mc R}(s)=\{k(s)| \|k(s)\|_{B}\leq \delta_1^{\frac{11}{10}}e^{-\frac{4}{3}\epsilon(s-s_0)}\}.
\]
From \eqref{normequivalence} and the choice of parameter \eqref{choiceofparameter}, we have
\[
\tilde{\mc R}(s)\Subset \mc R(s).
\]

We will always choose $\{ a_i \} = k(s_0)\in \tilde{\mc R}(s_0)$. From Proposition \ref{prop:bootstrap}, and Lemmas \ref{forcingestimate1}, \ref{forcingestimate2}, when $k(s)$ does not leave $\mc R(s)$, the unstable mode is controlled and we have the estimate for the forcing: 
\begin{equation}\label{forcingest}
\|\chi_2 \mc F\|_{X}\lesssim \delta_1^{\frac{6}{5}}e^{-\frac{3}{2}\epsilon(s-s_0)}.
\end{equation}
Next, we show the following outgoing property:
\begin{proposition}
Suppose $k(s)\in \tilde{\mc R}(s)$ at times $s\in[s_0,s_1]$ and at time $s_1$, we have $k(s_1)\in \partial{\tilde{\mc R}(s_1)},$ that is 
\[
\|k(s_1)\|_{B}=\delta_1^{\frac{11}{10}}e^{-\frac{4}{3}\epsilon(s_1-s_0)}.
\]
Then, we have that $k(s)$ does not belong to $\tilde{\mc R}(s)$ for s close enough to $s_1$ from  above. 
\end{proposition}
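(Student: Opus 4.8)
The plan is to compute $\frac{d}{ds}\|k(s)\|_B^2$ at $s=s_1$ and show it exceeds the corresponding derivative of the boundary function $\delta_1^{22/10}e^{-\frac{8}{3}\epsilon(s-s_0)}$, which forces $k$ to exit $\tilde{\mc R}(s)$ immediately after $s_1$. First I would write the equation satisfied by the unstable projection. Projecting the truncated equation \eqref{navierstokesperturb1trun} onto $V_{\rm uns}$ via $P_{\rm uns}$ and using that $V_{\rm uns}$ is $\mc L$-invariant (Lemma \ref{lemma:abstract_result}), we get $\p_s k = \mc L|_{V_{\rm uns}} k + P_{\rm uns}(\chi_2 \mc F)$. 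Pairing with $k$ in the $B$ inner product and taking real parts,
\begin{equation*}
\frac12 \frac{d}{ds}\|k\|_B^2 = \Re\langle \mc L k, k\rangle_B + \Re\langle P_{\rm uns}(\chi_2 \mc F), k\rangle_B \geq -\frac{6\delta_g}{10}\|k\|_B^2 - \|P_{\rm uns}(\chi_2 \mc F)\|_B\,\|k\|_B,
\end{equation*}
using \eqref{eq:decomposition_condition} (equivalently \eqref{eq:chisinau}) for the first term and Cauchy--Schwarz for the second.

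Next I would insert the forcing bound. By \eqref{forcingest} together with the norm equivalence \eqref{normequivalence} on the finite-dimensional space $V_{\rm uns}$, we have $\|P_{\rm uns}(\chi_2\mc F)\|_B \lesssim_m \|\chi_2 \mc F\|_X \lesssim_m \delta_1^{6/5} e^{-\frac32\epsilon(s-s_0)}$ for $s\in[s_0,s_1]$ (the hypothesis $k(s)\in\tilde{\mc R}(s)\Subset\mc R(s)$ on this interval lets us invoke Proposition \ref{prop:bootstrap} and Lemmas \ref{forcingestimate1}, \ref{forcingestimate2}, hence \eqref{forcingest}). At the exit time, $\|k(s_1)\|_B = \delta_1^{11/10}e^{-\frac43\epsilon(s_1-s_0)}$, so the forcing term is bounded by $C_m \delta_1^{6/5}e^{-\frac32\epsilon(s_1-s_0)}\cdot\delta_1^{11/10}e^{-\frac43\epsilon(s_1-s_0)}$, while $\|k(s_1)\|_B^2 = \delta_1^{22/10}e^{-\frac83\epsilon(s_1-s_0)}$. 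Comparing exponents of $\delta_1$: the forcing contribution carries $\delta_1^{6/5+11/10} = \delta_1^{23/10}$ against $\delta_1^{22/10}$ for the quadratic term, so since $\delta_1$ is small the forcing is negligible relative to $\|k\|_B^2$; comparing the exponential rates, $\frac32\epsilon + \frac43\epsilon > \frac83\epsilon$, which is also favourable. Therefore at $s=s_1$,
\begin{equation*}
\frac12\frac{d}{ds}\|k\|_B^2 \Big|_{s=s_1}\geq \left(-\frac{6\delta_g}{10} - C_m\delta_1^{1/10}e^{-\frac16\epsilon(s_1-s_0)}\right)\|k(s_1)\|_B^2 \geq -\frac{7\delta_g}{10}\|k(s_1)\|_B^2,
\end{equation*}
using $\delta_1$ small and $s_1\geq s_0$.

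Finally I would compare with the boundary. The function $g(s) = \delta_1^{22/10}e^{-\frac83\epsilon(s-s_0)}$ defining $\partial\tilde{\mc R}(s)$ (in squared $B$-norm) satisfies $g'(s) = -\frac83\epsilon\, g(s)$. Recalling $\epsilon = \frac{12\delta_g}{25}$, we have $\frac83\epsilon = \frac{32\delta_g}{25} > \frac{7\delta_g}{5} = \frac{14\delta_g}{10}$, so $\frac{d}{ds}\|k\|_B^2|_{s_1} \geq -\frac{14\delta_g}{10}\|k(s_1)\|_B^2 > -\frac{32\delta_g}{25}\|k(s_1)\|_B^2 = g'(s_1)$ (using $\|k(s_1)\|_B^2 = g(s_1)$). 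Hence $\frac{d}{ds}(\|k(s)\|_B^2 - g(s))|_{s=s_1} > 0$, and since $\|k(s_1)\|_B^2 - g(s_1) = 0$, we conclude $\|k(s)\|_B^2 > g(s)$ for $s$ slightly larger than $s_1$, i.e. $k(s)\notin\tilde{\mc R}(s)$. The main obstacle is purely bookkeeping: one must verify that all the exponents in $\delta_1$ and in the exponential decay rate genuinely line up so that the dissipativity-type lower bound from $\mc L|_{V_{\rm uns}}$ dominates both the forcing and the prescribed shrink rate of $\tilde{\mc R}(s)$; the inequality $\frac83\epsilon > \frac{7\delta_g}{5}$ (equivalently $\frac{32}{25} > \frac{7}{5}$, i.e. $\frac{32}{25} > \frac{35}{25}$ — which is \emph{false}, so one should actually use the sharper constant $-\frac{6\delta_g}{10}$ directly and note $\frac{8}{3}\cdot\frac{12}{25} = \frac{32}{25} > \frac{6}{10}$) must be checked carefully, and this is where the precise choice $\epsilon = \frac{12\delta_g}{25}$ and the margin between $\frac{6\delta_g}{10}$ in \eqref{eq:decomposition_condition} and the faster rate $\frac43\epsilon = \frac{16\delta_g}{25} > \frac{6\delta_g}{10} = \frac{15\delta_g}{25}$ of $\tilde{\mc R}$ is used.
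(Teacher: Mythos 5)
Your approach is essentially identical to the paper's: project onto $V_{\rm uns}$, apply the coercivity bound $\Re\langle \mc L k,k\rangle_B\geq -\tfrac{6\delta_g}{10}\|k\|_B^2$, control the forcing via \eqref{forcingest} and \eqref{normequivalence}, and compare the resulting ODE inequality with the shrink rate of $\partial\tilde{\mc R}(s)$. You correctly note that the intermediate absorption into $-\tfrac{7\delta_g}{10}$ is too crude (the available margin $\tfrac43\epsilon-\tfrac{6\delta_g}{10}=\tfrac{16}{25}\delta_g-\tfrac{15}{25}\delta_g=\tfrac{1}{25}\delta_g$ is smaller than $\tfrac{1}{10}\delta_g$), and the paper indeed keeps the forcing below $\tfrac{1}{50}\delta_g$ so as to land at $-\tfrac{31}{50}\delta_g < \tfrac43\epsilon=\tfrac{32}{50}\delta_g$; with that correction applied, your argument is the one in the paper.
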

\begin{proof}
Recall that $(\tilde{U}_{t},\tilde{S}_{t})$ satisfy the equation \eqref{navierstokesperturb1trun}:
\begin{align*}
\p_s \tilde U_t &= \mc L_{u}(\tilde{U}_t,\tilde{S}_t)+\chi_2\mc F_{u}(\tilde{U},\tilde{S}),\\\nonumber
\p_s \tilde S_t &= \mc L_{s}(\tilde{U}_t,\tilde{S}_t)+\chi_2\mc F_{s}(\tilde{U},\tilde{S}),
\end{align*}
with initial value \eqref{initialdatatruncated}:
\begin{align}
\begin{cases}
\tilde{U}_{t,0}=\chi_2\tilde{U}_0^{'}+\sum_{i=1}^{N}a_i
\varphi_{i,u}\\
\tilde{S}_{t,0}=\chi_2\tilde{S}_0^{'}+\sum_{i=1}^{N}a_i
\varphi_{i,s}.
\end{cases}
\end{align}
Since $V_{\rm{uns}}$ is invariant with respect to the linear operator $\mc L$, we have
\begin{align*}
\begin{cases}
\p_s k(s) &= \mc L k(s) + P_{\rm{uns}}(\chi_2 \mc F), \\
k(s_0) &= (\sum_{i=1}^{N}a_i
\varphi_{i,u},\sum_{i=1}^{N}a_i
\varphi_{i,s}),
\end{cases}
\end{align*}
From the forcing estimate \eqref{forcingest}, we have
\begin{align*}
&\|P_{\rm{uns}}(\chi_2 \mc F)\|_{B}\lesssim_{m}\|P_{\rm{uns}}(\chi_2 \mc F)\|_{X}\lesssim_{m} \|\chi_2 \mc F\|_{X}\lesssim_{m}\delta_{1}^{\frac{6}{5}}e^{-\frac{3}{2}\epsilon(s_1-s_0)}.
\end{align*}
Then 
\begin{align*}
\left\langle \frac{d}{ds}k(s),k(s) \right\rangle_{B}\bigg|_{s=s_1}\geq -\frac{6\delta_g}{10}\langle k(s_1),k(s_1) \rangle_{B}-C_m  \delta_{1}^{\frac{6}{5}}e^{-\frac{3}{2}\epsilon(s_1-s_0)}\|k(s_1)\|_{B}.
\end{align*}
Then when $\|k(s_1)\|_{B}=\delta_1^{\frac{11}{10}}e^{-\frac{4}{3}\epsilon(s-s_0)}$, we have
\begin{align*}
\left\langle \frac{d}{ds}k(s),k(s) \right\rangle_{B}\bigg|_{s=s_1}&\geq -\frac{6\delta_g}{10}\langle k(s_1),k(s_1) \rangle_{B}-C_m\delta_{1}^{\frac{6}{5}-\frac{11}{10}}e^{-\epsilon(\frac{3}{2}-\frac{4}{3})(s_1-s_0)} \langle k(s_1),k(s_1) \rangle_{B}.\\
&\geq \frac{-31}{50}\delta_g \langle k(s_1),k(s_1) \rangle_{B}.
\end{align*}
Since $\frac{31}{50}\delta_g< \frac{4}{3}\epsilon=\frac{16}{25}\delta_g,$ 
 when $s=s_1$ we have
\[
\frac{d}{ds}(\langle k(s),k(s)\rangle_{B}e^{\frac{8}{3}\epsilon(s-s_0)})\geq \frac{-31}{25}\delta_g\langle k(s),k(s)\rangle e^{\frac{8}{3}\epsilon(s-s_0)}+\frac{8}{3} \epsilon\langle k(s),k(s) \rangle_{B}e^{\frac{8}{3}\epsilon(s-s_0)}>0.
\]

Hence $k(s)$ exits $\tilde{\mc R}(s)$ at $s=s_1$.
\end{proof}

Next, we show the existence of at least one choice of $\{a_i\}$ such that $k(s)$ is in $\mc R(s)$ for all $s$. We borrow the result from \cite[Proposition 8.15]{Buckmaster-CaoLabora-GomezSerrano:implosion-compressible} (the same proof works here).
\begin{proposition} \label{prop:ai}
There exist specific initial conditions $\{a_i\} \in \tilde{\mc R} (s_0)$ such that $ k(s_0)=\{a_i\}$ and $k(s)\in \mc R(s)$ for all $s>s_0$.
\end{proposition}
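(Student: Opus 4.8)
\textbf{Proof plan for Proposition \ref{prop:ai}.} The plan is to run the standard Brouwer-type topological shooting argument, exactly as in \cite[Proposition 8.15]{Buckmaster-CaoLabora-GomezSerrano:implosion-compressible}. Suppose, for contradiction, that for \emph{every} choice of initial unstable data $\{a_i\}$ with $k(s_0) = \{a_i\} \in \tilde{\mc R}(s_0)$, the trajectory $k(s)$ leaves $\mc R(s)$ at some finite time. The first step is to observe that, by Proposition \ref{prop:bootstrap}, as long as $k(s)$ stays inside $\mc R(s)$ the full solution $(\tilde U, \tilde S)$ (equivalently $(\tilde U_t, \tilde S_t)$ inside $B(0,C_0)$) exists and satisfies all the bootstrap bounds \eqref{lowerestimate1}--\eqref{higherestimate3}; in particular the forcing estimate \eqref{forcingest} holds. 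Hence a trajectory can only exit $\mc R(s)$ through $\tilde{\mc R}(s)$, and by the outgoing property just proven, once $k(s)$ touches $\partial \tilde{\mc R}(s)$ it strictly leaves and never returns. Therefore, for each $\{a_i\}$ for which the trajectory does exit, there is a well-defined \emph{first exit time} $s_{\mathrm{exit}}(\{a_i\})$, at which $\|k(s_{\mathrm{exit}})\|_B = \delta_1^{11/10} e^{-\frac43 \epsilon (s_{\mathrm{exit}} - s_0)}$.

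The second step is to show that this exit time depends continuously on $\{a_i\}$. This follows from continuous dependence of the solution $(\tilde U_t, \tilde S_t)$ on the initial data in the relevant norm (the equation \eqref{navierstokesperturb1trun} is a well-posed semilinear problem with the forcing $\chi_2 \mc F$ continuous in the solution), combined with the transversality statement of the outgoing Proposition: at an exit time, $\frac{d}{ds}(\|k(s)\|_B^2 e^{\frac83 \epsilon(s-s_0)}) > 0$ strictly, so the exit is a transverse crossing of the boundary of the (shrinking) ball $\tilde{\mc R}(s)$, and the implicit function theorem gives continuity (indeed local Lipschitz-ness) of $s_{\mathrm{exit}}$ where it is finite. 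On the boundary $\{a_i\} \in \partial \tilde{\mc R}(s_0)$ one has $s_{\mathrm{exit}} = s_0$ trivially.

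The third step is to build the retraction giving the contradiction. Define the map $\Phi : \overline{\tilde{\mc R}(s_0)} \to \partial \tilde{\mc R}(s_0)$ (both identified with balls in $\mathbb{C}^N \cong \mathbb{R}^{2N}$ via the coordinates $a_i$ and the metric $B$) by following the trajectory from time $s_0$ to time $s_{\mathrm{exit}}(\{a_i\})$ and then rescaling $k(s_{\mathrm{exit}})$ back to the sphere $\partial \tilde{\mc R}(s_0)$ (i.e. $\Phi(\{a_i\}) = \delta_1^{11/10} k(s_{\mathrm{exit}})/\|k(s_{\mathrm{exit}})\|_B$, using that $\|k(s_{\mathrm{exit}})\|_B = \delta_1^{11/10} e^{-\frac43\epsilon(s_{\mathrm{exit}}-s_0)} \neq 0$). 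By the continuity established in step two, $\Phi$ is continuous, and by construction $\Phi$ restricted to $\partial \tilde{\mc R}(s_0)$ is the identity (there $s_{\mathrm{exit}} = s_0$). This exhibits a continuous retraction of the closed ball onto its boundary sphere, contradicting the no-retraction theorem (equivalently Brouwer's fixed point theorem). Hence the contradiction hypothesis fails: there exists at least one $\{a_i\} \in \tilde{\mc R}(s_0)$ whose trajectory never exits $\mc R(s)$, i.e. $k(s) \in \mc R(s)$ for all $s > s_0$, which is the claim.

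The main obstacle is the continuity/transversality step (step two): one needs that the truncated linear-plus-forcing flow is continuous in $X$ with respect to the initial unstable parameters $\{a_i\}$ uniformly while the trajectory remains in the bootstrap region, and that exits are genuinely transverse so that the first exit time is continuous. Both are guaranteed by the structure already in place — local well-posedness of \eqref{navierstokesperturb1trun} with the $X$-norm forcing bound \eqref{forcingest}, the invariance of $V_{\mathrm{uns}}$ under $\mc L$, the finite-dimensionality of $V_{\mathrm{uns}}$ with norm equivalence \eqref{normequivalence}, and the strict sign in the outgoing Proposition — so the argument goes through verbatim as in \cite[Proposition 8.15]{Buckmaster-CaoLabora-GomezSerrano:implosion-compressible}, and we simply cite it.
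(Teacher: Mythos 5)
Your proposal is correct and follows essentially the same route as the paper, which defers to \cite[Proposition 8.15]{Buckmaster-CaoLabora-GomezSerrano:implosion-compressible} for the standard Brouwer no-retraction shooting argument (continuity and transversality of the first exit time from $\tilde{\mc R}(s)$, then contradiction via the identity on $\partial\tilde{\mc R}(s_0)$). The only point worth being slightly more explicit about is that once $k(s)$ exits $\tilde{\mc R}(s)$ it cannot re-enter while the forcing bound \eqref{forcingest} still holds (i.e.\ while $k(s)\in\mc R(s)$), which follows by reapplying the same derivative computation at any later touching of $\partial\tilde{\mc R}(s)$; this is what makes the first exit time well defined and continuous.
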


Thus, for any choice of $\tilde U_0'$, $\tilde S_0'$ satisfying our initial data conditions \eqref{initialdatacondition2}--\eqref{initialdatacondition}, we can take $\{a_i\}$ as in Proposition \ref{prop:ai}. Then, recalling equation \eqref{eq:tildeplusunstable}, the initial data for the perturbation is given by: 
\begin{equation*}
\tilde U_0 = \tilde U_0' + \sum_{i=1}^N a_i \psi_{i, u}, \qquad \mbox{ and } \qquad \tilde S_0 = \tilde S_0' + \sum_{i=1}^N a_i \psi_{i, s}.
\end{equation*}
For such initial data we are under the hypothesis of Proposition \ref{prop:bootstrap} for all $s_1 \geq s_0$. Therefore, from \eqref{lowerestimate} we obtain the exponential decay bounds
\begin{equation*}
| \tilde U |, | \tilde S| \leq \frac{\delta_0}{C_2} e^{-\eps (s-s_0)}.
\end{equation*}

Let us also recall from \eqref{eq:periodic_perturbation} that $U = \tilde U (y, s) + X(ye^{-s}) \bar U (y)$ and $S = \tilde S (y, s) + X(ye^{-s}) \bar S (y)$. Using equations \eqref{eq:SS_coordinates1}--\eqref{eq:SS_coordinates2} to change back from self-similar variables to original physical variables, we obtain that 
\begin{align*}
u(x, t) &= \frac{X(x)}{r}\cdot \frac{1}{(T-t)^{1-\frac{1}{r}}} \bar U \left( \frac{x}{(T-t)^{1/r}} \right) + \frac{1}{(T-t)^{1-\frac{1}{r}-\frac{\eps}{r}}} U_{\rm{err}} \left( \frac{x}{(T-t)^{1/r}}, t \right), \\
\sigma (x, t) &= \frac{X(x)}{r}\cdot \frac{1}{(T-t)^{1-\frac{1}{r}}} \bar S \left( \frac{x}{(T-t)^{1/r}} \right) + \frac{1}{(T-t)^{1-\frac{1}{r}-\frac{\eps}{r}}} S_{\rm{err}} \left( \frac{x}{(T-t)^{1/r}}, t \right),
\end{align*}
where
\begin{equation*}
| U_{\rm{err}} |, | S_{\rm{err}} | \leq \frac{r \delta_0}{C_2}, \qquad \forall x \in \mathbb T_L^3, \; \mbox{ and } \; \forall t \in [0, T)
\end{equation*}
Finally, recalling that $\sigma = \frac{1}{\alpha} \rho^\alpha$, we obtain Theorem \ref{th:periodic}.

\begin{remark} \label{rem:codimension}
The only conditions we have imposed on $(\tilde U_0', \tilde S_0')$ are \eqref{eq:conditions_for_tilde} and \eqref{eq:tilde_is_stable}. Conditions \eqref{eq:conditions_for_tilde} are open, so we can consider some Banach space $\mc B$ including all the norms considered in \eqref{eq:conditions_for_tilde} so that \eqref{eq:conditions_for_tilde} is satisfied in a sufficiently small ball of $\mc B$ centered at the origin that we denote by $B_{\mc B}$. 

On the other hand, the restriction from \eqref{eq:tilde_is_stable}, that is, $P_{\rm{uns}} (\tilde U_0', \tilde S_0') = 0$, is a finite dimensional closed restriction. Since $V_{\rm{uns}}$ is the range of $P_{\rm{uns}}$, we know that there is always a representative of $(\tilde U_0', \tilde S_0')$ in the quotient space $\mc B / V_{\rm{uns}}$ that satisfies \eqref{eq:tilde_is_stable}.

Moreover, note that from the choice of initial data
\begin{equation*}
\left( \tilde U_0, \tilde S_0 \right) = (\tilde U_0', \tilde S_0') + \sum_{i=1}^N a_i \psi_i,
\end{equation*}
that $(\tilde U_0, \tilde S_0)$ and $(\tilde U_0', \tilde S_0')$ belong to the same equivalence class of $\mc B / V_{\rm{uns}}$. 

Therefore, the set of $(\tilde U_0, \tilde S_0)\in \mc B$ for which we show singularity formation covers an open set of the quotient space $\mc B / V_{\rm{uns}}$. Since $V_{\rm{uns}}$ is finite dimensional, we refer to the set where blow-up occurs as a finite codimension set of initial data.
\end{remark}

\section{Appendix}

\begin{lemma}[Angular repulsivity] \label{lemma:angular_repulsivity} We have that the profiles $(\bar U, \bar S)$ from \cite{Buckmaster-CaoLabora-GomezSerrano:implosion-compressible} or \cite{Merle-Raphael-Rodnianski-Szeftel:implosion-i} satisfy:
\begin{equation}
1 + \frac{\bar U_R}{R} - \alpha | \p_R \bar S | > \eta,
\end{equation}
for some $\eta > 0$ sufficiently small that is allowed to depend on $r$ or $\gamma$.
\end{lemma}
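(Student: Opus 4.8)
## Proof strategy for Lemma (Angular repulsivity)

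The plan is to reduce the desired inequality $1 + \bar U_R/R - \alpha|\p_R \bar S| > \eta$ to the already-established radial repulsivity $1 + \p_R \bar U_R - \alpha |\p_R \bar S| > \tilde\eta$ of \eqref{eq:radial_repulsivity} together with the decay estimates \eqref{eq:profiles_decay}, by comparing the quantities $\bar U_R/R$ and $\p_R \bar U_R$. The first observation is that these two expressions agree to leading order near $R=0$: since the profile is smooth and radially symmetric with $\bar U(0)=0$, Taylor expansion gives $\bar U_R(R) = \bar U_R'(0) R + O(R^3)$ (the odd-extension forces the even-order terms to vanish), so $\bar U_R/R = \bar U_R'(0) + O(R^2)$ and $\p_R \bar U_R = \bar U_R'(0) + O(R^2)$. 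Hence there is $R_1 > 0$ such that $|\bar U_R/R - \p_R \bar U_R| < \tilde\eta/2$ for $R \le R_1$, and on $[0, R_1]$ the claim follows immediately from \eqref{eq:radial_repulsivity} with $\eta = \tilde\eta/2$.

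For large $R$, the plan is to use \eqref{eq:profiles_decay}: both $|\bar U_R/R|$ and $|\alpha \p_R \bar S|$ are $\lesssim \langle R\rangle^{-(r-1)-1}$, hence they tend to $0$ as $R \to \infty$. Therefore there is $R_2$ such that for $R \ge R_2$ we have $|\bar U_R/R| + \alpha|\p_R \bar S| < 1/2$, and the claim holds on $[R_2, \infty)$ with room to spare. This leaves the compact interval $[R_1, R_2]$. On this interval, I would argue that $1 + \bar U_R/R - \alpha|\p_R \bar S|$ is a continuous, strictly positive function, so it attains a positive minimum; taking $\eta$ smaller than this minimum (and than $\tilde\eta/2$) finishes the proof. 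The positivity on $[R_1, R_2]$ is the only point requiring actual input about the profile rather than soft arguments, and it is where I expect the main work to lie.

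To establish strict positivity of $1 + \bar U_R/R - \alpha|\p_R\bar S|$ on the bulk region, the key is a monotonicity/sign argument on $\bar U_R$ analogous to the one used for \eqref{eq:radial_repulsivity}. From the profile ODE \eqref{eq:ss_profiles}, one can extract that $\bar U_R < 0$ everywhere (the velocity points inward — this is the imploding nature of the solution, and is recorded in the references, e.g. \cite[Theorem 2.3]{Merle-Raphael-Rodnianski-Szeftel:implosion-i} or the corresponding statement in \cite{Buckmaster-CaoLabora-GomezSerrano:implosion-compressible}). Writing $g(R) = \bar U_R(R)/R$, one checks $g'(R) = (\p_R \bar U_R - \bar U_R/R)/R = (\p_R \bar U_R - g(R))/R$, and I would combine this with the sign of $\bar U_R$ and the radial repulsivity to show that wherever $g(R) \le \p_R\bar U_R(R)$ the inequality is inherited directly from \eqref{eq:radial_repulsivity}, while wherever $g(R) > \p_R\bar U_R(R)$ one has $g' < 0$, so $g$ cannot have increased past its value at $R_1$, which is close to $\bar U_R'(0)$; since $\bar U_R < 0$, this keeps $g$ bounded above in a way compatible with $1 + g - \alpha|\p_R\bar S| > 0$.

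The main obstacle is making this bulk estimate fully rigorous using only the ODE structure, since the argument must work uniformly for \emph{both} families of profiles (those of \cite{Merle-Raphael-Rodnianski-Szeftel:implosion-i} and of \cite{Buckmaster-CaoLabora-GomezSerrano:implosion-compressible}) and the explicit form of the profile is not available in closed form. I would therefore phrase the bulk step as: invoke the sign $\bar U_R < 0$ and the bound $|\bar U_R/R| \le \|\p_R \bar U_R\|_{L^\infty([0,R]) }$-type control coming from $\bar U_R(R) = \int_0^R \p_R \bar U_R$, reducing everything to quantities already controlled in the cited works, plus the phase-portrait analysis of the focus point referenced after \eqref{eq:profiles_decay}. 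If a clean ODE argument proves delicate, a fallback is to note that $\bar U_R/R$ is an \emph{average} of $\p_R\bar U_R$ over $[0,R]$ (namely $\bar U_R(R)/R = \frac1R\int_0^R \p_R\bar U_R(\rho)\,d\rho$), so $\bar U_R/R \le \sup_{[0,R]} \p_R \bar U_R$, and if $\p_R \bar U_R \le 0$ throughout (again inward-pointing, decelerating flow — a property extractable from the references) then $\bar U_R/R \le \p_R \bar U_R$ fails pointwise but the weaker bound $1 + \bar U_R/R - \alpha|\p_R \bar S| \ge 1 + \bar U_R/R - \alpha|\p_R\bar S|$ combined with $\bar U_R/R \ge \inf \p_R \bar U_R$ and \eqref{eq:radial_repulsivity} closes it. I expect one of these two routes to go through cleanly once the sign information on $\bar U_R$ and $\p_R \bar U_R$ is pinned down from the cited constructions.
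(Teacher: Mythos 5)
Your outline for $R\to 0$ and $R\to\infty$ is fine: near the origin, the Taylor expansion of the profile (which, in the paper's notation, starts with $W = w_0/R + w_1 + \ldots$, $Z = -w_0/R + w_1 - \ldots$, hence $\bar U_R/R = w_1 + w_3 R^2 + O(R^4)$ and $\p_R\bar U_R = w_1 + 3w_3 R^2 + O(R^4)$) indeed makes $\bar U_R/R - \p_R\bar U_R = O(R^2)$, so \eqref{eq:radial_repulsivity} transfers; near infinity the decay \eqref{eq:profiles_decay} kills both correction terms. But the bulk step does not close, and this is where the entire content of the lemma sits.

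The compactness argument is circular: ``the function is continuous and strictly positive on $[R_1,R_2]$, hence attains a positive minimum'' just restates the claim you need to prove. Your two attempted routes to the positivity both fail. In the monotonicity route the cases are swapped: if $g := \bar U_R/R \le \p_R\bar U_R$ then $1 + g - \alpha|\p_R\bar S| \le 1 + \p_R\bar U_R - \alpha|\p_R\bar S|$, which is an \emph{upper} bound, not a lower bound; the inheritance from \eqref{eq:radial_repulsivity} works only when $g \ge \p_R\bar U_R$, and in the complementary (bad) regime $g' = (\p_R\bar U_R - g)/R > 0$ tells you $g$ is increasing, which does not by itself keep $1+g-\alpha|\p_R\bar S|$ positive when $|\p_R\bar S(R)|$ is also increasing (as it is near $R=0$, where $\p_R\bar S(0)=0$). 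In the averaging route, $g(R) = \frac1R\int_0^R \p_R\bar U_R$ together with \eqref{eq:radial_repulsivity} gives $g(R) > -1 + \tilde\eta + \frac{\alpha}{R}\int_0^R|\p_R\bar S|$, and you then need $\frac1R\int_0^R|\p_R\bar S| \ge |\p_R\bar S(R)| - O(\tilde\eta)$; since $|\p_R\bar S|$ is increasing near the origin this is precisely the dangerous direction and cannot be deduced from the decay bounds alone. The ``fallback'' sentence involving $\bar U_R/R \ge \inf\p_R\bar U_R$ has the same problem: the infimum of $\p_R\bar U_R$ over $[0,R]$ is controlled by $\alpha|\p_R\bar S|$ at the point where the infimum is attained, not at $R$.

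What the paper actually does is a genuine phase-portrait argument in $(W,Z)$ coordinates with $\xi = \log R$. For $R<1$ it establishes the pointwise inequality $\bar U_R/R - \p_R\bar U_R > 0$ (equivalently $N_W D_Z + N_Z D_W > 0$ along the trajectory), i.e. it proves that your ``Case 1'' holds everywhere on that half, and this already needs a barrier curve plus Taylor analysis at both $P_s$ and $P_0$. For $R>1$ the inequality $\bar U_R/R \ge \p_R\bar U_R$ is not true in general, so one cannot reduce to the radial repulsivity at all; instead one must prove $1 + \bar U_R/R \pm \alpha\p_R\bar S > 0$ directly, which the paper does via two further barrier constructions (the conics $\Xi_1 = 0$ and $\Xi_2 = 0$) together with sign information at $P_s$, $\bar P_s$, $P_\star$ collected in Lemma \ref{lemma:auxiliary}. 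None of this is recoverable by a soft continuity/compactness argument, and in particular the statement you are trying to prove does not follow from \eqref{eq:radial_repulsivity} plus \eqref{eq:profiles_decay}: it is a genuinely new property of the profile, which is why the paper states it separately.
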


\begin{proof}
Before showing the Lemma, let us introduce some notation and properties of the known self-similar profiles (these properties apply to both profiles from \cite{Merle-Raphael-Rodnianski-Szeftel:implosion-i} and \cite{Buckmaster-CaoLabora-GomezSerrano:implosion-compressible}). 

We will adopt the notation of \cite{Buckmaster-CaoLabora-GomezSerrano:implosion-compressible} and denote $U = \frac{\bar U}{R}$, $S = \frac{\bar S}{R}$. We will also regularly use $W, Z$ coordinates, which are just given by $W = U + S$ and $Z = U-S$. We will also reparametrize by $\xi = \log(R)$. The reason to introduce that extra factor of $\frac{1}{R}$ and the $\xi$ reparametrization is that now the ODEs satisfied by the self-similar profiles in these new coordinates are autonomous:
\begin{equation*}
\p_\xi W (\xi) = \frac{N_W (W, Z)}{D_W (W, Z)}, \qquad \mbox{ and } \quad \p_\xi Z = \frac{N_Z (W, Z)}{D_Z (W, Z)}
\end{equation*}
where $D_W, D_Z$ are first-degree polynomials given by
\begin{equation}
D_W(W, Z) = 1 + \frac{1 + \gamma}{4} W + \frac{3-\gamma}{4}Z, \qquad \mbox{ and }\qquad D_Z(W,Z) = 1 + \frac{3-\gamma}{4} W + \frac{1+\gamma}{4} Z,
\end{equation}
and $N_W, N_Z$ are second degree polynomials given by
\begin{align*}
N_W(W, Z) &= -rW +\frac{\ga-1}{4} Z^2 + \frac{\ga-3}{4} WZ-\frac{\ga W^2}{2}, \\
N_Z(W, Z) &= -r Z +\frac{\ga-1}{4} W^2+ \frac{\ga-3}{4} W Z-\frac{\ga Z^2}{2}.
\end{align*}

Moreover, the family of profiles considered pass through the point $P_s$ which is the rightmost of the two solutions to the second degree equation $N_Z(W, Z) = D_Z(W, Z) = 0$. We define $\bar P_s$ to be the other solution to the system $N_Z(W, Z) = D_Z(W, Z) = 0$. It is also assumed that the profiles pass through $P_s$ at $R=1$ (otherwise, they can be rescaled to do so). The formulas for $P_s$ and $\bar P_s$ are given by
\begin{align*}
P_s = (W_0, Z_0) &= \Big( \frac{\ga^2 r+(\ga+1) \mathcal{R}_1 -3 \ga^2-2 \ga r+10 \ga-3 r-3}{4 (\ga-1)^2}, \\
& \qquad \frac{\ga^2 r+(\ga -3) \mathcal{R}_1 -3 \ga^2-6 \ga r+6 \ga+9 r-7}{4 (\ga-1)^2} \Big)\,, \\ 
\bar P_s = (\bar W_0, \bar Z_0) &= \Big( \frac{\ga^2 r-(\ga+1) \mathcal{R}_1-3 \ga^2-2 \ga r+10 \ga-3 r-3}{4 (\ga-1)^2}, \\
  & \qquad \frac{\ga^2 r+(3-\ga) \mathcal{R}_1 -3 \ga^2-6 \ga r+6 \ga+9 r-7}{4 (\ga-1)^2} \Big)\,,
\end{align*}
where
\begin{equation} \label{eq:R1}
\mathcal{R}_1 = \sqrt{\ga^2 (r-3)^2-2 \ga (3r^2-6r+7)+(9r^2-14r+9)}\,.
\end{equation}

One can now apply the formula $\p_\xi W = \frac{N_W}{D_W}$ at point $P_s$ and get the formula for $W_1$, the first-order Taylor coefficient of the solution at $P_s$. The equation $\p_\xi Z = \frac{N_Z}{D_Z}$ it is however singular at $P_s$ (since both $N_Z$ and $D_Z$ vanish), but taking a derivative one can see that $Z_1 \nabla D_Z (P_s) \cdot (W_1, Z_1) = \nabla N_Z(P_s) \cdot (W_1, Z_1)$. Solving that second-degree equation one gets two possible solutions for $Z_1$ and all the profiles in the family we are considering correspond to the following choice:
\begin{align*}
W_1 &= \frac{\ga \left(-3 \left(\mathcal{R}_1+6\right)-3 \ga (r-3)+2 r\right)+\mathcal{R}_1+5 r+5}{4 (\ga-1)^2} \,,\\
Z_1 &=  \frac{-\left(3 \gamma ^3-7 \gamma ^2+\gamma +11\right) r+\gamma  (\gamma  (9 \gamma -3 \mc R_1-25)+10 \mc R_1-4(\gamma-1) \mc R_2+27)-3 \mc R_1+4 (\gamma-1) \mc R_2-3}{4 (\gamma -1)^2 (\gamma +1)}\,,
\end{align*}
where
\begin{align*}
\mathcal{R}_2 &=\frac{1}{\gamma-1}\bigg(\gamma  ((76-27 \gamma ) \gamma -71)-\left((3 \gamma -5) ((\gamma -5) \gamma +2) r^2\right)+(\gamma  (\gamma  (18 \gamma -52)+50)-8) 
   r\\
   &\qquad+\mc R_1 (9 (\gamma -2) \gamma +((2-3 \gamma ) \gamma +5) r+5)+18\bigg)^{\frac12}\,.
\end{align*}

Another point of interest of the phase portrait is the intersection $N_Z = 0$ and $N_W = 0$. Since both are second-degree polynomials there are at most four solutions to $N_Z = N_W = 0$. Two of them ($(0, 0)$ and $(-r, -r)$) lie on the diagonal $W = Z$, and of the other two only one of them lies in the positive density halfplane $W>Z$. That one is given by:
\begin{equation*}
P_\star = \left( \frac{2 \left(\sqrt{3}-1\right) r}{3 \ga-1}, -\frac{2 \left(1+\sqrt{3}\right) r}{3 \ga-1} \right).
\end{equation*}

Moreover, the profiles considered satisfy that:
\begin{equation} \label{eq:DWDZ}
D_W > 0, D_Z < 0, \quad \forall \xi < 0, \qquad \mbox{ and } \qquad D_W > 0, D_Z > 0 \quad \forall \xi > 0.
\end{equation} 
We also sometimes call these regions 'left of the phase portrait' (region where $D_W > 0, D_Z < 0$, corresponding to $ R<1$) and 'right of the phase portrait' (region where $D_W, D_Z > 0$, corresponding to $R>1$).

For the convenience of the reader, we refer to Figure \ref{fig:phase_portrait}, where have plotted the phase portrait that the profile $(W, Z)$ satisfies, together with all the definitions that we have introduced.

\begin{figure}[h]
\centering
\includegraphics[width=0.5\textwidth]{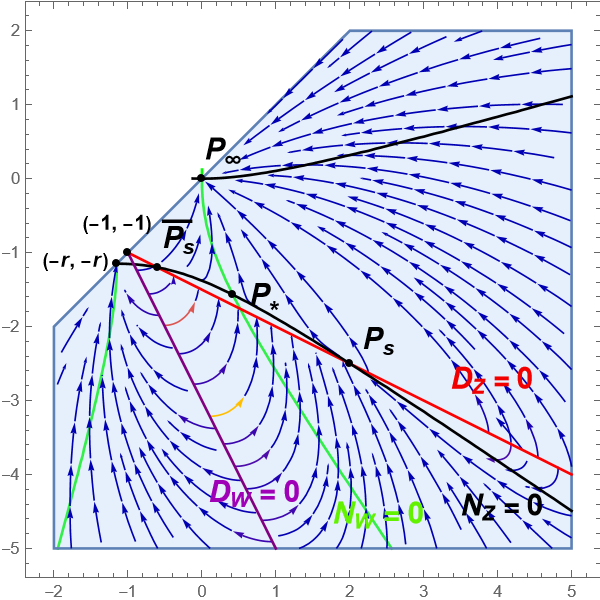}
\caption{Phase portrait of the profile for $\gamma = 5/3$, $r=1.15$}
\label{fig:phase_portrait}
\end{figure}

Lastly, let us recall from \eqref{eq:range_r} that for the profiles considered (including all the profiles of \cite{Merle-Raphael-Rodnianski-Szeftel:implosion-i} and \cite{Buckmaster-CaoLabora-GomezSerrano:implosion-compressible}), we have $1<r<r^\ast$, where 
\begin{equation} \label{eq:defr}
r^\ast(\gamma) = \begin{cases}
r^\ast_{<5/3} = 1 + \frac{2}{\left( 1 + \sqrt{ \frac{2}{\ga - 1} }\right)^2} \quad \mbox{ for } 1< \ga < 5/3, \\ 
r_{\geq 5/3}^\ast = \frac{3\ga - 1}{2 + \sqrt{3} (\ga - 1)} \quad \mbox{ for } \ga \geq 5/3.
\end{cases}
\end{equation}
We have defined here the quantities $r^\ast_{<5/3}$ and $r_{\geq 5/3}^\ast$ to be given by their respective expressions, so they are defined for all $\gamma > 1$ (however, we just have $r^\ast = r_{<5/3}^\ast$ for $\gamma \leq 5/3$ and $r^\ast = r^\ast_{\geq 5/3}$ for $\gamma \geq 5/3$). Let us also recall that \eqref{eq:defr} implies the following inequality (that can be found in \eqref{eq:rough_range_r}):
\begin{equation} \label{eq:danglars}
1 < r < 2 - \frac{1}{\gamma} = \frac{2\gamma - 1}{\gamma}.
\end{equation}
Inequality \eqref{eq:danglars} is looser than \eqref{eq:defr}, but it will be very useful, since it is simpler than \eqref{eq:defr} and it will suffice in many cases where we need to upper bound $r$.

\textbf{Part I. $R<1$: Right of the phase portrait.}

Let us start showing Lemma \ref{lemma:angular_repulsivity} for $\xi < 0$, that is, $R < 1$. In this region, using the radial repulsivity \eqref{eq:radial_repulsivity}, it suffices to show
\begin{equation}
\frac{\bar U}{R} - \p_R \bar U > 0.
\end{equation}
In our $ W, Z$ variables, this corresponds to showing
\begin{equation*}
\frac{N_W}{D_W} + \frac{N_Z}{D_Z} < 0,
\end{equation*}
or equivalently
\begin{equation} \label{eq:part1}
N_W D_Z + N_Z D_W > 0.
\end{equation}

We consider the branch of $N_W D_Z + N_Z D_W = 0$ that lies in our region $D_W > 0, D_Z < 0$, which actually connects $P_0$ to $P_s$ (recall $P_0$ is a point at infinity). Using variables $ U = \frac{W+Z}{2}, S = \frac{W-Z}{2}$ we have
\begin{equation*}
N_W D_Z + N_Z D_W = \frac{\gamma - 1}{2} S^2 (3 (\gamma - 1) U + 2 r - 2) - 2 U (U+1) (r+U)
\end{equation*}
and the curve solving $N_W D_Z + N_Z D_W = 0$ inside the region $D_W > 0, D_Z < 0$ can be parametrized as
\begin{equation*}
b_S = \frac{2}{\gamma - 1} \sqrt{ b_U+1} \sqrt{r+ b_U} \sqrt{ \frac{ b_U }{ 3  b_U+2(r-1)/(\gamma - 1)} }, \quad \mbox{ and } \quad b_U \in \left(  U(P_s),  \frac{-2(r-1)}{3(\gamma - 1)} \right).
\end{equation*}
where $b_U$ goes from $ U (P_s)$ to $\frac{-2(r-1)}{3(\gamma - 1)}$ (at $P_0$) and $(b_U, b_S)$ describes our curve in $(U, S)$ coordinates. We have that $-1 < U(P_s) < \frac{-2(r-1)}{3(\gamma - 1)} < 0$ by Lemma \ref{lemma:auxiliary}. Thus, it can be seen that all three square roots have positive quantities inside (the first two due to $b_U > -1$, the fraction because both $b_U$ and $\frac{2(r-1)}{3(\gamma - 1)} + b_U$ are negative). We define $b$ to be the barrier defined by $b_U, b_S$ in $W, Z$ coordinates.

Now the approach is the following. We show \eqref{eq:part1} in small neighbourhoods of $P_s$ and $P_0$ using a Taylor expansion around those points.  Then, if \eqref{eq:part1} was to fail at some intermediate point $\xi \in (-\infty, 0)$, it would need to cross $b$ in both directions. However, we will also show that the vector field $(N_W/D_W, N_Z/D_W)$ along $b$ points always to the same side of $b$, making that impossible. \\

\textbf{Part I.1. Constant sign along $b$} \\

We start by showing that that the vector field $(N_W/D_W, N_Z/ D_Z )$ along $b$ points always to the same side of $b$. Equivalently, we can show that $(N_W D_Z, N_Z D_W)$ along $b$ always points to the same side of $b$. Note that $N_W D_Z = -N_Z D_W$ along $b$ (since it solves $N_W D_Z + N_Z D_W = 0$) and that the vector field  $(N_W D_Z, N_Z D_W)$ does not vanish through $b$ except at the endpoint $P_s$. Thus, it suffices to show that $(-1, 1)$ along $b = (b_W, b_Z)$ always points to the same side of $b = (b_W, b_Z)$.

Given that $b$ is defined implicitly by $N_W D_Z + N_Z D_W = 0$, a normal vector at any given point is given by $\nabla ( N_W D_Z + N_Z D_W )$. Thus, we need to show that $(-1, 1) \cdot \nabla ( N_W D_Z + N_Z D_W )$ has constant sign along $b$. We have
\begin{equation*}
(-1, 1) \cdot \nabla ( N_W D_Z + N_Z D_W ) = -\frac{3(\gamma - 1)^2}{4}  ( W-  Z) \left(   W + Z+ \frac{4 (r-1)}{3 (\gamma - 1)} \right).
\end{equation*}
Since $\gamma > 1$ and $ W - Z = 2 \frac{ S}{R}> 0$, we just need to show that $ W +  Z+ \frac{4 (r-1)}{3 (\gamma - 1)}$ has a constant sign along $b$. Recalling that $U = \frac{W+Z}{2}$, this is equivalent to show that $b_U + \frac{2 (r-1)}{3 (\gamma - 1)}$ has constant sign. Since $b_U$ ranges from $ U(P_s)$ to $\frac{-2(r-1)}{3(\gamma - 1)}$ this is true. \\

\textbf{Part I.2 Taylor analysis at $P_s$ and $P_0$.} \\

We start by checking \eqref{eq:part1} at $P_s$. We need to show $W_1 + Z_1 < 0$. The inequality $W_1 + Z_1 < 0$ is deferred to Lemma \ref{lemma:auxiliary}.

With respect to the analysis at $P_0$, from \cite[Proposition 2.5]{Buckmaster-CaoLabora-GomezSerrano:implosion-compressible}, we have that the solution near $R=0$ has the form:
\begin{equation*}
W = \frac{w_0}{R} + \sum_{i=1}^\infty w_i R^{i-1}, \qquad \mbox { and } \qquad Z = \frac{-w_0}{R} + \sum_{i=1}^\infty w_i  (-R)^{i-1},  
\end{equation*}
where $w_1 = \frac{-2(r-1)}{3(\gamma - 1)}$ and $w_2, w_3, \ldots$ can be expressed in terms of $w_0$ (changing $w_0$ corresponds to a dilation of the profile, so it leaves $w_1$ unchanged but modifies the rest of the coefficients).

Plugging this into expression \eqref{eq:part1} and using $w_1 = \frac{-2(r-1)}{3(\gamma - 1)}$, we get
\begin{equation}
\frac{N_W}{D_W} + \frac{N_Z}{D_Z} = \left( -\frac{16 (r-1) (3 \gamma -2 r-1) ((3 \gamma -5) r+2)}{27 w_0^2 (\gamma-1)^5}-6 w_3\right) R^2 + O(R^4).
\end{equation}

Developing the equation $\p_\xi W = \frac{N_W}{D_W}$, we get
\begin{equation}
w_3 = -\frac{8 (r-1) (3 \gamma-2 r-1) ((3 \gamma-5) r+2)}{135 w_0^2 (\gamma-1)^5},
\end{equation}
and therefore
\begin{equation}
\frac{N_W}{D_W} + \frac{N_Z}{D_Z} = -\frac{32 (r-1) (3 \gamma -2 r-1) ((3 \gamma-5) r+2)}{135 w_0^2 (\gamma-1)^5} R^2 + O(R^4).
\end{equation}
Since $\gamma > 1$ and $\gamma > r > 1$ (from \eqref{eq:rough_range_r}), we just need to show that $(3 \gamma-5) r+2 > 0$. If $\gamma > 5/3$ it is obvious, otherwise using also \eqref{eq:rough_range_r}, we get
\begin{equation} \label{eq:villefort}
(3 \gamma-5) r+2 > (3\gamma - 5) \left(2 - \frac{1}{\gamma} \right) +2 = 6 \gamma - 3 - 10 + \frac{5}{\gamma} + 2 = \frac{(6\gamma - 5)(\gamma - 1)}{\gamma} > 0.
\end{equation}

\textbf{Part II: $R>1$. Left of the phase portrait}\\

Let us recall that from the proof of \cite[Proposition 3.1]{Buckmaster-CaoLabora-GomezSerrano:implosion-compressible} we know that for $\xi > 0$, the solution lies to the left of $P_s$. In particular, it is inside the triangle delimited by $D_Z = 0$, $W=Z$ and $W=W_0$. 

\textbf{Part II.1. Showing that the solution lies in $N_W < 0$.}
Let us show moreover that the solution has to lie in the region where $N_W < 0$. It is easy to check $N_W = 0$ describes a hyperbola in $(W, Z)$ coordinates (it has discriminant $\frac{9-14\gamma+9\gamma^2}{16} > 0$). One can solve that second-degree equation and get that the right branch of the hyperbola is parametrized in terms of $Z$ as
\begin{equation*}
p_W(Z) = \frac{\sqrt{9 \gamma^2 Z^2-8 \gamma r Z-14 \gamma Z^2+16 r^2+24 r Z+9 Z^2}+\gamma Z-4 r-3 Z}{4 \gamma}.
\end{equation*}
It satisfies the following properties: \begin{itemize}
\item $N_W(W, Z) < 0$ for every $W>p_W(Z)$. That is, $N_W < 0$ for points to the right of the branch.
\item The minimum of $p_W(Z)$ is zero and it is achieved at $Z = 0$. That is, the leftmost point of the branch is $(0, 0)$.
\item $p_W(Z)$ is decreasing for $Z < 0$ and increasing for $Z > 0$.
\item $(p_W(Z), Z)$ lies on the plane $S > 0$ (that is $p_W(Z) - Z > 0$) whenever $Z < 0$.
\end{itemize}

Let $P_i = (W_i, Z_i)$ the point of intersection of $N_W = 0$ with $D_Z = 0$. Our objective is to show that the solution cannot traverse the branch $(p_W(Z), Z)$ for $Z \in (Z_i, 0)$. Since the solution remains to the right of $D_Z = 0$ and $W=Z$, this will also show that the solution is to the right of $(p_W(Z), Z)$ for $Z < Z_i$ or $Z > 0$ respectively. This situation can be visualized in Figure \ref{fig:part21}.

\begin{figure}[h]
\centering
\includegraphics[width=0.5\textwidth]{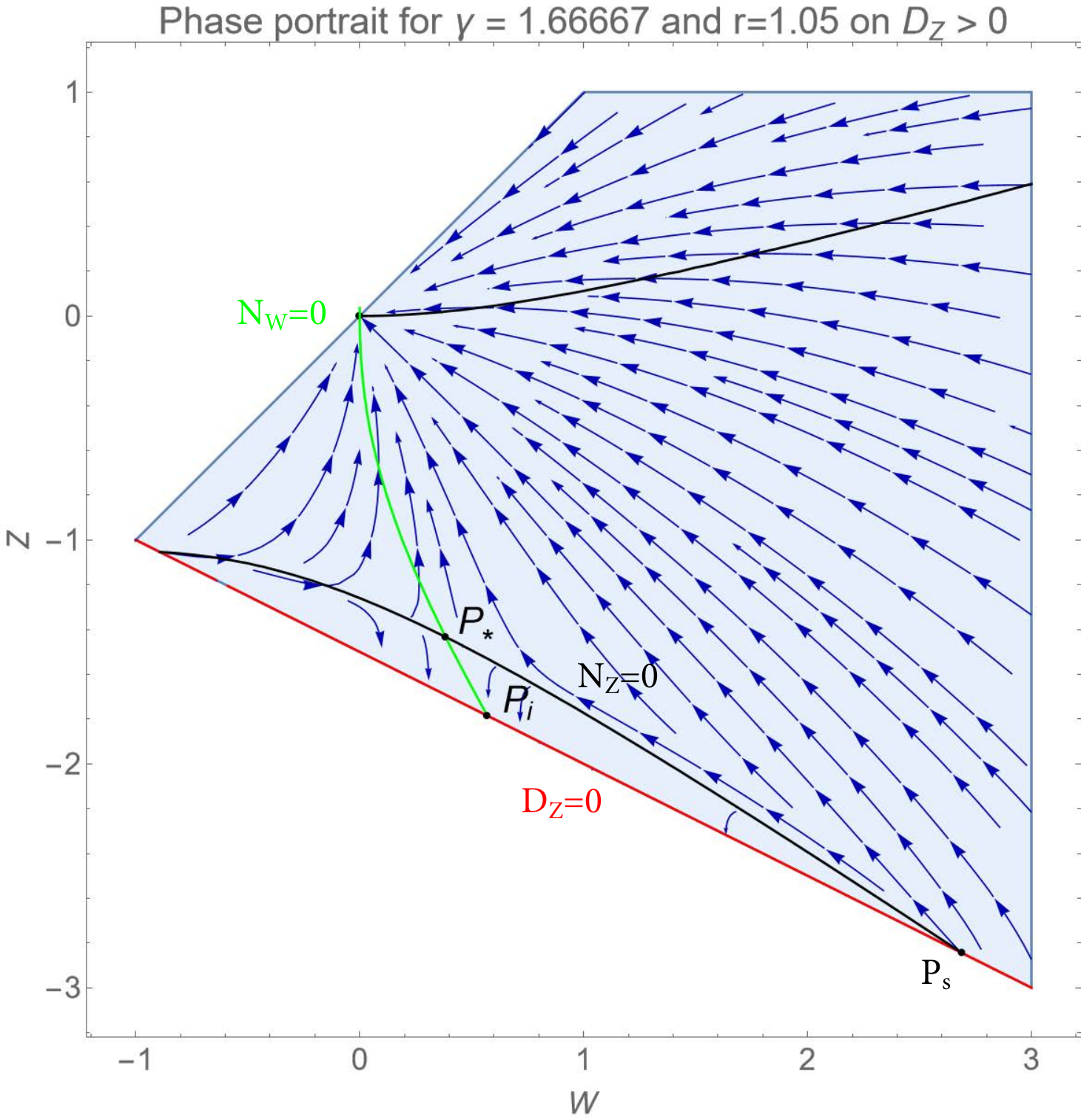}
\caption{Phase portrait to the left of $P_s$ for $\gamma = 5/3$ and $r = 1.05$. In green, the curve $(p_W(Z), Z)$ (that is, $N_W = 0$). The black curve represents $N_Z = 0$ and the red line represents $D_Z = 0$.}
\label{fig:part21}
\end{figure}

Note that the vector field $\left(\frac{N_W}{D_W}, \frac{N_Z}{D_Z}\right)$ over $N_W = 0$ is only vertical, and its sign is dictated by $N_Z$ (recall $D_Z > 0$). If $N_Z > 0$, then the field points upwards, and since $p_W(Z)$ is decreasing (third property) this means that solutions cannot go from the right of $(p_W(Z), Z)$ towards its left part. 

The only solution to $N_W = N_Z = 0$ in the open halfplane $S > 0$ is given by
\begin{equation}
(W_\ast, Z_\ast) = \left( \frac{2 \left(\sqrt{3}-1\right) r}{3 \gamma-1},-\frac{2 \left(1+\sqrt{3}\right) r}{3 \gamma-1}\right).
\end{equation}
The sign of $N_Z(p_W(Z), Z)$ is positive for $Z \in (Z_\ast, 0)$. It is clear that the sign is constant since $p_W(Z)$ does not pass through any other solution of $N_W = N_Z = 0$ from $P_\star$ to $(0, 0)$ (recall the other two solutions are $-P_\star$ and $(-r, -r)$). Moreover, that sign is positive because a Taylor analysis shows $N_Z (p_W(Z),Z) = -rZ + O(Z^2)$, showing that the sign is positive for some $Z < 0$.

Since $N_Z(p_W(Z)) > 0$ for $Z \in (Z_\ast, 0)$, we know that the solution cannot traverse $(p_W(Z), Z)$ from right to left whenever $Z \in (Z_\ast, 0)$. It is possible that $Z_\ast \leq Z_i$ (that is, if $(W_\ast, Z_\ast)$ lies on $D_Z \leq 0$), and in that case, we are done. Otherwise, we have that $Z_i < Z_\ast$ (as in Figure \ref{fig:part21}), and we still need to treat the region $Z \in (Z_i, Z_\ast)$, which is inside the eye-shaped area bounded by $D_Z = 0$ and $N_Z = 0$ (red line and black arc in Figure \ref{fig:part21}).

Let us suppose that indeed the solution traverses $(p_W(Z), Z)$ for some $Z \in (Z_i, Z_\ast)$. Let us call $\Delta$ the (curved) triangle determined by $N_Z < 0, N_W > 0, D_Z > 0$ (in Figure \ref{fig:part21}, this is the triangle located to the left of $P_\star$ and $P_i$, determined by the green, red and black lines). After traversing $(p_W(Z), Z)$, the solution lies inside $\Delta$. We will prove a contradiction by showing that the solution cannot exit through any of the sides of $\Delta$. It cannot exit through the side $N_W = 0$, because $\frac{N_Z}{D_Z} < 0$ and therefore the field points in the inward direction to $\Delta$. It cannot exit through $D_Z = 0$, because we know our solution satisfies $D_Z > 0$ from \eqref{eq:DWDZ}. Moreover, in $\Delta$, $W$ is increasing (since $N_W/D_W > 0$), so the solution inside $\Delta$ is confined to values of $W$ bigger than the value of $W$ when entering $\Delta$. This makes it impossible to exit through $N_Z = 0$, since that side is located to the left of $N_W = 0$. This concludes the proof that the solution cannot traverse $(p_W(Z), Z)$ and therefore, $N_W < 0$.

Once equipped with the knowledge that the trajectory lies on $N_W < 0$, we divide the proof in two parts, showing both
\begin{equation} 
1 + \frac{\bar U_R}{R} + \alpha \p_R \bar S > 0, \quad \mbox{ and } \quad
1 + \frac{\bar U_R}{R} - \alpha \p_R \bar S > 0.
\end{equation}
We express both conditions in one equation as 
\begin{equation} \label{eq:part2_1}
1 + \frac{\bar U_R}{R} \pm \alpha \p_R \bar S > 0,
\end{equation}
meaning that the inequality should hold for both choices of the sign. Going to $(W, Z)$ variables, equation \eqref{eq:part2_1} can be written as
\begin{equation*}
\left( 1 + \frac{ W +  Z}{2}
\pm \alpha \frac{W - Z}{2} \right) \pm \frac{\alpha}{2} \left( \frac{N_W}{D_W} - \frac{N_Z}{D_Z} \right) > 0
\end{equation*}
Given that the left parenthesis is exactly equal to $D_W$ or $D_Z$ (depending on the sign), we just need to show
\begin{equation*}
D_W + \frac{\alpha}{2} \frac{N_W}{D_W} - \frac{\alpha}{2} \frac{N_Z}{D_Z} > 0 \qquad \mbox{ and } \qquad D_Z + \frac{\alpha}{2} \frac{N_Z}{D_Z} - \frac{\alpha}{2} \frac{N_W}{D_W} > 0.
\end{equation*}
Since $D_W, D_Z > 0$, we can multiply by $D_W D_Z > 0$, and it suffices to show
\begin{equation} \label{part22}
\Xi_1 := D_W^2 D_Z + \frac{\alpha}{2} N_W D_Z - \frac{\alpha}{2} N_Z D_W > 0;
\end{equation}
and
\begin{equation} \label{part12}
 D_Z^2 D_W + \frac{\alpha}{2} N_Z D_W - \frac{\alpha}{2} N_W D_Z > 0.
\end{equation}

\textbf{Part II.2 Showing \eqref{part22}}

First, we express $\Xi_1$ in $(U, S)$ coordinates, obtaining
\begin{equation*}
\Xi_1 = (U+1)^3 + \frac{S}{4} (\gamma-1)  \left(r ((\gamma-3) U-2)-2 (\gamma-1) U^2+(5-3 \gamma) U+2\right)-\frac{1}{4} (\gamma-1)^2 (U+1) S^2.
\end{equation*}

Thus, $\Xi_1$ is a second-degree polynomial in terms of $S$. Moreover, it is clear that the discriminant is positive, since the independent term and the second-order term have opposite signs. Let $S_-(U), S_+(U)$ be the two real roots of the second-degree polynomial $\Xi_1$. Since $U = \frac{W+Z}{2} = \frac{D_Z + D_W}{2} - 1 > -1$, the principal coefficient of $\Xi_1$ (as a polynomial in $S$) is negative and therefore $\Xi_1$ is positive whenever $S_- < S < S_+$. Since the independent term is positive, $S_- < 0$, and we just need to check $S < S_+(U)$ due to the constraint $S > 0$.

Let us recall that the solution lies in the triangle $D_Z > 0$, $W < W_0$, $W-Z > 0$ (red triangle in Figure \ref{fig:PartII2}). We will show that the solution satisfies the extra condition $U > U(\bar P_s)$ (this corresponds to being to the upper-right part of the green segment in Figure \ref{fig:PartII2}). We defer the proof of $U > U(\bar P_s)$ to the end of Part II.2 and let us for now assume that the solution lies in the quadrilateral determined by $D_Z > 0$, $W < W_0$, $W-Z > 0$, $U > U(\bar P_s)$. Let us denote that quadrilateral by $Q$ (note that the inequalities are strict, so $Q$ is an open set). We will show that the boundary of $Q$ satisfies $\Xi_1 \geq 0$ (this corresponds to being in the upper-left side of the brown curve in Figure \ref{fig:PartII2}). This means that $S \leq S_+(U)$ for every point of $\partial Q$, implying that $S < S_+(U)$ on $Q$, and concluding the proof of \eqref{part22} (assuming the condition $U > U(\bar P_s)$ that we will show later). 

Thus, we proceed to show that $\Xi_1 \geq 0$ on $\partial Q$. Let us note that we will be always working on the half-plane $S > 0$, so we will alternate between the equivalent conditions $S \leq S_+(U)$ and $\Xi_1 \geq 0$, depending on which one is more convenient for each specific side of our quadrilateral.

\begin{figure}[h]
\centering
\includegraphics[width=0.5\textwidth]{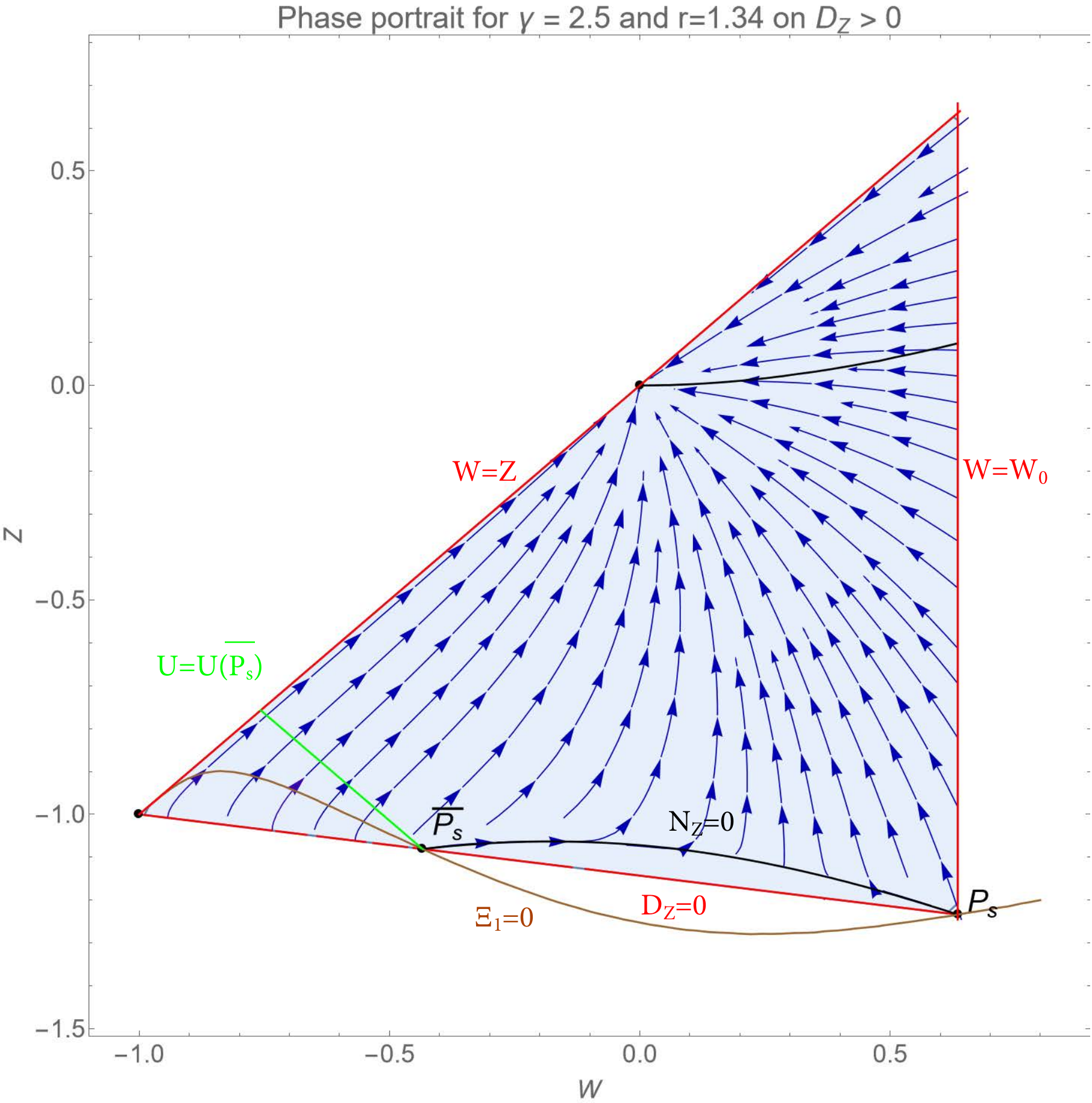}
\caption{Phase portrait of the profile for $\gamma = 5/3$, $r=1.34$. The red triangle corresponds to the triangle where  $D_Z > 0$, $W < W_0$, $W-Z > 0$. The black curve corresponds to $N_Z = 0$ and the brown curve to $\Xi_1 = 0$. The green line represents $U = U(\bar P_s)$. The quadrilateral $Q$ corresponds to the area of the picture that is located to the upper-right part of the green segment. Thus, it has three red sides and one green side, and $\bar P_s$, $P_s$ are two of its vertices. }
\label{fig:PartII2}
\end{figure}

Clearly, the side $W=Z$ satisfies $S < S_+(U)$, since $S_+(U) > 0$ and $S = 0$ on that line. Let us now proceed with the side $W = W_0$ (right side of the red triangle in Figure \ref{fig:PartII2}). Since $\Xi_1$ is a third-degree polynomial in $Z$ vanishing at $P_s$ ($N_Z(P_s) = D_Z(P_s) = 0$) it suffices to show that $\p_Z^i \Xi_1 > 0$ for $i \in \{ 1, 2, 3 \}$. That would show $\Xi_1$ is positive on all the vertical halfline of $W=W_0$ above $P_s$, including the side of our region. We have that
\begin{align*}
\p_Z \Xi_1 |_{P_s} &= \frac{1}{4} (\gamma (r-1)+r (2 Z_0-1)+2 W_0+5) > 0, \\
\p_Z^2 \Xi_1 |_{P_s} &= \frac{1}{8} \left(\gamma^2 (-r+3 Z_0+2)+4 \gamma r-3 r+12 W_0+9 Z_0+22\right) > 0,\\ 
\p_Z^3 \Xi_1 |_{P_s} &= \frac{3}{16} \left(\gamma^2-2 \gamma+5\right) = \frac{3}{4} + \frac{3}{16} (\gamma - 1)^2 > 0.
\end{align*}
The third inequality is trivial while the first two are shown in Lemma \ref{lemma:auxiliary}.

We proceed with the side $D_Z = 0$ of $Q$. If $D_Z = 0$ we have $\Xi_1 = \frac{-\alpha}{2} N_Z D_W$, so the sign is determined by $N_Z$. Let us recall $N_Z = 0$ intersects $D_Z = 0$ twice, at $\bar P_s$ (to the left) and $P_s$ to the right. We need to show that $N_Z \leq 0$ along $D_Z = 0$ between the points $\bar P_s$ and $P_s$ (cf Figure \ref{fig:PartII2}). This is true because $N_Z$ along $D_Z = 0$ (parametrized by $W$) is a second degree polynomial with quadratic coefficient $\frac{2(\gamma - 1)^2}{(\gamma - 1)^2}$. Thus, the polynomial is negative between the roots (that is, between $\bar P_s$ and $P_s$) and positive outside. We have therefore showed that $\Xi_1 \geq 0$ (equivalently $S < S_+(U)$) the side $D_Z = 0$ of our quadrilateral, that goes from $\bar P_s$ to $P_s$.

Lastly, we need to treat the side $U = U(\bar P_s)$ of our quadrilateral region $Q$. We know that $S = S_+(U)$ at $\bar P_s$, since $\Xi_1 = 0$ there ($D_Z = N_Z = 0$). Therefore, we have that $S < S_+(U)$ for any other point along the line $U = U(\bar P_s)$ with smaller value of $S$. This includes the side $U = U(\bar P_s)$ of our quadrilateral. We have shown that $\Xi_1 \geq 0$ on $\partial Q$. From the discussion above, using $\Xi_1 \geq 0$ on $\partial Q$ we conclude \eqref{part22} assuming that the solution stays in the region $U > U(\bar P_s)$ (and therefore, stays inside $Q$). 

Let us finally show that $U > U(\bar P_s)$. We construct the barrier $b(t) = ( \bar W_0 - t, \bar Z_0 + t)$. In order for the solution to remain in the upper-right part of the barrier, the condition needed is:
\begin{equation*}
\Xi_3 = N_W(b(t)) D_Z(b(t)) +  N_Z(b(t)) D_W(b(t)) > 0,
\end{equation*}
where we are using the vector field $(N_W D_Z, N_Z D_W)$, which has the same direction and sign as $\left(N_W/D_W, N_Z/D_W \right)$, given that $D_W, D_Z > 0$. A computation shows that
\begin{align*}
\frac{\Xi_3}{t} & = -\frac{(r-1) \left(3 \gamma^2 (r-3)+\gamma (-14 r-3 \mc R_1+22)+15 r+5 \mc R_1-17\right)}{4 (\gamma -1)}\\
&-\frac{1}{8} t ((\gamma-1) (-3 \gamma (r-3)+r+3 \mc R_1-7)).
\end{align*}
Using Lemma \ref{lemma:auxiliary}, we prove that the independent term of $\frac{\Xi_3}{t}$ is positive, thus showing $\Xi_3 > 0$ for small $t$ (equivalently, $U > U(\bar P_s)$ close to $(\bar W_0, \bar Z_0)$). If we also show that $\Xi_3$ is positive at the intersection of $b(t)$ with $W = Z$, we will be done, since $\Xi_3/t$ is an affine function of $t$ (and therefore positive between any two positive points). Let us denote by $(\kappa, \kappa)$ the intersection of $b(t)$ with $W = Z$. Using the expression $N_W D_Z + N_Z D_W$ on a point of the form $(\kappa, \kappa)$, we obtain
\begin{equation*}
\Xi_3 = -2\kappa (\kappa +1 )(\kappa + r).
\end{equation*}
In particular, since $r>1$, we have that $\Xi_3 > 0$ for $\kappa \in (-1, 0)$. Thus, we are done by showing that
\begin{equation*}
-1 < \frac{\bar W_0 + \bar Z_0}{2} < 0.
\end{equation*}
We refer to Lemma \ref{lemma:auxiliary} for this inequality. \\

\textbf{Part II.3. Showing \eqref{part12}.} \\ 
Since $D_Z, D_W > 0$, and $\alpha > 0$, we can reduce \eqref{part12} to show the stronger inequality
\begin{equation*}
\Xi_2 := N_Z D_W - N_W D_Z > 0.
\end{equation*}
Expressing $\Xi_2$ in $(U, S)$ variables, we obtain
\begin{equation} \label{eq:morcef}
\frac{\Xi_2}{S} = r (2-(\gamma-3) U)+U (2 \gamma U+3 \gamma-1) -\frac{1}{2} (\gamma-1)^2 S^2
\end{equation}
In particular, in the halfplane $S > 0$ we have that $\Xi_2 > 0$ whenever $$S < S_+(U) = \frac{\sqrt{ 2 \left( r (2-(\gamma-3) U)+U (2 \gamma U+3 \gamma-1)\right)}}{\gamma - 1}.$$

Note that this definition of $S_+(U)$ is different from the one we gave in Part II.2. Each definition of $S_+(U)$ only applies to the part where is it defined. Note that $S_+(U)$ may be complex, since the radical is not always real. If $S_+(U)$ is complex for some $U$, then there is no possible $S$ such that $\Xi_2 \geq 0$ (one example for this is at point $(W, Z) = (-0.5, -0.8)$ of Figure \ref{fig:PartII3_1} since the brown hyperbola $S = S_+(U)$ does not intersect the diagonal $U = \frac{-0.5-0.8}{2}$). In particular, whenever we say that $S \leq S_+(U)$ it should be interpreted that both $S_+(U)$ is real and $S \leq S_+(U)$.

\begin{figure}[h]
\centering
\includegraphics[width=0.5\textwidth]{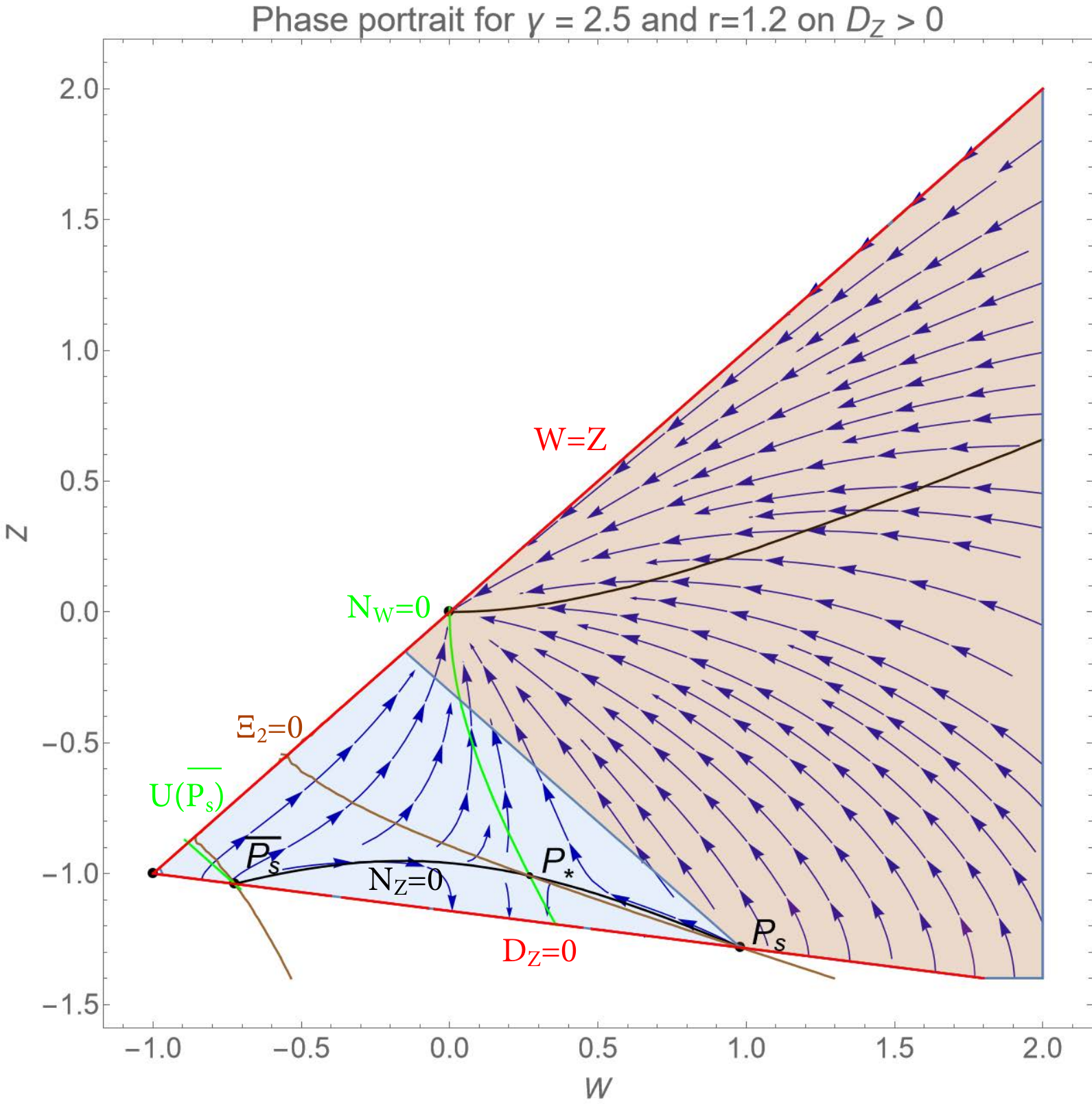}
\caption{Phase portrait of the profile for $\gamma = 5/2$, $r=1.20$. We are plotting the region corresponding to $W - Z > 0$ and $D_Z > 0$, with the boundary in red. The orange region corresponds to $U > U(P_s)$ and the blue to $U < U(P_s)$. The black curve corresponds to $N_Z = 0$. The brown curve corresponds to $\Xi_2 = 0$, and we want to show the solution stays in the region with $S$ below the brown curve (above the right branch of the brown curve). The green curves represent $N_W = 0$ and $U = U(\bar P_s)$.}
\label{fig:PartII3_1}
\end{figure}

Let us start showing that $\Xi_2 \geq 0$ for $U \geq U(P_s)$ (orange region in Figure \ref{fig:PartII3_1}). On the one hand, writing the condition $D_Z > 0$ in $(U, S)$ coordinates, we obtain that $S < \frac{2(1+U)}{\gamma - 1}$. On the other hand, we have that $\Xi_2$ is positive along $D_Z = 0$ for $U > U(P_s)$. This comes from the observation that $\Xi_2 = -N_Z D_W$ along $D_Z = 0$, the fact that $D_W > 0$ along $D_Z = 0$ and the observation from Part II.2 that, along $D_Z = 0$, $N_Z$ is positive between the two roots ($\bar P_s$ and $P_s$) and $N_Z < 0$ outside. Since $\Xi_2 \geq 0$ along $D_Z = 0$ for $U > U(P_s)$, we obtain that $S_+(U)$ is real and bigger than $\frac{2(1+U)}{\gamma - 1}$ (which is the value that $S$ takes at $D_Z = 0$). Since the solution satisfies $S < \frac{2(1+U)}{\gamma - 1}$, we are done with this region.

Now, we just need to show that $\Xi_2 > 0$ whenever the solution is in the region $U < U(P_s)$ (this corresponds to the blue region of Figure \ref{fig:PartII3_1}). The second-degree equation $\Xi_2 = 0$ describes a conic that clearly passes through $P_s$, and we will denote by $b(t)$ the conic branch of $\Xi_2 = 0$ that starts from $P_s$ on the side $D_Z > 0$ (with initial value $b(0) = P_s$). Since $\Xi_2 = 0$ is a conic we can determine $b(t)$ for all $t \geq 0$ (with an arbitrary parametrization). Since $N_W(P_s) < 0$ and $D_W(P_s) > 0$, from the expression of $\Xi_2$, $b(t)$ has to lie in the region $N_Z < 0$ for $t \geq 0$ sufficiently small. Thus, $b(t)$ starts in the eye-shaped region with vertices  $P_s, \bar P_s$ delimited by $N_Z < 0$, $D_Z > 0$ (in Figure \ref{fig:PartII3_1}, that region is delimited by the black curve and red line, and $b(t)$ is the brown curve starting at $P_s$). We denote the eye-shaped region $N_Z < 0$, $D_Z > 0$ by $E$. Moreover, $E$ is contained on the halfplane $W > Z$ since the hyperbola $N_Z$ only intersects $W = Z$ twice: at $(0, 0)$ and $(-r, -r)$. The intersection at $(0, 0)$ corresponds to the upper branch (which is not the one bounding $E$) and the intersection $(-r, -r)$ is located in $D_Z < 0$ (thus, it is not on the boundary of $E$. Moreover, $b(t)$ has to exit $E$ at some point (if $b(t)$ is unbounded that is obvious, if it is an ellipse, it needs to reach $P_s$ from the side $D_Z < 0$). There are two possibilities regarding how $b(t)$ exits $E$:
\begin{itemize}
\item Case A. $b(t)$ exits $E$ through some point with with $D_Z > 0$. This implies that the point of exit satisfies $N_Z = 0$. Combining that with $\Xi_2 = 0$, we get $N_W D_Z = 0$. Since $D_Z > 0$, we also have $N_W = 0$. There is only one solution to $N_W = N_Z = 0$ on the open halfplane $W > Z$, and it is given by $P_\star$. Thus, the point of exit is $P_\star$. This possibility is depicted in Figure \ref{fig:PartII3_1}.
\item Case B. $b(t)$ exits $E$ through $D_Z = 0$. From $\Xi_2 = 0$, and $D_Z = 0$, we obtain that $N_Z D_W = 0$ at that point of exit. Since $D_Z > D_W$ on our halfplane $W > Z$, it is not possible that $D_W$ vanishes, and therefore $N_Z = 0$. There are two points with $N_Z = D_Z = 0$, namely $P_s$ and $\bar P_s$. The point of exit cannot be $P_s$, since, even if $b(t)$ was a periodic conic (an ellipse) it would arrive back to $P_s$ from the halfplane $D_Z < 0$ (since we started from $D_Z > 0$). Thus, the point of exit is $\bar P_s$. This case is depicted in Figure \ref{fig:PartII3_2}.
\end{itemize}
Moreover, we can characterize cases A and B as follows. Since $E$ is contained in $W > Z$ and $N_Z = N_W = 0$ only has one simple solution ($P_\star$) on that halfplane. If $D_Z (P_\star) > 0$ we have that $P_\star \in \partial E$, and since $N_W = 0$ cuts the region $E$, it has to exit through the side $D_Z = 0$ (there are no more solutions to $N_W = N_Z = 0$). Thus, if $D_Z (P_\star) > 0$, it has to divide $E$ in two, so that $P_s$ and $\bar P_s$ lie on different parts. Thus, case B is not possible, since before arriving to $\bar P_s$, $b(t)$ would need to cross $N_W = 0$ (and since $\Xi_2 = 0$ we obtain that $N_Z = 0$ as well, meaning that $b(t)$ reaches $P_\star$ before $\bar P_s$). Reciprocally if $D_Z(P_\star) < 0$, we have that $P_\star \notin \partial E$, so it must happen that we are in case B. If $D_Z(P_\star) = 0$ we have $\bar P_s = P_\star$ and we can take any case. 

\begin{figure}[h]
\centering
\begin{tikzpicture}[scale=1.5]
  \coordinate (Origin) at (0,0);
  \coordinate (UpRight) at (3.2,3.2);    
  \coordinate (DownRight) at (5,-2.5); 
  
  \coordinate (PsBar) at (2,-1);     
  \coordinate (Ps) at (4,-2);        
  
  \coordinate (Pinfty) at (2,2);    
  
  \coordinate (IntersectionPoint) at (0.5,0.5);
  
  \draw[thick, red] (Origin) -- (UpRight);   
  \draw[thick, red] (Origin) -- (DownRight); 
  
  \node[red, above] at (2.5,2.72) {$W = Z$};
  \node[red, below] at (3,-1.7) {$D_Z = 0$};
  
  \fill (Ps) circle (2pt);
  \fill (PsBar) circle (2pt);
  \fill (Pinfty) circle (2pt);
  
  \node[below right] at (PsBar) {$\overline{P}_s$};
  \node[below right] at (Ps.south east) {$P_s$};
  \node[above right] at (1.9,2.1) {$P_{\infty}$};
  
  \draw[thick, black] (PsBar) .. controls (2.5,0) and (3, 0) .. (Ps);
  
  \draw[thick, green] (PsBar) -- (IntersectionPoint);
  
  \node[green, above left] at (1.6,0) {$U = U(\overline{P}_s)$};
  
  \draw[thick, green] (Pinfty) .. controls (1.5,0) and (1.5,-2) .. (1.9,-3);
  
  \node[green, right] at (1.7,0) {$N_W = 0$};
  
  \coordinate (Pstar) at (1.67,-2);
  \fill (Pstar) circle (2pt);
  \node[left] at (Pstar) {$P_{\star}$};
  
  \draw[thick, brown] (Pstar) .. controls (1.7,-1.6) and (1.9,-1.2) .. 
                     (PsBar) .. controls (2.2,-0.6) and (3,-0.5) .. (Ps);
                     
  \node[brown] at (3,-1.2) {$b(t)$};
\end{tikzpicture}
\caption{This figure is analogous to Figure \ref{fig:PartII3_1}, but for Case B. The Figure is schematic and in the real plot $D_Z = 0$, $N_Z = 0$ and $b(t)$ would be too close to distinguish from each other. Recall that $E$ is the region between the black and red lines, and observe that in this case the brown curve $b(t)$ exits $E$ through $\bar P_s$. }
\label{fig:PartII3_2}
\end{figure}

Let us show that in any of the cases above the solution will not traverse $b(t)$ while $b(t) \in E$. The solution starts above $b(t)$ due to the Taylor analysis performed in Part I.2 (the term $D_W D_Z^2$ that we omitted is irrelevant since it is second order at $P_s$). We will start showing that the field $(N_W D_Z, N_Z D_W)$ (which is parallel and has the same orientation as $(N_W/D_W, N_Z/D_Z)$ points in the lower-down direction of $b(t)$ while $b(t)\in E$. Given the implicit definition of $b(t)$ as $\Xi_2 = 0$, a normal vector of $\Xi_2$ pointing down-left is given by $\nabla (N_W D_Z - N_Z D_W)$. We have that
\begin{equation*}
\left( -1, -1 \right) \cdot \nabla (N_W D_Z - N_Z D_W) = \frac{1}{2} (W-Z) (\gamma (-r+2 W+2 Z+3)+3 r-1) = S \left(-1+3r +\gamma (3-r+4U) \right).
\end{equation*}
We show that this quantity is positive. Since the parenthesis is increasing with $U$ it suffices to show that it is positive for the left limit of the interval of $U$ considered. We show that $\left(-1+3r +\gamma (3-r+4U) \right) > 0$ for $U = U(P_\star)$ if $D_Z (P_\star ) \geq 0$ and for $U = U(\bar P_s)$ if $D_Z (P_\star) < 0$ in Lemma \ref{lemma:auxiliary}. This concludes that in both of our cases, on the arc $\Xi_2 = 0$, the field points always in the lower-left direction. Now, let us divide our argument with respect to the aforementioned two cases.

In case B, we have that $b(t)$ and $D_Z = 0$ form an inner subregion inside $E$. If the solution traverses $b(t)$ while in $E$, it would enter that region. This region cannot be exited, since the field of $b(t)$ points to the down-left direction and we know that the solution stays in $D_Z > 0$. This contradicts the fact that the solution reaches $P_\infty = (0, 0)$. So the solution could not have traversed $b(t)$ in the first place.

In case A, we consider the triangular-shaped region $T$ inside $E$ bounded by $N_W = 0$, $D_Z = 0$ and $b(t)$ (the three sides have colours red, green and brown in Figure \ref{fig:PartII3_1}, and two of its vertices are $P_s$ and $P_\star$). If the solution crosses $b(t)$ while inside $E$, it necessarily enters $T$. We know that the solution satisfies $N_W > 0$ (from Part II.1) and $D_Z > 0$. Thus, the solution can only exit through $b(t)$. But this is impossible since we showed that while $b(t) \in E$, the vector field points to the down-left direction (thus, inwards $T$). Thus, the solution cannot exit $T$. This contradicts that the solution converges to $P_\infty = (0, 0) \notin T$ and we deduce that the solution did not traverse $b(t)$ in the first place.

We have proven that the solution does not traverse $b(t)$ while in $E$. Let us now conclude the proof that the solution satisfies $\Xi_2 > 0$ while in the region $U < U(P_s)$. 

If we are in case A, we consider $b(t)$ until reaching $P_\star$ and then we connect with the branch of $N_W = 0$ that joins $P_\star$ with $(0, 0)$ (green curve in Figure \ref{fig:PartII3_1}. We close the region with the condition $U < U(P_s)$ (recall $U(P_s) < 0$ from Lemma \ref{lemma:auxiliary}). While in $U < U(P_s)$, the smooth solution has to stay in this region (we just showed it cannot traverse $b(t)$, and we showed $N_W < 0$ in Part II.1). Moreover, we have that $S \leq S_+(U)$ on the boundary (and in particular, $S_+(U)$ is real). Along the side $U = U(P_s)$ we showed $S \leq S_+(U)$ when treating the case $U \geq U(P_s)$. We clearly have $S = S_+(U)$ along $b(t)$ (since solves $\Xi_2 = 0$). We have that $\Xi_2 = N_Z D_W$ along $N_W = 0$ and we recall from Part II.1 that $N_Z D_W$ is nonnegative along the branch of $N_W = 0$ that joint $P_\star$ with $(0, 0)$. Equivalently, $S_+(U)$ is real and $S \leq S_+(U)$. Since we have that $S \leq S_+(U)$ at the boundary of our region, we conclude $S < S_+(U)$ in the interior, and therefore the solution satisfies $\Xi_2 > 0$.

If we are in case B, we apply the same procedure but when constructing our region, we substitute the branch $N_W = 0$ of case A by the line $U = U(\bar P_s)$ (green line in Figure \ref{fig:PartII3_2}). We also have that the solution cannot traverse this line due to the argument given in Part II.2. It is also clear that $S \leq S_+(U)$ on this side, since $U$ is constant and $S$ is decreasing starting from $\bar P_s$ (at which $S = S_+(U)$). The other sides of the boundary also satisfy $S \leq S_+(U)$ by the same arguments as in case A, so we conclude that $S < S_+(U)$ in the interior of the region. Since the solution lives in this region while $U < U(P_s)$, we are done.
\end{proof}

\begin{lemma} \label{lemma:auxiliary} For all $\gamma > 1$ and $1 < r < r^\ast (\gamma )$ we have the following inequalities: \begin{itemize}
\item $W_1 + Z_1 < 0$.
\item $\frac{1}{4} (\gamma (r-1)+r (2 Z_0-1)+2 W_0+5) > 0$.
\item  $\frac{1}{8} \left(\gamma^2 (-r+3 Z_0+2)+4 \gamma r-3 r+12 W_0+9 Z_0+22\right) > 0$.
\item $3 \gamma^2 (r-3)+\gamma (-14 r-3 \mc R_1+22)+15 r+5 \mc R_1-17$ is negative.
\item $-1  < U(P_s) < \frac{-2(r-1)}{3(\gamma - 1)} < 0$.
\item $-1 < U(\bar P_s) < 0$.
\item $U = U(P_\star)$ satisfies $-1 + 3 r +\ga (3-r + 4 U) > 0$. Moreover, this is also satisfied for $U = U(\bar P_s)$ in the case where $D_Z (P_\star) < 0$. 
\end{itemize}
\end{lemma}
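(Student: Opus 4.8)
The plan is to verify each of the seven inequalities by reducing it to a statement about polynomials in the two parameters $\gamma > 1$ and $r \in (1, r^\ast(\gamma))$, and then exploiting the explicit (if cumbersome) algebraic formulas for $W_0, Z_0, W_1, Z_1, \bar W_0, \bar Z_0, U(P_s), U(\bar P_s), U(P_\star), \mc R_1, \mc R_2$ recorded earlier in the Appendix. Since $r^\ast(\gamma)$ is itself algebraic and $\mc R_1, \mc R_2$ are square roots of polynomials, the cleanest path is: (i) rationalize, i.e. isolate the radical terms on one side and square, turning each inequality into a polynomial inequality in $(\gamma, r)$ (keeping careful track of the sign of the quantity being squared so the implication goes the right way); (ii) use the cruder bound $1 < r < 2 - 1/\gamma$ from \eqref{eq:danglars}, which is a genuine polynomial constraint, to reduce to checking positivity of an explicit polynomial on the semialgebraic set $\{\gamma > 1,\ 1 < r < (2\gamma-1)/\gamma\}$; (iii) perform that final check either by a Positivstellensatz-type decomposition into sums of squares times the constraint polynomials $(\gamma - 1)$, $(r-1)$, $((2\gamma-1)/\gamma - r)$, or — where $\eqref{eq:danglars}$ is too lossy — by using the sharper bound $r < r^\ast(\gamma)$ from \eqref{eq:defr} directly.

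Concretely, I would handle the inequalities in increasing order of difficulty. The bounds $-1 < U(P_s) < \frac{-2(r-1)}{3(\gamma-1)} < 0$, $-1 < U(\bar P_s) < 0$, and the positivity of $\frac14(\gamma(r-1) + r(2Z_0 - 1) + 2W_0 + 5)$ and $\frac18(\gamma^2(-r+3Z_0+2) + 4\gamma r - 3r + 12W_0 + 9Z_0 + 22)$ are the most mechanical: plug in the closed forms, clear denominators (all of the form $4(\gamma-1)^2$, which is positive), isolate the single radical $\mc R_1$, square, and reduce using \eqref{eq:danglars}. For the rightmost inequality $\frac{-2(r-1)}{3(\gamma-1)} < 0$ there is nothing to do; for $U(P_s) < \frac{-2(r-1)}{3(\gamma - 1)}$ one writes $U(P_s) = \frac{W_0 + Z_0}{2}$ using the formulas for $P_s$ and checks the resulting rational inequality. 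The inequality $W_1 + Z_1 < 0$ is similar but involves $\mc R_2$ as well; here I would first use the defining relation $Z_1\, \nabla D_Z(P_s)\cdot(W_1, Z_1) = \nabla N_Z(P_s)\cdot(W_1,Z_1)$ (the same quadratic that selects $Z_1$) to avoid manipulating the full nested-radical expression for $Z_1$, expressing $W_1 + Z_1$ in terms of $W_1$ (a simple rational function of $\gamma, r$) and the known quantities, and only at the end squaring away $\mc R_1$ and $\mc R_2$. The inequality that $3\gamma^2(r-3) + \gamma(-14r - 3\mc R_1 + 22) + 15r + 5\mc R_1 - 17$ is negative is the cleanest radical computation: move $(5 - 3\gamma)\mc R_1$ to one side, note its sign, square, and reduce.

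The main obstacle I expect is the last item — showing $-1 + 3r + \gamma(3 - r + 4U) > 0$ at $U = U(P_\star) = \frac{W_\star + Z_\star}{2}$, and also at $U = U(\bar P_s)$ when $D_Z(P_\star) < 0$. The first part is actually favorable: $U(P_\star) = \frac12\left(\frac{2(\sqrt3 - 1)r}{3\gamma - 1} - \frac{2(1+\sqrt3)r}{3\gamma - 1}\right) = \frac{-2r}{3\gamma - 1}$, a clean rational expression, so $-1 + 3r + \gamma(3 - r + 4U)$ becomes an explicit rational function of $(\gamma, r)$ with denominator $3\gamma - 1 > 0$, whose numerator can be shown positive on $\{\gamma > 1,\ 1 < r < 2 - 1/\gamma\}$ by an SOS-type argument as in \eqref{eq:villefort}. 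The genuinely delicate sub-case is $U = U(\bar P_s)$ with the side condition $D_Z(P_\star) < 0$: here one cannot ignore the constraint, so the strategy is to first translate $D_Z(P_\star) < 0$ (again a rational inequality in $\gamma, r$ since $P_\star$ is rational) into an explicit region of parameter space, then show that on that region the quantity $-1 + 3r + \gamma(3 - r + 4U(\bar P_s))$ is positive — using that $U(\bar P_s)$, while involving $\mc R_1$, can be lower-bounded by the already-established $U(\bar P_s) > -1$ together with monotonicity in $U$ of the linear form, reducing to checking $-1 + 3r + \gamma(3 - r - 4) = 3r - 1 - \gamma(r+1) > 0$... which may fail, so one likely needs the genuine value of $U(\bar P_s)$ and a careful squaring argument restricted to the region $D_Z(P_\star) < 0$. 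I would isolate this as the one place where a short extra computation (rather than a one-line reduction) is unavoidable, and handle it by combining the explicit formula for $\bar Z_0$ with the constraint to pin down the sign of the relevant radical before squaring.
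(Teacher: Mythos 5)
Your plan correctly identifies the overall strategy used in the paper: write each quantity explicitly in terms of $\gamma,r,\mc R_1$ (and $\mc R_2$ for Item~1), isolate the radical, square with sign bookkeeping, and reduce to polynomial positivity on $\{1<r<r^\ast(\gamma)\}$, falling back on the cruder polynomial bound $r<2-1/\gamma$ from \eqref{eq:danglars} whenever it suffices. Your observations that $U(P_\star)=-2r/(3\gamma-1)$ is rational (making the first half of Item~7 elementary) and that the crude bound $U(\bar P_s)>-1$ is insufficient for the second half are both correct and match the paper.

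The proposal is, however, a roadmap rather than a proof, and a few of the steps you describe as routine require genuine ideas in the paper's execution. For Item~1 the paper does not isolate $Z_1$ via its defining quadratic as you suggest; instead it rewrites $W_1+Z_1$ as $\tfrac{2(\gamma-1)}{\gamma+1}\bigl(\tfrac{2D_{Z,1}}{\gamma-1}+W_1\bigr)$ and multiplies the resulting quantity by its $\mc R_1$-conjugate, obtaining a product that factors into $(r-1)(\gamma-1)$, $(r^\ast_{<5/3}-r)$, $(3\gamma-5)r+2$, and one further bracket — a factorization that is not visible a priori and drives the case split $\gamma\gtrless 5/3$. Items~3 and~4 likewise require case splits ($\gamma<4$ vs.\ $\gamma\geq 4$; $\gamma\gtrless 5/3$) because naive squaring does not close uniformly. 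Most significantly, for the second half of Item~7 the paper does not ``pin down the sign of the radical before squaring''; it shows that the squared quantity $B=(\gamma r+\gamma-3r+1)^2-\gamma^2\mc R_1^2$ is increasing in $r$ (by differentiating $B/(\gamma-1)$ and checking positivity of the affine-in-$r$ derivative at $r=1$ and $r=(2\gamma-1)/\gamma$), identifies the constraint $D_Z(P_\star)<0$ as exactly $r>r^\ast_{\geq5/3}(\gamma)$ (so relevant only for $\gamma<5/3$), and reduces to a single-variable polynomial check at the boundary $r=r^\ast_{\geq5/3}(\gamma)$. That monotonicity-plus-boundary-evaluation argument is the one substantive idea not anticipated in your plan.
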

\begin{proof}
\textbf{Item 1} \\
We start showing the inequality $W_1 + Z_1 < 0$. It will be useful to express this quantity in terms of 
$$D_{Z, 1} = \nabla D_Z (W_1, Z_1) = \frac{3-\gamma}{4}W_1 + \frac{1+\gamma}{4}Z_1.$$
We have
\begin{equation*}
W_1 + Z_1 = \frac{4}{\gamma + 1} D_{Z, 1} + \left( 1 - \frac{(3-\gamma)}{\gamma + 1}\right) W_1 = \frac{2(\gamma - 1)}{\gamma + 1} \left( \frac{2 D_{Z, 1}}{\gamma - 1}+W_1\right),
\end{equation*}
so we just need to show that the right parenthesis is negative. Using the explicit formulas for $D_{Z, 1}, W_1$ from \cite{Buckmaster-CaoLabora-GomezSerrano:implosion-compressible} in terms of $\gamma, r, \mc R_2$, we obtain
\begin{equation*}
\left( \frac{2 D_{Z, 1}}{\gamma - 1}+W_1\right) = \frac{-3 \ga^2 (r-3)-\ga (3 \mc R_1+8)+3 r+\mc R_1-1 -2(\ga -1) \mc R_2}{4 (\ga-1)^2}.
\end{equation*}
The denominator is clearly positive, so we just need to show that the numerator is negative. Since the term $-2(\gamma - 1)\mc R_2$ is negative, it suffices to show that this term dominates. That is, it suffices to show that
\begin{align*}
4(\ga -1)^2 \mc R_2^2 &- \left( -3 \ga^2 (r-3)-\ga (3 \mc R_1+8)+3 r+\mc R_1-1 \right)^2 \\ 
&= -2(\ga + 1) \Big( \underbrace{ \left(9 \ga^3 (r-3)^2-33 \ga^2 r^2+114 \ga^2 r-189 \ga^2+43 \ga r^2-82 \ga r+147 \ga-11 r^2+6  r-31\right) }_A \\
&\qquad+ \mc R_1 \underbrace{ \left(9 \ga^2 r-27 \ga^2-6 \ga r+42 \ga-7 r-11\right) }_B \Big)
\end{align*}
is positive. Let us denote by $\Xi = A + B \mc R_1$ the big parenthesis in the right hand side. We need to show that $\Xi$ is negative. 

In order to study $\Xi$, we multiply it by its algebraic conjugate $A - B\mc R_1$ and obtain
\begin{align} \begin{split} \label{eq:calvacanti}
\Xi (A - B\mc R_1) &= -64 (r-1) (\gamma - 1) \Bigg( \left(\frac{3 \ga-1}{\sqrt{3} (\ga-1)+2}-r\right) ((3 \ga-5) r+2) \\ 
 & \qquad \left((3 (\ga-2) \ga-1) r+(3 \ga-1)
   \left(\sqrt{3} \ga-\sqrt{3}+2\right)  \right)\Bigg).
\end{split} \end{align}
The terms $r-1$ and $\gamma - 1$ are clearly positive. The term $\left(\frac{3 \ga-1}{\sqrt{3} (\ga-1)+2}-r\right)$ is equal to $(r^\ast_{\geq 5/3} - r)$ from the definition of $r^\ast_{\geq 5/3}$ (equation \eqref{eq:defr}). The term $(3\gamma - 5)r+2$ is positive due to \eqref{eq:villefort}. Let us argue the term in the last parenthesis is also positive. Since it is linear in $r$, we just need to check that it is positive at $r=1$ and $r = r^\ast_{<5/3}(\gamma) \geq r^\ast(\gamma)$ (we have that $r_{<5/3}^\ast (\gamma) = r^\ast (\gamma)$ for $\gamma \leq 5/3$ but we also have that $r_{<5/3}^\ast (\gamma) > r^\ast (\gamma )$ for $\gamma > 5/3$). We have that  
\begin{equation*}
(3 (\ga-2) \ga-1) r+(3 \ga-1)
   \left(\sqrt{3} \ga-\sqrt{3}+2\right)  \Big|_{r=1} = (\ga-1) \left(3 \sqrt{3} \ga+3 \ga-\sqrt{3}+3\right) > 0,
\end{equation*}
and letting $\ell = \sqrt{\gamma - 1}$:
\begin{align*}
&(3 (\ga-2) \ga-1) r+(3 \ga-1)
   \left(\sqrt{3} \ga-\sqrt{3}+2\right)  \Big|_{r=r_{< 5/3}^\ast} \\
   &= \quad \frac{\ell^2 \left(3 \sqrt{3} \ell^4+9 \ell^4+6 \sqrt{6} \ell^3+6 \sqrt{2} \ell^3+8 \sqrt{3} \ell^2+12
   \ell^2+4 \sqrt{6} \ell+12 \sqrt{2} \ell+4 \sqrt{3}+4\right)}{\left(\ell+\sqrt{2}\right)^2},
\end{align*}
which is clearly positive for $\ell > 0$ (all the coefficients are positive). 

Going back to \eqref{eq:calvacanti}, we deduce that $\Xi (A - B\mc R_1)$ has the same sign as $-\left(r_{\geq 5/3}^\ast - r \right)$, where we recall $r_{\geq 5/3}^\ast, r^\ast_{<5/3}, r^\ast$ were defined in \eqref{eq:defr}. In particular, we know that $\Xi$ does not vanish for $r\in (1, r^\ast)$, $\gamma \geq 5/3$ and that $\Xi$ at most vanishes once for $r\in (1, r^\ast)$, $\gamma < 5/3$ (and if it does, the root is a single root). Recall that we want to show that $\Xi < 0$.

We have that
\begin{align*}
\Xi \Big|_{r=1} = 0, \qquad \mbox{ and } \qquad
\frac{\p}{\p r}\Xi \Big|_{r=1} = -16(\gamma - 1).
\end{align*}
This shows that $\Xi < 0$ for $\gamma \geq 5/3$, since it is negative on some right neighbourhood of $r=1$ and we know it does not change sign since $\Xi (A-B\mc R_2)$ does not vanish.

With respect to the case $\gamma < 5/3$, in that case we have
\begin{align*}
\Xi \Big|_{r=r^\ast_{<5/3}} = \frac{16 \left(\sqrt{2} \sqrt{\frac{1}{\ga-1}}+3\right) (\ga-1) (3 \ga-5)}{\left(\sqrt{2}
   \sqrt{\frac{1}{\ga-1}}+1\right)^4},
\end{align*}
which is clearly negative since $3\gamma - 5 < 0$. In this case, we know that $\Xi$ vanishes at most once with a single root in $r\in (1, r^\ast)$. However, since $\Xi$ is negative for a right neighbourhood of $r=1$ and for $r=r^\ast$, $\Xi$ can only vanish in between if it has a double root or if has more than one root. Thus, $\Xi$ remains negative in all the interval $(1, r^\ast)$. \\

\textbf{Item 2} \\
Using the expressions for $W_0$ and $Z_0$ we have
\begin{align*}
&\gamma (r-1) + r (2Z_0 -1) + 2W_0 + 5 \\
&= 
\frac{ \overbrace{ 2 \ga^3 (r-1)+\ga^2 ((r-8) r+11)-6 \ga r^2+10 \ga r-12 \ga+9 r^2-12 r+7 }^A
+ \mc R_1 \overbrace{ (\ga r+\ga-3 r+1) }^B}{2 (\ga-1)^2}.
\end{align*}
We will conclude our proof showing that both $A$ and $B$ are positive. Using that $r < \frac{2\ga - 1}{\ga} $ from \eqref{eq:danglars}, we have
\begin{equation*}
 B = \ga r+\ga-3 r+1  < 3\ga - 3\frac{2\ga - 1}{\ga} = 3 \frac{\ga^2 - 2\ga + 1}{\ga} = 3\frac{(\ga-1)^2}{\ga} > 0.
\end{equation*}
With respect to $A$, we have that
\begin{equation*}
A =4 (\ga-1)^2 +  2 (\ga-3) (\ga+1) (\ga-1) (r-1)+(\ga-3)^2 (r-1)^2,
\end{equation*}
which clearly shows $A > 0$ whenever $\ga \geq 3$ (since all terms are nonnegative and $4(\ga - 1)^2 > 0$). In the case $\ga < 3$, note that we have a second-degree polynomial of $(r-1)$ with positive principal coefficient. Thus, it is $A$ is a convex hyperbola with respect to $r$, so suffices to show that $A$ is positive and has negative derivative (with respect to $r$) at $r = r^\ast_{<5/3}$, given that $r^\ast \leq r^\ast_{<5/3}$. 

Denoting $\ell = \sqrt{\ga - 1}$ (which is in $(0, \sqrt 2)$ for $\gamma \in (1, 3)$), we have:
\begin{align*}
A \Big|_{r = r_{<5/3}^\ast} &= \frac{4 \ell^6 \left(\ell^2+2\right) \left(\ell^2+2 \sqrt{2} \ell+2\right)}{\left(\ell+\sqrt{2}\right)^4} > 0, \\
\frac{\p}{\p r} A \Big|_{r = r_{<5/3}^\ast} &= \frac{2 \ell^3 \left(\ell^2-2\right) \left(\ell^3+2 \sqrt{2} \ell^2+6 \ell+4
   \sqrt{2}\right)}{\left(\ell+\sqrt{2}\right)^2} < 0.
\end{align*}

\textbf{Item 3} \\

We have to show that $\gamma^2 (-r + 3 Z_0 + 2) + 4\gamma r - 3 r + 12 W_0 + 9 Z_0 + 22 > 0$. We start by reexpressing this quantity as
\begin{equation*}
\frac{\overbrace{11 + 13\ga -\ga^3+\ga^2+\left(-\ga^3+5 \ga^2+5 \ga-33\right) r }^A +\overbrace{ 3 \left(\ga^2-2 \ga+5\right) }^B\mc R_1}{4 (\ga-1)}.
\end{equation*}
Thus, we need to show that the numerator is positive. It is clear that $B \mc R_1 > 0$ (since $\gamma^2 - 2\gamma + 1 = (\ga - 1)^2 \geq 0$). We will show that $A > 0$ for $\gamma < 4$ and that $B^2 \mc R_1^2 - A^2 > 0$ for $\gamma \geq 4$. This would conclude the proof.

Let us start with $A > 0$ for $\gamma < 4$. Since $A$ is a first-degree polynomial in $r$, it suffices to show that $A > 0$ for both $r=1$ and $r=r^\ast$. We have that
\begin{align*}
A \Big|_{r=1} &= 2(\ga - 1)(11 + 2\ga - \ga^2 ) , \\
A \Big|_{r=r_{<5/3}^\ast} &= \frac{4 (\ga-1) (11 + 2\ga - \ga^2 )}{\sqrt{2} \sqrt{\frac{1}{\ga-1}}+1}, \\ 
A \Big|_{ r = r_{\geq 5/3}^\ast} &= \frac{(\ga-1) \left( (3\ga-5)+\sqrt{3} (\ga + 1) \right)(11 + 2\ga - \ga^2 ) }{\sqrt{3}(\ga-1)+2}.
\end{align*}
We observe that all three expressions are positive because $(11 + 2\ga - \ga^2) = 12 - (\ga - 1)^2$, so it is positive for $\gamma \in (1, 4]$. In the last expression note also that $3\gamma - 5 \geq 0$ because this expression is only applied for $\gamma \geq 5/3$. 

Finally, we need to show $B^2 \mc R_1^2 - A^2 > 0$ for $\gamma > 4$. Computing $B^2 \mc R_1^2 - A^2$ we obtain
\begin{align*}
C:=\frac{B^2 \mc R_1^2 - A^2}{8(\ga-1)^2} &= \Big(
(\ga-3)^2 \left(\ga^2-2 \ga+13\right) (r-1)^2 \\
&\qquad + \left(-5\ga^4+12 \ga^3-30 \ga^2-52 \ga-69\right) (r-1) 
+4 \left(\ga^2-2 \ga+13\right) (\ga-1)^2\Big).
\end{align*}
Since $C$ is a second-degree polynomial in $(r-1)$ with positive dominant coefficient, it is enough to show that $C > 0$ at $r = r^\ast$ and $\frac{\p C}{\p r} < 0$ at $r = r^\ast$. Recall $r^\ast = r^\ast_{\geq 5/3}$ since we are in the case $\ga > 4$. We have that
\begin{align*}
C \Big|_{r=r^\ast} &=  -\frac{\ga-1}{\left(\sqrt{3}   \ga-\sqrt{3}+2\right)^2} \Big( 3 \left(7 \sqrt{3}-13\right) \ga^5+\left(249-131 \sqrt{3}\right) \ga^4+\left(466 \sqrt{3}-982\right)   \ga^3 \\ 
&\qquad -6 \left(177 \sqrt{3}-379\right) \ga^2+\left(1673 \sqrt{3}-3027\right) \ga-1255 \sqrt{3}+2389\Big) \\ 
& =-\frac{\ga-1}{\left(\sqrt{3}   \ga-\sqrt{3}+2\right)^2}\Big(\left(21 \sqrt{3}-39\right) (\ga-4)^5+\left(289 \sqrt{3}-531\right) (\ga-4)^4 \\
&+\left(1730 \sqrt{3}-3238\right) (\ga-4)^3+6 \left(899 \sqrt{3}-1761\right) (\ga-4)^2+\left(8889 \sqrt{3}-18147\right) (\ga-4)\\
&+99 \left(63 \sqrt{3}-125\right)\Big) \\
\frac{\p C}{\p r} \Big|_{r= r^\ast} &=  \frac{1}{{\sqrt{3} \ga-\sqrt{3}+2}} \Big( \left(6-7 \sqrt{3}\right) \ga^5+\left(35 \sqrt{3}-64\right) \ga^4-6 \left(21 \sqrt{3}-46\right) \ga^3 \\ 
&\qquad +14 \left(17   \sqrt{3}-60\right) \ga^2+\left(1174-443 \sqrt{3}\right) \ga+303 \sqrt{3}-840 \Big) \\
&= \frac{1}{{\sqrt{3} \ga-\sqrt{3}+2}} \Big( \left(6-7 \sqrt{3}\right) (\ga-4)^5+\left(56-105 \sqrt{3}\right) (\ga-4)^4 \\
&+\left(212-686 \sqrt{3}\right) (\ga-4)^3+42 \left(4-57 \sqrt{3}\right) (\ga-4)^2-3 \left(1529 \sqrt{3}+334\right) (\ga-4)\\
&-9 \left(437 \sqrt{3}+240\right) \Big).
\end{align*}
Thus, the proof follows from the two fifth-degree polynomials of $\gamma$ above being negative for $\gamma \geq 4$ due to all coefficients being negative.\\

\textbf{Item 4} \\
Let us define $$A = 3 \ga^2 (r-3)-14 \ga r+22 \ga+15 r-17,$$ which includes all terms except the ones with $\mc R_1$. We need to show that $A + (-3\ga+5)\mc R_1 < 0$. 

We start showing that $A < 0$. Since $A$ is an affine function of $r$ and $1 < r < r^\ast < \frac{2\ga-1}{\ga}$ (the last inequality is due to \eqref{eq:danglars}), it suffices to show that $A<0$ for $r=1$ and $r = \frac{2\ga-1}{\ga}$. We have that
\begin{equation*}
A \Big|_{r=1} = -2(\ga-1) (3\ga-1) < 0, \qquad \mbox{ and } \qquad A\Big|_{r=(2\ga-1)/\ga} = -3\frac{(\ga-1)^2(\ga +5)}{\ga} < 0.
\end{equation*}

Therefore, we have that $A < 0$. This immediately shows that $A + (-3\ga+5)\mc R_1 < 0$ for $\ga \geq 5/3$. With respect to the case $\ga < 5/3$, it suffices to show $A^2 - (-3\ga+5)^2\mc R_1^2 > 0$. We have that
\begin{equation*}
A^2 - (-3\ga+5)^2\mc R_1^2
= 32 (\ga-1)^2 ((3 \ga-5) r+2)
\end{equation*}
and we conclude by recalling that $(3 \ga-5) r+2$ is positive due to \eqref{eq:villefort} and the discussion immediately before that equation. \\

\textbf{Item 5} \\
With respect to $- \frac{2(r-1)}{3(\ga - 1)} - U(P_s)$ being positive, we have that 
\begin{equation} \label{eq:ulanbator}
- \frac{2(r-1)}{3(\ga - 1)} - U(P_s) = \frac{9\ga - 7 + (1-3 \ga )r -3 \mc R_1}{12 (\ga-1)}
\end{equation}
Since clearly $\ga - 1 > 0$ it suffices to show that the numerator is positive. Now, let us recall from \eqref{eq:danglars} that $r^\ast < \frac{2\ga-1}{\ga}$. Using that (and $1-3\ga < 0$) we have 
\begin{equation*}
9\ga - 7 + (1-3 \ga )r > 9\ga - 7 + \frac{(2\ga-1) (1-3\ga )}{\ga} = \frac{3\ga^2 -2\ga   - 1}{\ga} > 0.
\end{equation*}
Thus, in order to show that the numerator of \eqref{eq:ulanbator} is positive, we just need to show that $(9\ga - 7 + (1-3 \ga )r)^2$ is bigger than $(3\mc R_1)^2$. Indeed, we have
\begin{equation*}
( 9\ga - 7 + (1-3 \ga )r )^2 - 9 \mc R_1^2 = 16 (r-1) ((3 \ga-5) r+2) > 0,
\end{equation*}
where the factor $(3\ga - 5)r + 2$ is positive because of \eqref{eq:villefort} and the discussion immediately before that equation.

The other part of this item is to show that $U(P_s) + 1$ is positive. We have that
\begin{equation*}
U(P_s) + 1 = \frac{\ga r+\ga-3r+1+\mc R_1 }{4 (\ga-1)}.
\end{equation*}
Since $\mc R_1$ and $\ga-1$ are positive, we will be done if we show that $\ga r+\ga-3r+1 > 0$. Since this is an affine function of $r$, for $r\in (1, r^\ast)$, it will take it values between the value at $r=1$ and the value at $r = \frac{2\ga-1}{\ga}$ (recall that from \eqref{eq:danglars} this is an upper bound of $r^\ast$). When $r=1$ we clearly have $\ga r+\ga-3r+1 = 2(\ga - 1)>0$. For $r = \frac{2\ga-1}{\ga}$, we have
\begin{equation} \label{eq:seoul}
\left( \ga r+\ga-3r+1 \right) \Big|_{r = (2\ga-1)/\ga } = 3 \frac{(\ga-1)^2}{\ga}
\end{equation}

\textbf{Item 6} \\
Let us show that $U(\bar P_s) \in (-1, 0)$. We have that
\begin{equation*}
U(\bar P_s)= \frac{\ga (r-3)-3 r+5-\mc R_1}{4 (\ga-1)}
\end{equation*}
In order to show that $U(\bar P_s)$ is negative, it suffices to show that $\ga r - 3\ga - 3 r + 5 < 0$. Since this function is affine in $r$, it suffices to check so at $r=1$ and $r=\frac{2\ga-1}{\ga}$ (which is an upper bound of $r^\ast$). At $r=1$ we have $\ga r - 3\ga - 3 r + 5 = -2\ga+2$ and at $r=\frac{2\ga-1}{\ga}$ we have 
\begin{equation*}
\left( \ga r - 3\ga - 3 r + 5  \right) \Big|_{r = (2\ga-1)/\ga} = \frac{-\ga^2 -2\ga +3}{\ga} < 0.
\end{equation*}

With respect to $U(\bar P_s) > -1$, we have that
\begin{equation*}
U(\bar P_s) + 1 = \frac{\ga r+\ga-3 r+1-\mc R_1}{4 (\ga-1)}
\end{equation*}
Let us recall that in \eqref{eq:seoul} and the paragraph before, we showed $\ga r+\ga-3 r+1 > 0$. Therefore, in order to show that $U(\bar P_s) + 1$ is positive, it suffices to show that $(\ga r+\ga-3 r+1)^2 - \mc R_1^2$ is positive. We have
\begin{equation*}
(\ga r+\ga-3 r+1)^2 - \mc R_1^2
= 8 (\ga-1)^2 (r-1) > 0
\end{equation*}

\textbf{Item 7} \\
Let us start by showing the inequality for $U(P_\star)$. We have that
\begin{equation*}
-1 + 3 r +\ga (3-r + 4 U(P_\star)) = \frac{  -3 \ga^2 r+9 \ga^2+2 \ga r-6 \ga-3 r+1 }{3   \ga-1}
\end{equation*}
Let us call $A$ to the numerator of the fraction above. It suffices to show that $A > 0$. Since $A$ is an affine function of $r$, it suffices to show that $A > 0$ at $r=1$ and $r = \frac{2\ga-1}{\ga}$ (since $r^\ast < \frac{2\ga-1}{\ga}$ by \eqref{eq:danglars}). We have that
\begin{equation*}
A \Big|_{r=1} = 2(\ga-1)(3\ga+1) > 0, \qquad \mbox{ and }\qquad A\Big|_{r=(2\ga-1)/\ga} = \frac{(\ga-1) (3\ga^2 + 4\ga - 3)}{\ga} > 0.
\end{equation*}

Now, let us show the inequality for $U(\bar P_s)$. We have that
\begin{equation*}
-1 +3r + \ga (3-r+ 4 U (\bar P_s)) =
\frac{\ga r+\ga-3 r+1 -\ga \mc R_1}{\ga-1}
\end{equation*}
From \eqref{eq:seoul} and the discussion immediately before, we have that $\ga r+\ga-3 r+1 > 0$. Therefore, we just need to show that $(\ga r+\ga-3 r+1)^2 - \ga^2 \mc R_1^2$ is positive. We have that
\begin{equation*}
B = (\ga r+\ga-3 r+1)^2 - \ga^2 \mc R_1^2
= - (\ga-1) \left(  \ga^3 (r-3)^2+\ga^2 \left(-5 r^2+6
   r-5\right)+\ga \left(3 r^2-10 r+3\right)+(1-3 r)^2 \right)
\end{equation*}
Extracting the factor $\gamma - 1$ and taking a derivative with respect to $r$, we obtain
\begin{equation*}
\frac{\p }{\p r} \left( \frac{B}{\ga-1} \right) = 10 \ga+6 + 6 \ga^3-6 \ga^2+\left(-2 \ga^3+10 \ga^2-6
   \ga-18\right) r
\end{equation*}
Let us note that this is an affine function of $r$. Moreover it is positive at $r=1$ and $r = \frac{2\ga-1}{\ga} > r^\ast$ since
\begin{align*}
\frac{\p }{\p r}\Big|_{r=1} \left( \frac{B}{\ga-1} \right) &= 4 (\ga-1)(3 + 2\ga + \ga^2), \\
\frac{\p }{\p r}\Big|_{r=(2\ga - 1)/\ga} \left( \frac{B}{\ga-1} \right) &= \frac{2(\ga-1) (-9+3\ga + 9\ga^2 + \ga^3)}{\ga}.
\end{align*}
Therefore, we have that $\frac{\p }{\p r} \left( \frac{B}{\ga-1} \right)$ is positive for all $r \in (1, r^\ast )$ and we obtain that $B$ is increasing in that interval.

We recall that we only need to show $B > 0$ for the values of $\ga, r$ such that $D_Z (P_\star) < 0$. We have that
\begin{equation*}
D_Z (P_\star ) = \frac{-1+3\ga - r(2 + \sqrt{3} (\ga-1 ) )}{3\ga - 1},
\end{equation*}
which is clearly decreasing with $r$. Therefore, solving the equation $D_Z (P_\star ) = 0$, we obtain that the condition $D_Z(P_\star) < 0$ is equivalent to 
\begin{equation} \label{eq:rthres}
r >  \frac{3\ga - 1}{2 + \sqrt{3} (\ga-1) } = r_{\geq 5/3}^\ast .
\end{equation}
That is, the threshold value for $r$ at which $D_Z(P_\star)$ becomes negative is given by the same formula that describes $r^\ast$ for $\gamma \geq 5/3$. In particular, for $\gamma \geq 5/3$, there is nothing to show, since we will always have $D_Z(P_\star ) > 0$ for $r \in (1, r^\ast )$. However, for $\gamma \in (1, 5/3)$, we need to show that $B > 0$ on $(r_{\geq 5/3}^\ast (\gamma ), r_{<5/3}^\ast (\gamma ) )$. Since we have shown that $B$ is increasing with $r$, it suffices to do so at the left endpoint of the interval. We have that
\begin{equation*}
\frac{B}{\gamma - 1} \Big|_{r = r_{\geq 5/3}^\ast} = \frac{2 (\ga-1)^2 \left(9 \left(\sqrt{3}-2\right) \ga^3-21
   \left(\sqrt{3}-2\right) \ga^2+\left(7 \sqrt{3}-2\right) \ga+5
   \sqrt{3}-14\right)}{\left(\sqrt{3} \ga-\sqrt{3}+2\right)^2}
\end{equation*}
so it suffices to show that the third-degree polynomial $$\left(9 \left(\sqrt{3}-2\right) \ga^3-21   \left(\sqrt{3}-2\right) \ga^2+\left(7 \sqrt{3}-2\right) \ga+5  \sqrt{3}-14\right)$$ is positive for $\gamma \in (1, 5/3)$.

We can rewrite it as
\begin{align*}
& 16 + 12 (-1 + \sqrt{3}) \left(\ga - \frac53\right) + 24 (-2 + \sqrt{3}) \left(\ga - \frac53\right)^2 + 
 9 (-2 + \sqrt{3}) \left(\ga - \frac53\right)^3 \\
 &> 
16 + 12 (-1 + \sqrt{3}) \left(\ga - \frac53\right) + 24 (-2 + \sqrt{3}) \left(\ga - \frac53\right)^2
\end{align*}
for $\ga \in \left(1,\frac53\right)$, and the conclusion follows from the inequality $$16 > |12 (-1 + \sqrt{3})|  + |24 (-2 + \sqrt{3})| = 36 - 12\sqrt{3}.$$



\end{proof}
\begin{lemma} \label{lemma:integrated_repulsivity} We have that the profiles $(\bar U, \bar S)$ satisfy
\begin{equation}
R + \bar U_R - \alpha \bar S > (R-1)\eta
\end{equation}
for $R > 1$.
\end{lemma}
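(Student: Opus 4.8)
The plan is to prove the estimate $R + \bar U_R - \alpha \bar S > (R-1)\eta$ for $R > 1$ by reducing it to the already-established repulsivity properties via an integration argument along the radial direction. Observe first that at $R=1$ the left-hand side is $1 + \bar U_R(1) - \alpha \bar S(1)$, and we need to understand the sign there as well as the behavior of the derivative $\p_R(R + \bar U_R - \alpha \bar S) = 1 + \p_R \bar U_R - \alpha \p_R \bar S$. Since $\bar S > 0$ (by \eqref{eq:profiles_positive}), we have $\alpha \bar S = \alpha |\bar S|$ and similarly the repulsivity property \eqref{eq:angular_repulsivity} states $1 + \frac{\bar U_R}{R} - \alpha |\p_R \bar S| > \tilde \eta$. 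The key algebraic identity is that $\p_R(R\cdot g(R))$ relates the radial component and its logarithmic derivative: writing $f(R) = R + \bar U_R - \alpha \bar S$, I would like to show $f(R) \geq \tilde\eta (R-1)$ for some (possibly smaller) $\tilde \eta$.

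First I would establish that $f(1) \geq 0$. This should follow from continuity and the fact that $\bar U_R(R)/R \to$ something finite as $R \to 0$ together with the decay \eqref{eq:profiles_decay}; more directly, one can integrate: $\bar U_R(1) = \int_0^1 \p_R \bar U_R\, dR$ and use that $\bar U_R/R$ extends continuously to $R=0$. Actually the cleanest route is to note that $f(R)/R = 1 + \bar U_R/R - \alpha \bar S/R$; but $\bar S/R$ blows up near $R=0$, so this is not immediately the angular repulsivity quantity. Instead, I would work directly with $f$ and its derivative. We have $f'(R) = 1 + \p_R \bar U_R - \alpha \p_R \bar S \geq 1 + \p_R \bar U_R - \alpha|\p_R \bar S| > \tilde\eta$ by the radial repulsivity \eqref{eq:radial_repulsivity}. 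So $f$ has derivative bounded below by $\tilde\eta$ everywhere. Therefore $f(R) \geq f(1) + \tilde\eta(R-1)$ for $R \geq 1$, and it remains only to check $f(1) \geq 0$, i.e. $1 + \bar U_R(1) - \alpha\bar S(1) \geq 0$.

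To check $f(1) \geq 0$: integrating $f' > \tilde\eta$ downward from $R=1$ to small $R$ gives $f(R) < f(1) - \tilde\eta(1-R)$, i.e. $f(1) > f(R) + \tilde\eta(1-R)$. Taking $R \to 0^+$: $f(R) = R + \bar U_R(R) - \alpha \bar S(R) \to 0 + 0 - 0$ is \emph{not} quite right since $\bar S(R) \sim \langle R\rangle^{-r+1} \to 1$ as $R\to 0$ by \eqref{eq:profiles_decay}... so actually $\bar S(0)$ is a positive constant and $f(0^+) = -\alpha \bar S(0) < 0$, which only gives $f(1) > -\alpha\bar S(0) + \tilde\eta$, insufficient. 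The better approach is to use the angular repulsivity in the form already packaged: the quantity $R + \bar U_R - \alpha|\bar S|$ at $R=1$ — here I would instead invoke the ODE \eqref{eq:ss_profiles} satisfied by the profile, which at the sonic point $R=1$ forces a specific relation. The main obstacle is precisely pinning down $f(1) \geq 0$; I expect this follows either from the explicit value of the profile at the sonic point $P_s$ (using $W_0, Z_0$ and the relation $\bar U_R(1) = \frac{W_0+Z_0}{2}$, $\bar S(1) = \frac{W_0-Z_0}{2}$, so $f(1) = 1 + \frac{W_0+Z_0}{2} - \alpha\frac{W_0-Z_0}{2}$, which is exactly $D_Z(P_s)$ or $D_W(P_s)$ up to sign — and $D_W(P_s) > 0$, $D_Z(P_s) = 0$ by construction), or from a direct continuity/monotonicity argument combining \eqref{eq:radial_repulsivity} and \eqref{eq:angular_repulsivity}. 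Once $f(1)\geq 0$ is secured, the conclusion $f(R) \geq \tilde\eta(R-1) \geq \eta(R-1)$ for a suitable $\eta \leq \tilde\eta$ is immediate from $f' > \tilde\eta$.
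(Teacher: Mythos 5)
Your proposal is correct and takes essentially the same approach as the paper: you integrate $f'(R) = 1 + \partial_R \bar U_R - \alpha \partial_R \bar S \geq 1 + \partial_R \bar U_R - \alpha|\partial_R \bar S| > \tilde\eta$ (using the radial repulsivity \eqref{eq:radial_repulsivity}) from $R=1$, and you correctly identify that $f(1) = 1 + \bar U_R(1) - \alpha \bar S(1) = D_Z(P_s) = 0$ since the profile passes through the sonic point $P_s$ at $R=1$. The paper's proof is exactly this two-line argument; your initial detour toward integrating down to $R=0$ is unnecessary, but you land on the correct resolution.
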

\begin{proof} From \cite{Merle-Raphael-Rodnianski-Szeftel:implosion-i}, \cite{Buckmaster-CaoLabora-GomezSerrano:implosion-compressible} we have that the profiles constructed satisfy $R+\bar U_R - \alpha \bar S = 0$ for $R=1$ (i.e. $D_Z(P_s) = 0$). 
Therefore, using the fundamental theorem of calculus and \eqref{eq:radial_repulsivity}, we have
\begin{align*}
R+\bar U_R - \alpha \bar S = \int_1^R \left( \tilde R + \bar U_R' (\tilde R)  - \alpha \bar S' (\tilde R) \right) d\tilde R > \int_1^R \eta d\tilde R.
\end{align*}
\end{proof}

\begin{lemma} \label{lemma:GN_general} Let $f:\mathbb R^3 \to \mathbb R^3$. Let $I_{-1} = [0, 1]$ and $I_j = [2^{j}, 2^{j+1}]$. Let $\phi (x), \psi(x) \geq 0$ be weights such that there exist values $\phi_j, \psi_j$ and a constant $C$ satisfying that $\phi (x) \in [\phi_j/C, C \phi_j]$ and $\psi(x) \in [\psi_j/C, C\psi_j]$ for every $x$ with $|x| \in I_j$. Moreover, let us assume the weights are approximately $1$ at the origin, that is $\phi_{-1} = \psi_{-1} = \phi_0 = \psi_0 = 1$. Let $1 \leq i \leq m$. Assume that the parameters $p, q, \bar{r}, \theta$ satisfy:
\begin{equation} \label{eq:GN_conditions}
    \frac{1}{\bar{r}} = \frac{i}{3} + \theta \left( \frac{1}{q} - \frac{m}{3} \right) + \frac{1-\theta}{p}, \qquad \mbox{ and } \qquad \frac{i}{m} \leq \theta < 1,
\end{equation}
Letting $\bar{r} = \infty$, we have
\begin{equation} \label{eq:GNresultinfty}
    \left| \nabla^i f \right| \les \| \psi^m f \|_{L^p}^{1-\theta} \| \phi^m \nabla^m f \|_{L^q}^{\theta} \psi^{-m(1-\theta)} \phi^{-m\theta}  + \left\| \psi^m f \right\|_{L^p} \cdot  \langle x \rangle^{3\theta (1/q-1/p)-m\theta} \psi^{-m}.
\end{equation}
If moreover $p = \infty, q = 2$ and $\psi = 1$, we obtain
\begin{equation} \label{eq:GNresultinfty_simplified}
    \left| \nabla^i f \right| \les \| f \|_{L^\infty}^{1-\frac{i}{m-3/2}} \| \phi^m \nabla^m f \|_{L^q}^{\frac{i}{m-3/2}}  \phi^{-m\frac{i}{m-3/2}}  + \left\| f \right\|_{L^\infty} \cdot  \langle x \rangle^{-i}.
\end{equation}
For $\bar{r} \in [1,\infty)$, any $\eps > 0$, and under the extra assumption:
\begin{equation}\label{eq:GN_extracond}
    \left( \frac{\phi(x)}{\langle x \rangle \psi(x) }\right)^{m\theta} \langle x \rangle^{3\theta (1/q-1/p)} \les 1
\end{equation}
we have the weighted Gagliardo-Nirenberg inequalities:
\begin{equation} \label{eq:GNresult}
    \| \langle x \rangle^{-\eps} \psi^{m(1-\theta)} \phi^{m\theta} \nabla^i f \|_{L^{\bar{r}}} \les \| \psi^m f \|_{L^p}^{1-\theta} \| \phi^m \nabla^m f \|_{L^q}^{\theta} + \| \psi^m f \|_{L^p}
\end{equation}
The implicit constants in \eqref{eq:GNresultinfty} and \eqref{eq:GNresult} may depend on the parameters $p, q, i, m, \theta, C$, (as well as $r$, $\eps >0$ for \eqref{eq:GNresult}) but they are independent of $f$ and $\psi$, $\phi$. 
\end{lemma}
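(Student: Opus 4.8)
The plan is to establish \eqref{eq:GNresultinfty} by a dyadic rescaling argument reducing to the classical (unweighted) Gagliardo--Nirenberg inequality on an annulus of fixed size, and then derive \eqref{eq:GNresultinfty_simplified} and \eqref{eq:GNresult} as corollaries. First I would fix a point $x_0$ with $|x_0| \in I_j$ for some $j \geq -1$ and work on a ball $B(x_0, c|x_0|)$ (or on $B(0,2)$ when $j \in \{-1, 0\}$), where the weights $\phi, \psi$ are comparable to the constants $\phi_j, \psi_j$ by hypothesis. On such a ball one has the scale-invariant interpolation inequality: rescaling $g(z) = f(x_0 + |x_0| z)$ to the unit ball, the standard Gagliardo--Nirenberg inequality (with the intermediate $L^{\bar r}$ norm, $\bar r = \infty$ here) gives
\[
\| \nabla^i f \|_{L^\infty(B(x_0, c|x_0|))} \lesssim |x_0|^{-i + 3\theta(1/q - 1/p) - 3/p' \cdot 0 } \| f \|_{L^p}^{1-\theta} \| \nabla^m f \|_{L^q}^{\theta} + |x_0|^{-i} \| f \|_{L^p},
\]
where the exponents of $|x_0|$ are dictated exactly by the scaling relation \eqref{eq:GN_conditions} (the first condition is precisely the requirement that the two sides scale consistently, and $i/m \leq \theta < 1$ is the range in which the classical inequality with an additive lower-order term holds). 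The additive $\|f\|_{L^p}$ term is needed because on a bounded domain Gagliardo--Nirenberg requires a lower-order correction; its power of $|x_0|$ is computed by letting only $f$ scale.

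Next I would reinstate the weights: since $\phi \sim \phi_j$ and $\psi \sim \psi_j$ on $I_j$, and since $\phi_{-1} = \phi_0 = \psi_{-1} = \psi_0 = 1$, I can replace $\|f\|_{L^p(B)}$ by $\psi_j^{-m} \|\psi^m f\|_{L^p}$ and $\|\nabla^m f\|_{L^q(B)}$ by $\phi_j^{-m}\|\phi^m \nabla^m f\|_{L^q}$, up to the uniform constant $C$ from the hypothesis. Collecting the powers of $\phi_j, \psi_j$ and of $|x_0| \sim \langle x_0 \rangle$ then yields \eqref{eq:GNresultinfty} pointwise at $x_0$, hence for all $x$. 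For \eqref{eq:GNresultinfty_simplified} one just sets $p = \infty$, $q = 2$, $\psi \equiv 1$, so $\theta = i/(m - 3/2)$ solves \eqref{eq:GN_conditions} and $3\theta(1/q - 1/p) - m\theta = 3\theta/2 - m\theta = -\theta(m - 3/2) = -i$, giving the stated exponent $\langle x\rangle^{-i}$; the condition $i/m \le \theta < 1$ holds for $m$ large. For the $L^{\bar r}$ version \eqref{eq:GNresult} with $\bar r < \infty$, I would raise \eqref{eq:GNresultinfty} (or rather its annular analogue, keeping the $L^{\bar r}$ norm instead of $L^\infty$ in the classical inequality) to the power $\bar r$, multiply by $\langle x \rangle^{-\eps \bar r}$ and integrate over each annulus $I_j$; the extra hypothesis \eqref{eq:GN_extracond} is exactly what guarantees that the weight $(\phi/(\langle x\rangle \psi))^{m\theta} \langle x \rangle^{3\theta(1/q-1/p)}$ arising from the lower-order term stays bounded, and the $\langle x\rangle^{-\eps}$ factor makes the sum over $j$ of the dyadic contributions summable (a geometric series), so that $\sum_j$ of the right-hand sides is controlled by $\|\psi^m f\|_{L^p}^{(1-\theta)\bar r}\|\phi^m\nabla^m f\|_{L^q}^{\theta\bar r} + \|\psi^m f\|_{L^p}^{\bar r}$ after one more application of Hölder in $j$ (discrete interpolation between $\ell^\infty$ and $\ell^1$, using $(1-\theta) + \theta = 1$).

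The main obstacle is bookkeeping rather than conceptual: one must verify that the exponents of $|x_0|$, $\phi_j$ and $\psi_j$ produced by scaling the homogeneous inequality and by scaling the lower-order term are \emph{both} consistent with the claimed statement, and this is precisely the content of the scaling identity in \eqref{eq:GN_conditions} — so the delicate point is to carry the two scaling computations carefully and to check that the additive-term hypothesis \eqref{eq:GN_extracond} is the correct one to make the lower-order weight harmless. A secondary subtlety is the passage from the pointwise/annular estimate to the global $L^{\bar r}$ bound: one needs the $\langle x\rangle^{-\eps}$ loss to sum the geometric series in $j$, and one needs to be slightly careful that $\theta < 1$ strictly (guaranteed by \eqref{eq:GN_conditions}) so that the discrete Hölder step producing the product of the two norms is legitimate. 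Neither the innermost annuli $I_{-1}, I_0$ (handled directly by classical Gagliardo--Nirenberg on $B(0,2)$ with weights $\equiv 1$) nor the uniformity of constants (the constant in classical Gagliardo--Nirenberg on a fixed unit ball depends only on $p, q, i, m, \theta$, and the weight comparison costs only $C$) pose real difficulties.
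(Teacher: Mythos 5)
Your proposal is correct and follows essentially the same route as the paper: dyadic decomposition of $\mathbb{R}^3$, rescaling each annular piece to a fixed reference domain, applying the bounded-domain Gagliardo--Nirenberg inequality there, reinstating the weights via their near-constancy on each dyadic shell, and using the $\langle x\rangle^{-\eps}$ factor to sum the resulting geometric series in $j$. The only minor inefficiency is the discrete H\"older step at the end: since $\mathbbm{1}_{A_j}\leq 1$ pointwise, each dyadic contribution is already bounded by the fixed global right-hand side once \eqref{eq:GN_extracond} has absorbed the lower-order weight, so the geometric factor alone makes the sum converge (the paper simply bounds $\sum_j 2^{-j\eps}(\cdots)$ by a constant times $\max_j(\cdots)$ before estimating any single piece).
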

\begin{proof} We show first \eqref{eq:GNresult}. We divide $\mathbb R^3$ dyadically, defining $A_{-1} = B(0, 1)$ and $A_j = B(0, 2^{j+1}) \setminus B(0, 2^{j})$ for $j \geq 0$. It is clear that $\mathbb R^3 = \cup_{j \geq -1} A_i$. Now, note that
\begin{equation} \label{eq:rousseau}
    \| \langle x \rangle^\eps \psi^{m-i} \phi^i \nabla^i f \|_{L^{\bar{r}}} \leq \sum_{j=-1}^\infty 2^{-j\eps} \| \mathbbm{1}_{A_j} \psi^{m-i} \phi^i \nabla^i f \|_{L^{\bar{r}}} \les_\eps \max_{-1 \leq j <\infty } \| \mathbbm{1}_{A_j} \psi^{m-i} \phi^i \nabla^i f \|_{L^{\bar{r}}}
\end{equation}
so we reduce to bound by the right hand side of \eqref{eq:GNresult}, \eqref{eq:GNresultinfty} each of the dyadic pieces. 

Now, if the maximum on \eqref{eq:rousseau} is achieved for $j \in \{-1, 0 \}$, we are done, because the Gagliardo-Nirenberg inequality for bounded domains ensures that
\begin{align}
    \| \mathbbm{1}_{A_{-1}} \nabla^i f \|_{L^{\bar{r}}} &\les \| \mathbbm{1}_{A_{-1}} \nabla^m f \|_{L^q}^{\theta} \| \mathbbm{1}_{A_{-1}}  f \|_{L^p}^{1-\theta} + \| \mathbbm{1}_{A_0}  f \|_{L^p} \label{eq:Am1} \\
    \| \mathbbm{1}_{A_0} \nabla^i f \|_{L^{\bar{r}}} &\les \| \mathbbm{1}_{A_0} \nabla^m f \|_{L^q}^{\theta} \| \mathbbm{1}_{A_0}  f \|_{L^p}^{1-\theta} + \| \mathbbm{1}_{A_0}  f \|_{L^p}, \label{eq:A0}
\end{align}
and we have that $\phi_{j}, \psi_{j} = 1$ for $j \in \{-1, 0 \}$ (so the weight's range is inside $[1/C, C]$ on those regions). Thus, we can assume that the maximum is achieved for $j \geq 1$. Now, for $j \geq 1$, consider the function $f_j(x) = f(2^{j} x)$. The change of variables formula yields
\begin{equation}
    \| \mathbbm{1}_{A_0} \nabla^{i'} f_j \|_{L^{p'}} = 2^{ji'} 2^{-3j/p'} \| \mathbbm{1}_{A_j} \nabla^{i'} f \|_{L^{p'}}
\end{equation}
Plugging this into \eqref{eq:A0}, we obtain
\begin{equation*} 
    2^{3j(i/3-1/\bar{r})} \| \mathbbm{1}_{A_j} \nabla^i f \|_{L^{\bar{r}}} \les  2^{3j(m/3-1/q)(1-\theta)}\| \mathbbm{1}_{A_j} \nabla^m f \|_{L^q}^{\theta} 2^{(-3j/p)\theta} \| \mathbbm{1}_{A_j} f \|_{L^p}^{1-\theta} + 2^{-3j/p} \| \mathbbm{1}_{A_j}  f \|_{L^p}.
\end{equation*}
Using condition \eqref{eq:GN_conditions}, this just reads
\begin{equation} \label{eq:Aj}
    \| \mathbbm{1}_{A_j} \nabla^i f \|_{L^{\bar{r}}} \les  \| \mathbbm{1}_{A_j} \nabla^m f \|_{L^q}^{\theta}  \| \mathbbm{1}_{A_j}f \|_{L^p}^{1-\theta} + 2^{3j(-i/3+1/\bar{r}-1/p)} \| \mathbbm{1}_{A_j}  f \|_{L^p},
\end{equation}
for $j \geq 0$.

Now, introducing the weights, we see that
\begin{align}
    \| \mathbbm{1}_{A_j} \psi^{m(1-\theta)} \phi^{m\theta} \nabla^i f \|_{L^{\bar{r}}} &\les  \psi_j^{m(1-\theta)} \phi_j^{m\theta} \| \mathbbm{1}_{A_j} \nabla^m f \|_{L^q}^{\theta}  \| \mathbbm{1}_{A_j}  f \|_{L^p}^{1-\theta} + 2^{3j (-i/3+1/\bar{r}-1/p)} \psi_j^{-m\theta} \phi_j^{m\theta} \| \psi^m \mathbbm{1}_{A_j}  f \|_{L^p} \notag \\
    &\les  
    \| \mathbbm{1}_{A_j} \psi^m \nabla^m f \|_{L^q}^{\theta}  \| \phi \mathbbm{1}_{A_j}  f \|_{L^p}^{1-\theta} + 2^{j (-i+3/\bar{r}-3/p)} \left( \frac{\phi_j}{\psi_j} \right)^{m\theta} \| \psi^m \mathbbm{1}_{A_j} f \|_{L^p}  \label{eq:marat}
\end{align}
where we recall that our weights are within $[\phi_j/C, \phi_j C]$ and $[\psi_j/C, \psi_j C]$, respectively, for $x \in A_j$. Recall also that $\les$ is allowed to depend on $C, i, m, p, q, \bar{r}, \eps$ (in particular, it can absorb constants $C^m$). 

Now, note from \eqref{eq:GN_conditions}
\begin{equation} \label{eq:robespierre}
    -i + \frac{3}{\bar{r}} - \frac{3}{p} = - m \theta + \theta \left(\frac{3}{q} - \frac{3}{p} \right).
\end{equation}
Using that together with the fact that $\phi, \psi$ are within a constant factor of $\phi_j, \psi_j$ for $x \in A_j$, we obtain that 
\begin{equation*}
2^{j (-i+3/\bar{r}-3/p)} \left( \frac{\phi_j}{\psi_j} \right)^{m\theta} = \left( 2^j \right)^{\theta (3/q - 3/p)} \left( \frac{ \phi_j}{\langle x \rangle\psi_j} \right)^{m\theta} \les 
\langle x \rangle^{\theta (3/q - 3/p)} \left( \frac{ \phi (x)}{\langle x \rangle \psi (x)} \right)^{m\theta} \les 1.
\end{equation*}
where in the last inequality we used \eqref{eq:GN_extracond}. Plugging this inequality into \eqref{eq:marat}, we conclude \eqref{eq:GNresult}.

Now, let us show \eqref{eq:GNresultinfty}. If $x\in A_{-1}$ we are done by equation \eqref{eq:Am1} taking $\bar{r} = \infty$. Thus, let us suppose that $x \in A_j$ for some $j\geq 0$. Using the same dilation argument on the Gagliardo-Nirenberg inequality for bounded domains as in the $\bar{r}<\infty$ case, we get \eqref{eq:marat} in a completely analogous way. Therefore:
\begin{align*}
    |\nabla^{i} f(x)| \psi^{m(1-\theta)}(x) \phi^{m\theta}(x) | &\leq 
    \| \mathbbm{1}_{A_j} \psi^m \nabla^m f \|_{L^q}^{\theta}  \| \phi^m \mathbbm{1}_{A_j}  f \|_{L^p}^{1-\theta} \\
    &\quad + 2^{j (-i+3/\bar{r}-3/p)} \left( \frac{\phi_j}{\psi_j} \right)^{m\theta} \| \psi^m \mathbbm{1}_{A_j} f \|_{L^p}
\end{align*}
Lastly, since $x\in A_j$, we have
\begin{equation*}
    2^{j (-i+3/\bar{r}-3/p)} \left( \frac{\phi_j}{\psi_j} \right)^{m\theta} \les \langle x \rangle^{-i+3/\bar{r}-3/p} \left( \frac{\phi (x)}{\psi (x)} \right)^{m\theta} = \langle x \rangle^{3\theta (1/q-1/p)} \left( \frac{\phi (x)}{\langle x \rangle \psi (x)} \right)^{m\theta} 
\end{equation*}
using again \eqref{eq:robespierre}. Thus, we conclude \eqref{eq:GNresultinfty}.
\end{proof}

\begin{lemma} \label{lemma:GN_generaltorus} Let $f:{\mathbb T}_{L}^3 \to \mathbb R^3$, with $L\geq 1$. Let $I_{-1} = [0, 1]$ and $I_j = [2^{j}, 2^{j+1}]$. Let $\phi (x), \psi(x) \geq 0$ be weights such that there exist values $\phi_j, \psi_j$ and a constant $C$ satisfying that $\phi (x) \in [\phi_j/C, C \phi_j]$ and $\psi(x) \in [\psi_j/C, C\psi_j]$ for every $x$ with $|x| \in I_j$. Moreover, let us assume the weights are approximately $1$ at the origin, that is $\phi_{-1} = \psi_{-1} = \phi_0 = \psi_0 = 1$. Let $1 \leq i \leq m$. Assume that the parameters $p, q, r, \theta$ satisfy:
\begin{equation} \label{eq:GN_conditionstorus}
    \frac{1}{\bar{r}} = \frac{i}{3} + \theta \left( \frac{1}{q} - \frac{m}{3} \right) + \frac{1-\theta}{p}, \qquad \mbox{ and } \qquad \frac{i}{m} \leq \theta < 1,
\end{equation}
Letting $\bar{r} = \infty$, we have
\begin{equation} \label{eq:GNresultinftytorus}
    \left| \nabla^i f \right| \les \| \psi^m f \|_{L^p({\mathbb T}_{L}^3)}^{1-\theta} \| \phi^m \nabla^m f \|_{L^q({\mathbb T}_{L}^3)}^{\theta} \psi^{-m(1-\theta)} \phi^{-m\theta}  + \left\| \psi^m f \right\|_{L^p({\mathbb T}_{L}^3)} \cdot  \langle x \rangle^{3\theta (1/q-1/p)-m\theta} \psi^{-m}.
\end{equation}
If moreover $p = \infty, q = 2$ and $\psi = 1$, we obtain
\begin{equation} \label{eq:GNresultinfty_simplifiedtorus}
    \left| \nabla^i f \right| \les \| f \|_{L^\infty({\mathbb T}_{L}^3)}^{1-\frac{i}{m-3/2}} \| \phi^m \nabla^m f \|_{L^q({\mathbb T}_{L}^3)}^{\frac{i}{m-3/2}}  \phi^{-m\frac{i}{m-3/2}}  + \left\| f \right\|_{L^\infty({\mathbb T}_{L}^3)} \cdot  \langle x \rangle^{-i}.
\end{equation}
For $\bar{r} \in [1,\infty)$, any $\eps > 0$, and under the extra assumption:
\begin{equation}\label{eq:GN_extracondtorus}
    \left( \frac{\phi(x)}{\langle x \rangle \psi(x) }\right)^{m\theta} \langle x \rangle^{3\theta (1/q-1/p)} \les 1
\end{equation}
we have the weighted Gagliardo-Nirenberg inequalities:
\begin{equation} \label{eq:GNresulttorus}
    \| \langle x \rangle^{-\eps} \psi^{m(1-\theta)} \phi^{m\theta} \nabla^i f \|_{L^{\bar{r}}({\mathbb T}_{L}^3)} \les \| \psi^m f \|_{L^p({\mathbb T}_{L}^3)}^{1-\theta} \| \phi^m \nabla^m f \|_{L^q({\mathbb T}_{L}^3)}^{\theta} + \| \psi^m f \|_{L^p({\mathbb T}_{L}^3)}
\end{equation}
The implicit constants in \eqref{eq:GNresultinftytorus} and \eqref{eq:GNresult} may depend on the parameters $p, q, i, m, \theta, C$, (as well as $\bar{r}$, $\eps >0$ for \eqref{eq:GNresult}) but they are independent of $f$,$L$, and $\psi$, $\phi$. 
\end{lemma}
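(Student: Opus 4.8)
The plan is to deduce Lemma~\ref{lemma:GN_generaltorus} from Lemma~\ref{lemma:GN_general} by a dyadic argument identical in spirit to the one already used in the proof of Lemma~\ref{lemma:GN_general}. The only genuinely new feature is that the domain is now the torus $\mathbb T_L^3$ rather than $\mathbb R^3$, and the key point is that near the origin (inside $A_{-1}$ and $A_0$, say inside the ball $B(0,2)$) the torus $\mathbb T_L^3$ with $L \geq 1$ is indistinguishable from a ball in $\mathbb R^3$, so the Gagliardo--Nirenberg inequality on a bounded Euclidean domain applies verbatim; and for the dyadic annuli $A_j$ with $j \geq 1$ the rescaled function $f_j(x) = f(2^j x)$ lives on a torus of side $2^{-j}L$, which for the purposes of the local interpolation estimate on $A_0$ can again be treated as a bounded Euclidean domain after covering $A_0 \cap \mathbb T_L^3$ by finitely many unit cubes.

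First I would set $A_{-1} = B(0,1)$ and $A_j = B(0,2^{j+1}) \setminus B(0,2^j)$ for $j \geq 0$, intersected with $\mathbb T_L^3$, and note that these cover $\mathbb T_L^3$ (here one uses that $\mathbb T_L^3$, identified with $[-L,L]^3$, is contained in $B(0, 2L)$, and that only finitely many $A_j$ meet it, but the bound we prove is uniform in $L$ so this causes no trouble). Then, exactly as in the displayed inequality \eqref{eq:rousseau}, for $\bar r < \infty$ I would estimate
\begin{equation*}
\| \langle x \rangle^{-\eps} \psi^{m(1-\theta)}\phi^{m\theta} \nabla^i f \|_{L^{\bar r}(\mathbb T_L^3)} \lesssim_\eps \max_{-1 \leq j} \| \mathbbm 1_{A_j} \psi^{m(1-\theta)}\phi^{m\theta} \nabla^i f \|_{L^{\bar r}(\mathbb T_L^3)}.
\end{equation*}
For $j \in \{-1,0\}$ the annulus $A_j$ sits inside $B(0,2)$, which is a ball in $\mathbb R^3$ (using $L \geq 1$), so the classical Gagliardo--Nirenberg inequality on bounded domains applies just as in \eqref{eq:Am1}--\eqref{eq:A0}, and the weights are within $[1/C,C]$ there. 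For $j \geq 1$ I would dilate: with $f_j(x) = f(2^j x)$ the change of variables gives $\|\mathbbm 1_{A_0} \nabla^{i'} f_j\|_{L^{p'}} = 2^{ji'}2^{-3j/p'}\|\mathbbm 1_{A_j}\nabla^{i'}f\|_{L^{p'}}$, and $f_j$ is defined on a torus of side $2^{-j}L \leq L$; covering the annular region $A_0$ by $O(1)$ unit cubes (a number independent of $j$ and $L$) and applying Gagliardo--Nirenberg on each cube yields \eqref{eq:A0} for $f_j$, hence, after undoing the dilation and using \eqref{eq:GN_conditionstorus}, the analogue of \eqref{eq:Aj}. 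Reintroducing the weights and invoking \eqref{eq:GN_extracondtorus} together with the identity \eqref{eq:robespierre} (whose derivation is purely algebraic and insensitive to the domain) closes the estimate, giving \eqref{eq:GNresulttorus}. The $\bar r = \infty$ case, yielding \eqref{eq:GNresultinftytorus} and its specialization \eqref{eq:GNresultinfty_simplifiedtorus}, follows by the same dilation argument applied pointwise, exactly as in the last paragraph of the proof of Lemma~\ref{lemma:GN_general}.

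The main obstacle, such as it is, is purely bookkeeping: one must check that the covering of each dyadic annulus by unit cubes can be done with a bounded number of cubes that is \emph{uniform in $L$ and in $j$}, so that the implicit constants in \eqref{eq:GNresultinftytorus}--\eqref{eq:GNresulttorus} do not degenerate as $L \to \infty$; this is clear since an annulus $B(0,2^{j+1})\setminus B(0,2^j)$ dilated down to $B(0,2)\setminus B(0,1)$ always has the same shape, and intersecting with a torus of side $\leq L$ only removes points. Beyond that, the proof is a verbatim transcription of the proof of Lemma~\ref{lemma:GN_general} with $\mathbb R^3$ replaced by $\mathbb T_L^3$, so I would in fact write it as: ``The proof is identical to that of Lemma~\ref{lemma:GN_general}, noting that on $B(0,2) \supset A_{-1}\cup A_0$ the torus $\mathbb T_L^3$ (with $L\geq 1$) coincides with a Euclidean ball, and that for $j\geq 1$ the dilated function $f_j$ lives on a torus of side $\leq L$ whose relevant annular region is covered by $O(1)$ unit cubes, with the bound $O(1)$ uniform in $L$.''
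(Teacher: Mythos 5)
Your overall strategy (dyadic decomposition plus a dilation that reduces each piece to Gagliardo--Nirenberg on a fixed model region) is the right framework, and it is indeed what the paper does. But the decomposition you pick creates a genuine problem that the paper's decomposition is specifically designed to avoid. You keep the Euclidean spherical annuli $A_j$ and intersect them with the torus; after dilating $A_j \cap \mathbb T_L^3$ by $2^{-j}$, the model region is $(B(0,2)\setminus B(0,1)) \cap [-a,a]^3$ with $a = 2^{-j}L$, and as $L$ and $j$ vary this region does \emph{not} stay in a uniformly nice family. For $a$ just above $1$ the inner sphere $|x|=1$ becomes tangent to the cube faces and the region develops parabolic cusps (near $(1,0,0)$ it is $\{1-\tfrac12(x_2^2+x_3^2) < x_1 < a\}$); for $a$ just above $1/\sqrt3$ it collapses to eight vanishing corner slivers (since $[-a,a]^3 \subset B(0,a\sqrt3)$, so $[-a,a]^3\setminus B(0,1)\to\emptyset$). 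No unit cube fits in these slivers, and the Gagliardo--Nirenberg constant on such pinched regions is not uniform. Your line that ``intersecting with a torus of side $\leq L$ only removes points,'' and that therefore the $O(1)$ covering is ``clear,'' is exactly the false step: removing points changes the shape, and the constant depends on the shape. There is a further issue you do not address: once $a<1$, a unit cube in the rescaled picture straddles several fundamental domains of the rescaled torus, so one can no longer conclude that the weights are approximately constant on it.

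The paper's proof sidesteps all of this by decomposing the torus into \emph{cubical} shells rather than spherical annuli. It chooses $\delta\in[1,2)$ so that $L/\delta$ is a power of $2$ and sets $\tilde A_{-1}=\mathbb T^3_\delta$, $\tilde A_j = \mathbb T^3_{\delta 2^{j+1}}\setminus\mathbb T^3_{\delta 2^{j}}$. These shells tile $\mathbb T_L^3$ exactly (the outermost being $\mathbb T_L^3\setminus\mathbb T_{L/2}^3$), and each rescales to $\mathbb T^3_{2\delta}\setminus\mathbb T^3_{\delta}$, a bona fide Lipschitz region depending only on $\delta\in[1,2)$, so no degeneration ever occurs. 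The price is small: the weight-comparability hypothesis is stated on the spherical shells $|x|\in I_j$, so one must re-verify it on the cubical shells; since each $\tilde A_j$ sits inside a bounded number of the $I_k$, the constants merely worsen from $C$ to a fixed power of $C$, which is what the paper checks. To repair your write-up you should switch to this cubical decomposition and do the weight check, rather than try to prove a uniform Gagliardo--Nirenberg constant over the degenerating family of truncated spherical annuli.
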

\begin{proof}
We first divide dyadically $\mathbb{T}_{L}^{3}$.
Let $\delta=\log_{2}(L)-[\log_{2}(L)]$, where $[x]$ is the largest integer not greater than $x$. From the def, $\delta \in[1,2]$ and  $\log_{2}(\frac{L}{\delta})$ is an integer. Define $\tilde{A}_{-1}=\mathbb{T}_{\delta}^{3},$ $\tilde{A}_{j}=\mathbb{T}_{\delta 2^{j+1}}^{3}\backslash \mathbb{T}_{\delta 2^{j}}^{3}$ for $j\geq 0$. Then we have $\mathbb{T}_{L}^{3}=\cup_{j\leq \log_{2}(\frac{L}{\delta})-1}\tilde{A}_{j}.$

Now we show for any $j$, $x\in \tilde{A}_{j}$, there exists $\bar{\phi}_{j}$, $\bar{\psi}_{j},$ such that  $\phi (x) \in [\bar{\phi}_{j}/C^{3}, C^{3} \bar{\phi_j}]$ and $\psi(x) \in [\bar{\phi}_{j}/C^{3}, C^{3}\bar{\psi_j}].$ For any $x\in \tilde{A_j},$ we have $\delta 2^{j}\leq |x|\leq \sqrt{2}\delta 2^{j+1}\leq 2^{j+3}.$ Let $\tilde{\psi_{j}}=\max\{\psi_{j},\psi_{j+1},\psi_{j+2}\},\hat{\psi_{j}}=\min\{\psi_{j},\psi_{j+1},\psi_{j+2}\}$. Then we get $\psi(x)\in [\frac{\hat{\psi_{j}}}{C},C\tilde{\psi_{j}}].$ Moreover, from the condition of $\psi$, we have $\psi_{j+1}\leq \psi_{j} C^2 \leq \psi_{j+1} C^4.$ Then $\tilde{\psi}_{j}\leq \hat{\psi}_{j} C^6.$ Therefore $[\hat{\psi}_{j}/C, \tilde{\psi_j}]\subset [\sqrt{\hat{\phi_{j}}\tilde{\phi_{j}}}\frac{1}{C^{3}},\sqrt{\hat{\phi_{j}}\tilde{\phi_{j}}}C^3].$

Then we could use the similar proof as in Lemma \ref{lemma:GN_generaltorus} by estimate the dyadic pieces on $\tilde{A}_j$ instead of $A_{j}$. 
\end{proof}
\begin{lemma} \label{lemma:GN_generalnoweighttorus} Let $f:\mathbb{T}_{L}^3 \to \mathbb R^3$ be a periodic  function on $H^{m}(\mathbb{T}_{L}^3)$. $C_0\geq 1$, $L\geq 10C_0$. Assume that the parameters $p, q, r, \theta$ satisfy:
\begin{equation} \label{eq:GN_conditions02torus_periodic}
    \frac{1}{\bar{r}} = \frac{i}{3} + \theta \left( \frac{1}{q} - \frac{m}{3} \right) + \frac{1-\theta}{p},  \qquad \frac{i}{m} \leq \theta < 1, \qquad \mbox{ and }  \qquad \frac{m}{3}\geq \frac{1}{q}-\frac{1}{p},
\end{equation}
we have the Gagliardo-Nirenberg inequalities:
\begin{equation} \label{eq:GNresultnoweighttorus}
    \|   \nabla^i f \|_{L^{\bar{r}}(|y|\geq C_0, y\in e^s\mathbb{T}_{L}^3)} \les \|  f \|_{L^p(|y|\geq C_0, y\in e^s\mathbb{T}_{L}^3)}^{1-\theta} \|  \nabla^m f \|_{L^q(|y|\geq C_0, y\in e^s\mathbb{T}_{L}^3)}^{\theta} + \| f \|_{L^p(|y|\geq C_0, y\in e^s\mathbb{T}_{L}^3)}.
\end{equation}
The implicit constants in \eqref{eq:GNresultnoweighttorus} may depend on the parameters $p, q, i, m, \theta, \bar{r}$, but they are independent of $f$ and $L, C_0$. 
\begin{proof}

We first show for any $k$, $D_{k}=\{y|C_0\leq |y|\leq C_0(\frac{4}{3})^{k}\}$, we have
\begin{align}\label{GNinequalityDk}
    \|   \nabla^i f \|_{L^{\bar{r}}(D_k)} \les \|  f \|_{L^p(D_k)}^{1-\theta} \|  \nabla^m f \|_{L^q(D_k)}^{\theta} + \| f \|_{L^p(D_k)},
\end{align}
with the implicit constant independent of $C_0$, $k$.

Let $A$ be the annulus where $1\leq |x| \leq \frac{4}{3}$. Then $D_k=\cup_{j=1}^{k}\lambda_j A$ with $\lambda_j=C_0(\frac{4}{3})^{j-1}.$

Moreover, from the Gagliardo-Nirenberg inequality in bounded domains, we have 
\begin{align*}
\|\nabla^{i}(f(\lambda_j x))\|_{L^{\bar{r}}(A)}\les \|\nabla^{m}(f(\lambda_j x))\|_{L^{q}(A)}^{\theta}\|f(\lambda_j x)\|_{L^{p}(A)}^{1-\theta}+\|f(\lambda_j x)\|_{L^{p}(A)}.
\end{align*}
This is equivalent to 
\begin{align*}
\lambda_{j}^{i-\frac{3}{\bar{r}zz}}\|\nabla^{i}(f(x))\|_{L^{\bar{r}}(\lambda_j A)}\les  \lambda_j^{(m-\frac{3}{q})\theta-\frac{3}{p}(1-\theta)}\|\nabla^{n}(f(x))\|_{L^{q}(A)}^{\theta}\|f( x)\|_{L^{p}(\lambda_j A)}^{1-\theta}+\lambda_j^{-\frac{3}{p}}\|f(x)\|_{L^{p}(\lambda_j A)}.
\end{align*}
Since $i-\frac{3}{\bar{r}}=(m-\frac{3}{q})\theta-\frac{3}{p}(1-\theta)$, we have
\begin{align*}
\|\nabla^{i}(f(x))\|_{L^{\bar{r}}(D_k)}^{\bar{r}}=\sum_{j=1}^{k}(\|\nabla^{i}(f(x))\|_{L^{\bar{r}}(\lambda_j A)})^{\bar{r}}&\les \sum_{j=1}^{k} \|\nabla^{m}(f(x))\|_{L^{q}(\lambda_j A)}^{\bar{r}\theta}\|f( x)\|_{L^{p}(\lambda_j A)}^{\bar{r}(1-\theta)}+\sum_{j=1}^{k} \lambda_j^{(-\frac{3}{p}+\frac{3}{r}-i)r}\|f(x)\|_{L^{p}(\lambda_j A)}^{\bar{r}}\\
&\les  \sum_{j=1}^{k} \|\nabla^{m}(f(x))\|_{L^{q}(\lambda_j A)}^{\bar{r}\theta}\|f( x)\|_{L^{p}(\lambda_j A)}^{\bar{r}(1-\theta)}+\sum_{j=1}^{k} (\lambda_j^{(-\frac{3}{p}+\frac{3}{r}-i)r})\|f(x)\|_{L^{p}(D_k)}^{\bar{r}}\\
&\les  \sum_{j=1}^{k} \|\nabla^{m}(f(x))\|_{L^{q}(\lambda_j A)}^{q*\frac{\bar{r}\theta}{q}}\|f( x)\|_{L^{p}(\lambda_j A)}^{p*\frac{\bar{r}(1-\theta)}{p}}+\|f(x)\|_{L^{p}(D_k)}^{\bar{r}},
\end{align*}
where we use $-\frac{3}{p}+\frac{3}{\bar{r}}-i<0$ in the last inequality.

Now we claim for $l_1,l_2>0, l_1+l_2\geq 1$, $a_j,b_j\geq 0$,  
\begin{align}\label{holderine}
\sum_{j}a_j^{l_1}b_j^{l_2}\leq (\sum_{j}a_j)^{l_1}(\sum_{j}b_j)^{l_2}.
\end{align}
In fact, when $l_1\geq \frac{1}{2}$, $l_2\geq \frac{1}{2}$, from Holder's inequality, we have 
\begin{align*}
&\sum_{j}a_j^{l_1}b_j^{l_2}\leq \sum_{j}a_j^{\frac{1}{2}}b_j^{\frac{1}{2}}(\sup_{j}{a_j})^{l_1-\frac{1}{2}}(\sup_{j}{b_j})^{l_2-\frac{1}{2}}\\
&\leq (\sum_j(a_j))^{\frac{1}{2}}(\sum_j(b_j))^{\frac{1}{2}}(\sum_{j}{a_j})^{l_1-\frac{1}{2}}(\sum_{j}{b_j})^{l_2-\frac{1}{2}}\\
&\leq(\sum_{j}a_j)^{l_1}(\sum_{j}b_j)^{l_2}.
\end{align*}
When $l_1\leq \frac{1}{2}, $ we have
\begin{align*}
&\sum_{j}a_j^{l_1}b_j^{l_2}\leq \sum_{j}a_j^{l_1}b_j^{1-l_1}(\sup_{j}{b_j})^{l_2+l_1-1}\\
&\leq (\sum_ja_j)^{l_1}(\sum_jb_j)^{1-l_1}(\sum_{j}{b_j})^{l_2+l_1-1}\\
&\leq(\sum_{j}a_j)^{l_1}(\sum_{j}b_j)^{l_2}.
\end{align*}
Then from \eqref{holderine}, since $\frac{\bar{r}\theta}{q}+\frac{\bar{r}(1-\theta)}{p}=1-\frac{i\bar{r}}{3}+\theta\frac{m\bar{r}}{3}\geq 1$, we have
\begin{align*}
&\|\nabla^{i}(f(x))\|_{L^{\bar{r}}(D_k)}^{\bar{r}}=\sum_{j=1}^{k}(\|\nabla^{i}(f(x))\|_{L^{\bar{r}}(\lambda_j A)})^{\bar{r}}\\
&\les  (\sum_{j=1}^{k} \|\nabla^{m}(f(x))\|_{L^{q}(\lambda_j A)}^{q})^{\frac{\bar{r}\theta}{q}}(\sum_{j=1}^{k}\|f( x)\|_{L^{p}(\lambda_j A)}^{p})^{\frac{\bar{r}(1-\theta)}{p}}+\|f(x)\|_{L^{p}(D_k)}^{\bar{r}},\\
&\les\|\nabla^{m}(f(x))\|_{L^{q}(D_k)}^{\bar{r}\theta}\|f(x)\|_{L^{p}(D_k)}^{\bar{r}(1-\theta)}+\|f(x)\|_{L^{p}(D_k)}^{\bar{r}}.
\end{align*}
Then \eqref{GNinequalityDk} follows.

For the inequality over $e^{s}\mathbb{T}_{L}^3$, let us denote $F = [-L e^s, L e^s]^3 \setminus B ( C_0 )$ corresponding to the points $y$ of $e^s \mathbb T_L^3$ (in the cube centered at the origin) with $|y| \geq C_0$. We fix the minimum $k\in \mathbb N$ for which $F \subset D_k$ (that is, the minimum $k$ for which $C_0 \left( \frac43 \right)^k \geq \sqrt{3} L e^s$ ).  
Since $f$ is an periodic function on $e^s\mathbb{T}_{L}^3$, there is an natural periodic extension $E(f)$ to $D_k$ satisfying
\[
\|\nabla^{i}f\|_{L^{l}(F)}\leq
\|\nabla^{i}E(f)\|_{L^{l}(D_k)}\leq 3^3 \|\nabla^{i}f\|_{L^{l}(F)}.
\]
for all $i,l$. This allows us to conclude \eqref{eq:GNresultnoweighttorus} from \eqref{GNinequalityDk} 
\end{proof}
\end{lemma}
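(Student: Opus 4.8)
The plan is to reduce the torus inequality \eqref{eq:GNresultnoweighttorus} to the Euclidean Gagliardo--Nirenberg inequality on bounded annuli in two steps: first establish the unweighted inequality on a finite union of dyadic annuli $D_k$ with an implicit constant independent of $C_0$ and $k$, and then cover the periodic domain $\{|y|\ge C_0,\ y\in e^s\mathbb T_L^3\}$ by such a $D_k$ and transfer the estimate via a periodic extension. Throughout, I keep the structure and notation of the proof of Lemma \ref{lemma:GN_generaltorus}, since the dyadic mechanism is the same; the only genuinely new ingredients are (i) handling a \emph{finite} union of dyadic shells rather than the full $\mathbb R^3$, and (ii) the periodic extension at the end.

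The first step is to fix the reference annulus $A = \{1\le |x|\le \tfrac43\}$, write $D_k = \cup_{j=1}^k \lambda_j A$ with $\lambda_j = C_0(\tfrac43)^{j-1}$, and for each $j$ apply the standard bounded-domain Gagliardo--Nirenberg inequality to the rescaled function $f(\lambda_j\cdot)$ on $A$. Undoing the scaling and using the scaling identity $i - 3/\bar r = (m-3/q)\theta - (3/p)(1-\theta)$ from \eqref{eq:GN_conditions02torus_periodic}, the $\lambda_j$ prefactors on the main term cancel exactly, while the lower-order term picks up a factor $\lambda_j^{-3/p+3/\bar r - i}$ with a negative exponent (here I would use $-3/p + 3/\bar r - i < 0$, which follows from the third condition in \eqref{eq:GN_conditions02torus_periodic} together with $\theta<1$). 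Raising to the $\bar r$-th power and summing over $j$, one is left with a sum $\sum_j \|\nabla^m f\|_{L^q(\lambda_j A)}^{\bar r\theta}\|f\|_{L^p(\lambda_j A)}^{\bar r(1-\theta)}$ plus a harmless geometric sum of the lower-order terms. To collapse this sum into a product of norms over all of $D_k$ I would prove the elementary inequality
\begin{equation*}
\sum_j a_j^{l_1} b_j^{l_2} \le \Big(\sum_j a_j\Big)^{l_1}\Big(\sum_j b_j\Big)^{l_2}, \qquad a_j,b_j\ge 0,\ l_1,l_2>0,\ l_1+l_2\ge 1,
\end{equation*}
by splitting into the cases $l_1,l_2\ge \tfrac12$ (a direct Hölder argument against the supremum) and $l_1\le\tfrac12$ (pulling $b_j^{l_2+l_1-1}$ out at the supremum). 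Applying this with $l_1 = \bar r\theta/q$, $l_2 = \bar r(1-\theta)/p$ — whose sum is $1 - i\bar r/3 + \theta m\bar r/3 \ge 1$ by the parameter conditions — yields \eqref{GNinequalityDk} with constant uniform in $C_0$ and $k$.

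The second step is geometric: one chooses $k$ minimal with $e^s\mathbb T_L^3 \subset D_k$, so that also $D_{k-1}\subset e^s\mathbb T_L^3$ up to the corner region, and notes that $\operatorname{dist}(x,\partial D_k)\lesssim C_0(\tfrac43)^{k-1}\lesssim L \lesssim L - C_0$ for $x$ near $\partial D_{k-1}$; since $f$ is periodic on $e^s\mathbb T_L^3$ it admits a finitely-overlapping periodic extension $E(f)$ to $D_k$ with $\|\nabla^i E(f)\|_{L^\ell(D_k)}\le 3^3\|\nabla^i f\|_{L^\ell(|y|\ge C_0,\ y\in e^s\mathbb T_L^3)}$ for every $i,\ell$. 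Feeding $E(f)$ into \eqref{GNinequalityDk} and using that the norms of $E(f)$ over $D_k$ are comparable to those of $f$ over the periodic domain gives \eqref{eq:GNresultnoweighttorus}. The main obstacle I anticipate is bookkeeping in the first step — making sure every $\lambda_j$-power is tracked correctly so that the scaling identity produces an \emph{exact} cancellation on the leading term and a \emph{summable} tail on the lower-order term, and checking that the exponent conditions in \eqref{eq:GN_conditions02torus_periodic} are exactly what is needed for both the negativity $-3/p+3/\bar r-i<0$ and the condition $l_1+l_2\ge 1$ in the combinatorial lemma; the periodic-extension step is routine once the domain inclusions are set up.
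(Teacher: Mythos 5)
Your proposal is correct and follows essentially the same route as the paper: dyadic decomposition of $D_k$ into rescaled copies of the fixed annulus $A$, application of bounded-domain Gagliardo--Nirenberg with exact cancellation of $\lambda_j$-powers on the leading term, the combinatorial inequality $\sum_j a_j^{l_1}b_j^{l_2}\le(\sum_j a_j)^{l_1}(\sum_j b_j)^{l_2}$ for $l_1+l_2\ge 1$ proved by the same two-case split, and a finitely-overlapping periodic extension to pass from $D_k$ to $e^s\mathbb T^3_L$. One small quibble: the negativity $-3/p+3/\bar r - i<0$ does not come from ``$\theta<1$'' but from $\theta>0$ together with (strict) $m/3>1/q-1/p$, since $3/\bar r - i - 3/p = 3\theta(1/q-1/p-m/3)$ by the scaling relation.
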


\begin{lemma} \label{lemma:GN_generalnoweightwholespace} Let $f:\mathbb R^3 \to \mathbb R^3$ be in $H^{m}(\mathbb R^3 )$. $C_0\geq 1$, $L\geq 10C_0$. Assume that the parameters $p, q, r, \theta$ satisfy:
\begin{equation} \label{eq:GN_conditions02torus}
    \frac{1}{\bar{r}} = \frac{i}{3} + \theta \left( \frac{1}{q} - \frac{m}{3} \right) + \frac{1-\theta}{p},  \qquad \frac{i}{m} \leq \theta < 1, \qquad \mbox{ and }  \qquad \frac{m}{3}\geq \frac{1}{q}-\frac{1}{p},
\end{equation}
we have the Gagliardo-Nirenberg inequalities:
\begin{equation} \label{eq:GNresultnoweightwholespace}
    \|   \nabla^i f \|_{L^{\bar{r}}(|y|\geq C_0)} \les \|  f \|_{L^p(|y|\geq C_0)}^{1-\theta} \|  \nabla^m f \|_{L^q(|y|\geq C_0)}^{\theta} + \| f \|_{L^p(|y|\geq C_0)}.\end{equation}
The implicit constants in \eqref{eq:GNresultnoweighttorus} may depend on the parameters $p, q, i, m, \theta, \bar{r}$, but they are independent of $f$ and $L, C_0$.
\end{lemma}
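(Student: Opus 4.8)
The plan is to reduce the whole-space statement to the already-proven periodic statement (Lemma \ref{lemma:GN_generalnoweighttorus}) essentially verbatim, since the dyadic-annulus scaling argument used there never actually used periodicity for the interior estimates — it used it only to control the outermost piece. First I would recall the heart of the proof of Lemma \ref{lemma:GN_generalnoweighttorus}: one writes the region $\{|y|\geq C_0\}$ as a countable union of dyadic annuli $\lambda_j A$ with $A=\{1\leq|x|\leq 4/3\}$ and $\lambda_j=C_0(4/3)^{j-1}$, applies the classical Gagliardo–Nirenberg inequality on the fixed bounded domain $A$ (after rescaling $x\mapsto \lambda_j x$), uses the scaling identity $i-\tfrac{3}{\bar r}=\theta(m-\tfrac3q)-\tfrac3p(1-\theta)$ coming from \eqref{eq:GN_conditions02torus}, and then sums the pieces using the elementary inequality \eqref{holderine}, namely $\sum_j a_j^{l_1}b_j^{l_2}\leq(\sum_j a_j)^{l_1}(\sum_j b_j)^{l_2}$ for $l_1,l_2>0$, $l_1+l_2\geq1$, where the condition $l_1+l_2=\tfrac{\bar r\theta}{q}+\tfrac{\bar r(1-\theta)}{p}=1-\tfrac{i\bar r}{3}+\theta\tfrac{m\bar r}{3}\geq1$ is exactly \eqref{eq:GN_conditions02torus}.

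The simplification in the whole-space setting is that there is no outermost annulus to worry about: $\{|y|\geq C_0\}=\bigcup_{j\geq 1}\lambda_j A$ is an exact disjoint-up-to-measure-zero decomposition, so no periodic extension to a larger ball $D_k$ is needed, and the constants produced are manifestly independent of $C_0$ (they come only from the fixed domain $A$ and from the summation inequality \eqref{holderine}, neither of which sees $C_0$). Concretely I would: (1) state the dyadic decomposition; (2) apply classical Gagliardo–Nirenberg on $A$ and rescale, obtaining $\|\nabla^i f\|_{L^{\bar r}(\lambda_j A)}\lesssim \|\nabla^m f\|_{L^q(\lambda_j A)}^\theta\|f\|_{L^p(\lambda_j A)}^{1-\theta}+\lambda_j^{-3/p+3/\bar r-i}\|f\|_{L^p(\lambda_j A)}$, where the exponent $-3/p+3/\bar r-i<0$ so the tail factor is bounded; (3) raise to the power $\bar r$, sum over $j$, bound $\sum_j \lambda_j^{(-3/p+3/\bar r-i)\bar r}\lesssim 1$, and apply \eqref{holderine} with $a_j=\|\nabla^m f\|_{L^q(\lambda_j A)}^q$, $b_j=\|f\|_{L^p(\lambda_j A)}^p$, $l_1=\tfrac{\bar r\theta}{q}$, $l_2=\tfrac{\bar r(1-\theta)}{p}$ to collapse the sums into $\|\nabla^m f\|_{L^q(|y|\geq C_0)}^{\bar r\theta}\|f\|_{L^p(|y|\geq C_0)}^{\bar r(1-\theta)}$; (4) take the $\bar r$-th root to conclude \eqref{eq:GNresultnoweightwholespace}.

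Honestly there is no serious obstacle here — this is the easiest of the interpolation lemmas, strictly simpler than Lemma \ref{lemma:GN_generalnoweighttorus} because the periodic-extension step that complicates the torus case is absent. The only point deserving a line of care is verifying that the summation inequality \eqref{holderine} genuinely applies, i.e. that $l_1+l_2\geq 1$, which as noted is precisely the inequality $\tfrac m3\geq\tfrac1q-\tfrac1p$ appearing in hypothesis \eqref{eq:GN_conditions02torus} combined with $\theta<1$; and that $-3/p+3/\bar r-i<0$, which follows from $\theta\geq i/m$ together with $m\geq i$. So the entire proof can be written as: ``The proof is identical to that of Lemma \ref{lemma:GN_generalnoweighttorus}, working with the exact dyadic decomposition $\{|y|\geq C_0\}=\bigcup_{j\geq 1}\lambda_j A$; since no periodic extension is needed one obtains \eqref{eq:GNresultnoweightwholespace} directly, with constants depending only on $p,q,i,m,\theta,\bar r$.''

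\begin{proof}
The proof is a direct repetition of that of Lemma \ref{lemma:GN_generalnoweighttorus}, and is in fact simpler. Let $A=\{x\in\mathbb R^3:1\leq|x|\leq \tfrac43\}$ and, for $j\geq 1$, set $\lambda_j=C_0(\tfrac43)^{j-1}$, so that $\{|y|\geq C_0\}=\bigcup_{j\geq 1}\lambda_j A$ up to a set of measure zero. By the classical Gagliardo--Nirenberg inequality on the fixed bounded domain $A$, applied to $f(\lambda_j\cdot)$ and rescaled back using \eqref{eq:GN_conditions02torus} (which gives $i-\tfrac{3}{\bar r}=\theta(m-\tfrac3q)-\tfrac3p(1-\theta)$), we obtain
\begin{equation*}
\|\nabla^i f\|_{L^{\bar r}(\lambda_j A)}\lesssim \|\nabla^m f\|_{L^q(\lambda_j A)}^{\theta}\|f\|_{L^p(\lambda_j A)}^{1-\theta}+\lambda_j^{-3/p+3/\bar r-i}\|f\|_{L^p(\lambda_j A)}.
\end{equation*}
Since $\theta\geq i/m$ and $m\geq i$ we have $-\tfrac3p+\tfrac{3}{\bar r}-i<0$, so $\sum_{j\geq 1}\lambda_j^{(-3/p+3/\bar r-i)\bar r}\lesssim 1$. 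Raising the displayed inequality to the power $\bar r$ and summing over $j$ yields
\begin{equation*}
\|\nabla^i f\|_{L^{\bar r}(|y|\geq C_0)}^{\bar r}\lesssim \sum_{j\geq 1}\|\nabla^m f\|_{L^q(\lambda_j A)}^{q\cdot\frac{\bar r\theta}{q}}\|f\|_{L^p(\lambda_j A)}^{p\cdot\frac{\bar r(1-\theta)}{p}}+\|f\|_{L^p(|y|\geq C_0)}^{\bar r}.
\end{equation*}
Because $\tfrac{\bar r\theta}{q}+\tfrac{\bar r(1-\theta)}{p}=1-\tfrac{i\bar r}{3}+\theta\tfrac{m\bar r}{3}\geq 1$ by \eqref{eq:GN_conditions02torus}, we may apply inequality \eqref{holderine} with $a_j=\|\nabla^m f\|_{L^q(\lambda_j A)}^{q}$, $b_j=\|f\|_{L^p(\lambda_j A)}^{p}$, $l_1=\tfrac{\bar r\theta}{q}$, $l_2=\tfrac{\bar r(1-\theta)}{p}$, obtaining
\begin{equation*}
\|\nabla^i f\|_{L^{\bar r}(|y|\geq C_0)}^{\bar r}\lesssim \|\nabla^m f\|_{L^q(|y|\geq C_0)}^{\bar r\theta}\|f\|_{L^p(|y|\geq C_0)}^{\bar r(1-\theta)}+\|f\|_{L^p(|y|\geq C_0)}^{\bar r}.
\end{equation*}
Taking the $\bar r$-th root gives \eqref{eq:GNresultnoweightwholespace}. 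All constants depend only on $p,q,i,m,\theta,\bar r$ and, in particular, are independent of $C_0$ and $L$.
\end{proof}
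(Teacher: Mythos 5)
Your proof follows the paper's approach exactly: the paper's own proof of Lemma \ref{lemma:GN_generalnoweightwholespace} is just the two-sentence remark that one decomposes $\{|y|\geq C_0\}=\bigcup_{j\geq 1}\lambda_j A$ and repeats the argument of Lemma \ref{lemma:GN_generalnoweighttorus}, which is precisely what you carry out. One small note: your stated reason for $-\tfrac3p+\tfrac{3}{\bar r}-i<0$ (``since $\theta\geq i/m$ and $m\geq i$'') is off — in fact $\tfrac{3}{\bar r}-\tfrac3p-i=3\theta\left(\tfrac1q-\tfrac1p-\tfrac m3\right)$, so strict negativity comes from $\tfrac m3>\tfrac1q-\tfrac1p$ (the hypothesis only guarantees $\geq$, but this holds with strict inequality in every application in the paper, and the paper's own proof of the torus case glosses over the same point).
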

\begin{proof}
 We could write 
$D=\{y||y|\geq C_0\}=\cup_{j=1}^{\infty}\lambda_j A$, where $A$, $\lambda_j$ are same as in Lemma \ref{lemma:GN_generalnoweighttorus}. Then the similar proof holds.
\end{proof}

\printbibliography

\begin{tabular}{l}
  \textbf{Gonzalo Cao-Labora} \\
  {Courant Institute of Mathematical Sciences} \\
  {New York University} \\
  {251 Mercer Street, 619} \\
  {New York, NY 10012, USA} \\
  {Email: gc2703@nyu.edu} \\ \\
  \textbf{Javier G\'omez-Serrano}\\
  {Department of Mathematics} \\
  {Brown University} \\
  {314 Kassar House, 151 Thayer St.} \\
  {Providence, RI 02912, USA} \\
  {Email: javier\_gomez\_serrano@brown.edu} \\ \\
  \textbf{Jia Shi} \\
  {Departament of Mathematics} \\
  {Massachusetts Institute of Technology} \\
  {182 Memorial Drive, 2-157} \\
  {Cambridge, MA 02139, USA} \\
  {Email: jiashi@mit.edu} \\ \\
  \textbf{Gigliola Staffilani} \\
  {Departament of Mathematics} \\
  {Massachusetts Institute of Technology} \\
  {182 Memorial Drive, 2-251} \\
  {Cambridge, MA 02139, USA} \\
  {Email: gigliola@math.mit.edu} \\
\end{tabular}

\end{document}